\author{Nicola Gigli, Ivan Yuri Violo}
\newcommand{\eps}{\varepsilon}
\newcommand{\rr}{\mathbb{R}}
\newcommand{\nn}{\mathbb{N}}
\newcommand{\nchi}{{\raise.3ex\hbox{$\chi$}}}
\newcommand{\bigslant}[2]{{\raisebox{.2em}{$#1$}\left/\raisebox{-.2em}{$#2$}\right.}}
\newcommand{\sfd}{{\sf d}}
\newcommand{\Lip}{{\rm Lip}}
\newcommand{\id}{{\sf id}}
\renewcommand{\phi}{\varphi}
\newcommand{\restr}[1]{\lower3pt\hbox{$|_{#1}$}}
\newcommand{\X}{{\rm X}}
\newcommand{\fr}{\penalty-20\null\hfill$\blacksquare$} 
\definecolor{mygray}{gray}{0.9}
\newcommand{\ee}{{\sf e}}
\newcommand{\rcd}{\mathrm{RCD}}
\newcommand{\Z}{{\rm Z}}
\theoremstyle{plain}
\newtheorem{theorem}{Theorem}[section]
\newtheorem{lemma}[theorem]{Lemma}
\newtheorem{prop}[theorem]{Proposition}
\newtheorem{cor}[theorem]{Corollary}
\theoremstyle{definition}
\newtheorem{definition}[theorem]{Definition}
\newtheorem{remark}[theorem]{Remark}
\newtheorem{example}[theorem]{Example}
\numberwithin{equation}{section}
\title{Notes on the Cheeger and Colding  version  of the Reifenberg theorem for metric spaces}
\author{Nicola Gigli\footnote{\href{mailto:ngigli@sissa.it}{ngigli@sissa.it}, SISSA, Via Bonomea 265, 34136 Trieste (TS), Italy},
\,Ivan Yuri Violo\footnote{\href{mailto:ivan.violo@sns.it}{ivan.violo@sns.it}, Centro di Ricerca Matematica Ennio De Giorgi, Scuola Normale Superiore, Piazza dei
Cavalieri 3, 56126 Pisa (PI), Italy.}}
\date{}
\begin{document}

\maketitle

\begin{abstract}
   The classical Reifenberg's theorem says that a set which is sufficiently well approximated by planes  uniformly at all scales is  a topological H\"older manifold. Remarkably,  this generalizes to metric spaces, where the approximation by planes is replaced by the Gromov-Hausdorff distance. This fact was shown by Cheeger and Colding in an appendix of one of their celebrated works on  Ricci limit spaces \cite{Cheegercolding}. 

   Given the recent interest around this statement in the growing field of  analysis in metric spaces, in this note we provide a self contained  and detailed proof of the Cheeger and Colding result. Our presentation substantially expands  the arguments in \cite{Cheegercolding} and makes explicit all the relevant estimates and constructions. As a byproduct we also shows a biLipschitz version of this result  which, even if folklore among experts, was not present in the literature.

   This work is an extract from the doctoral dissertation of the second author.
\end{abstract}

\tableofcontents

\section{Introduction}
The celebrated Reifenberg's theorem \cite{r}  states that if a set in $\rr^d$ is well approximated at every small scale by $n$-dimensional affine planes, then it is (locally) an $n$-dimensional bi-H\"older manifold.  More precisely, for a set $S\subset  \rr^d$ and $n \in \mathbb{N}$, with $n < d$,  we set
\[
\ee(x,r)\coloneqq r^{-1}\inf_{\Gamma}  \sfd_H(S\cap B_{r}(x),\Gamma\cap B_{r}(x)), \quad \text{for every $r>0$ and $x \in S$},\\
\]
where  $ \sfd_H$ is the Hausdorff distance and where the infimum is  taken among all the  $n$-dimensional affine planes $\Gamma$ in $\rr^d$ containing $x$. Then Reifenberg's result reads as follows:
\begin{theorem}[Classical Reifenberg's theorem,\cite{r}]\label{thm:reif intro}
	For every $n,d\in \nn$  with $n<d$  and $\alpha\in(0,1)$ there exists $\delta=\delta(n,d,\alpha)$ such that the following holds. Let $S\subset \rr^d$ be closed, containing the origin and such that $\ee(x,r)<\delta$ for every $x \in S\cap B_1(0)$ and $r \in(0,1)$.
	
	Then there exists an $\alpha$-bi-H\"older homeomorphism $F: \Omega\to S\cap B_{1/2}^{\rr^d}(0)$, where $\Omega$ is an open set in $\rr^n.$
\end{theorem}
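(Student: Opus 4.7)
The strategy I would follow is Reifenberg's original ``squashing'' construction: starting from an $n$-plane $\Gamma_0$ through the origin, I build a sequence of diffeomorphisms $\sigma_k$ of $\rr^d$, each adapted to the dyadic scale $r_k := 2^{-k}$, so that the compositions $\Phi_k := \sigma_k \circ \cdots \circ \sigma_0$ converge uniformly to a map $\Phi$ whose restriction to $\Gamma_0 \cap \Omega$ parametrises $S \cap B_{1/2}(0)$. At each scale I would fix a maximal $r_k$-separated net $\{x_{k,i}\} \subset S \cap B_{3/4}(0)$ and, using the hypothesis $\ee(x_{k,i}, 10 r_k) < \delta$, select affine $n$-planes $\Gamma_{k,i} \ni x_{k,i}$ that Hausdorff-approximate $S$ in $B_{10 r_k}(x_{k,i})$ with error $\leq \delta\cdot 10 r_k$. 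A preliminary linear-algebra lemma is needed: whenever two such planes both approximate $S$ on a common ball of radius $\sim r$, their Hausdorff distance on that ball is $O(\delta r)$ and the angle between their directions is $O(\delta)$. This is what makes it possible to glue the local plane approximations across adjacent indices and across scales.

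For the construction of $\sigma_k$, let $\{\psi_{k,i}\}_i$ be a smooth partition of unity subordinate to $\{B_{2 r_k}(x_{k,i})\}_i$ with $\|\nabla \psi_{k,i}\|_\infty \lesssim r_k^{-1}$, and let $\pi_{k,i}$ denote the orthogonal projection onto $\Gamma_{k,i}$. I would define
\[
\sigma_k(y) := y + \sum_i \psi_{k,i}(y)\bigl(\pi_{k,i}(y) - y\bigr).
\]
Using the preliminary lemma, one checks that $\|\sigma_k - \id\|_\infty \lesssim \delta r_k$ and $\Lip(\sigma_k - \id) \lesssim \delta$, so each $\sigma_k$ is $(1+C\delta)$-biLipschitz and equals $\id$ outside a $C r_k$-neighbourhood of $S$. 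Inductively, $\Phi_k(\Gamma_0)$ stays in a $C\delta r_k$-tube around $S$, which in turn guarantees that the next squashing $\sigma_{k+1}$ remains well-defined on $\Phi_k(\Gamma_0)$.

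Summing the telescoping bound $\|\Phi_k - \Phi_{k-1}\|_\infty \lesssim \delta r_k$ gives uniform convergence to a continuous map $\Phi$. Setting $\Omega := \Phi^{-1}(B_{1/2}(0)) \cap \Gamma_0$ and $F := \Phi|_\Omega$, injectivity follows from the biLipschitz-type bounds derived below, while surjectivity onto $S \cap B_{1/2}(0)$ follows because each point of $S$ lies within $C\delta r_k$ of $\Phi_k(\Gamma_0)$, allowing a diagonal argument to produce a preimage in the limit.

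The bi-H\"older estimate is the technical core. For $y,y'\in\Gamma_0$ with $|y-y'|\approx r_k$, the maps $\sigma_j$ with $j\le k$ are nearly rigid on the relevant scale (since their local corrections vary slowly on distances $\gg r_j$), whereas the $\sigma_j$ with $j>k$ contribute compounding $(1+C\delta)^{\pm 1}$ factors. Careful bookkeeping yields
\[
(1-C\delta)^{\lceil \log_2(1/|y-y'|)\rceil}|y-y'| \;\leq\; |F(y)-F(y')| \;\leq\; (1+C\delta)^{\lceil \log_2(1/|y-y'|)\rceil}|y-y'|,
\]
i.e.\ bi-H\"older regularity with exponent $\alpha = 1 - \log_2(1+C\delta)$, and choosing $\delta$ small depending on $\alpha$ closes the theorem. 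The main obstacle I anticipate is this simultaneous bookkeeping: one must propagate at every step the three estimates --- successive planes are close, the surface $\Phi_k(\Gamma_0)$ stays in the tube around $S$, and the accumulated distortion is \emph{power-law} in the separation rather than exponential in $k$ --- because each of these three controls is needed to justify the next iteration.
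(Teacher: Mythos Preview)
The paper does not prove this statement at all: Theorem~\ref{thm:reif intro} is quoted in the introduction as the classical result of Reifenberg, with references to \cite{r} and to Simon's notes \cite{simondisk} for a short proof, and the paper then moves on to its actual object, the metric-space version. So there is no ``paper's own proof'' to compare against here.

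That said, your outline is the standard Reifenberg squashing construction and is essentially the argument of \cite{simondisk}: nets at dyadic scales, approximating planes, partition-of-unity interpolation of the orthogonal projections, composition, and the telescoping/compounding estimate for the bi-H\"older exponent. As a sketch it is correct. One small slip: in your parenthetical about $\sigma_j$ with $j\le k$ you write that their corrections ``vary slowly on distances $\gg r_j$''; what you need is the opposite, that the corrections vary slowly on the scale $r_k\le r_j$ (equivalently, just use that each $\sigma_j$ is $(1+C\delta)$-biLipschitz and compound). The bi-H\"older bookkeeping then goes through exactly as you wrote.

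It is worth noting, since you may be reading this paper for its own contribution, that the metric-space proof the authors actually give (for Theorem~\ref{mainthm}) is \emph{not} a squashing argument in an ambient space---there is no ambient space---but instead builds, at each dyadic scale, an approximating Riemannian manifold $W_i$ by gluing Euclidean balls along almost-compatible isometries, and then constructs diffeomorphisms $h_i:W_i\to W_{i+1}$ and almost-isometries $f_i:W_i\to Z$. The limiting map is obtained by composing the $h_i$'s and passing to the limit via $f_i$. Your construction exploits the ambient $\rr^d$ in an essential way (orthogonal projections, global diffeomorphisms of $\rr^d$), which is why it is shorter; the paper's construction is intrinsic, which is why it transfers to metric spaces.
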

A set $S$ which satisfies the hypotheses of the previous result is called\textit{ Reinfenberg-flat} (in $B_1(0)$). For a short proof of the above result and an explanation of its core ideas we refer to \cite{simondisk} (see also  \cite{morrey}).

The original motivation in \cite{r} to prove this result was the regularity of minimal surfaces. However, from its original formulation,  Reifenberg's theorem and more in general the idea behind its proof have found successful generalizations and applications in harmonic analysis, geometric measure theory, rectifiability theory and PDE's (see e.g. \cite{holes,nv,env,tolsa,tolsa2,DS,jones,badger,toro,Naberlectures} and the references therein).

We will be interested in the Reifenberg's theorem for metric spaces. The generalization of Theorem \ref{thm:reif intro} in this setting has been obtained in a celebrated result by Cheeger and Colding \cite[Appendix A]{Cheegercolding}.  To state it we need to define the metric-analogue of the `flatness'-coefficients $\ee(x,r)$ in Reifenberg's theorem. In this case the comparison with planes, which clearly is not available, is replaced with the Gromov-Hausdorff distance to Euclidean balls.  In particular for a metric space $(Z_1,\sfd_1)$ and a fixed $n \in \nn$ we set
\[
\eps(z,r)\coloneqq r^{-1}\sfd_{GH}(B_r(z),B_r^{\rr^n}(0)), \quad \text{for every $r>0$ and $z \in Z$,}
\]
(we refer to Section \ref{sec:preliminaries chp2} for the definition of $\sfd_{GH}$).

\begin{theorem}[{Reifenberg's theorem for metric spaces, \cite[Theorem A.1.2]{Cheegercolding}}]\label{localthmholder}
	For every $n \in \mathbb{N}$ there exist constants $\eps(n)>0, M=M(n)>1$ such that the following is true.  	Let $B_1(z_0)$ be a a ball inside a complete metric space $(Z,\sfd)$. Suppose that  $\eps(z,r) \le \eps(n)$ for every $z \in B_1(z_0)$ and all $r<1-\sfd(z,z_0)$. 
	Then there exists a   bi-H\"older map $F: B^{\rr^n}_1(0) \to Z $  such that $B_{1-M\eps_0}(z_0)\subset F(B_1(0)).$ 
\end{theorem}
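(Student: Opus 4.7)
The plan is to construct $F$ as the uniform limit of an inductively defined sequence of maps $F_k: B^{\rr^n}_1(0) \to Z$, working at dyadic scales $r_k := \tau^k$ for some $\tau \in (0, 1/2)$ depending on $n$. For each $k$ I would fix a maximal $r_k$-separated net $\{x^k_i\}_i$ in $B^{\rr^n}_1(0)$ together with a smooth partition of unity $\{\psi^k_i\}_i$ subordinate to the cover $\{B_{2r_k}(x^k_i)\}$ and satisfying $\|\nabla \psi^k_i\|_\infty \lesssim r_k^{-1}$. The hypothesis supplies, for every $z \in B_1(z_0)$ and every admissible scale $r$, a $\eps(n)r$-GH approximation $\Phi_{z,r}: B^{\rr^n}_r(0) \to B_r(z)$; these local charts play the role that the affine planes play in the classical proof.

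Given $F_k$, I would define $F_{k+1}$ locally near each center $x^k_i$ as follows. Set $z^k_i := F_k(x^k_i)$ and let $\Phi_i := \Phi_{z^k_i, \rho r_k}$ for a fixed $\rho > 1$; using that $\Phi_i$ is almost surjective, pull back the nearby centers $z^k_j$ (for $j$ such that $x^k_j$ is in the support of $\psi^k_i$) to obtain points $y^{(i)}_j \in B^{\rr^n}_{\rho r_k}(0)$, then form the Euclidean shifted barycenter
\[
\bar y_i(x) := \sum_j \psi^k_j(x)\bigl(y^{(i)}_j + (x - x^k_j)\bigr),
\]
and finally declare $F_{k+1}(x) := \Phi_i(\bar y_i(x))$ for $x \in B_{2r_k}(x^k_i)$. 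A separate compatibility check, relying on the fact that two different GH-approximations centred at nearby points $z^k_i, z^k_{i'}$ differ by an almost-isometry on their overlap, shows that different admissible choices of $i$ yield values agreeing up to $O(\eps r_k)$, so $F_{k+1}$ is well defined modulo this error.

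The two inductive estimates that drive the whole argument are:
\begin{enumerate}
\item[(i)] $\sfd\bigl(F_{k+1}(x), F_k(x)\bigr) \le C(n)\,\eps\, r_k$ for all $x$;
\item[(ii)] for $|x-y| \ge r_k$,
\[
\bigl|\sfd(F_{k+1}(x), F_{k+1}(y)) - |x-y|\bigr| \le \bigl|\sfd(F_k(x), F_k(y)) - |x-y|\bigr| + C(n)\,\eps\,r_k.
\]
\end{enumerate}
Estimate (i) follows from the fact that the GH-charts at scale $r_k$ and the previous charts at scale $r_{k-1}$ agree up to $O(\eps r_k)$ on their overlap, so the correction step moves each image point by at most $O(\eps r_k)$. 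Estimate (ii) is the geometric core: it measures how well the local Euclidean affine approximations splice together after averaging. Iterating (i) yields that $\{F_k\}$ is uniformly Cauchy, with limit $F$ satisfying $\sfd(F_k, F) \le C\eps r_k/(1-\tau)$. Iterating (ii), telescoping the bound $\sum_{r_k \le s} \eps r_k$ and balancing against $s$, produces a bi-Hölder distortion estimate $\bigl|\sfd(F(x), F(y)) - |x-y|\bigr| \le C \eps\, |x-y|^{\alpha}$ with exponent $\alpha = \alpha(\eps) \to 1$ as $\eps \to 0$, which in particular gives injectivity and continuous invertibility of $F$.

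Finally, the covering statement $B_{1-M\eps_0}(z_0) \subset F(B^{\rr^n}_1(0))$ follows from a Brouwer-degree argument: the restriction of $F$ to $\partial B^{\rr^n}_{1-C\eps}(0)$ is $O(\eps)$-close to the restriction of the initial GH-approximation $\Phi_{z_0, 1}$, which has nonzero topological degree around any target in $B_{1-M\eps_0}(z_0)$; homotopy invariance then forces the same for $F$. The main technical obstacle is step (ii): controlling how errors from different charts at different scales accumulate under the partition-of-unity averaging requires delicate bookkeeping, in particular a careful choice of the constants $\tau$ and $\rho$ and a quantitative estimate on the Lipschitz norm of the transition maps $\Phi_i^{-1} \circ \Phi_{i'}$, both of which implicitly fix the final constant $M(n)$. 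Once (i) and (ii) are in hand, all remaining claims — convergence, bi-Hölder regularity, and surjectivity onto $B_{1-M\eps_0}(z_0)$ — follow from standard arguments.
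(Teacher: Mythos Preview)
Your route is genuinely different from the paper's: the paper does not build $F_k:B_1^{\rr^n}(0)\to Z$ directly but instead constructs a sequence of intermediate $n$-dimensional Riemannian manifolds $W_i$ (each obtained by gluing copies of a Euclidean ball along carefully modified transition maps), together with diffeomorphisms $h_i:W_i\to W_{i+1}$ and almost-isometries $f_i:W_i\to Z$, and takes $F=\lim f_i\circ h_{i-1}\circ\cdots\circ h_0$. The technical heart of that argument is the \emph{mapping modification theorem} (Theorem~\ref{bigmodification2}), which perturbs the approximate transition isometries so that they satisfy an exact cocycle condition; this is what replaces the ambient linear structure you are implicitly leaning on.

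The main gap in your sketch is precisely the missing rotational alignment. Your formula $\bar y_i(x)=\sum_j\psi^k_j(x)\bigl(y^{(i)}_j+(x-x^k_j)\bigr)$ tacitly assumes that $\Phi_i^{-1}\circ F_k$ (defined via an almost-inverse) is $O(\eps r_k)$-close to a \emph{translation}. The only available control (Lemma~\ref{AA}) says it is $O(\eps r_k)$-close to some isometry $u\mapsto R_iu+v_i$ with $R_i\in O(n)$ a priori arbitrary. A direct computation then gives
\[
\bar y_i(x)-\Phi_i^{-1}(F_k(x))\approx (I-R_i)\Bigl(x-\sum_j\psi^k_j(x)\,x^k_j\Bigr),
\]
which has size $\lVert I-R_i\rVert\cdot r_k$, not $\eps r_k$; so estimate (i) fails. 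The same uncontrolled rotation appears when you compare two reference indices $i,i'$: the transition between $\Phi_i$ and $\Phi_{i'}$ is only close to an isometry, and the linear part of that isometry produces an $O(r_k)$ (not $O(\eps r_k)$) discrepancy between your two candidate values of $F_{k+1}(x)$, so the compatibility claim is also unjustified. One can attempt a repair by pre-rotating each $\Phi_i$ to align with $F_k$, but then the aligned charts must themselves be shown to be mutually compatible on overlaps to order $\eps r_k$, and compatible with the alignments chosen at the next scale; carrying this out is essentially the content of the paper's modification procedure.

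A secondary issue: the Brouwer-degree argument for $B_{1-M\eps_0}(z_0)\subset F(B_1(0))$ is not available, since degree is undefined for maps into a bare metric space and the GH-approximation $\Phi_{z_0,1}$ to which you want to homotope is not even continuous. The paper instead shows directly that the image of $F$ is dense in a slightly smaller ball (using that each $f_i(W_i)$ is $C\cdot 2^{-i}$-dense by construction) and then upgrades density to surjectivity via the lower bi-H\"older bound and completeness of $Z$.
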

The original motivation of Cheeger and Colding to prove the metric-Reifenberg's theorem was to prove manifold-regularity of the regular set of ``non-collapsed" Ricci-limit spaces \cite{Cheegercolding}. However, as a consequence of their construction, they obtained also several improvements of some previous stability results for Riemannian manifolds with Ricci curvature bounded below (see \cite{perelman,cosphere2,coconvergence}). In addition, this result has found recently many different applications in the theory of  metric measure spaces with synthetic Ricci curvature lower bounds (see for instance  \cite{ricci2,mk,boundary}).  It is worth  mentioning that also the more general Reifenberg-type constructions connected  to  rectifiability (as in \cite{nv}) have found successful applications on spaces with Ricci bounds (see \cite{cjn,boundary} using ideas also from \cite{NJ}) and in theory of uniform rectifiability for metric spaces \cite{BHS23}.   
We finally mention  \cite{snow}  where Reinfenberg-flat metric spaces have been further studied.

Returning to the Euclidean case,  it is  classical that if the approximation by planes  improves sufficiently fast as the scale decreases, the bi-H\"older regularity  in Reifenberg's result can be improved to bi-Lipschitz. This  goes back to Toro \cite{toro} and the same idea has been further refined and developed in the context of rectifiability e.g.\ in \cite{DS,nv,env,holes}. The right decay  condition turns out to be the square summability of the numbers $\ee(x,r)$ along dyadic scales.
\begin{theorem}[{Classical Reifenberg's theorem - biLipschitz version, \cite{toro}}]\label{thm:toro}
	For every $\eps>0$, $n,d\in \nn$  with $n<d$, there exists $\delta=\delta(n,d,\eps)>0$ such that the following holds. Let $S\subset \rr^d$ be closed, containing the origin and such that
	\begin{equation}\label{eq:dini1}
		\sum_{i=1}^\infty \sup_{x \in S\cap B_1(0)} \ee(x,2^{-i})^2<\delta.
	\end{equation}
	Then there exists a (1+$\eps$)-bi-Lipschitz homeomorphism $F: \Omega\to S\cap B_{1/2}^{\rr^d}(0)$, where $\Omega$ is an open set in $\rr^n.$
\end{theorem}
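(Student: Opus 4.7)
The plan is to run the multi-scale graph construction that underlies Theorem \ref{thm:reif intro} and to sharpen its per-scale Lipschitz bounds, so that the square-summability assumption \eqref{eq:dini1} produces bi-Lipschitz, rather than merely bi-Hölder, control on the limit parametrization. For each integer $k\ge 0$ and each $x\in S\cap B_1(0)$ fix an affine $n$-plane $L_k(x)$ through $x$ which, up to a universal factor, realizes the infimum defining $\ee(x,2^{-k})$. Using a partition of unity subordinate to a bounded-overlap cover of $S$ at the dyadic scale $2^{-k}$, glue the local planes $L_k(x)$ into a smooth embedded $n$-surface $\Sigma_k$, and construct transition maps $\sigma_k\colon \Sigma_{k-1}\to\Sigma_k$ that interpolate between the nearest-point projections onto the $L_k(x)$'s. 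Take $F$ to be the uniform limit of the compositions $F_k\coloneqq \sigma_k\circ\cdots\circ\sigma_1$ starting from $L_0(0)$.

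The $C^0$ control is routine: by a triangle inequality the Hausdorff distance between $L_{k-1}(x)$ and $L_k(x)$ inside $B_{2^{-(k-1)}}(x)$ is bounded by $C\bigl(\ee(x,2^{-k})+\ee(x,2^{-(k-1)})\bigr)\cdot 2^{-k}$, so $\|\sigma_k-\id\|_\infty\lesssim \sup_{y}\ee(y,2^{-k})\cdot 2^{-k}$. This is already summable in $k$, hence $F_k\to F$ uniformly and $F$ lies close to the identity. The bi-Lipschitz improvement rests on the following first-order estimate. Writing $\theta_k(x)$ for the angle between $L_k(x)$ and $L_{k-1}(x)$, the Hölder proof already yields $\theta_k(x)\lesssim \ee(x,2^{-k})+\ee(x,2^{-(k-1)})$; the new observation is that the metric distortion introduced by $\sigma_k$ is \emph{quadratic} in this tilt,
\[
\Lip(\sigma_k^{\pm 1})\;\le\; 1 + C_n \sup_{y\in S\cap B_1(0)} \ee(y,2^{-k})^2.
\]
The underlying reason is purely trigonometric: orthogonal projection from one affine $n$-plane onto another tilted by an angle $\theta$ distorts lengths by a factor $1+O(\theta^2)$, not $1+O(\theta)$, and the partition-of-unity gluing preserves this cancellation up to constants depending only on $n$.

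Iterating the previous bound and plugging in \eqref{eq:dini1},
\[
\Lip(F_k)\;\le\;\prod_{i=1}^{k}\bigl(1+C_n\sup_{y}\ee(y,2^{-i})^2\bigr)\;\le\;\exp\Bigl(C_n\sum_{i=1}^{\infty}\sup_{y\in S\cap B_1(0)}\ee(y,2^{-i})^2\Bigr)\;\le\;e^{C_n\delta},
\]
with a symmetric inequality for $\Lip(F_k^{-1})$. Choosing $\delta=\delta(n,d,\eps)$ so that $e^{C_n\delta}\le 1+\eps$ delivers, in the limit, a $(1+\eps)$-bi-Lipschitz parametrization $F$. Injectivity, openness of the image $\Omega\subset \rr^n$, and the identification $F(\Omega)=S\cap B_{1/2}^{\rr^d}(0)$ follow exactly as in the bi-Hölder case, since each $\sigma_k$ is itself a bi-Lipschitz diffeomorphism with the above constants.

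The main technical obstacle is the quadratic Lipschitz bound $\Lip(\sigma_k^{\pm 1})\le 1+C_n\sup_{y}\ee(y,2^{-k})^2$: the naive estimate inherited from the Hölder argument is only $\Lip(\sigma_k^{\pm 1})\le 1+C_n\sup_{y}\ee(y,2^{-k})$, which is not summable under \eqref{eq:dini1}. To recover the missing factor of $\ee$ one has to decompose $\sigma_k$ into tangential and normal components with respect to the common approximating $n$-plane, exploit the fact that the orthogonal projection between two nearby affine $n$-planes is a symmetric linear map so that its first-order tangential deviation vanishes, and verify that the derivatives of the partition-of-unity cutoffs do not destroy this cancellation. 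Once this estimate is secured, the remainder of the proof is a routine adaptation of the bi-Hölder argument.
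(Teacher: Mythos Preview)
The paper does not contain its own proof of Theorem~\ref{thm:toro}: this result is stated in the introduction as background and attributed to Toro \cite{toro}, with no argument supplied. So there is nothing in the paper to compare your proposal against directly.

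That said, your sketch is a faithful outline of the standard approach (Toro's, and the refinements in \cite{DS,holes}). The paper itself endorses the mechanism you identify when it remarks that ``the reason boils down to the Pythagorean theorem, which implies that the projection of a (sufficiently flat) set onto a plane, contained in its $\delta$-neighbourhood, distorts distances only up to $\delta^2$.'' Your identification of the quadratic Lipschitz bound $\Lip(\sigma_k^{\pm1})\le 1+C_n\sup_y\ee(y,2^{-k})^2$ as the crux, and of the tangential/normal decomposition needed to secure it, is exactly right. The one point you should be aware of is that, in the actual execution, the partition-of-unity derivatives are of size $\sim 2^k$, and showing they do not spoil the $O(\ee^2)$ gain requires carefully pairing them against the \emph{normal} displacement (which is $O(\ee\cdot 2^{-k})$) rather than the tangential one; you flag this at the end but do not indicate how it is resolved, and this is where most of the work in \cite{toro} lies.
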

The square summability assumption is  sharp as shown by suitable \emph{snowflake-constructions}. 

It is now understood that a similar improvement can be obtain also in metric spaces, only that the correct assumption is the summability of the numbers $\eps(z,r)$ along dyadic scales, rather than their squares. Very roughly speaking the reason boils down to the Pythogorean theorem, which implies that the projection of a (sufficiently flat) set onto a plane, contained in its $\delta$-neighbourhood, distorts distances only up to $\delta^2$. We refer to \cite{snow,Violoflat} for a more detailed discussion on the discrepancy between summability and square summability in the Euclidean and metric setting. 

Let us summarize what we said so far about the possible variants of Reifenberg's theorem:
\begin{itemize}
    \item \textbf{$\rr^n$ vs metric space}: the object of the statement can e either a closed subset of $\rr^n$ (classical Reifenberg's theorem) or a complete metric space (Cheeger and Colding metric version).
    \item \textbf{Local vs global}: the Reifenberg-flatness condition can be required inside a ball or at all locations. In the first case we get that a slightly smaller ball is homeomorphic to an Euclidean ball, in the second that the whole set (or space) is homeomorphic to a Riemannian manifold.
    \item \textbf{biH\"older vs biLipschitz}: if the flatness parameters  ($\ee(x,r)$ or $\eps(z,r)$) are only required to be sufficiently small, then the homeomorphism   is biH\"older, if instead we require square summability (in $\rr^n$) or summability (in metric spaces) we can get a biLipschitz homeomorphism.
\end{itemize}

In the above terminology the classical Reifenberg's theorem is the local and biH\"older version for sets in $\rr^n$ stated in Theorem \ref{thm:reif intro} and proved in \cite{r}. The local biLipschitz version in $\rr^n$ is instead Theorem \ref{thm:toro} and proved by Toro \cite{toro}. The global counterparts of these results in  $\rr^n$ are well known to be consequences of the local ones, but are  rarely found in the literature. 
In the setting of metric spaces the local biH\"older version  is instead contained in Theorem \ref{localthmholder} by Cheeger and Colding \cite{Cheegercolding}. Actually in \cite{Cheegercolding} they prove the global biH\"older version for metric spaces, while the modifications to obtain the local version are left to the reader. Finally the biLipschitz versions  in metric spaces (both local and global) are folklore among the experts, even if they did not appear in the literature so far, and can be deduced using the same arguments in \cite{Cheegercolding}.

In fact our main goal is to give a completely self contained proof of the both the biLipschitz and H\"older  version for metric spaces in the global case (see Theorem \ref{mainthm} which contains both statements). 

We will do so for two reasons:
\begin{itemize}
	\item to get the bi-Lipschitz regularity  of the map it is necessary that the error-order of every estimate in the proof is quantified and therefore we need to make explicit every step,
	\item given the recent growing interest in the metric version of Reifenberg's theorem  we believe it useful to have available an expanded version, also with a somewhat different presentation, of the arguments in \cite{Cheegercolding}.
\end{itemize}

Finally we will give an outline on how to modify the proof of the global case to obtain the local one (see Theorem \ref{localthm} and Section \ref{sec:local proof}).

\medskip

\textbf{Acknowledgements: } 
The authors are grateful to Guy David, Guido De Philippis and Tatiana Toro for several useful conversations on the topic of this note.

\section{Main statements and remarks}\label{sec:bilipremark}
To state the main result we introduce the dyadic Gromov-Hausdorff `flatness-coefficients'.
\begin{definition}\label{ghjonest}
	Let $(Z,\sfd)$ be a metric space. For every $r>0$, $n \in \mathbb{N}$ and  $i\in \mathbb{N}$ define 
	\begin{equation}\label{ghjones}
		\eps_{i}(r,n)\coloneqq\frac{2^i}{r}\, \sup_{z \in Z} \sfd_{GH}(B^Z_{2^{-i}r}(z),B^{\rr^n}_{2^{-i}r}(0)).
	\end{equation}
\end{definition}

\begin{theorem}[Reifenberg's theorem in metric spaces-global version]\label{mainthm}
	For every  $n \in \mathbb{N}$ the following holds.  Let $(Z,\sfd)$ be a connected and complete metric space and fix $r>0.$
\begin{enumerate}[label=\roman*)]
    \item \emph{biH\"older version:} if $\eps_i(r,n)\le \eps(n)$ for all $i\ge 0$ there exists  a smooth $n$-dimensional Riemannian manifold $W$ and a surjective (uniformly) locally  biH\"older homeomorphism $F: W \to Z $. More precisely there exists $\alpha\in (0,1)$ such that, setting $\rho\coloneqq \frac{r}{800}$, for all $w \in W$ and $z\in \Z$ the maps $F|_{B_\rho(w)}$,  $F^{-1}|_{B_{\rho}(z)}$ are $\alpha$-biH\"older with uniform constants.
    \item  \emph{biLipschitz version:}	if \begin{equation}\label{sumscales}
		\sum_{i=0}^\infty \eps_{i}(r,n) < +\infty,
	\end{equation}
	then for every $\eps>0$ there exists  a smooth $n$-dimensional Riemannian manifold $W_\eps$ and surjective (uniformly) locally  $(1+\eps)$-biLipschitz homeomorphism $F_{\eps}: W_{\eps} \to Z $. More precisely there exists $\rho>0$ such that 
	\begin{equation}\label{finalbilip}
		\Lip\ F_{\eps}|_{B_{\rho}(w)}, \,\, \Lip\ {F_{\eps}}^{-1}|_{B_{\rho}(z)} <1+\eps,
	\end{equation}
	for every $w \in W_{\eps}$ and every $z \in Z.$ 
 Finally for every $\eps_1,\eps_2>0$  the manifolds $W_{\eps_1},W_{\eps_2}$ can be chosen to be diffeomorphic to each other. 
\end{enumerate} 

\end{theorem}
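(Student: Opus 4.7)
The plan is to follow the Cheeger--Colding iterative chart-construction, producing a sequence of smooth Riemannian manifolds $(W_i,g_i)$ together with maps $F_i:W_i\to Z$ that approximate $Z$ at the dyadic scale $2^{-i}r$, and then pass to the limit $i\to\infty$. The initial $W_0$ is assembled from finitely many Euclidean balls glued to match the rough shape of $Z$ at scale $r$, and each $W_{i+1}$ is a refinement of $W_i$ in which the smooth metric is perturbed by an amount controlled by $\eps_i(r,n)$, while the underlying smooth structure stabilizes after finitely many steps.

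More concretely, at each step $i$ I would fix a maximal $c_n 2^{-i}r$-separated net $\{z_{i,\alpha}\}_\alpha\subset Z$, and for each $\alpha$ use the flatness assumption to obtain a GH-approximation $\psi_{i,\alpha}:B^Z_{2^{-i}r}(z_{i,\alpha})\to B^{\rr^n}_{2^{-i}r}(0)$ with error at most $2^{-i}r\cdot\eps_i(r,n)$; a quasi-inverse $\tau_{i,\alpha}$ is selected on the target ball. The manifold $W_i$ is then assembled from the Euclidean balls $B^{\rr^n}_{2^{-i}r}(0)$ indexed by $\alpha$, using the compositions $\psi_{i,\alpha}\circ\tau_{i,\beta}$ as transition maps and a suitable smooth partition of unity to average their pushforward Euclidean metrics into a genuine smooth Riemannian metric $g_i$. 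The map $F_i$ is defined locally as $\tau_{i,\alpha}$ on each chart, patched via the same partition of unity evaluated back in $Z$.

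The heart of the argument is the consistency estimate between consecutive scales. For overlapping charts $\alpha,\beta$ at scale $i$, comparing three GH-approximations of the same ball shows that $\psi_{i,\alpha}\circ\tau_{i,\beta}$ is $O(2^{-i}r\cdot\eps_i(r,n))$-close to some rigid motion $T_{\alpha\beta}^{(i)}$ of $\rr^n$. Similarly, the refinement from level $i$ to level $i+1$ changes the pointwise position of $F_i$ by at most $C\cdot 2^{-i}r\cdot\eps_i(r,n)$ and multiplies the differential by a matrix of the form $I+O(\eps_i(r,n))$. Telescoping these estimates, $F_i$ is Cauchy at rate $\sum_{j\ge i}2^{-j}r\cdot\eps_j$ and hence converges uniformly to the desired $F:W\to Z$. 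For the biH\"older conclusion in (i), only smallness is used: $\eps_i\le\eps(n)$ together with the geometric decay of scales yields $\alpha$-H\"olderity with $\alpha$ close to $1$ when $\eps(n)$ is small. For the biLipschitz conclusion in (ii), the differential estimate gives $\Lip\,F_\eps|_{B_\rho}\le\prod_{i\ge i_0}(1+C\eps_i(r,n))$, which by the summability assumption \eqref{sumscales} can be made as close to $1$ as desired by choosing $i_0$ large and $\rho=c\cdot 2^{-i_0}r$.

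The main obstacle is the quantitative propagation of distortion across scales: each step must be quantified precisely enough that the cumulative error remains multiplicative rather than compounding, and the smoothing of the glued Euclidean metrics into a single Riemannian $g_i$ introduces an additional correction that must be absorbed into the same $O(\eps_i)$ budget. This is exactly where the bi-Lipschitz case becomes delicate: at the level of differentials one needs $(1+C\eps_i)$-bounds, not merely $(1+\eps(n))$-bounds, for every transition and every chart. Finally, the diffeomorphism equivalence of $W_{\eps_1}$ and $W_{\eps_2}$ in (ii) follows from the fact that the underlying smooth atlas is determined already at the finite scale $i_0$ and is independent of the chosen target $\eps$; varying $\eps$ only changes the Riemannian metric $g_\infty$, not the topology or the differentiable structure of $W$.
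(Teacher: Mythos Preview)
Your overall strategy matches the paper's (and Cheeger--Colding's): build approximating manifolds $W_i$ from Euclidean charts indexed by a net in $Z$, connect them by near-isometric diffeomorphisms, and pass to the limit. However, two steps in your outline are genuine gaps, not details.

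First, you propose to use the raw compositions $\psi_{i,\alpha}\circ\tau_{i,\beta}$ as transition maps for $W_i$. These maps are not even continuous, and more importantly they only satisfy the cocycle condition $T_{\alpha\gamma}=T_{\alpha\beta}\circ T_{\beta\gamma}$ up to an error $O(\eps_i 2^{-i}r)$, not exactly. Without an exact cocycle, the gluing relation on $\bigsqcup_\alpha B^{\rr^n}_{2^{-i}r}(0)$ is not transitive and $W_i$ is not a manifold; averaging metrics with a partition of unity does nothing to fix this. The paper first replaces each $\psi_{i,\alpha}\circ\tau_{i,\beta}$ by a nearby isometry $I_{\alpha\beta}$ (Lemma~\ref{AA}), and then runs a nontrivial iterative modification (Theorem~\ref{bigmodification2}) to perturb the $I_{\alpha\beta}$ into smooth diffeomorphisms $\tilde I_{\alpha\beta}$ that are \emph{exactly} cocyclical on the relevant balls, with $\|I_{\alpha\beta}-\tilde I_{\alpha\beta}\|_{C^2,2^{-i}}\le C\eps_i$. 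This modification is the technical core of the construction and cannot be skipped.

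Second, you write that $F_i$ is ``patched via the same partition of unity evaluated back in $Z$'' and later speak of ``the differential'' of $F_i$. But $Z$ is a bare metric space: there is no linear structure in which to take convex combinations, and maps $W_i\to Z$ have no differential. In the paper the map $f_i:W_i\to Z$ is defined by an arbitrary choice of chart (so it is discontinuous), and the biLipschitz/biH\"older control is obtained not through $f_i$ directly but through genuine diffeomorphisms $h_i:W_i\to W_{i+1}$ (built via a separate patching theorem, Theorem~\ref{patching}) whose $(1+C(\eps_i+\eps_{i+1}))$-biLipschitz constants telescope. The limit map $F$ is then $\lim_i f_i\circ h_{i-1}\circ\cdots\circ h_m$, and the product $\prod_i(1+C\eps_i)$ you correctly identify appears at the level of the $h_i$, not of any ``differential of $F_i$''.
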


\begin{remark}
	Unlike \cite{Cheegercolding} we are not assuming that $(Z,\sfd)$ is separable. Indeed separability is a consequence of the assumptions of Theorem \ref{mainthm}, as we will prove at the beginning of Section \ref{sec:manifolds}. \fr
\end{remark}

\begin{remark}
	Observe that, since $W_{\eps}$ is complete and connected, the map $F_{\eps}$ is always globally $1+\eps$ Lipschitz. Analogously, if $(Z,\sfd)$ is a length space, we also obtain that $F_{\eps}^{-1}$ is globally $1+\eps$ bi-Lipschitz. On the contrary, without further assumptions of $Z$ we can only say that ${F_{\eps}}^{-1}$ is Lipschitz on any bounded set, with a Lipschitz constant that depends only on the diameter of the set. In particular if $Z$ is bounded, then $F_{\eps}$ is always bi-Lipschitz. \fr
\end{remark}

A local version of Theorem \ref{mainthm}, more in the spirit of Theorem \ref{thm:toro}  also holds.  To state it we need the following variants of the numbers $\eps_i$ defined above.

\begin{definition}\label{ghjonesloct}
	Let $(Z,\sfd)$ be a metric space and $z_0 \in Z$. For every   $n \in \mathbb{N}$ and every $i\in \mathbb{N}$ define 
	\begin{equation}\label{ghjonesloc}
		\eps_i(n)\coloneqq 2^i\, \sup_{z \in B_{1-2^{-i}}(z_0)} \sfd_{GH}(B^Z_{2^{-i}}(z),B^{\rr^n}_{2^{-i}}(0)).
	\end{equation}
\end{definition}

\begin{theorem}[Bi-Lipschitz metric-Reifenberg's theorem - local version]\label{localthm}
	For every $n \in \mathbb{N}$ there exist constants $\eps(n)>0, M=M(n)>1$ such that the following is true.  	Let $B_1(z_0)$ be a a ball inside a complete metric space $(Z,\sfd)$. Suppose that  $\eps_i(n) \le \eps(n)$ for every $i \ge 0$. Then there exists a   biH\"older map $F: B^{\rr^n}_1(0) \to Z $  such that $B_{1-M\eps_0}(z_0)\subset F(B_1(0)).$ 
 If moreover
	\begin{equation}\label{sumscales2}
		\sum_{i= 1}^\infty \eps_i(n) < +\infty,
	\end{equation}
	then $F$ can be taken to be biLipschitz.
\end{theorem}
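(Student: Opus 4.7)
The plan is to import the machinery developed for Theorem \ref{mainthm} and restrict it to the ball $B_1(z_0)$, tracking carefully how losses near the boundary accumulate. The hypothesis $\eps_i(n)\le \eps(n)$ for all $i\ge 0$ gives GH-approximations $\phi_{i,\alpha}\colon B^{\rr^n}_{2^{-i}}(0)\to B_{2^{-i}}(z_{i,\alpha})$ precisely for centers $z_{i,\alpha}$ lying in the concentric ball $B_{1-2^{-i}}(z_0)$, which is exactly the region on which the global construction can be performed at scale $2^{-i}$. So the idea is: at each scale $i$ choose a maximal $2^{-i}$-separated net in a slightly smaller ball $B_{1-\mathsf{C}\cdot 2^{-i}}(z_0)$ (with $\mathsf{C}=\mathsf{C}(n)$ large enough that every ball of radius $10\cdot 2^{-i}$ around a net point sits inside $B_{1-2^{-i}}(z_0)$, so that the overlapping collars needed in the gluing fall within the domain of the hypothesis), and run the same iterative construction of charts and partition-of-unity gluings in $\rr^n$ carried out for the global theorem.

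First I would reproduce, restricted to these truncated nets, the inductive definition of the maps $F_i\colon \Omega_i\to Z$ from the global proof, where $\Omega_i\subset B_1^{\rr^n}(0)$ is the open set on which the $i$-th approximating chart is defined; by construction $\Omega_i$ is a neighborhood of the closed ball $\overline{B}_{1-(\mathsf{C}+1)\sum_{j\le i}2^{-j}\eps_j(n)}(0)$, since at each refinement step the domain can shrink by at most a $\mathsf{C}\cdot 2^{-i}\eps_i(n)$-collar (this is the only place the local truncation enters). Next I would invoke the convergence and biHölder estimates from the global proof verbatim on $\Omega_\infty\coloneqq\bigcap_i \Omega_i$, which contains $B_{1-M\eps_0}(0)$ for $M=M(n)$ absorbing the geometric series of the boundary losses (here $\eps(n)$ has to be chosen small enough for $(\mathsf{C}+1)\sum_i 2^{-i}\eps(n)\le M\eps_0$, possible for every $M$ sufficiently large). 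Surjectivity of $F$ onto $B_{1-M\eps_0}(z_0)$ follows as in the global case: a target point $z\in B_{1-M\eps_0}(z_0)$ is traced back, through the nested sequence of GH-approximations, to a Cauchy sequence in $\rr^n$ whose limit lies in $\Omega_\infty$.

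For the biLipschitz upgrade under \eqref{sumscales2}, the Pythagorean-type cancellation discussed in the introduction replaces the biHölder increments $(1+\mathsf{C}\eps_i(n)^\alpha)$ by $(1+\mathsf{C}\eps_i(n))$, which are now summable by assumption, giving a product $\prod_i(1+\mathsf{C}\eps_i(n))<+\infty$ and hence a genuinely biLipschitz $F$. This step is entirely the same as in part (ii) of Theorem \ref{mainthm}, since the relevant estimates are local-to-scale and do not interact with the boundary truncation.

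The main obstacle is the bookkeeping near $\partial B_1(z_0)$: at each scale $i$ one must certify that every point participating in the $i$-th chart construction (net point, center of the partition of unity, target of the GH-approximation) lies in $B_{1-2^{-i}}(z_0)$, so that the hypothesis $\eps_i(n)\le \eps(n)$ is applicable; otherwise one would need GH-control for balls leaving the region where the local hypothesis gives information. The correct formulation is to fix, once and for all, a constant $\mathsf{C}=\mathsf{C}(n)$ comparable to the diameter of the supports of the standard Euclidean partition of unity used in the gluing, and define $\Omega_i$ to be the subset of $B_1^{\rr^n}(0)$ at distance $>\mathsf{C}\cdot 2^{-i}$ from any net point that failed the inclusion test at some earlier step; the constant $M$ in the conclusion is then determined by the resulting telescoping sum. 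With this accounting done, the remainder of the argument is identical to the proof of Theorem \ref{mainthm} and the details only require transcribing the relevant estimates with the domains $\Omega_i$ in place of the full manifolds $W_i$.
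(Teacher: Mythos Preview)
Your boundary accounting has a genuine gap. You claim that the cumulative loss $(\mathsf{C}+1)\sum_i 2^{-i}\eps_i(n)$ can be absorbed into $M\eps_0$ by choosing $M=M(n)$ large enough, but this is false: the theorem must supply $M$ depending only on $n$, while $\eps_0$ can be arbitrarily small compared to $\eps(n)$. Concretely, take a space with $\eps_0=10^{-100}$ but $\eps_i=\eps(n)$ for all $i\ge 1$; then your sum is $\sim \eps(n)$, which no dimensional constant $M$ can bound by $M\eps_0$. Moreover, your per-step shrinkage estimate ``at most a $\mathsf{C}\cdot 2^{-i}\eps_i(n)$-collar'' is unjustified: the obstruction to placing net points and charts near $\partial B_1(z_0)$ is that balls of radius $\sim 10\cdot 2^{-i}$ must stay inside $B_{1-2^{-i}}(z_0)$, which forces a collar of width $\sim 2^{-i}$, not $2^{-i}\eps_i$. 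With that (correct) shrinkage, your telescoping sum is $\sim \mathsf{C}$, which is useless.

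The paper's remedy is precisely to avoid this accumulation by \emph{not} iterating through all dyadic scales from $1$ downward. Instead one takes $W_0=B_1^{\rr^n}(0)$ with $f_0$ the scale-$1$ GH-approximation (error $\sim\eps_0$), and then jumps directly to a first nontrivial scale $2^{-i_0}\ll M\eps_0$; the manifolds $W_1,W_2,\dots$ are built from coverings at scales $2^{-i_0},2^{-i_0-1},\dots$, all of which are much smaller than the fixed boundary layer $M\eps_0$. The single jump from $W_0$ to $W_1$ costs the entire $M\eps_0$-collar (this is why condition a*) in the paper asserts $B_{1-2\bar\eps_0}(0)\subset U_0$ with $\bar\eps_0=M\eps_0$), and thereafter the domains $U_i\subset W_i$ satisfy $U_{i+1}=h_i(U_i)$ with no further boundary loss, because every chart at every subsequent scale fits well inside $B_{1-2^{-i_0}}(z_0)$. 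This is the missing idea in your sketch: the boundary loss must be paid once at a scale comparable to $\eps_0$, not amortized over all scales.
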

We observe that, contrary to Theorem \ref{mainthm}, in Theorem \ref{localthm} we do not have an arbitrary small bi-Lipschitz constant. This is due to the fact that in Theorem \ref{mainthm} we have the freedom to choose the manifold to which we compare the metric space, while in Theorem \ref{localthm} the manifold is fixed to be the unit Euclidean ball. We will not include the proof of Theorem \ref{localthm}, which in any case is a minor modification of the one that we will present for Theorem \ref{mainthm}. 

It is also worth to point out that we cannot improve  Theorem \ref{localthm} to have $B_{1}(z_0)=F(B_1(0)),$ indeed it might even  happen  that $B_{1}(z_0)$ is not connected, as shown in the following example.

\begin{example}\label{twoballs}
	Fix $\eps>0$ small and $n\in \mathbb{N}$.
	Consider the metric space $(Z,\sfd)$ obtained as  union of two closed Euclidean balls $B_1:=\overline{B^{\rr^n}_{1}(0_1)},B_2:=\overline{B^{\rr^n}_{1}(0_2)}$ by setting $\sfd(0_2,0_1)=1-\eps$. It is straight-forward to verify that $B^Z_1(0_1)$ satisfies the hypotheses of Theorem \ref{localthm} with $\eps_0=3\eps$ and $\eps_i=0$ for every $i \ge 1.$ However $B^Z_s(0_1)$ is disconnected for every $1-\eps<s\le 1$. \fr
\end{example}

\section{A Corollary: Gromov-Hausdorff close  and Reifenberg flat metric spaces are homeomorphic}\label{sec:cor}

From the proof of Theorem \ref{mainthm} it is possible to deduce the following important corollary, originally stated in \cite[Theorem A.1.3]{Cheegercolding}. 
\begin{cor}\label{corollary}
For every $n\in \nn$ there exists $\delta(n)>0$ such that the following holds.
    Let $(Z_1,\sfd_1)$ and $(Z_2,\sfd_2)$ be two metric spaces satisfying the assumptions of Theorem \ref{mainthm}-$i)$ at scale $r>0$. Suppose also that 
    \begin{equation}\label{eq:corollary assumption}
        \sfd_{GH}((Z_1,\sfd_1),(Z_2,\sfd_2))<\delta(n) r.
    \end{equation}
    Then the Riemannian manifold $W$ in the conclusion of Theorem \ref{mainthm}-$i)$ can be taken to be the same for both $Z_1$ and $Z_2.$ In particular $(Z_1,\sfd_1)$ and $(Z_2,\sfd_2)$ are (locally) biH\"older homeomorphic. Moreover if $Z_1$ and $Z_2$ are also smooth compact Riemannian manifolds (endowed with the geodesic distance) than they are actually diffeomorphic.
\end{cor}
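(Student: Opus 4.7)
The strategy is to extract from the proof of Theorem \ref{mainthm} the iterative construction of $W$ and show that it is stable under a Gromov--Hausdorff perturbation of the input space: a single $W$ can then serve both $Z_1$ and $Z_2$, so that the composition of the two maps given by Theorem \ref{mainthm} becomes a locally biH\"older homeomorphism $Z_1\to Z_2$.

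Recall that the manifold $W$ in Theorem \ref{mainthm} is built inductively scale by scale: at each dyadic scale $2^{-i}r$ one fixes an $\eps_i$-net $N_i\subset Z$ together with, for each $z\in N_i$, a GH-chart $\phi_{i,z}: B_{2^{-i}r}(z)\to B^{\rr^n}_{2^{-i}r}(0)$ realising the flatness bound $\eps_i(r,n)$, and then glues the local Euclidean pieces via smooth partitions of unity. The abstract output (the underlying manifold, the combinatorial nerve, and the transition formulas) depends only on the numerical estimates $\eps_i(r,n)$ and on the incidence pattern of the chosen nets, not intrinsically on the ambient space. This flexibility is what we exploit.

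Fix a $\delta r$-GH approximation $\Phi: Z_1\to Z_2$ with $\delta=\delta(n)$ to be chosen small. Apply the construction of Theorem \ref{mainthm}-$i)$ to $Z_1$ with some choice of nets $N_i^{1}\subset Z_1$ and charts $\phi_{i,z}^{1}$; set $N_i^{2}\coloneqq \Phi(N_i^{1})\subset Z_2$, and for each $z\in N_i^{1}$ pick any GH-chart $\phi_{i,\Phi(z)}^{2}$ of $B_{2^{-i}r}(\Phi(z))$ in $Z_2$ realising $\eps_i(r,n)$. A direct triangle-inequality computation shows that at scales $2^{-i}r\gtrsim\delta r$ the set $N_i^{2}$ is still a net with the same combinatorial incidence as $N_i^{1}$, and that the composition $\phi_{i,\Phi(z)}^{2}\circ\Phi\circ(\phi_{i,z}^{1})^{-1}$ differs from the identity of $B^{\rr^n}_{2^{-i}r}(0)$ by at most $O(\eps_i(r,n)+2^i\delta)$. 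At scales $2^{-i}r\lesssim \delta r$ the GH-approximation $\Phi$ is no longer informative, but there the inductive step of the construction has enough freedom to accept any valid fine-scale data separately for each space, producing identical abstract gluings above the coarse skeleton. Choosing $\delta(n)$ small enough that each quantitative estimate in the proof of Theorem \ref{mainthm} survives the extra $O(\delta)$ error, the same abstract $W$ is produced together with maps $F_1: W\to Z_1$ and $F_2: W\to Z_2$, both locally biH\"older. Hence $F_2\circ F_1^{-1}$ yields the claimed homeomorphism.

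For the final assertion, note that if $Z_1,Z_2$ are smooth compact Riemannian manifolds then Taylor expansion of the Riemannian distance in normal coordinates gives $\eps_i(r,n)=O(2^{-2i}r^2)$ for $r$ smaller than the injectivity radius, so \eqref{sumscales} holds; applying Theorem \ref{mainthm}-$ii)$ instead of $i)$ inside the previous argument produces, for every $\eps>0$, a common $W_\eps$ and locally $(1+\eps)$-biLipschitz maps $F_1,F_2$, hence a $(1+O(\eps))$-biLipschitz homeomorphism $Z_1\to Z_2$. A biLipschitz homeomorphism between smooth compact Riemannian manifolds with Lipschitz constant close to $1$ can then be smoothed to a diffeomorphism by a standard mollification in harmonic charts (or by averaging the nearest-point projection in an isometric embedding). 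The main obstacle I foresee is the bookkeeping in the central paragraph: one must verify uniformly across all scales that the transferred nets and charts still satisfy every quantitative hypothesis entering the iterative step of the proof of Theorem \ref{mainthm}, while also ensuring that the choice of fine-scale data in each space glues coherently onto the shared coarse skeleton.
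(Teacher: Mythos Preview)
Your overall strategy for the first part---transferring the net and the GH-charts through a $\delta r$-approximation $\Phi$---is the right one, and matches what the paper does. But you over-complicate the bookkeeping because you miss the key structural fact: the manifold $W$ in case $i)$ is $W_0$, and $W_0$ is determined \emph{entirely} by the scale-$0$ data, namely the index set $J_0$, its partition $\{J_0^k\}$, and the family of isometries $\mathcal I_0$. The finer scales $i\ge 1$ enter only in the construction of the maps $f_i$ and $h_i$, hence in the map $F:W_0\to Z$, but not in the underlying manifold $W_0$. Consequently there is no need to match data at all scales, and your dichotomy between ``coarse scales $2^{-i}r\gtrsim\delta r$'' and ``fine scales $2^{-i}r\lesssim\delta r$'' is unnecessary. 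The paper simply builds the scale-$0$ objects for $Z_1$ with a little slack (a $\tfrac12$-dense net instead of $1$-dense, separation $\ge 101$ instead of $100$, the class $\mathcal I_0$ cut at distance $<29\tfrac12$ instead of $<30$), pushes them to $Z_2$ via $\Phi^*$, and checks that the transferred data are again admissible inputs for the construction on $Z_2$, yielding literally the same $J_0$, partition, and $\mathcal I_0$, hence the same $W_0$.

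For the diffeomorphism statement your argument has a genuine gap. You produce a $(1+\eps)$-biLipschitz homeomorphism $Z_1\to Z_2$ and then assert it can be ``smoothed to a diffeomorphism by a standard mollification in harmonic charts (or by averaging the nearest-point projection)''. Neither of these procedures works: mollifying a homeomorphism destroys injectivity in general, and nearest-point projection does not take values in the target manifold. More fundamentally, upgrading a biLipschitz homeomorphism to a diffeomorphism is exactly the kind of statement that can fail because of exotic smooth structures, so it cannot be dismissed as routine. The paper's argument is entirely different and avoids this issue: it observes (Remark \ref{rmk:charts}) that when $Z$ is a smooth compact Riemannian manifold one can, for $i$ large, take the GH-approximations $\alpha_{i,j},\beta_{i,j}$ to be actual coordinate charts of $Z$, so that the approximating manifold $W_i$ can be identified with $Z$ itself. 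Since the $h_i:W_i\to W_{i+1}$ are diffeomorphisms, $Z_1\cong W_i\cong W_0$ and likewise $Z_2\cong W_0$, giving the diffeomorphism $Z_1\cong Z_2$ directly from the internal structure of the construction rather than by post-processing the output map.
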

This result have been recently applied several times  to deduce topological stability properties of RCD spaces (see \cite{KM21,HM21} and also \cite{HP23,D22}).

The idea behind Corollary \ref{corollary} is that the topology of a Reifenberg-flat metric space $(Z,\sfd)$ (i.e.\ satisfying the assumptions of Theorem \ref{mainthm}-$i)$) can be essentially deduced looking only at scale $r$. More precisely, inspecting its proof, the manifold $W$ appearing in the statement of Theorem \ref{mainthm}-$i)$ is built using only GH-approximation maps between balls at scale $\sim r$ in $Z$ and $\rr^n.$ Therefore, if $Z_1$ and $Z_2$ are sufficiently close at this scale, i.e.\ $\delta$ in assumption \eqref{eq:corollary assumption} is small enough, then $W$ can be taken to be the same for both.

\section{Preliminaries and notations}\label{sec:preliminaries chp2}
In this short preliminary section  we list some notations and definitions that we will need in the sequel, which are mainly linked to the Gromov-Hausdorff distance between metric spaces and related concepts.

\subsection{Scaling invariant \texorpdfstring{$C^k$}{Ck}-norm and technical estimates}\label{sec:ck norm}
For an $n\times n$ real-matrix $A\in\rr^{n\times n}$ we will denote by $\|A\|$ its operator norm, i.e.\ $\|A\|\coloneqq \sup_{|v|\le 1} |A v|$, and by $\{a_{i,j}\}_{i,j\in{1,...,n}}$ its entries.  We recall also the following elementary inequality, which will be used in several proofs: 
\begin{equation}\label{opnorm}
	\frac{1}{\sqrt n}\|A\|\le \max_{i,j}|a_{i,j}|\le \|A\|, \quad \forall A \in \rr^{n \times n}.
\end{equation}
Indeed by definition of operator norm we have 
$$\max_{i,j}|a_{i,j}|\le \max_j|(a_{1,j},\dots,a_{n,j})|\le \|A\|,$$
while for all $v=(v_1,\dots,v_n)$ with $|v|=1$ we have
\[
|A v|^2=\sum_i \big(\sum_j a_{i,j}v_j\big)^2\le     \left(\max_{i,j}|a_{i,j}|^2\right) \sum_i \sum_j |v_j|^2= \max_{i,j}|a_{i,j}|^2 n.
\]
We will also denote by $I_n$ the $n\times n$-identity matrix.

Given  $f\in C^1(U;\rr^n)$  where $U$ is an open set of $\rr^n,$  we will denote with $f_k$ (or $(f)_k$), $k=1,..,n$ its components and with $Df: U \to \rr^{n\times n}$ its differential, which will be always treated as an ($n\times n$ matrix)-valued map.

A central role will be played by the following norms.
\begin{definition}[$C^1/C^2$-scaling invariant norm]
	Let $U\subset \rr^n$ be open. For all $f\in C^1(U;\rr^n)$ and  $t>0$ we set
	\[ \|f\|_{C^1(U),t}\coloneqq \max\left(\sup_{U}  \frac{|f|}t, \sup_{U}  \|D f\|\right), \]
	 where $\|\cdot \|$ denotes the operator norm of the matrix $Df$. Moreover if $f\in C^2(U;\rr^n)$ we also set
  \[
   \|f\|_{C^2(U),t}\coloneqq \max\left(\|f\|_{C^1(U),t}, \max_{i,j,k\in\{1,\dots,n\}} \sup_{U}   t|\partial_{i,j}f_k | \right).
  \]
\end{definition}

\begin{remark}\label{rmk:scaling invariant}
	The reason for us to work with this norm is the following observation: 
	$\|f\|_{C^1(U),t}\le C$ (resp.  $\|f\|_{C^2(U),t}\le C$) if and only if  $\|f_t\|_{C^1(U/t),1}\le C$ (resp.  $\|f_t\|_{C^2(U/t),1}\le C$),  where $U/t\coloneqq \{x/t \ : x \in U\}$ and $f_t(x)\coloneqq\frac{f(tx)}{t}$.\fr
\end{remark}

\begin{lemma}\label{diffeo}
	Let $f\in C^k(\rr^n;\rr^n)$, $k \in \mathbb{N}$, be such that \[\|Df(x)-I_n\|\le 1/2,\]
	\[\{|f(x)-x| \ | \ x \in \rr^n \} \text{ is bounded }.\]
	Then $f$ is a diffeomorphism (from $\rr^n$ to $\rr^n$) with inverse of class  $C^k$.
\end{lemma}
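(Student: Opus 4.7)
The strategy is standard: use the hypothesis $\|Df-I_n\|\le 1/2$ to get both injectivity and local invertibility via the inverse function theorem, and then use the boundedness of $f-\id$ to promote local surjectivity to global surjectivity via a clopen argument.

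\textbf{Step 1 (injectivity and bi-Lipschitz estimate).} For any $x,y\in\rr^n$ the fundamental theorem of calculus gives
\[
f(x)-f(y)=(x-y)+\int_0^1\bigl(Df(y+t(x-y))-I_n\bigr)(x-y)\,\d t,
\]
so the hypothesis yields $\tfrac12|x-y|\le|f(x)-f(y)|\le\tfrac32|x-y|$. In particular $f$ is injective and its inverse (defined on $f(\rr^n)$) is $2$-Lipschitz.

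\textbf{Step 2 (local $C^k$-diffeomorphism).} Since $Df(x)=I_n+A$ with $\|A\|\le 1/2$, $Df(x)$ is invertible at every point by Neumann series. The inverse function theorem then says that around each $x$, $f$ is a $C^k$-diffeomorphism onto an open neighbourhood of $f(x)$. Consequently $f(\rr^n)$ is open in $\rr^n$.

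\textbf{Step 3 (surjectivity, the main point).} The serious step is to show $f(\rr^n)=\rr^n$. Since $\rr^n$ is connected and $f(\rr^n)$ is nonempty and open, it suffices to show $f(\rr^n)$ is also closed. Let $y_j=f(x_j)\to y$ in $\rr^n$. By the boundedness assumption there is $C>0$ with $|f(x)-x|\le C$ for every $x$, hence $|x_j|\le|f(x_j)|+C$ stays bounded. Extracting a convergent subsequence $x_{j_k}\to x_\infty$ and using continuity of $f$ gives $f(x_\infty)=y$, so $y\in f(\rr^n)$. Thus $f(\rr^n)$ is clopen and nonempty, hence equals $\rr^n$.

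\textbf{Step 4 (regularity of the inverse).} Combining Steps 1--3, $f$ is a $C^k$ bijection whose local inverses (produced by the inverse function theorem in Step 2) patch together to yield a well-defined global inverse $f^{-1}:\rr^n\to\rr^n$ of class $C^k$.

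The only non-routine point is Step 3; the boundedness of $f-\id$ is precisely what forces preimages of convergent sequences to stay in a bounded set and thereby closes up the image. Without this hypothesis one could have $f(\rr^n)$ equal to a proper open subset of $\rr^n$.
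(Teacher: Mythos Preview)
Your proof is correct and follows essentially the same approach as the paper: invertibility of $Df$ plus the inverse function theorem for local diffeomorphism and openness, the $1/2$-Lipschitz bound on $f-\id$ for injectivity, and the boundedness of $f-\id$ to conclude that $f(\rr^n)$ is closed (the paper phrases this last step as ``$f$ is proper, hence closed'', which is the same argument in different clothing).
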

\begin{proof}
	Fix $x \in \rr^n$. Then for every $v \in \rr^n$
	\[|v|\le |v-Df(x)v|+|Df(x)v|\le 1/2 |v|+|Df(x)v|\]
	hence $1/2|v|\le |Df(x)v|$, thus $Df(x)$ is invertible. Then by the Inverse Function Theorem $f$ is a local diffeomorphism and an open map. Now observe that $f- {\sf{id}} $ is $1/2-$Lipschitz, thus for every $x,y \in \rr^n$
	\begin{equation} \label{eq:lip trick}
 |x-y|-|f(x)-f(y)|\le |({\sf{id}}-f)(x)-({\sf{id}}-f)(y)|\le |x-y|/2,
	\end{equation}
	hence $|x-y|\le 2|f(x)-f(y)| $ and in particular $f$ is injective. Moreover $f^{-1}$ is  $C^k$ again thanks to the Inverse Function Theorem. We claim now that $f$ is proper, i.e.\ that $f^{-1}(K)$ is compact for every $K$ compact. Indeed $K$ is closed and bounded, hence $f^{-1}(K)$ is closed and it is also bounded by second hypothesis on $f$, in particular  it is compact. Since $f$ is proper, it a closed map, but we already observed that it is also open, hence it is surjective. This concludes the proof.
\end{proof}

\begin{lemma}\label{inversebound}
	For every $n \in \mathbb{N}$ there exists a constant $C=C(n)>0$ with the following property. Let $m \in \{1,2\}$. Let $H \in C^k(\rr^n;\rr^n)$, $k \ge m,$ be such that $\|H-{\sf{id}}\|_{C^m(\rr^n),t}\le \eps$, for some $\eps\in (0,1/4)$. Then $H$ is a  diffeomorphism  (from $\rr^n$ to $\rr^n$) with $C^k$ inverse and satisfying
	\[\|H^{-1}-{\sf{id}}\|_{C^m(\rr^n),t}\le C\eps.\]
\end{lemma}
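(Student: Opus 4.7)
The plan is to first use Lemma \ref{diffeo} to conclude that $H$ is a $C^k$-diffeomorphism of $\rr^n$ onto itself, and then estimate each part of $\|H^{-1}-{\sf id}\|_{C^m(\rr^n),t}$ by differentiating the identity $H(H^{-1}(y))=y$. Unpacking the hypothesis $\|H-{\sf id}\|_{C^m(\rr^n),t}\le\eps$ gives $|H(x)-x|\le \eps t$ and $\|DH(x)-I_n\|\le \eps<1/2$ for every $x\in\rr^n$, which are precisely the two conditions required by Lemma \ref{diffeo}. For the case $m=2$ we additionally have the componentwise bound $|\partial_{ij}H_k|\le \eps/t$ everywhere (the second derivatives of the identity vanish).

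For the sup-norm part of the target estimate I would set $x\coloneqq H^{-1}(y)$ and observe $|H^{-1}(y)-y|=|x-H(x)|\le \eps t$. For the first-order part, differentiating $H(H^{-1}(y))=y$ yields $DH^{-1}(y)=(DH(H^{-1}(y)))^{-1}$, and writing $DH=I_n+A$ with $\|A\|\le \eps<1/4$ a Neumann-series expansion gives
\[
\|DH^{-1}(y)-I_n\|=\|(I_n+A)^{-1}-I_n\|\le \frac{\|A\|}{1-\|A\|}\le \frac{4}{3}\eps.
\]
This already settles the case $m=1$, in fact with a dimension-independent constant $C=4/3$.

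For the case $m=2$ I would differentiate once more, using the matrix identity $\partial_j(M^{-1})=-M^{-1}(\partial_j M)M^{-1}$ together with the chain rule, which represents $\partial_{y_j}DH^{-1}(y)$ as a product involving $(DH)^{-1}$, the second-derivative tensor $D^2H$ evaluated at $H^{-1}(y)$, and $DH^{-1}(y)$ itself. Plugging in the bounds $\|(DH)^{-1}\|\le 4/3$, $\|DH^{-1}\|\le 1+\tfrac{4}{3}\eps\le 2$, the componentwise bound $|\partial_{ij}H_k|\le \eps/t$, and the elementary inequality \eqref{opnorm} to pass between operator norms and componentwise suprema, one obtains $|\partial_{ij}(H^{-1})_k|\le C(n)\eps/t$, i.e.\ $t|\partial_{ij}(H^{-1})_k|\le C(n)\eps$, as desired.

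The argument is essentially a bookkeeping exercise on top of the Neumann series and the standard formula for the Hessian of an inverse; the only point where some care is needed is in tracking the dimension-dependent constants produced each time one converts between the operator norm and the componentwise supremum via \eqref{opnorm}, but no genuine conceptual obstacle is expected.
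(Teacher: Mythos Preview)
Your proposal is correct and follows essentially the same approach as the paper: invoke Lemma~\ref{diffeo} for the diffeomorphism part, bound $|H^{-1}-\id|$ by composing with $H^{-1}$, use $DH^{-1}=(DH\circ H^{-1})^{-1}$ for the gradient, and differentiate this once more via $\partial_j(M^{-1})=-M^{-1}(\partial_j M)M^{-1}$ plus chain rule for the Hessian. The only cosmetic difference is that you phrase the first-order estimate via the Neumann series (getting $4/3$) while the paper factors $\|(DH)^{-1}-I_n\|\le\|(DH)^{-1}\|\,\|DH-I_n\|\le 2\eps$; these are the same estimate.
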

\begin{proof}
	The fact that $H$ is a global  diffeomorphism  of $\rr^n$  with $C^k$ inverse follows from Lemma \ref{diffeo}. To prove the required bound we first observe that
	\[ |H^{-1}-{\sf{id}}|=|({\sf{id}}-H)\circ H^{-1}|\le \eps t.\]
	Moreover 
	\[ \|D(H^{-1})-I_n\|=\|[DH(H^{-1})]^{-1}-I_n\|\le \|[DH(H^{-1})]^{-1}\| \|DH(H^{-1})-I_n\|\le 2\eps,\]
	where we have used the fact that $\|D(H^{-1})(x)\|\le 2$ for every $x\in \rr^n$, as a can be easily derived from $\|DH(x)-I_n\|\le 1/2$ for every $x\in \rr^n$ as we did in \eqref{eq:lip trick}. This settles the case $m=1.$
	For $m=2$, exploiting the formula for the derivative of the inverse matrix, we have
	\begin{align*}
		\partial_j D(H^{-1})&=\partial_j[DH(H^{-1})]^{-1} =[DH(H^{-1})]^{-1} \partial_j (DH(H^{-1}))[DH(H^{-1})]^{-1}\\
		&=D(H^{-1}) \partial_j (DH(H^{-1}))D(H^{-1})
	\end{align*}
	where $\partial_j$ is component-wise derivative. Therefore, recalling \eqref{opnorm} and that $\|DH\|,\|D(H^{-1})\|\le 2$, 
	\begin{align*}
		|\partial_{i,j}(H^{-1})_k|&\le \|\partial_j D(H^{-1})\|\le 4\| \partial_j (DH(H^{-1}))\| \\
		&\le\sqrt{n}\max_{l,m} |\partial_j(\partial_lH_m(H^{-1}))|\\
		&\le \sqrt{n} \max_{l,m} \sum_h |\partial_{h,l}H_m(H^{-1})||\partial_{j}H^{-1}_h|\\
		&\le \sqrt{n}\frac{\eps}{t} \|D(H^{-1})\|\le 2 \sqrt{n}\frac{\eps}{t}.
	\end{align*}
\end{proof}

\begin{lemma}\label{iterateclosest}
	For every $n \in \mathbb{N}$ and every $\delta>0$ there exists a positive constant $C(n,\delta)$ such that the following holds. Fix $m\in \{1,2\}$. Let $f_i \in C^m(U_i ;\rr^n)$ with $i=1,2$, with $U_i \subset \rr^n$ open sets, and let $I_1,I_2 : \rr^n \to \rr^n$ be two isometries. Suppose that $\|f_i-I_i\|_{C^m(U_i),t}\le \delta_i\le \delta$, then letting $W$ be any open set where $f_1\circ f_2$ is defined,
	\begin{equation}\label{iterateclose}
		\|f_1\circ f_2 -I_1\circ I_2\|_{C^m(W),t}\le C(n,\delta)(\delta_1+\delta_2).
	\end{equation}
\end{lemma}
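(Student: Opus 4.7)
The plan is to reduce everything to the chain rule and telescoping, treating the $C^0$, $C^1$ and $C^2$ contributions to the scaling-invariant norm separately. Write the isometries as $I_i(x) = O_i x + b_i$ with $O_i$ orthogonal, so that $\|O_i\|=1$ and in particular each $I_i$ is $1$-Lipschitz. Fix an arbitrary $x \in W$. The hypothesis $\|f_i - I_i\|_{C^m(U_i),t}\le \delta_i \le \delta$ will be used in the form $|f_i - I_i|\le \delta_i t$, $\|Df_i - O_i\|\le \delta_i$ (and hence $\|Df_i\|\le 1+\delta$), and (for $m=2$) $|\partial_{jk}(f_i)_l|\le \delta_i/t$.

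For the zeroth-order part, I insert the intermediate term $I_1(f_2(x))$: since $I_1$ is $1$-Lipschitz,
\[
|f_1(f_2(x)) - I_1(I_2(x))| \le |f_1(f_2(x)) - I_1(f_2(x))| + |f_2(x) - I_2(x)| \le (\delta_1+\delta_2)t,
\]
which, after dividing by $t$, gives the desired $C^0$ estimate. For the first-order part I use the chain rule: $D(f_1\circ f_2)(x) = Df_1(f_2(x))\,Df_2(x)$ and $D(I_1\circ I_2) = O_1 O_2$. Writing
\[
Df_1(f_2(x))\,Df_2(x) - O_1 O_2 = (Df_1(f_2(x)) - O_1)\,Df_2(x) + O_1\,(Df_2(x) - O_2),
\]
and using $\|Df_2\|\le 1+\delta$ together with $\|O_1\|=1$, the operator norm is bounded by $\delta_1(1+\delta) + \delta_2 \le (1+\delta)(\delta_1+\delta_2)$. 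Together with the $C^0$ estimate this settles the case $m=1$.

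For $m=2$, I differentiate the chain rule once more. Since $D^2(I_1\circ I_2) \equiv 0$, only the composed second derivative appears:
\[
\partial_{ij}(f_1\circ f_2)_k(x) = \sum_{l,h}\partial_{lh}(f_1)_k(f_2(x))\,\partial_i (f_2)_l(x)\,\partial_j (f_2)_h(x) + \sum_l \partial_l(f_1)_k(f_2(x))\,\partial_{ij}(f_2)_l(x).
\]
Using $|\partial_{lh}(f_1)_k|\le \delta_1/t$ and $|\partial_{ij}(f_2)_l|\le \delta_2/t$, together with the uniform bounds $|\partial_i (f_2)_l|\le 1+\delta$ and $|\partial_l(f_1)_k|\le 1+\delta$ coming from \eqref{opnorm} and the $C^1$ information, multiplying by $t$ yields
\[
t\,|\partial_{ij}(f_1\circ f_2)_k(x)| \le n^2(1+\delta)^2\,\delta_1 + n(1+\delta)\,\delta_2 \le C(n,\delta)(\delta_1+\delta_2),
\]
which together with the $C^1$ bound already proved completes the case $m=2$.

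The only subtle point is the bookkeeping of the factor $t$: the scaling-invariant norm is designed precisely so that one factor of $t$ in the second-derivative estimate is absorbed against the $1/t$ appearing on the right-hand sides of the hypotheses, so that the overall bound depends only on $n$ and $\delta$ and is additive (rather than multiplicative) in $\delta_1+\delta_2$. Apart from this, the argument is a direct chain-rule computation, with no compactness or open-covering step needed since every estimate above is pointwise in $x\in W$ and the hypotheses are uniform on $U_1,U_2$.
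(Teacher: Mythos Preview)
Your proof is correct and follows essentially the same approach as the paper: insert the intermediate term $I_1\circ f_2$ for the $C^0$ estimate, telescope $Df_1(f_2)Df_2 - O_1O_2$ for the $C^1$ estimate, and expand the second-order chain rule using the entrywise bounds from \eqref{opnorm} for the $C^2$ estimate. The paper's argument is line-for-line the same computation, so there is nothing to add.
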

\begin{proof}
	The proof is just a straightforward computation.
	\begin{align*}
		|f_1\circ f_2 -I_1\circ I_2|&\le 	|f_1\circ f_2 -I_1\circ f_2|+	|I_1\circ f_2 -I_1\circ I_2|\le \delta_1t +|f_2-I_2|\le \delta_1t+\delta_2t,
	\end{align*}
	\begin{align*}
		\|	D(f_1\circ f_2)  -D(I_1\circ I_2)\|&= 	\|(Df_1)\circ f_2 Df_2 -(DI_1)\circ I_2 DI_2\|\\
		&\le\|(Df_1)\circ f_2 -(DI_1)\circ I_2\|\|Df_2\|+\|DI_1\|\|Df_2-DI_2\|\\
		&\le \delta_1 (1+\delta_2)+\delta_2\le C(\delta)(\delta_1+\delta_2),
	\end{align*}
 where in the last line we used that $DI_1\circ I_2=DI_1\circ f_2$ because $DI_1$ is constant.
	Moreover, recalling \eqref{opnorm} and that $\|Df_i-I_i\|\le \delta_i,$
	\begin{align*}
		|\partial_{ij}(f_1\circ f_2)_k|&=|\sum_{m,h=1}^n \partial_{hm}(f_1)_k\circ f_2\partial_j(f_2)_m\partial_i(f_2)_k+\sum_{h=1}^n\partial_{ij}(f_2)_k\partial_h(f_1)_k\circ f_2|\\
		&\le  (1+\delta_2)^2\sum_{m,h=1}^n |\partial_{hm}(f_1)_k\circ f_2|+ (1+\delta_1)\sum_{h=1}^n|\partial_{ij}(f_2)_k|\le C(n,\delta)\frac{\delta_1+\delta_2}{t}.
	\end{align*}
\end{proof}
Iterating Lemma \ref{iterateclosest} we get the following.
\begin{lemma}\label{iterateclosest2}
	For every $n \in \mathbb{N}$ and every $\delta>0$ there exists a positive constant $C(n,\delta)$ such that the following holds. Fix $m\in \{1,2\}$. Let $f_1,f_2,f_3,f_4: \rr^n \to \rr^n $ be $C^m$ functions and let $I_1,I_2,I_3,I_4 : \rr^n \to \rr^n$ be  isometries. Suppose that $\|f_i-I_i\|_{C^m(\rr^n),t}\le \delta_i<\delta$, then 
	\begin{equation}
		\|f_1\circ f_2 \circ f_3\circ f_4  -I_1\circ I_2\circ I_3\circ I_4\|_{C^m(\rr^n),t}\le C(n,\delta )(\delta_1+\delta_2+\delta_2+\delta_3+\delta_4).
	\end{equation}
\end{lemma}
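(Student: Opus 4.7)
The plan is to iterate Lemma \ref{iterateclosest} three times, with the main bookkeeping task being to show that at each stage the hypothesis on the ``perturbation size'' is still controlled by a constant depending only on $n$ and $\delta$, so that the composition constant remains bounded in terms of $n$ and $\delta$ alone.

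First, I would group from the right. Set $g_1\coloneqq f_3\circ f_4$ and $J_1\coloneqq I_3\circ I_4$. Since $I_3,I_4$ are isometries, $J_1$ is also an isometry. Applying Lemma \ref{iterateclosest} to the pair $(f_3,f_4)$ versus $(I_3,I_4)$ on $W=\rr^n$ gives
\[
\|g_1-J_1\|_{C^m(\rr^n),t}\le C_1(n,\delta)(\delta_3+\delta_4)\le 2\delta C_1(n,\delta)\eqqcolon \delta'.
\]
Crucially, $\delta'$ depends only on $n$ and $\delta$, not on the individual $\delta_i$.

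Next, set $g_2\coloneqq f_2\circ g_1$ and $J_2\coloneqq I_2\circ J_1$. Note $J_2$ is again an isometry. We apply Lemma \ref{iterateclosest} to the pair $(f_2,g_1)$ versus $(I_2,J_1)$, using the perturbation threshold $\tilde\delta\coloneqq\max(\delta,\delta')$, which still depends only on $n$ and $\delta$. This yields
\[
\|g_2-J_2\|_{C^m(\rr^n),t}\le C_2(n,\delta)\bigl(\delta_2+C_1(n,\delta)(\delta_3+\delta_4)\bigr)\le C_2'(n,\delta)(\delta_2+\delta_3+\delta_4),
\]
and the new error is again bounded by a constant $\delta''=\delta''(n,\delta)$. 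Repeating the same move once more with $(f_1,g_2)$ versus $(I_1,J_2)$ produces the desired bound
\[
\|f_1\circ f_2\circ f_3\circ f_4-I_1\circ I_2\circ I_3\circ I_4\|_{C^m(\rr^n),t}\le C(n,\delta)(\delta_1+\delta_2+\delta_3+\delta_4).
\]

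The only subtle point, and the one I would flag as the main obstacle, is the constant tracking: Lemma \ref{iterateclosest} returns a constant of the form $C(n,\delta)$, and each nested application is fed a function whose $C^m$-distance from an isometry may exceed the original $\delta$. The resolution is exactly the observation above, namely that each intermediate error is dominated by a constant (like $2\delta C_1$) determined by $n$ and $\delta$, so a single enlarged threshold $\tilde\delta(n,\delta)$ suffices for all three applications. One could instead phrase the argument inductively, proving by induction on $k$ that the claim holds for compositions of $k$ functions with a constant $C_k(n,\delta)$; the step $k\mapsto k+1$ is exactly the above reasoning and the statement for $k=4$ is what is required.
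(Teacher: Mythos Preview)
Your proposal is correct and is exactly the approach the paper takes: the paper's own proof consists of the single line ``Iterating Lemma \ref{iterateclosest} we get the following.'' Your bookkeeping of the constants (enlarging the threshold at each stage to a $\tilde\delta$ depending only on $n,\delta$) is the right way to make that one-line proof precise.
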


\begin{lemma}\label{isombound}
	Let $F : \rr^n \to \rr^n$ be an isometry such that $|F-{\sf{id}}|\le r \eps $ on some ball $B_{r}(x)$, then $\|DF-I_n\|\le 4\eps.$ 
\end{lemma}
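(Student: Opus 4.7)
Since $F$ is an isometry of $\rr^n$, I would first invoke the standard fact that it has the affine form $F(y)=Ay+b$, where $A\in O(n)$ is orthogonal and $b\in\rr^n$; in particular $DF\equiv A$ is constant, so the task reduces to bounding $\|A-I_n\|$ by $4\eps$.

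Next, I would exploit the given estimate on two antipodal points of the ball. Fix an arbitrary unit vector $v\in\rr^n$ and choose some radius $s<r$ (for concreteness $s=r/2$), so that the points $y_{\pm}\coloneqq x\pm sv$ lie in $B_r(x)$. By hypothesis $|F(y_{\pm})-y_{\pm}|\le r\eps$, and since $F(y_+)-F(y_-) = 2sAv$ while $y_+-y_-=2sv$, the triangle inequality yields
\[
2s\,|(A-I_n)v| \;=\; \bigl|(F(y_+)-y_+)-(F(y_-)-y_-)\bigr|\;\le\; 2r\eps.
\]
Dividing by $2s=r$ gives $|(A-I_n)v|\le 2\eps$, and taking the supremum over unit vectors $v$ produces $\|A-I_n\|\le 2\eps\le 4\eps$, as desired.

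There is essentially no obstacle: the only mild subtlety is that $B_r(x)$ is open, so one cannot directly use $s=r$; choosing any $s<r$ (or, equivalently, extending the bound to $\overline{B_r(x)}$ by continuity of $F-\id$) and absorbing the resulting factor into the harmless constant $4$ resolves this. The statement is therefore established with room to spare, which justifies the generous constant $4$ appearing in the lemma.
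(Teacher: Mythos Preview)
Your proof is correct and follows essentially the same approach as the paper's: write the isometry in affine form and evaluate the hypothesis at points of the ball to bound the linear part via the definition of the operator norm. Your use of two antipodal points to cancel the translation directly is a minor variation (and even yields the sharper constant $2\eps$), whereas the paper first bounds the translation at the center and then subtracts it to reach $4\eps$.
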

\begin{proof}
	We may suppose $x=0.$ Recall that $F=DF\cdot x+v$ for some vector $v$ and notice that $|v|=|F(0)-0|\le r \eps$. Hence $|DF \cdot y-y|\le 2 \eps r$ for every $y \in B_r(0)$. In particular if $|y|=r/2$ we have
	\[ \frac{|DF \cdot y-y|}{|y|}\le 4 \eps,\]
from which the conclusion  follows from the definition of operator norm.
\end{proof}

\begin{lemma}\label{B4}
	Let $f:   U \to V$ be a bijective $C^2$ function  with $C^2$ inverse, where $U,V\subset \rr^n$ are open sets.  Let $g: \rr^n \to \rr^n$ be a $C^1$ function satisfying $\|g-{\sf{id}}\|_{C^1(\rr^n),t}\le \delta<1$ for some $t>0$. Moreover suppose that 
	\[\|Df\|,\|Df^{-1}\|,t|\partial_{ij}(f)_k|,t|\partial_{ij}(f^{-1})_k|\le M, \]
	everywhere on $U$ (for $f$) and everywhere on $V$ (for $f^{-1}$).
	
	Then letting $W\subset \rr^n$ be any open set where the function $f\circ g \circ f^{-1}$ is defined, we have
	\[ \|f\circ g \circ f^{-1}-{\sf{id}}\|_{C^1(W),t} \le \delta c, \]
	where $c=c(n,M)>1$ is positive constant that depends only on $M$ and $n$.
\end{lemma}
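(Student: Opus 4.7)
The plan is to set $h \coloneqq f\circ g\circ f^{-1}$ and estimate separately $\sup_W |h-\id|/t$ and $\sup_W\|Dh-I_n\|$ by direct computation, using only the chain rule, the mean value inequality, and the pointwise bounds assumed on $f$, $f^{-1}$ and $g$.

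For the zeroth-order piece I would write $h(x)-x = f(g(f^{-1}(x))) - f(f^{-1}(x))$ and apply the mean value inequality in the argument of $f$ with $\|Df\|\le M$, together with $|g(y)-y|\le \delta t$ coming from $\|g-\id\|_{C^1(\rr^n),t}\le \delta$. This immediately yields $|h(x)-x|\le M\delta t$ and therefore $\sup_W |h-\id|/t \le M\delta$.

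For the derivative, the chain rule gives
$$Dh(x) = Df(g(f^{-1}(x)))\, Dg(f^{-1}(x))\, Df^{-1}(x),$$
while differentiating $f\circ f^{-1}=\id$ produces $I_n = Df(f^{-1}(x))\, Df^{-1}(x)$. Subtracting and factoring $Df^{-1}(x)$ on the right yields
$$Dh(x)-I_n = \Bigl[Df(g(f^{-1}(x)))\bigl(Dg(f^{-1}(x))-I_n\bigr) + \bigl(Df(g(f^{-1}(x)))-Df(f^{-1}(x))\bigr)\Bigr]\, Df^{-1}(x).$$
The first summand in the bracket is bounded in operator norm by $M\delta$ via $\|Df\|\le M$ and $\|Dg-I_n\|\le \delta$, and the final factor contributes only $\|Df^{-1}\|\le M$.

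The only step that uses the $C^2$ hypothesis on $f$, and which I expect to be the main point, is controlling the variation $Df(g(f^{-1}(x)))-Df(f^{-1}(x))$. From $t|\partial_{ij}f_k|\le M$ and inequality \eqref{opnorm}, each scalar entry of $Df$ has gradient of euclidean norm at most $\sqrt{n}\,M/t$, so the mean value inequality applied entry by entry, together again with \eqref{opnorm}, gives $\|Df(y)-Df(z)\|\le n M|y-z|/t$. Taking $y=g(f^{-1}(x))$ and $z=f^{-1}(x)$, for which $|y-z|\le \delta t$, contributes at most $n M\delta$. Summing the two contributions inside the brackets and multiplying by $\|Df^{-1}\|\le M$ produces $\|Dh-I_n\|\le c(n,M)\delta$ with, say, $c(n,M)=M^2(1+n)$, which together with the bound on $|h-\id|/t$ completes the estimate.
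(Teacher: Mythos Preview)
Your proposal is correct and follows essentially the same approach as the paper: the zeroth-order estimate via the mean value inequality applied to $f$, and the derivative estimate via the chain rule combined with an add-and-subtract argument that isolates a $Dg-I_n$ term and a $Df(g(f^{-1}))-Df(f^{-1})$ term controlled through the second-derivative bound on $f$. The only cosmetic difference is that the paper adds and subtracts $Df(f^{-1})\,Dg\,Df^{-1}$ whereas you factor out $Df^{-1}$ first and split inside the bracket; the resulting bounds are the same up to harmless constants depending on $n$ and $M$.
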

\begin{proof}
	Observe first that by the mean value theorem
	\begin{align*}
		|f\circ g \circ f^{-1}-{\sf{id}}|&=|f\circ g \circ f^{-1}-f\circ f^{-1}|\\
		&\le (\sup_U \|Df\|)|g(f^{-1})-f^{-1}|\le(\sup_U \|Df\|)\delta t.
	\end{align*}
	Moreover 
	\begin{align*}
		\|D(f\circ g \circ f^{-1})-I_n\|&\le\|Df(g(f^{-1}))Dg(f)Df^{-1}-I_n\|\\
		&\le \|Df(g(f^{-1}))-Df(f^{-1})\|\|Dg\|\|Df^{-1}\|+\|Df(f^{-1})DgDf^{-1}-I_n\|.
	\end{align*}
	To bound the first term we need to exploit the bound on the second derivatives of $f$ as follows. From the mean value theorem and recalling \eqref{opnorm},
	\begin{align*} 
		\|Df(g(f^{-1})-Df(f^{-1})\| &\le \sqrt n\sup_{i,k} |\partial_i f_k(g(f^{-1}(x)))-\partial_i f_k(f^{-1})|\\
		&\le  \sqrt n  |g(f^{-1})-f^{-1}| \sup_{i,j,k}|\partial_{ij}f_k|\\
		&\le  \sqrt n \delta t\sup_{i,j,k}|\partial_{ij}f_k|.
	\end{align*}
	To bound the second term, recall that $Df(f^{-1})Df^{-1}=I_n$ and argue in the following way
	\begin{align*}
		\|Df(f^{-1})DgDf^{-1}-I_n\| = \|Df(f^{-1})(Dg-I_n)Df^{-1}\|\le \|Df\|\|Df^{-1}\|\delta.
	\end{align*}
\end{proof}

\subsection{Gromov-Hausdorff distance}
Here we introduce  the Gromov-Hausdorff distance and  Gromov-Hausdorff approximation maps, for  more details on this topic we refer to \cite{burago} and \cite{gromov}.

\begin{definition}[$\delta$-neighbourhood and $\delta$-dense set]
	Let $(\X,\sfd)$ be a metric space. For every $\delta>0$ and every set $E\subset \X$ we define the \textit{$\delta$-neighbourhood of $E$} as
 \[
 (E)^\delta\coloneqq \{x \in \X \ : \ \sfd(x,E)<\delta\}.
 \]
We will then say that a set $S\subset \X$ is $\delta$-dense in $\X$ if $(S)^\delta=\X.$
\end{definition}

\begin{definition}[$\delta$-isometry]
	Let $(\X_1,\sfd_1),(\X_2,\sfd_2)$ be two metric spaces and fix a number $\delta>0$. We say that a function $f : S \to \X_2$  for some $S \subset \X_1$  is a $\delta$-isometry if
	\[|\sfd_2(f(x),f(y))-\sfd_1(x,y)|< \delta, \quad \forall x,y \in \,S.\]
\end{definition}

\begin{definition}[Hausdorff distance]\label{dh}
	Let $(\X,\sfd)$ be a metric space and let $A,B\subset \X$. We define the Hausdorff distance between $A$ and $B$ as the number
	\[d^{\X}_H(A,B)\coloneqq\inf \{r \ | \  B\subset (A)_r \text{ and } A\subset (B)_r \}.\]
\end{definition}

\begin{definition}[Gromov-Hausdorff distance] \label{GHdef}
	Let $(\X_1,\sfd_1),(\X_2,\sfd_2)$ be two metric spaces, we define the Gromov-Hausdorff distance between $\X_1$ and $\X_2$ as the number
	\[d_{GH}(\X_1,\X_2)\coloneqq\inf_{\substack{{(Z,\sfd)}\\{i_1 : \X_1 \to (Z,\sfd)}\\{i_2 : \X_2 \to (Z,\sfd )}}}  d^{\Z}_H(i_1(\X_1),i_2(\X_2)),\]
	where the infimum is taken among all the triples $(\Z,\sfd),i_1,i_2$ such that  $i_1,i_2$ are isometric embeddings of $\X_1,\X_2$ into  $\Z$. 
\end{definition}
With a slight abuse of notation we will also write 
\[
\sfd_{GH}(A_1,A_2)\coloneqq \sfd_{GH}\big((A_1,\sfd_1\restr {A_1}),(A_2,\sfd_2\restr {A_2})\big), \quad \text{for all subsets $A_1\subset \X_1$, $A_2\subset \X_2$}.
\]

It will be sometimes more convenient to work with an equivalent characterization of the Gromov-Hausdorff distance through Gromov-Hausdorff approximation maps defined as follows.
\begin{definition}[GH-approximations]\label{ghappr}
	Let $(\X_1,\sfd_1),(\X_2,\sfd_2)$ be two metric spaces and fix a number $\delta>0$. We say that a function $f:\X_1 \to \X_2$ is a \emph{$\delta$-Gromov-Hausdorff approximation map} ($\delta$-GH-app.\ for short) if
	\begin{itemize}
		\item $f$ is a $\delta$-isometry,
		\item $f(X)$ is $\delta$-dense in $\X_2.$
	\end{itemize} 
\end{definition}
The Gromov-Hausdorff approximation maps and Gromov-Hausdorff distance are related as follows. 
\begin{theorem}[\hspace{1sp}{\cite[Corollary 7.3.28]{burago}}]\label{ghchar}
	Let $(\X_1,\sfd_1),(\X_2,\sfd_2)$ be two metric spaces. Then
	\begin{enumerate}
		\item if $d_{GH}(\X_1,\X_2)\le \delta$, then there exists a $2\delta$-GH-app.\ $f: \X_1 \to \X_2,$
		\item if there exists a $\delta$-GH-app.\ $f: \X_1 \to \X_2,$ then $d_{GH}(\X_1,\X_2)\le 2\delta$.
	\end{enumerate}
\end{theorem}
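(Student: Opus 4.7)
\medskip

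My plan is to handle the two implications separately, in each case translating between the two equivalent languages for ``metric proximity'': embeddings into a common space versus approximation maps.

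\textbf{Direction (1): from $\sfd_{GH}$-bound to approximation map.} Given $\sfd_{GH}(\X_1,\X_2)\le\delta$, for each $\eta>0$ the definition of the infimum yields a metric space $(Z,\sfd)$ with isometric embeddings $i_1,i_2$ such that $\sfd_H^Z(i_1(\X_1),i_2(\X_2))<\delta+\eta$. The idea is to define $f:\X_1\to\X_2$ by choosing, for each $x\in\X_1$, a point $f(x)\in\X_2$ with $\sfd(i_1(x),i_2(f(x)))<\delta+\eta$. Then two applications of the triangle inequality in $Z$ give, for all $x,x'\in\X_1$,
\[
|\sfd_2(f(x),f(x'))-\sfd_1(x,x')|\le 2\sfd(i_1(x),i_2(f(x)))\vee 2\sfd(i_1(x'),i_2(f(x'))) < 2(\delta+\eta),
\]
while for any $y\in\X_2$ the $(\delta+\eta)$-density of $i_1(\X_1)$ inside $i_2(\X_2)$ furnishes $x\in\X_1$ with $\sfd_2(y,f(x))<2(\delta+\eta)$. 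Choosing $\eta$ small enough produces a $2\delta$-GH approximation (more precisely, a $(2\delta+\eta')$-approximation for any $\eta'>0$, which is the content of the claim up to the usual infimum-versus-minimum subtlety).

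\textbf{Direction (2): from approximation map to $\sfd_{GH}$-bound.} This is the more delicate half, and the main obstacle is to glue the two metric spaces into a single one. I would build an ambient pseudometric $\sfd$ on the disjoint union $Z\coloneqq\X_1\sqcup\X_2$ that restricts to $\sfd_1$ on $\X_1$ and to $\sfd_2$ on $\X_2$, and then check the Hausdorff distance bound. The natural choice is
\[
\sfd(x,y)\coloneqq \inf_{x'\in\X_1}\bigl(\sfd_1(x,x')+\sfd_2(f(x'),y)\bigr)+\delta,\qquad x\in\X_1,\ y\in\X_2,
\]
and $\sfd_i$ on each piece. The verification splits into checking the four flavours of triangle inequality (pure $\X_1$, pure $\X_2$, and the two mixed cases). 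Three of them are immediate from the triangle inequalities of $\sfd_1,\sfd_2$ and the monotonicity of the infimum; the only case that uses the $\delta$-isometry hypothesis is $\sfd(x,z)\le\sfd(x,y)+\sfd(y,z)$ when $x,z\in\X_1$ and $y\in\X_2$ (and symmetrically with roles swapped), where the bound $\sfd_1(x',x'')\le \sfd_2(f(x'),f(x''))+\delta$ allows one to absorb the extra $+\delta$ summands. The role of the constant $\delta$ added in the definition of $\sfd$ is precisely to ensure that these mixed triangle inequalities close.

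\textbf{Conclusion of (2).} Once $(Z,\sfd)$ is a pseudometric space (one may quotient out null pairs if one prefers a genuine metric), the identity inclusions $i_1,i_2$ are isometric embeddings of $\X_1$ and $\X_2$. For $x\in\X_1$ one has $\sfd(x,f(x))\le\delta$ directly from the defining formula (taking $x'=x$), while the $\delta$-density of $f(\X_1)$ in $\X_2$ yields, for every $y\in\X_2$, some $x\in\X_1$ with $\sfd_2(f(x),y)<\delta$, giving $\sfd(x,y)<2\delta$. Thus $\sfd_H^Z(i_1(\X_1),i_2(\X_2))\le 2\delta$, and $\sfd_{GH}(\X_1,\X_2)\le 2\delta$ follows. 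I expect the bookkeeping of the mixed triangle inequality in this second direction to be the only genuinely non-routine step.
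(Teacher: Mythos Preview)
The paper does not supply its own proof of this statement: it is quoted verbatim from \cite[Corollary 7.3.28]{burago} and used as a black box, so there is nothing in the paper to compare your argument against. Your proof is the standard one and is correct; the construction of the pseudometric on $\X_1\sqcup\X_2$ in part~(2) and your verification of the mixed triangle inequalities are exactly how this is usually done. The only caveat is the one you already flag in part~(1): with the paper's strict-inequality convention in Definition~\ref{ghappr}, the hypothesis $\sfd_{GH}\le\delta$ literally yields only a $(2\delta+\eta)$-GH-approximation for every $\eta>0$, not a $2\delta$-GH-approximation on the nose; this is harmless for every application in the paper, where the theorem is always invoked with slack in the constants.
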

Observe that in \cite{burago} $\delta$-isometry maps are what we here call $\delta$-GH-app.\ maps.  

The following result shows that GH-approximations between two metric spaces can be chosen to be almost the inverse of each other.
\begin{prop}\label{apprinverse}
	Let $(\X_1,\sfd_1),(\X_2,\sfd_2)$ be two metric spaces and suppose $f: \X_1 \to \X_2$ is a $\delta$-GH-app.\ then there exists a $3\delta$-GH-app.\ $g: \X_2 \to \X_1$ such that 
	\begin{equation}\label{inv1}
		\sfd_2(f(g(y)),y)<2\delta ,
	\end{equation}
	\begin{equation}\label{inv2}
		\sfd_1(g(f(x)),x)< 2\delta,
	\end{equation}
	for every $x \in \X_1$ and $y \in \X_2.$
\end{prop}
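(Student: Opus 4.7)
The plan is to construct $g$ pointwise using the $\delta$-density of $f(\X_1)$ in $\X_2$, and then verify the four required properties (the two almost-inverse identities, the $3\delta$-isometry property, and the $3\delta$-density of $g(\X_2)$) by direct triangle-inequality estimates.

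First I would define $g$ as follows. For each $y\in\X_2$, since $f(\X_1)$ is $\delta$-dense in $\X_2$, there exists at least one $x\in\X_1$ with $\sfd_2(f(x),y)<\delta$. Using the axiom of choice (or just picking one such point for each $y$), set $g(y):=x$. This definition immediately gives $\sfd_2(f(g(y)),y)<\delta<2\delta$ for every $y\in\X_2$, which is \eqref{inv1}.

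Next I would verify \eqref{inv2}. Given $x\in\X_1$, the point $g(f(x))$ satisfies $\sfd_2(f(g(f(x))),f(x))<\delta$ by construction. Since $f$ is a $\delta$-isometry,
\[
\sfd_1(g(f(x)),x)\le \sfd_2(f(g(f(x))),f(x))+\delta<2\delta,
\]
which gives \eqref{inv2}.

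It then remains to check that $g$ is a $3\delta$-GH-approximation. For the $\delta$-isometry part, pick $y_1,y_2\in\X_2$; using that $f$ is a $\delta$-isometry and \eqref{inv1} twice,
\[
\bigl|\sfd_1(g(y_1),g(y_2))-\sfd_2(y_1,y_2)\bigr|\le \bigl|\sfd_2(f(g(y_1)),f(g(y_2)))-\sfd_2(y_1,y_2)\bigr|+\delta<3\delta,
\]
where the first term is bounded by $\sfd_2(f(g(y_1)),y_1)+\sfd_2(f(g(y_2)),y_2)<2\delta$ via the triangle inequality. Finally, $3\delta$-density of $g(\X_2)$ in $\X_1$ is a direct consequence of \eqref{inv2}: for any $x\in\X_1$, the point $g(f(x))\in g(\X_2)$ is within $2\delta<3\delta$ of $x$. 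There is no real obstacle here — the only subtlety is keeping track of the constants so that the final estimate lands at $3\delta$ rather than something larger, and correctly invoking the axiom of choice in the definition of $g$.
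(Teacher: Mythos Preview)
Your proof is correct and follows essentially the same approach as the paper: construct $g$ by choosing for each $y$ a preimage within $\delta$ of $y$, then verify \eqref{inv1}, \eqref{inv2}, the $3\delta$-isometry property, and the density of $g(\X_2)$ via the same triangle-inequality estimates. The only cosmetic difference is the order in which you check density versus the isometry property, and your explicit mention of the axiom of choice (which the paper relegates to a remark after the proof).
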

\begin{proof}
	We can construct $g$ explicitly as follows. For every $y \in \X_2$ by definition there exists at least one $x\in \X_1$ satisfying $\sfd_2(f(x),y)<\delta,$ we define $g(y)$ to be one of such points (chosen arbitrarily). Then \eqref{inv1} holds by construction.  We now prove \eqref{inv2}. Pick any $x \in \X_1$, then by definition of $g$ we have that $\sfd_2(f(g(f(x))),f(x))<\delta $ and since $f$ is a $\delta$-isometry we deduce that $\sfd_1(g(f(x)),x)<2\delta.$ Notice now that \eqref{inv2} already proves that $g(\X_2)$ is $2\delta$-dense in $\X_1$. It remains to prove that $g$ is a $3\delta$-isometry. Pick $y_1,y_2 \in \X_2.$ From \eqref{inv1} we have $\sfd_2(f(g(y_i)),y_i)<\delta,$ therefore by the triangle inequality we have
	\[ |\sfd_2(f(g(y_1)),f(g(y_2)))-\sfd_2(y_1,y_2)|<2 \delta.\]
	Recalling that $f$ is a $\delta$-isometry we obtain that
	\[|\sfd_1(g(y_1),g(y_2))-\sfd_2(y_1,y_2)|<3\delta.\]
\end{proof}
\begin{remark}
    Strictly speaking in the proof of the above proposition we used the axiom of choice in the construction of $g$. However it would be sufficient to use the axiom of countable choice if for instance the metric spaces $X_1$ is separable. Indeed we can define $g(y)$ by selecting a point from $f(S)$, with $S$ a dense set in $X_1.$
    \fr
\end{remark}

Given a metric space $(\X,\sfd)$ and a point $p\in \X$ we denote by $B^{\X}_r(p)\coloneqq\{x \in \X \ : \ \sfd(x,p)<r\}$ the ball of radius $r>0$ centered at $p$ and by $\overline B_r(p)$ its topological closure, which in case of a length space coincides with the closed ball $\{x \in \X \ : \ \sfd(x,p)\le r\}.$ 

In general a GH-approximation between two balls in different metric spaces needs not to send the center near the center, e.g.\ there are balls in metric spaces admitting two different centers. However this does hold if one the spaces is the Euclidean space.
\begin{prop}\label{centerappr}
	Let $(\X,\sfd)$ be a metric space and let $B_1(x_0)$ the ball of radius $1$ centered at $x_0 \in \X.$ Suppose $f: B_1(x)\to B^{\rr^k}_1(0)$ is a $\delta$-GH-app., then 
	\[|f(x_0)|\le 7\delta .\]
\end{prop}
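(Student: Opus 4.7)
The plan is to argue by elementary metric geometry, exploiting the rigidity of the Euclidean ball: the Euclidean center is characterized by having no point of the ball at distance more than $1$, while any off-center point admits almost-antipodal companions at distance close to $|f(x_0)|+1$. Such a large distance, pulled back to $X$ via an approximate isometry, would force a preimage to lie outside $B_1(x_0)$, giving a contradiction.

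More concretely, set $r := |f(x_0)|$ and assume $r > 0$ (otherwise there is nothing to prove). We may also assume $2\delta < 1$, since otherwise $1 \le 2\delta \le 7\delta$ and the bound $r < 1 \le 7\delta$ is automatic. Let $v := -f(x_0)/r \in \rr^k$ be the unit vector pointing opposite to $f(x_0)$, and consider the test point
\[
y := (1 - 2\delta)\,v \in B_1^{\rr^k}(0).
\]
By direct computation one has $|y - f(x_0)| = (1-2\delta) + r$. Now I would apply the $\delta$-density half of the GH-approximation definition: there exists $x \in B_1(x_0)$ such that $|f(x) - y| < \delta$. The triangle inequality then yields
\[
|f(x) - f(x_0)| \;>\; |y - f(x_0)| - \delta \;=\; r + 1 - 3\delta,
\]
and using the $\delta$-isometry property of $f$ I obtain
\[
\sfd(x, x_0) \;>\; |f(x) - f(x_0)| - \delta \;>\; r + 1 - 4\delta.
\]

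Since $x \in B_1(x_0)$ forces $\sfd(x, x_0) < 1$, combining with the previous inequality gives $r < 4\delta$, which is strictly better than the asserted bound $r \le 7\delta$. The main (minor) point to be careful about is the construction of $y$: it must lie genuinely inside $B_1^{\rr^k}(0)$ so that $\delta$-density applies, and the case analysis handling $\delta \ge 1/2$ must be dispatched separately as above. No other step presents any real difficulty; the argument is essentially the observation that $f(x_0)$ cannot be far from the Euclidean origin without creating a preimage that escapes $B_1(x_0)$.
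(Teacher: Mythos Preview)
Your proof is correct and in fact yields the sharper bound $|f(x_0)|<4\delta$. The underlying geometric idea---picking a point in $B_1^{\rr^k}(0)$ almost antipodal to $f(x_0)$ and pulling it back to $X$---is the same as in the paper. The difference is in how the pull-back is performed: the paper invokes Proposition~\ref{apprinverse} to build an approximate inverse $g:B_1^{\rr^k}(0)\to B_1(x_0)$ and then tracks the error through $g$ and the relation $\sfd(g(f(x_0)),x_0)<2\delta$; you instead use the $\delta$-density of $f(B_1(x_0))$ directly to find a preimage $x$ with $|f(x)-y|<\delta$. Your route is slightly more elementary (no auxiliary construction) and loses less in the constants, which is why you end up with $4\delta$ rather than $7\delta$. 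Both arguments are short; yours just cuts one layer of indirection.
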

\begin{proof}
	Suppose by contradiction that $|f(x_0)|>7\delta$, then there exists a point $y \in B_1(0)$ such that $|y-f(x_0)|>1+6\delta$. Let $g$ be the almost-inverse map given by Proposition \ref{apprinverse}. Then from \eqref{inv2} we have $\sfd(g(f(x_0)),x_0)<2\delta$. Moreover, since $g$ is a $3\delta$-isometry, we get
	\[\sfd(g(y),g(f(x_0)))>1+3\delta. \]
	Hence by the triangle inequality 
	\[\sfd(g(y),x_0)\ge \sfd(g(y)),g(f(x_0))-\sfd(g(f(x_0)),x_0) \ge 1+\delta,\]
	but $g(y)\in B_1(x_0)$ and thus we have a contradiction.
\end{proof}

The following  well known approximation result gives a quantified approximation of an almost-isometry with an isometry and will  be the starting point for the construction in the proof of the bi-Lipschitz metric Reifenberg's theorem.  It can be easily proved via computation in coordinates
(see for example Lemma 7.11 in \cite{snow}).  A non-quantified version of this statement could be more directly derived by compactness, however  for our goal of proving the bi-Lipschitz Reifenberg's theorem, this explicit control will be essential. 
\begin{lemma}\label{AA}
	Suppose that a function $f: {B_{t}(0)} \to  \rr^n$, with $B_t(0)\subset \rr^n$, is a $\delta t$-isometry
	for some $\delta<1.$ Then there exists an  isometry $I: \rr^n \to \rr^n$ with $I(0)=f(0)$, such that 
	\begin{equation}\label{nn}
		|I-f|\le C(n)\delta t, \quad \text{in $B_t(0)$},
	\end{equation}
 where $C(n)>0$ is a constant depending only on $n.$
\end{lemma}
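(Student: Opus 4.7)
The approach is to reduce the problem to a normalized setting and then construct the isometry explicitly via the images of a small, almost-orthonormal frame. By the scaling $\tilde f(x) := f(tx)/t$, in the spirit of Remark \ref{rmk:scaling invariant}, I may assume $t = 1$; and by replacing $f$ with $f - f(0)$ (which is still a $\delta$-isometry), I may assume $f(0) = 0$. The goal then becomes to produce an orthogonal matrix $Q$ such that $|Qx - f(x)| \le C(n)\delta$ for all $x \in B_1(0)$; re-inserting the rescaling and the translation will yield the statement with $I(0) = f(0)$.

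Fix the test points $p_i := e_i/2 \in B_1(0)$, $i = 1, \dots, n$, and set $v_i := 2 f(p_i)$. Applying the $\delta$-isometry property to the pairs $(0,p_i)$ and $(p_i,p_j)$ gives $\bigl||v_i| - 1\bigr| \le 2\delta$ and $\bigl||v_i - v_j| - \sqrt{2}\bigr| \le 2\delta$; squaring and using the polarization identity upgrades this to $|\langle v_i, v_j\rangle - \delta_{ij}| \le C(n)\delta$. Collecting the $v_i$ as the columns of an $n\times n$ matrix $M$, this reads $M^T M = I_n + E$ with $\|E\| \le C(n)\delta$. Hence, for $\delta$ small enough, the polar decomposition yields an orthogonal matrix $Q := M(M^T M)^{-1/2}$ satisfying $\|Q - M\| \le C(n)\delta$; equivalently, $|Q e_i - v_i| \le C(n)\delta$ for every $i$. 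Define the candidate isometry $I(x) := Qx$.

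The main step is to verify $|I(x) - f(x)| \le C(n)\delta$ for every $x \in B_1(0)$. Here the polarization identity is used once more: applying the $\delta$-isometry hypothesis to each of the pairs $(0,x)$, $(p_i,x)$ and $(0,p_i)$, all of which have distance at most $2$ in both spaces so that squared distances differ by at most $C\delta$, one obtains
\[
\bigl|\langle f(x), f(p_i)\rangle - \langle x, p_i\rangle\bigr| \le C\delta,
\]
which after multiplication by $2$ reads $|\langle f(x), v_i\rangle - x_i| \le C\delta$. On the other hand $\langle I(x), v_i\rangle = \langle Qx, Qe_i\rangle + \langle Qx, v_i - Qe_i\rangle = x_i + r_i$ with $|r_i| \le C(n)\delta$. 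Subtracting gives $|\langle f(x) - I(x), v_i\rangle| \le C(n)\delta$ for all $i$. Since the Gram matrix of $\{v_1, \dots, v_n\}$ is $I_n + E$ with $\|E\| \le C(n)\delta$, it is invertible with the operator norm of the inverse bounded by a constant depending only on $n$, whence $|f(x) - I(x)| \le C(n)\delta$, as required.

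The main obstacle is precisely this last implication: upgrading the $n$ scalar bounds on $\langle f(x) - I(x), v_i\rangle$ to a norm bound on $f(x) - I(x)$. It hinges on the $v_i$ being an almost-orthonormal frame with quantitatively controlled conditioning, which is why we had to set up the polar decomposition carefully in the second paragraph; everything else is routine bookkeeping of $C(n)\delta$-errors produced by the polarization identity and the uniform boundedness of the distances in $B_1(0)$.
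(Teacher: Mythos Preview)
Your proof is correct and is precisely the ``computation in coordinates'' that the paper alludes to (the paper does not give its own proof, referring instead to \cite[Lemma~7.11]{snow}). One minor point worth making explicit: you invoke the polar decomposition ``for $\delta$ small enough'', whereas the lemma is stated for all $\delta<1$. This is harmless, since for $\delta\ge\delta_0(n)$ (the threshold below which $M^TM$ is invertible with controlled inverse) any isometry $I$ with $I(0)=f(0)$ already satisfies $|I-f|\le 3t\le (3/\delta_0(n))\,\delta t$ on $B_t(0)$, so one simply takes $C(n)=\max\bigl(C_{\mathrm{small}}(n),\,3/\delta_0(n)\bigr)$.
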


\section{Proof of the metric  Reifenberg's theorem}\label{sec:constructions}
Throughout this section $(\Z,\sfd)$ is a metric space satisfying the assumptions of Theorem \ref{mainthm} with $n \in \mathbb{N}$. In the following subsection we give a short proof of Theorem \ref{mainthm}, \emph{assuming} the existence of suitable objects, that is a sequence of approximating manifolds endowed with suitable maps. The rest of the subsections will be devoted to the construction of such objects.

\subsection{Main argument}\label{sec:main argument}
 For brevity we will  write $\eps_i$ in place of $\eps_i(r,n),$ where  $\eps_i(r,n)$ are the numbers appearing in the statement of Theorem \ref{mainthm}. Up to rescaling we can also assume that $r=200$ (the assumptions will be used in later sections).
 
The rough idea of the proof is to construct for every scale $2^{-i}$  a manifold $(W_i,\sfd_i)$ that approximates the metric space $Z$  in the sense that there exists a map $f_i : W_i \to Z$  that is roughly an $\eps_i2^{-i}$-isometry. Moreover we build maps $h_i : W_i \to W_{i +1}$ that are bi Lipschitz  with  constant $\sim (1+\eps_i)$.  Then the required map $F$ will be obtained as limit of the maps $f_i\circ h_i\circ h_{i-1}\circ \cdots \circ h_1$.

{ {\bf CLAIM}}:	For the proof it is sufficient to construct  a sequence of (complete and connected) $n$-dimensional Riemannian manifolds $(W_i,\sfd_i)$, symmetric maps $\rho_i : W_i \times W_i \to [0,\infty)$ vanishing on the diagonal, surjective diffeomorphisms $h_i : W_i \to W_{i+1}$ and maps $f_i: W_i \to Z$, for $i \in \mathbb{N}$, satisfying the following statements.
\begin{enumerate}[label=\alph*)]
	\item \label{a)} For every $w_1,w_2 \in W_i$  it holds that
	$$\rho_i(w_1,w_2)=\sfd_i(w_1,w_2),$$
	whenever either $\rho_i(w_1,w_2)\le 2^{-i}$ or $\sfd_i(w_1,w_2)\le 2^{-i}$.
	\item \label{b)}$$\frac{1}{1+C(\eps_i+\eps_{i+1})}\le\frac{\rho_{i+1}(h_i(w_1),h_i(w_2))}{\rho_i(w_1,w_2)}\le 1+C(\eps_i+\eps_{i+1}),$$
	for every $w_1,w_2 \in W_i,$
	\item \label{c)}$|\sfd(f_i(w_1),f_i(w_2))-\rho_i(w_1,w_2)|\le C2^{-i}\eps_i$, for every $w_1,w_2 \in W_i,$
	\item \label{d)}$\sfd(f_{i+1}(h_i(w)),f_i(w))\le C 2^{-i}(\eps_i+\eps_{i+1})$, for every $w \in W_i,$
	\item \label{e)}$f_i(W_i)$ is $ 20 \cdot 2^{-i}$-dense in $Z$.
\end{enumerate}

We now briefly comment the statement of the claim. The function $\rho_i$ substitutes the Riemannian distance, coincides with $\sfd_i$ at small scales and it is built to approximate the distance $\sfd$ (see item \ref{c)}). It is needed because we do not have a good control on the relation between $\sfd_i$ and $\sfd$ at large scales. The  map $f_i$ is a kind of GH-approximation between $W_i$ and $Z$. Indeed it is almost surjective (see item \ref{e)}) and it is a quasi isometry (see item \ref{c)}), however not with respect to the Riemannian distance, but with respect to $\rho_i$. Item \ref{b)} is the bi-Lipschitz property of the maps $h_i$. Finally, item \ref{d)} ensures that $h_i$ does not move points too much, from the prospective of the metric space.\\

{\textbf {Proof of the CLAIM}}:
Pick any $\eps>0$. Start by fixing an integer $m\ge 0$ large enough, that will be chosen later. In the case $i)$ of the Theorem we actually choose $m=0.$ Set $\theta_m\coloneqq\sup_{i\ge m} \eps_i$. In particular in case $i)$ we have $\theta_m\le \eps(n)$ by assumption.
For every $i > m$ define the function $F_i:W^n_{m}\to Z$ as $F_i\coloneqq f_i \circ h_{i-1}\circ...\circ h_{m+1}\circ  h_{m}$  and set $F_m\coloneqq f_m.$ From \ref{d)} we get that for every $w \in W_m^n$
\begin{equation}\label{Fbound}
	\sfd(F_{i+1}(w),F_i(w))\le (\eps_i+\eps_{i+1})C2^{-i}\le 2C\theta_m2^{-i},
\end{equation}
for every $i \ge m.$
Hence the sequence $\{F_i(w)\}_{i\ge m}$ is Cauchy in $Z$ and we call $F(w)$ its limit. We will prove that the manifold $W_m^n$	and the map $F: W_m^n \to Z$ satisfy the conclusions of Theorem \ref{mainthm} with $\eps$. Notice first that from \eqref{Fbound} we obtain 
\begin{equation}\label{fif}
	\sfd(F(w),F_i(w))\le  2C2^{-i}\theta_m,
\end{equation} 
for every $i \ge m.$
Consider now any couple of distinct points $w_1,w_2 \in W_m$  and set
\[s_m\coloneqq\rho_m(w_1,w_2)>0.\]
Define also inductively for every $i\ge m$ the points $w_1^i,w_2^i \in W_i$ and the numbers $s_i$ by setting $w_1^m\coloneqq w_1, w_2^m\coloneqq w_2$ and then
\[w_1^{i+1}\coloneqq h_{i}(w_1^i),\quad w_2^{i+1}\coloneqq h_{i}(w_2^i), \quad  s_i\coloneqq\rho_i(w_1^i,w_2^i).\]
Observe that from \ref{b)} 
\begin{equation}\label{eq:silip}
    (1+C(\eps_i+\eps_{i+1}))^{-1}s_i\le s_{i+1}\le s_{i}(1+C(\eps_i+\eps_{i+1})),
\end{equation}
for every $i \ge m$. Moreover from \ref{c)} we deduce that 
\begin{equation}\label{rofi}
	|\sfd(F_i(w_1),F_i(w_2))-s_i|\le C \eps_i2^{-i},
\end{equation}
for every $i\ge  m$. 

We now divide the proof depending on the assumptions $i)$ (i.e.\ $\eps_i\le \eps(n)$) or $ii)$ (i.e.\ $\sum_i \eps_i<+\infty$) of Theorem \ref{mainthm}

\noindent{\sc $F$ is locally BiH\"older (under assumption $i)$):}
Combining \ref{c)} and \ref{d)} we get
\begin{equation}\label{eq:si si+1}
   s_i- 6C\eps(n)2^{-i}\le  s_{i+1}\le s_i+ 6C\eps(n)2^{-i}.
\end{equation}
In particular the limit $s_\infty\coloneqq \lim_{m\to +\infty} s_m$ exists and by \eqref{rofi} 
    $s_\infty=\sfd(F(w_1),F(w_2)).$
Let $j\in \nn$ be arbitrary. Applying repeatedly \eqref{eq:silip} for all $s_0,\dots,s_j$ and then \eqref{eq:si si+1} for the remaining $s_i$'s with $i>j$, we obtain
\begin{equation}\label{eq:holder trick}
   2^{-\alpha j} s_0 - 6C \eps(n)2^{-j}\le  s_\infty\le 2^{\alpha j} s_0 + 6C \eps(n)2^{-j},
\end{equation}
where we set $\alpha\coloneqq \log_2(1+2C \eps(n))>0.$ Note that $\alpha\in(0,1)$ provided $\eps(n)<C/2.$ Assume that $s_0\le 1$ (which by  \ref{a)} implies $s_0=\sfd_0(w_1,w_2)$).
Then there exists a (maximal) $j\in\nn$ so that 
\begin{equation} \label{eq:first j}
    2^{\alpha j}s_0\le 2^{-j}, \quad 2^{\alpha(j+1)}s_0\ge 2^{-(j+1)}.
\end{equation}
From the second in \eqref{eq:first j} we get
\begin{equation}\label{eq:s0 potenziato}
    2s_0^\frac{1}{1+\alpha}\ge 2^{-j}.
\end{equation}
Plugging in the right hand side of \eqref{eq:holder trick} first \eqref{eq:first j} and then \eqref{eq:s0 potenziato} we obtain
\begin{equation}\label{eq:holder1}
     s_\infty\le 2^{\alpha j} s_0 + 6C \eps(n)2^{-j}\le (1+6C \eps(n))2^{-j} \le 2(1+6C \eps(n))s_0^\frac{1}{1+\alpha}
\end{equation}
Similarly, since $\alpha<1,$ we can find $j\in \nn$ such that
\begin{equation}\label{eq:laltro}
    2^{-\alpha j}s_0\ge 2^{-j} \quad \text{ and } \quad 2^{-\alpha (j-1)}s_0\le 2^{-(j-1)}.
\end{equation}
From the second in \eqref{eq:laltro}  we get $s_0^\frac{1}{1-\alpha}\le 2^{-j+1}$.
Plugging this and the first in \eqref{eq:laltro} into \eqref{eq:holder trick} we get
\begin{equation}\label{eq:holder2}
     s_\infty\ge  2^{-j} - 6C \eps(n)2^{-j}\ge (1-6C \eps(n)))\frac{s_0^\frac{1}{1-\alpha}}2,
\end{equation}
provided $\eps(n)<C/6.$
Combining \eqref{eq:holder1} and \eqref{eq:holder2}  we get (provided $\eps(n)<C/12$)
\begin{equation}\label{eq:biholder}
    \frac14 \sfd_0(w_1,w_2)^\frac{1}{1-\alpha}\le \sfd(F(w_1),F(w_2))\le 4\sfd_0(w_1,w_2)^{1-\alpha}, \quad \text{whenever $\sfd_0(w_1,w_2)\le 1$}.
\end{equation}
In particular $F$ is continuous.
Note that by \eqref{fif} and \eqref{rofi} (which were deduced without assuming $s_0\le 1$) we have 
$$|\sfd(F(w_1),F(w_2))-s_0|\le C \eps(n).$$
Hence if $\sfd(F(w_1),F(w_2))<\frac12$, then $s_0<1$, $s_0=\sfd_0(w_1,w_2)$ and \eqref{eq:biholder} holds. In particular $F$ is injective. To conclude it remains to prove that $F$ is surjective (this is checked below). Indeed if this was true the required locally biH\"older condition is given by \eqref{eq:biholder} and the observation we just made (note that for $r=200$, $r/800=\frac14$).

\noindent{\sc $F$ is locally BiLipschitz (under assumption $ii)$): }
 Iterating inequality \eqref{eq:silip} we get
\begin{equation}	\label{lip}
	s_m\prod_{j=m}^i(1+C(\eps_j+\eps_{j+1}))^{-1}\le s_{i}\le s_{m} \prod_{j=m}^i(1+C(\eps_j+\eps_{j+1})),
\end{equation} 
for every $i>m.$ From \eqref{rofi} we infer that $s_i \to \sfd(F(w_1),F(w_2))$ as $i \to +\infty.$ Then passing to the limit in \eqref{lip} we obtain
\[\prod_{j=m}^{+\infty }\frac{1}{1+C(\eps_j+\eps_{j+1})}\le \frac{\sfd(F(w_1),F(w_2))}{s_m}\le \prod_{j=m}^{+\infty }(1+C(\eps_j+\eps_{j+1})).\]
Observe now that, since by hypothesis $\sum_{j\ge 0}\eps_j <+\infty $, we have that $\prod_{j=0}^{+\infty }(1+C(\eps_j+\eps_{j+1}))\in(0,\infty)$. In particular $\prod_{j=k}^{+\infty }(1+C(\eps_j+\eps_{j+1})) \downarrow 1$ and $\prod_{j=k}^{+\infty }(1+C(\eps_j+\eps_{j+1}))^{-1} \uparrow 1$ as $k$ goes to $+\infty.$ Therefore, up to choosing $m$ big enough, we have
\[\frac{1}{1+\eps}\le \frac{\sfd(F(w_1),F(w_2))}{s_m}\le 1+\eps.\]
Recall now that by \ref{a)} we have that $\sfd_m(w_1,w_2)\le 2^{-m}$ implies $s_m=\sfd_m(w_1,w_2),$   hence 
\begin{equation}	\label{lipfinal}
	\frac{1}{1+\eps}\le \frac{\sfd(F(w_1),F(w_2))}{\sfd_m(w_1,w_2)}\le 1+\eps, \quad \text{whenever $\sfd_m(w_1,w_2)\le 2^{-m}.$}
\end{equation}
This already proves that $F$ is continuous and the first part of \eqref{finalbilip}, provided we take $\rho<2^{-m-1}.$ Combining now \eqref{rofi} with \eqref{fif} for $i=m$ and observing that $\theta_m \to 0^+$ as $m$ goes to $+\infty$, we obtain that
\[ |\sfd(F(w_1),F(w_2))-s_m|\le 6C\theta_m2^{-m}<\frac{1}{4}2^{-m},\]
provided $m$ is big enough. Therefore, whenever $\sfd(F(w_1),F(w_2))\le 2^{-m-1}$, it holds that $s_m\le 2^{-m}$ and thus from \ref{a)} also that $d_m(w_1,w_2)\le 2^{-m}.$ Combining this observation with \eqref{lipfinal} we obtain that
\begin{equation}\label{eq:prebilip}
\frac{1}{1+\eps}\le \frac{\sfd(F(w_1),F(w_2))}{\sfd_m(w_1,w_2)}\le 1+\eps, \quad \text{whenever $\sfd(F(w_1),F(w_2))\le 2^{-m-1}.$}
\end{equation}
 This proves that $F$ is injective. To conclude it remains to prove that $F$ is surjective (indeed the second part of \eqref{finalbilip} would follow from \eqref{eq:prebilip} taking $\rho<2^{-m-2}$). 

\noindent{\sc F is surjective:} The proof is the same for both cases $i)$ and $ii)$ of the theorem.	We start proving that $F(W_m^n)$ is dense. To see this consider $z \in Z$ and $\delta>0.$ 
Take now $i\ge m$ such that $20\cdot 2^{-i}\le \delta/8$, then by \ref{e)} and the fact that the maps $h_k$ are surjective, there exists $w \in W_m^n$ such that $\sfd(F_i(w),z)< \delta/2$. Moreover from \eqref{fif} it holds $\sfd(F(w),F_i(w))<  4C2^{-i}\theta_m\le  4C\delta \theta_m\le  \delta/2$, provided $m$ is big enough in case $ii)$ (or in case $i)$, since $\theta_m\le \eps(n),$ provided $\eps(n)$ is small enough). Hence $\sfd(F(w),z)<  \delta$ and thus $F(W_m^n)$ is dense from the arbitrariness of $\delta>0$. Pick now any $z \in Z,$ by density, there exists a sequence $w_k \in W_m$ such that $F(w_k)\to z$. In particular $(F(w_k))$ forms a Cauchy sequence in $(Z,\sfd)$. From \eqref{eq:prebilip} we deduce that also $w_k$ is Cauchy  and by the completeness of $W_m$ it converges to a limit $w\in W_m$. From the continuity of $F$ we have that $F(w)=z$. This proves that $F$ is surjective.  

\medskip

The last part of the theorem follows directly form the fact that the maps $h_i: W_i \to W_{i+1}$ are diffeomorphisms.
This concludes the proof of the {\bf CLAIM}. \\

\subsection{Construction of the approximating manifolds \texorpdfstring{$W_i$}{Wi}}\label{sec:manifolds}

\subsubsection{Notation and choice of constants}\label{sec:notations reif}
We start by fixing once and for all a positive constant $C=C(n)$, that will appear both on the present and on the following sections. Its value will be determined along the proof and may change from line to line, but will in any case remain dependent only on $n$.
Moreover we  also pick  $\eps(n)=\eps(n,C)$ another positive constant that may change along the proof, but will depend only on $C$ and $n$. In particular $\eps(n)$ at the end will depend only on $n$. Since $r$ and $n$ are fixed along all the proof we will also write $\eps_i$ in place of $\eps_i(r,n),$ for every $i \in \mathbb{N}.$ 

We observe that Theorem \ref{mainthm} holds for a metric space $(Z,\sfd)$ if and only if it holds for all the rescaled spaces $(Z,\lambda\sfd),$ with $\lambda>0.$ Moreover, denoted by $\eps_i^{\lambda}(r,n)$ the numbers in Definition \ref{ghjonest} relative to the rescaled space $(Z,\lambda \sfd)$, it easy to verify that 
$$\eps_i^\lambda(\lambda r 2^{-j},n)=\eps_{i+j}(r,n),$$
for every $i,j \in \mathbb{N}$ and every $r>0$.
Therefore, since by \eqref{sumscales} $\eps_i \to 0^+$ as $i \to +\infty$, up to rescaling the metric $\sfd$ we can assume both that
\begin{equation}\label{ass1}
	r=200
\end{equation}
and that
\begin{equation}\label{ass2}
	\eps_i\le \eps(n) \text{ for every } i \ge 0. 
\end{equation}
This assumption will be fixed throughout the rest of Section \ref{sec:constructions}. In particular whenever in the sequel we will say  that  $\eps_i$ is small enough, we will mean that the constant $\eps(n)$ is chosen sufficiently small.

\subsubsection{Construction of the coverings}\label{sec:coverings}
We start by showing that under our assumptions $(Z,\sfd)$ is separable and locally compact. 
To prove this we exploit the following result due to Alexandroff.
\begin{theorem}[\hspace{1sp}\cite{alexandrov}]\label{thm:alex}
	Let $(Z,\sfd)$ be a connected metric space that is locally separable, i.e.\ for every $z \in Z	$ there exists a separable ball  that contains $z$.  Then $Z$ is separable.
\end{theorem}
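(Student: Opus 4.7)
The plan is to exhibit a countable dense subset of $Z$ via a greedy construction starting at a fixed point $z_0\in Z$. The real difficulty will not be producing countably many points, but rather showing that the open set these points generate is also \emph{closed} in $Z$ (and hence all of $Z$ by connectedness); this will force me to extract a little more from the hypothesis than just the bare statement that each point has a separable neighbourhood.

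The first step is to quantify local separability. For every $x\in Z$ I set $\rho_x\coloneqq\sup\{r>0:B_r(x)\text{ is separable}\}$. Local separability gives $\rho_x>0$, and if $\rho_x=\infty$ for some $x$ then $Z=\bigcup_n B_n(x)$ is a countable union of separable sets and I am done. Otherwise, the hereditary nature of separability in metric spaces (every subset of a separable set is separable) yields the Lipschitz-type lower bound $\rho_y\ge \rho_x-\sfd(x,y)$; in particular $\rho$ cannot collapse to $0$ along a convergent sequence. I then put $U_x\coloneqq B_{\rho_x/2}(x)$, which is open and separable.

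Next I would run the greedy construction: start with $F_0\coloneqq\{z_0\}$ and inductively define $F_{n+1}\coloneqq F_n\cup D_n$, where $D_n$ is any countable dense subset of $W_n\coloneqq\bigcup_{x\in F_n}U_x$ (which is separable, being a countable union of separable sets). Then $F\coloneqq\bigcup_n F_n$ is countable and $W\coloneqq\bigcup_{x\in F}U_x$ is open and separable. To prove $W=Z$ I invoke connectedness: $W$ is nonempty and open, so it suffices to show $W$ is closed. Given $p\in\overline W$, for every $\delta>0$ the ball $B_\delta(p)$ meets $W$, hence meets some $U_x$ with $x\in F_n$; since $D_n$ is dense in $W_n\supset U_x$, the nonempty open set $B_\delta(p)\cap U_x$ must contain a point of $D_n\subset F$. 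This produces a sequence $d_k\in F$ with $d_k\to p$. The Lipschitz bound now gives $\rho_{d_k}\ge \rho_p-\sfd(d_k,p)$, so for $k$ large one has $\sfd(d_k,p)<\rho_{d_k}/2$, i.e.\ $p\in U_{d_k}\subset W$. Hence $\overline W= W$, so $W=Z$, and $Z=W$ is separable.

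The principal technical obstacle is precisely this closedness step. Just knowing that $p$ has \emph{some} separable neighbourhood $U_p$ does not obviously place $p$ in $W$, because $U_p$ need not appear in the countable family indexed by $F$. The Lipschitz lower bound on $\rho$ is exactly what is needed to guarantee that among the neighbourhoods $U_{d_k}$ already included in $W$, at least one is large enough to contain $p$ once $d_k$ is sufficiently close to $p$; without such control one could only conclude $p\in\overline F$ rather than $p\in W$.
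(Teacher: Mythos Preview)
The paper does not actually prove this theorem; it is merely quoted from \cite{alexandrov} and used as a black box (the paper's own work begins with Lemma~\ref{totballs}, which verifies the local hypothesis needed to apply the cited result). Your argument is correct and supplies a clean self-contained proof. The key device---introducing the radius function $\rho_x$ and exploiting the $1$-Lipschitz lower bound $\rho_y\ge\rho_x-\sfd(x,y)$ to show that the greedily constructed open set $W$ is closed---is exactly what is needed, and your diagnosis of why closedness is the main obstacle is accurate. One minor remark: the hypothesis only guarantees that each $z$ lies in \emph{some} separable ball, not necessarily one centred at $z$; you are implicitly using that a smaller ball centred at $z$ sits inside this ball and inherits separability (via the hereditary property you already invoke), which indeed gives $\rho_z>0$.
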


Therefore to prove that $(Z,\sfd)$ is separable it is enough  to prove the following Lemma.
\begin{lemma}\label{totballs}
	$B_{r}(z_0)$ is totally bounded for every $z_0 \in Z$, where $r$ is the one in the statement of Theorem \ref{mainthm} (that is $r=200$  under the current assumptions).
\end{lemma}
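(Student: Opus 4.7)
The plan is to establish a scale-halving covering property and iterate it. Specifically, I will show:

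\textbf{Claim.} There exists $N = N(n) \in \mathbb{N}$ such that for every $i \ge 0$ and every $z \in Z$, the ball $B_{2^{-i}r}(z)$ is covered by at most $N$ balls of radius $2^{-i-1}r$ whose centers lie in $B_{2^{-i}r}(z)$.

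Granting the claim, a straightforward induction on $k$ yields a cover of $B_r(z_0)$ by $N^k$ balls of radius $2^{-k}r$ for every $k \ge 0$; since $2^{-k}r \to 0$, this produces finite $\delta$-nets in $B_r(z_0)$ for every $\delta > 0$, which is exactly total boundedness. Note that the iteration is self-contained because the claim produces centers inside the ball being covered, so at each step the next-scale sub-balls we apply the claim to are automatically centered at points of $Z$.

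To prove the Claim, I fix $i$ and $z$ and combine the standing assumption $\eps_i \le \eps(n)$ with Theorem \ref{ghchar}(1) to obtain a $2\eps(n)\, 2^{-i}r$-GH approximation $f: B_{2^{-i}r}(z) \to B^{\rr^n}_{2^{-i}r}(0)$. On the Euclidean side I fix once and for all a $\tfrac{1}{8}\cdot 2^{-i-1}r$-net in $B^{\rr^n}_{2^{-i}r}(0)$ of cardinality $N = N(n)$ (independent of $i$ by scaling of $\rr^n$). Using the $2\eps(n)\,2^{-i}r$-density of $f$, I lift this Euclidean net to points $x_1,\dots,x_N \in B_{2^{-i}r}(z)$. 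For any $x \in B_{2^{-i}r}(z)$, the $\eps$-isometry property of $f$ together with the triangle inequality will give $\sfd(x,x_j) < \tfrac14\cdot 2^{-i-1}r + 4\eps(n)\, 2^{-i}r$, where $x_j$ is the lifted preimage of the Euclidean net point nearest to $f(x)$. This bound is strictly less than $2^{-i-1}r$ provided $\eps(n)$ is chosen small enough in terms of $n$ alone, and the Claim follows.

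No real obstacle arises: the only point to watch is that the GH error $\eps(n)\,2^{-i}r$ at each scale be negligible compared to the Euclidean net mesh $\sim 2^{-i-1}r$, and this is precisely what the freedom to shrink the small parameter $\eps(n)$ in assumption \eqref{ass2} provides.
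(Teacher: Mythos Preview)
Your proposal is correct and follows essentially the same approach as the paper: both establish that each ball $B_{2^{-i}r}(z)$ can be covered by finitely many balls of half the radius by transferring a Euclidean net via a GH-approximation, and then iterate. The only cosmetic differences are that the paper pushes the Euclidean net forward through a map $f:B^{\rr^n}_t(0)\to B_t(z)$ rather than lifting it back as you do, and it does not track the explicit cardinality bound $N(n)$ (which is not needed for total boundedness).
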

\begin{proof}
	We start with the following claim. For every $z\in Z$ and every $t=r2^{-k}$ with $k\in \mathbb{N}$, there exists a finite set $S_{z,t} \subset B_{t}(z)$ that is $t/2$-dense in $B_{t}(z).$ To prove this first observe there exists a finite subset $E_t$ of $B^{\rr^n}_{t}(0)$ that is $t/8$ dense in $B^{\rr^n}_{r}(0)$. Moreover by hypothesis 	
	$\sfd_{GH}(B^Z_{t}(z),B^{\rr^n}_{t}(0))<\eps(n)t\le t/16,$ therefore there exists a $t/8$-GH approximation $f: B^{\rr^n}_{t}(0) \to B_t(z).$ Define $S_{t,z}\coloneqq f(E_t).$ By definition of GH-approximation, for  every $p \in B_t(z),$ there exists $x \in B^{\rr^n}_{t}(0) $ such that $\sfd(f(x),p)\le t/8$. Moreover there exists $e \in E_t$ with $|e-x|\le t/8,$ hence $\sfd(f(e),p) \le 3/8t<t/2$ and the claim is proved. We start defining sets $S_k$ with $k\in \mathbb{N}$ inductively as follows. Set $S_1\coloneqq S_{z_0,r/2}$ and $$S_{k+1}\coloneqq\bigcup_{z \in S_k} S_{z,\frac{r}{2^{k+1}}}.$$
	Then define 
	$$A_k\coloneqq \bigcup_{z\in S_k} B_{\frac{r}{2^k}}(z).$$
	Observe that $A_k \subset A_{k+1},$ indeed 
	$$A_k=\bigcup_{z\in S_k} B_{\frac{r}{2^k}}(z)\subset \bigcup_{z\in S_k} \bigcup_{\bar z \in S_{z,\frac{r}{2^{k+1}}}} B_{\frac{r}{2^{k+1}}}(\bar z)=\bigcup_{z\in S_{k+1}} B_{\frac{r}{2^{k+1}}}(z)=A_{k+1}.$$
	Notice also that that $B_{r}(z_0)\subset \bigcup_{z\in S_1} B_{\frac{r}{2}}(z)=A_1 \subset A_k$ for every $k$. Moreover $A_k$ is union of a finite number of balls of radius $r/2^{k}$, therefore $B_{r}(z_0)$ is totally bounded.
\end{proof}
\begin{remark}
	Instead of Theorem \ref{thm:alex} we could have used the fact that a metric space is paracompact and then the fact that every connected, locally compact and paracompact tolopogical space is separable (see e.g. \cite[Appendix A]{spivak}).\fr
\end{remark}

\begin{tcolorbox}[colframe=white,colback=mygray]
  \textbf{Important:} From now on we assumed to have fixed for all $i\in \nn$ a set $X_i$ that is $2^{-i}$-dense in $Z$ and we label the elements of $X_i$ as $X_i=\{x_{i,1},x_{i,2},....\}=\{x_{i,j}\}_{j\in J_i}$ where $J_i\coloneqq \{1,2,...,\#|X_i|\}$.
We also choose a partition of $X_i$ into disjoint subsets $Q^i_1,Q^i_2,...,Q^i_{N_i},$ satisfying 
\[
\sfd(x,y)\ge100 \cdot 2^{-i}, \quad \text{for all $x,y \in Q^i_k$ and all $k=1,\dots,N_i$}.
\]
Moreover for all $i\in \nn$ we can take $N_i\le N(n)$, where $N(n)$ is an integer depending only on $n$.
 Finally we partition  the set of indices $J_i$ as $J_i=\bigcup_{k=1}^{N_i}J_i^k$ where $J_i^k\coloneqq  \{ j\in J_i \ | x_{i,j}\in Q_k^i\}$.
\end{tcolorbox}

The validity of the above construction is justified by the following result.
\begin{prop}\label{prop:covering}
     For all $i\in\nn$ and all $\theta \in [1/2,1]$ we can find countable set $X_i \subset Z$ satisfying the following properties:
     \begin{enumerate}[label=\roman*)]
         \item $X_i$ is $\theta 2^{-i}$-dense in $Z,$
         \item for all constants $\lambda\le 200$ the set $X_i$ can be partitioned into disjoint subsets $Q^i_1,Q^i_2,...,Q^i_{N_i},$ with $N(i)\le N(n)$ (an integer depending only on $n$) with the property that $\sfd(x,y)\ge\lambda \cdot 2^{-i}$ for all $x,y \in Q^i_k$ and for every $k=1,\dots,N(i).$
     \end{enumerate}
\end{prop}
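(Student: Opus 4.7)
The plan is to build $X_i$ as a maximal $\theta 2^{-i}$-separated subset of $Z$, and then obtain the partition in (ii) by a greedy graph coloring argument whose validity rests on a uniform local packing estimate deduced from the hypothesis $\eps_i(200,n)\le \eps(n)$.

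First, I would invoke Zorn's lemma to obtain a maximal $\theta 2^{-i}$-separated set $X_i\subset Z$. Maximality automatically gives $\theta 2^{-i}$-density, since any $z\notin (X_i)^{\theta 2^{-i}}$ could be adjoined to $X_i$ without destroying the separation. Countability of $X_i$ follows from Lemma \ref{totballs}: every ball in $Z$ is totally bounded, so a $\theta 2^{-i}$-separated subset of any ball is finite, and by separability (proved via Lemma \ref{totballs} and Theorem \ref{thm:alex}) $Z$ is a countable union of such balls.

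Second, I would prove the following uniform local cardinality bound: there is a constant $N(n)$, depending only on $n$, such that for every $x\in Z$,
\[
\#\bigl(X_i\cap B_{200\cdot 2^{-i}}(x)\bigr)\le N(n),
\]
uniformly in $i\in\nn$ and $\theta\in[1/2,1]$. Indeed, by Theorem \ref{ghchar} applied with $\eps_i(200,n)\le \eps(n)$, there exists a $400\eps(n)\,2^{-i}$-GH approximation $f:B_{200\cdot 2^{-i}}(x)\to B^{\rr^n}_{200\cdot 2^{-i}}(0)$. The images of points of $X_i$ in $B_{200\cdot 2^{-i}}(x)$ land in a Euclidean ball of radius at most $201\cdot 2^{-i}$ and, thanks to the $\theta 2^{-i}$-separation of $X_i$, are pairwise $(\theta-400\eps(n))\,2^{-i}\ge \tfrac14\, 2^{-i}$-separated (provided $\eps(n)$ is small enough). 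A standard packing estimate in $\rr^n$ then bounds their number by a constant $N(n)$ depending only on $n$.

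Third, fix $\lambda\le 200$ and define a graph $G$ on vertex set $X_i$ with an edge between $x,y\in X_i$ whenever $\sfd(x,y)\le \lambda 2^{-i}$. The previous step shows that $G$ has maximum degree at most $N(n)-1$. Enumerating $X_i=\{x_{i,1},x_{i,2},\ldots\}$ and performing a greedy coloring — at step $j$ assign $x_{i,j}$ the smallest color in $\{1,\dots,N(n)\}$ not already used by any of the finitely many previously colored neighbors — produces the required partition $X_i=\bigsqcup_{k=1}^{N(i)}Q^i_k$ with $N(i)\le N(n)$ and $\sfd(x,y)\ge \lambda 2^{-i}$ for $x\neq y$ in the same $Q^i_k$.

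The key step is the local cardinality estimate: all other parts are standard set-theoretic or combinatorial maneuvers. The subtlety is that the bound $N(n)$ must be independent of $\lambda\in[0,200]$ and of $\theta\in[1/2,1]$, which is why we always carry out the comparison with $\rr^n$ at the maximal admissible ambient scale $200\cdot 2^{-i}$ and use only the worst-case separation $\tfrac12\cdot 2^{-i}$; these choices make the resulting Euclidean packing constant depend solely on $n$.
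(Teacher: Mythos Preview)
Your proposal is correct and follows essentially the same approach as the paper: construct $X_i$ as a maximal $\theta 2^{-i}$-separated set, bound the local cardinality $\#(X_i\cap B_{200\cdot 2^{-i}}(x))$ via the GH-approximation to $\rr^n$, and then build the partition greedily. The only cosmetic difference is that the paper phrases the greedy step as ``take $Q^i_1$ maximal $\lambda 2^{-i}$-separated in $X_i$, then $Q^i_2$ maximal in the remainder, etc.'' rather than as a graph coloring; both arguments terminate in $N(n)$ steps for exactly the same reason, namely the uniform local packing bound.
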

\begin{proof}
Consider  for every $i\in \mathbb{N}$ a set $X_i \subset Z$, such that for every $x_1,x_2 \in X_i$ it holds that $\sfd(x_1,x_2)\ge \theta 2^{-i}$ and $X_i$ is maximal with this property with respect to inclusion. In particular $X_i$ is also $\theta 2^{-i}$-dense. Moreover, since the balls of radius $2^{-i-1}$ centered in $X_i$  are  pairwise disjoint and $Z$ is separable, $X_i$ is countable. 
 The required partition $X_i$ into disjoint subsets can be obtained as follows.	Let $Q_1^i \subset X_i$, maximal (with respect to inclusion), with the property that $\sfd(x,y)\ge \lambda\cdot 2^{-i} $ for every $x,y \in Q^i_1.$ Consider also $Q^i_2\subset X_i\setminus Q^i_1$ maximal such that $\sfd(x,y)\ge\lambda \cdot 2^{-i}$ for every $x,y \in Q^i_2.$ Keep defining  inductively (non-empty) sets $Q^i_1,Q^i_2,...,Q^i_{N_i}.$ It remains to show that cardinality of these partitions is uniformly bounded in $i$.
 Suppose that $Q^i_{\bar k}$ is non-empty for some $\bar k>1$ and fix any $x \in Q^i_{\bar k}$. In particular $x \in X_i\setminus (Q_1^i\cup...\cup Q^i_{\bar k-1})$.
	By maximality we have $B_{\lambda \cdot 2^{-i}}(x) \cap Q^i_k \neq \emptyset$ for every $1\le k \le \bar k.$  Set now $S\coloneqq  X_i \cap B_{\lambda \cdot 2^{-i}}(x)$. Then $S\cap  Q^i_k \neq \emptyset$ for every $1\le k \le \bar k$ and in particular $\bar k\le \#|S|.$ 
	
	We now aim to give an upper bound on $\#|S|$.
	Recall that by  hypothesis  there exists a $C\eps_i2^{-i} $-isometry $f : B_{200\cdot 2^{-i}}(x) \to B_{200\cdot 2^{-i}}(0)$. Recall also that by construction the points in $S$ are at distance at least $2^{-i}/2$  from each other. Therefore, assuming  $\eps_i$ small enough, we deduce both $f|_S$ is injective and that the points in $f(S) \subset B_{200\cdot 2^{-i}}(0) $ are at distance at least $2^{-i}/4$  from each other. Thus $\#|S|=\#|f(S)|\le N(n)$, for some integer $N(n)$ depending only on $n$, that in turn implies $\bar k\le N(n).$
\end{proof}

\subsubsection{Construction of the irregular transition maps}\label{sec:irr charts}
From  hypothesis, combining Theorem \ref{ghchar}, Proposition \ref{apprinverse} and Proposition \ref{centerappr} (recalling that we are assuming $r=200$) we have that for every $j \in J_i$ there exist maps $$\alpha_{i,j} : B^{\rr^n}_{200\cdot 2^{-i}}(0) \to B^{Z}_{200\cdot 2^{-i}}(x_{i,j})$$ and $$\beta_{i,j}: B^{Z}_{200\cdot 2^{-i}}(x_{i,j}) \to B^{\rr^n}_{200\cdot 2^{-i}}(0)$$ that are $C \eps_i 2^{-i}$-Gromov-Hausdorff approximations, such that 
\begin{equation}\label{alfabeta}
	d_{\rr^n}(\beta_{i,j}\circ \alpha_{i,j},\id), \, d_Z(\alpha_{i,j}\circ \beta_{i,j},\id)\le C\eps_i 2^{-i}, \quad \text{uniformly}
\end{equation}
and
\begin{eqnarray}\label{centertocenter1}
	\alpha_{i,j}(0)\in B_{C \eps_i 2^{-i}}(x_{i,j}),\,\,\, \beta_{i,j}(x_{i,j})\in B_{C \eps_i 2^{-i}}(0).
\end{eqnarray}
For simplicity and to avoid very heavy notations we will often drop the index $i$ on the symbols $\alpha_{i,j},\beta_{i,j}$ and $x_{i,j}.$ Since for the most part the index $i$ is fixed, this convention should cause no confusion. \\
An immediate consequence of \eqref{centertocenter1} is that, provided $\eps(n) < \frac{1}{200C}$,
\begin{equation}\label{centertocenter}
	\alpha_{j}(B_{s}(0))\subset B_{s+\frac{2^{-i}}{100}}(x_{j}), \,\,\, \beta_{j}(B_{s}(x_{j}))\subset B_{s+\frac{2^{-i}}{100}}(x_{j}),
\end{equation}
for every $s<200 \cdot 2^{-i}.$\\

The maps that we just defined allow us to think of $Z$ as a very irregular manifold. In particular  $\beta_j$ have the role of charts for the metric space $Z$ and  $\beta_{j_2} \circ \alpha_{j_1}$ (when defined) can be viewed as transition maps. However all these functions are very rough and could be not even continuous. Therefore the main idea to build the manifold $W_i$ is to approximate the maps $\beta_{j_2} \circ \alpha_{j_1}$ with diffeomorphisms $\tilde I_{j_2j_1}$ and then build a manifold having   $\tilde I_{j_2j_1}$  as transition maps. The main issue in this procedure is that to be able to actually build a manifold we need these diffeomorphisms to be compatible with respect to each other. This will require the modification procedure that we developed in Section \ref{sec:modification}. 

\begin{remark}\label{rmk:charts}
    If $Z$ is a smooth compact Riemannian manifold then, for $i$ big enough (depending on $Z$), the maps $\beta_{i,j}, \alpha_{i,j}$ satisfying \eqref{alfabeta} and \eqref{centertocenter1}  can in fact be taken to be (restriction of)  suitable charts and their inverses respectively. 
\end{remark}

\subsubsection{Approximation of the irregular transition maps with isometries}
We start approximating  $\beta_{j_2} \circ \alpha_{j_1}$ with isometries of $\rr^n$ as follows. Note that in our notation isometries do not need to fix the origin.
\begin{lemma}\label{Definition of the transition maps}
	 Suppose  that $j_1,j_2 \in J_i$ are such that $\sfd(x_{j_1},x_{j_2})<30\cdot 2^{-i}$. Then there exist two isometries 
	$I_{i,j_2j_1},I_{i,j_1j_2}: \rr^n \to \rr^n$ such that $I_{i,j_2j_1}=I_{i,j_1j_2}^{-1}$ and
	\begin{equation}\label{alminverse}
		|I_{i,j_2j_1}(x)-\beta_{i,j_2} \circ \alpha_{i,j_1}(x) |\le C\eps_i 2^{-i}
	\end{equation}
	\begin{equation}\label{alminverseother}
		|I_{i,j_1j_2}(x)-\beta_{i,j_1} \circ \alpha_{i,j_2}(x) |\le C\eps_i 2^{-i}
	\end{equation}
	for every $x \in B_{45\cdot2^{-i}}(0)$.
\end{lemma}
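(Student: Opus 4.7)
The plan is to apply the almost-isometry approximation result (Lemma \ref{AA}) to the composition $\beta_{i,j_2}\circ\alpha_{i,j_1}$ on a slightly enlarged ball to produce $I_{i,j_2j_1}$, and then to \emph{define} $I_{i,j_1j_2}:=(I_{i,j_2j_1})^{-1}$ and check a posteriori that \eqref{alminverseother} still holds. Setting the second isometry as the inverse of the first by definition guarantees the required symmetry $I_{i,j_2j_1}=I_{i,j_1j_2}^{-1}$ for free, at the price of an extra verification.

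Concretely, using $\sfd(x_{j_1},x_{j_2})<30\cdot 2^{-i}$ together with \eqref{centertocenter} one checks that $\alpha_{j_1}(B_{100\cdot 2^{-i}}(0))\subset B_{101\cdot 2^{-i}}(x_{j_1})\subset B_{131\cdot 2^{-i}}(x_{j_2})$, which sits safely inside the domain $B_{200\cdot 2^{-i}}(x_{j_2})$ of $\beta_{j_2}$. Hence $\beta_{j_2}\circ\alpha_{j_1}$ is well defined on all of $B_{100\cdot 2^{-i}}(0)$, and since both $\alpha_{j_1}$ and $\beta_{j_2}$ are $C\eps_i 2^{-i}$-isometries (being GH-approximations), so is the composition. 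Lemma \ref{AA} applied with $t=100\cdot 2^{-i}$ and $\delta=C\eps_i$ (which is $<1$ thanks to \eqref{ass2}, up to choosing $\eps(n)$ small enough) then yields an isometry $I_{i,j_2j_1}\colon\rr^n\to\rr^n$ satisfying
\[
|I_{i,j_2j_1}(x)-\beta_{j_2}(\alpha_{j_1}(x))|\le C\eps_i 2^{-i}\quad\text{for all }x\in B_{100\cdot 2^{-i}}(0),
\]
which in particular gives \eqref{alminverse} on the smaller ball $B_{45\cdot 2^{-i}}(0)$.

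The main obstacle is now to verify \eqref{alminverseother} for $I_{i,j_1j_2}:=(I_{i,j_2j_1})^{-1}$. Fix $y\in B_{45\cdot 2^{-i}}(0)$. Two consecutive applications of \eqref{centertocenter} show that $\alpha_{j_2}(y)\in B_{46\cdot 2^{-i}}(x_{j_2})\subset B_{76\cdot 2^{-i}}(x_{j_1})$ and hence $\beta_{j_1}(\alpha_{j_2}(y))\in B_{77\cdot 2^{-i}}(0)\subset B_{100\cdot 2^{-i}}(0)$, so that the estimate above is applicable at this point. Exploiting that $I_{i,j_2j_1}$ is a global isometry I rewrite
\[
\bigl|I_{i,j_2j_1}^{-1}(y)-\beta_{j_1}(\alpha_{j_2}(y))\bigr|=\bigl|y-I_{i,j_2j_1}\bigl(\beta_{j_1}(\alpha_{j_2}(y))\bigr)\bigr|,
\]
replace $I_{i,j_2j_1}$ on the right by $\beta_{j_2}\circ\alpha_{j_1}$ at a cost of $C\eps_i 2^{-i}$, and conclude with two uses of the almost-inverse identity \eqref{alfabeta}: first $\alpha_{j_1}\circ\beta_{j_1}$ collapses to the identity in $Z$ up to an error of order $\eps_i 2^{-i}$ (which, after post-composition with the $C\eps_i 2^{-i}$-isometry $\beta_{j_2}$, is still of the same order), and then $\beta_{j_2}\circ\alpha_{j_2}$ collapses to the identity in $\rr^n$ again up to $C\eps_i 2^{-i}$. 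Summing the three contributions yields $|I_{i,j_1j_2}(y)-\beta_{j_1}(\alpha_{j_2}(y))|\le C\eps_i 2^{-i}$, i.e.\ \eqref{alminverseother}, and completes the proof.
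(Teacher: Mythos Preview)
Your proof is correct and follows essentially the same approach as the paper's: apply Lemma~\ref{AA} to one of the two compositions to obtain one isometry, define the other as its inverse, and then verify the remaining estimate via the almost-inverse relations \eqref{alfabeta}. The only (immaterial) difference is that the paper applies Lemma~\ref{AA} to $\beta_{j_1}\circ\alpha_{j_2}$ on $B_{80\cdot 2^{-i}}(0)$ and defines $I_{i,j_2j_1}$ as the inverse, whereas you start from $\beta_{j_2}\circ\alpha_{j_1}$ on $B_{100\cdot 2^{-i}}(0)$---the verification step is symmetric in the two choices.
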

\begin{proof}
	Observe that by hypothesis  $B_{90 \cdot 2^{-i}}(x_{j_2}) \subset B_{200\cdot 2^{-i}}(x_{j_1})$. Thus (recalling \eqref{centertocenter}) the map  $\beta_{j_1} \circ \alpha_{j_2} : B_{80 \cdot 2^{-i}}(0) \to {B_{200 \cdot 2^{-i}}(0)}$ is well defined and is a $C\eps_i 2^{-i}$ isometry. Therefore, if $\eps_i$ is small enough, we are in position to apply Lemma \ref{AA} and deduce that there exists a global isometry $I_{i,j_2j_1}: \rr^n \to \rr^n$ such that
	\begin{equation}\label{quasifatto}
		|I_{i,j_1j_2}(x)-\beta_{j_1} \circ \alpha_{j_2}(x) |\le C\eps_i 2^{-i}
	\end{equation} 
	for every $x \in B_{80\cdot2^{-i}}(0)$.
	This already proves \eqref{alminverse}. We  define $I_{i,j_2j_1}\coloneqq I_{i,j_1j_2}^{-1}$. Using \eqref{quasifatto} and \eqref{alfabeta}
	\begin{align*}
		|I_{i,j_2j_1}(x)-\beta_{j_2} \circ \alpha_{j_1}(x) |&=|x-I_{i,j_1j_2}(\beta_{j_2} \circ \alpha_{j_1}(x))|\\
	\text{by \eqref{quasifatto} \quad}	&\le|x-\beta_{j_1}(\alpha_{j_2}(\beta_{j_2}( \alpha_{j_1}(x)))) |+C\eps_i 2^{-i}\\
		&\le |x-\beta_{j_1}( \alpha_{j_1}(x)) |+C\eps_i 2^{-i}\\
		&\le C\eps_i2^{-i}
	\end{align*}
	for every $x \in B_{45 \cdot 2^{-i}}(0).$ This proves \eqref{alminverseother}. Observe that to justify the use of \eqref{quasifatto}  above we need to check that
	\begin{equation}\label{checkball}
		\beta_{j_2}(\alpha_{j_1}(B_{45\cdot 2^{-i}}(0)))\subset B_{80\cdot 2^{-i}}(0).
	\end{equation}
	To prove this observe that  from \eqref{centertocenter} and the fact that $\sfd(x_{j_1},x_{j_2})<30\cdot 2^{-i}$, it follows that $\alpha_{j_1}(B_{45\cdot 2^{-i}}(0))\subset B_{46 \cdot 2^{-i}}(x_{j_1})\subset B_{76 \cdot 2^{-i}}(x_{j_2})$.  From this, using again  \eqref{centertocenter}, we obtain \eqref{checkball}.
\end{proof}
\begin{definition}\label{maps}
	For any couple of indices $j_1,j_2 \in J_i$ such that $\sfd(x_{j_1},x_{j_2})<30\cdot 2^{-i}$ we choose once and for all a couple of maps  $I_{i,j_1j_2},I_{i,j_2j_1}$ that satisfy the conclusion of  Lemma \ref{Definition of the transition maps}.
	We then choose once and for all a   sufficiently large  subset $\mathcal I_i$ of these maps, so that
 \begin{equation}\label{eq:def Ii}
   \{I_{i,j_1j_2}\ : j_1,j_2 \in J_i \text{ and }\sfd(x_{j_1},x_{j_2})<29\cdot 2^{-i}\}\subset   \mathcal I_i 
 \end{equation}
\end{definition}
Again for simplicity we will often write $I_{j_2j_1}$ in place of $I_{i,j_2j_1}.$ Moreover from now on we will essentially forget and about the maps $I_{i,j_1j_2}$ that are not in $\mathcal I_i.$

\begin{remark}[On the choice of $\mathcal I_i$] 
    The reason we want some freedom in the definition of $\mathcal I_i$  in Definition \ref{maps} is only because doing so  makes the proof of Theorem .... much easier. Actually this is needed only for $i=0.$ For $i\ge 1$ (and also if $i=0$ in the case the reader was not interested in proving Theorem ...) we can  simply take $\mathcal I_i$ to be \textit{all the maps}, i.e.\ $\mathcal I_i\coloneqq \{I_{i,j_1j_2}\ : j_1,j_2 \in J_i \text{ and }\sfd(x_{j_1},x_{j_2})<30\cdot 2^{-i}\}$.
\end{remark}

\begin{remark}\label{QI}
	For any two maps $I_{j_1j_2}, I_{j_1j_3}\in \mathcal I_i$ with $j_2 \neq j_3$, it holds that $j_1 \in J_i^{a_1},j_2 \in J_i^{a_2},j_3\in J_i^{a_3}$ with $a_1\neq a_2 \neq a_3 \neq a_1$. This follows from the definition of the sets $J_{i}^k$ and the fact that $\sfd(x_{j_1},x_{j_2})< 30\cdot  2^{-i},$ $\sfd(x_{j_1},x_{j_3})< 30\cdot  2^{-i}$ and thus $\sfd(x_{j_2},x_{j_3})< 60\cdot  2^{-i}$. \fr
\end{remark}

In the following key result we observe that isometries that we just defined are almost compatible in a suitable sense. 
\begin{lemma}\label{almcomp}
	 Suppose that for some triple of indices $j_1,j_2,j_3 \in J_i$ the maps $I_{j_3j_2},I_{j_2j_1},I_{j_3j_1}$ are defined.
	Then 
	\[|I_{j_3j_2}(I_{j_2j_1}(x))-I_{j_3j_1}(x)|\le  C\eps_i 2^{-i}\] 
	for every $x \in B_{12\cdot2^{-i}}(0).$
\end{lemma}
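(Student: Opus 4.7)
The strategy is entirely a chain of triangle inequalities: each isometry $I_{j_aj_b}$ approximates the composition $\beta_{j_a}\circ \alpha_{j_b}$ with error $C\eps_i 2^{-i}$, and $\alpha_{j_2}\circ \beta_{j_2}$ is almost the identity by \eqref{alfabeta}. Combining these facts one gets, schematically,
\[
I_{j_3j_2}(I_{j_2j_1}(x))\ \approx\ \beta_{j_3}\bigl(\alpha_{j_2}(\beta_{j_2}(\alpha_{j_1}(x)))\bigr)\ \approx\ \beta_{j_3}(\alpha_{j_1}(x))\ \approx\ I_{j_3j_1}(x),
\]
with each approximation holding up to $C\eps_i2^{-i}$.

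The first preparatory step is to verify the domain condition needed to invoke \eqref{alminverse} for the pair $(j_3,j_2)$ at the point $I_{j_2j_1}(x)$; namely I must check $I_{j_2j_1}(B_{12\cdot 2^{-i}}(0))\subset B_{45\cdot 2^{-i}}(0)$. For this I evaluate \eqref{alminverse} at $x=0$ and combine with \eqref{centertocenter1} and $\sfd(x_{j_1},x_{j_2})<30\cdot 2^{-i}$ (together with \eqref{centertocenter}) to estimate $|I_{j_2j_1}(0)|\le 31\cdot 2^{-i}$, provided $\eps_i$ is small. Since $I_{j_2j_1}$ is an isometry, $|I_{j_2j_1}(x)|\le 31\cdot 2^{-i}+12\cdot 2^{-i}<45\cdot 2^{-i}$. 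Along the way one also needs, using \eqref{centertocenter} and Remark \ref{QI}, that every intermediate composition $\beta_{j_3}\circ\alpha_{j_2}$, $\beta_{j_2}\circ\alpha_{j_1}$, $\beta_{j_3}\circ\alpha_{j_1}$ is actually defined on the relevant balls; this is routine.

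The main computation then proceeds in four steps, each contributing $C\eps_i 2^{-i}$ to the error:
\emph{(1)} apply \eqref{alminverse} for $(j_3,j_2)$ at $I_{j_2j_1}(x)\in B_{45\cdot 2^{-i}}(0)$ to replace $I_{j_3j_2}(I_{j_2j_1}(x))$ by $\beta_{j_3}(\alpha_{j_2}(I_{j_2j_1}(x)))$;
\emph{(2)} apply \eqref{alminverse} for $(j_2,j_1)$ at $x$ to write $I_{j_2j_1}(x)=\beta_{j_2}(\alpha_{j_1}(x))+O(\eps_i 2^{-i})$, then use that $\alpha_{j_2}$ is a $C\eps_i 2^{-i}$-isometry to propagate this error through $\alpha_{j_2}$;
\emph{(3)} apply the almost-inverse relation \eqref{alfabeta} to replace $\alpha_{j_2}(\beta_{j_2}(\alpha_{j_1}(x)))$ by $\alpha_{j_1}(x)$ in $Z$, and use that $\beta_{j_3}$ is a $C\eps_i 2^{-i}$-isometry to carry this error back to $\rr^n$;
\emph{(4)} apply \eqref{alminverse} for $(j_3,j_1)$ at $x\in B_{12\cdot 2^{-i}}(0)\subset B_{45\cdot 2^{-i}}(0)$ to replace $\beta_{j_3}(\alpha_{j_1}(x))$ by $I_{j_3j_1}(x)$. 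The triangle inequality then gives the claimed bound.

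There is no real obstacle here beyond the bookkeeping: the only mildly delicate point is step (1), where I need to know in advance that $I_{j_2j_1}(x)$ lies in the ball where the approximation \eqref{alminverse} for $(j_3,j_2)$ is valid, which is why the preparatory estimate on $|I_{j_2j_1}(0)|$ comes first, and why the statement is restricted to the smaller ball $B_{12\cdot 2^{-i}}(0)$ rather than $B_{45\cdot 2^{-i}}(0)$.
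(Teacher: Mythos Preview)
Your proposal is correct and follows essentially the same route as the paper: a chain of triangle inequalities using \eqref{alminverse} three times and \eqref{alfabeta} once, together with a domain check to ensure the intermediate point lies in $B_{45\cdot 2^{-i}}(0)$. The only cosmetic difference is the order of substitutions (the paper keeps $I_{j_3j_2}$ as an exact isometry when replacing $I_{j_2j_1}$, which spares the separate error-propagation through $\alpha_{j_2}$ you do in step~(2)), but the arguments are otherwise identical.
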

\begin{proof} 
	Observe that by construction the existence of the maps  $I_{j_3j_2},I_{j_2j_1},I_{j_3j_1}$  implies that $\sfd(x_{j_1},x_{j_2})<30\cdot 2^{-i}$, $\sfd(x_{j_1},x_{j_3})<30\cdot 2^{-i}$ and $\sfd(x_{j_3},x_{j_2})<30\cdot 2^{-i}$. Then
	\begin{align*}
		|I_{j_3j_2}(I_{j_2j_1}(x))-I_{j_3j_1}(x)|&\le |I_{j_3j_2}(I_{j_2j_1}(x))-\beta_{j_3}(\alpha_{j_1}(x))|+C \eps_i 2^{-i}\\
		&\le|I_{j_3j_2}(\beta_{j_2}(\alpha_{j_1}(x)))-\beta_{j_3}(\alpha_{j_1}(x))|+C \eps_i 2^{-i}\\
		&\le  |\beta_{j_3}(\alpha_{j_2}(\beta_{j_2}(\alpha_{j_1}(x))))-\beta_{j_3}(\alpha_{j_1}(x))|+C \eps_i 2^{-i}\\
		&\le |\beta_{j_3}(\alpha_{j_1}(x))-\beta_{j_3}(\alpha_{j_1}(x))|+C \eps_i 2^{-i}=C \eps_i 2^{-i},
	\end{align*}
	for any $x \in B_{12 \cdot 2^{-i}}(0),$ where we used \eqref{alminverse} in the first three inequalities and in the last inequality we used \eqref{alfabeta}. 
	Observe that, to justify the use of \eqref{alminverse} in the third inequality above we need to check that
	\[\beta_{j_2}\circ \alpha_{j_1}(B_{12\cdot 2^{-i}}(0))\subset B_{45 \cdot 2^{-i}}(0), .\] To see this observe that by \eqref{centertocenter} $\alpha_{j_1} (B_{12\cdot 2^{-i}}(0))\subset B_{13\cdot 2^{-i}}(x_{j_1})$. Then, since $\sfd(x_{j_1},x_{j_2})<30\cdot 2^{-i}$, we have $\alpha_{j_1} (B_{12\cdot 2^{-i}}(0))\subset B_{43\cdot 2^{-i}}(x_{j_2})$. Now applying again \eqref{centertocenter} we conclude.
\end{proof}	

\subsubsection{Modification of the isometries to get compatibility}\label{sec:modification charts}
Our plan is now to construct the manifold $W_i$ by gluing together a number of copies of Euclidean balls in the following way:
\[ W_i\coloneqq  \bigslant{\bigsqcup_{j \in J_i} B^j_{10\cdot 2^{-i}}(0) }{\sim},	 \]
where:
\begin{quote}
    $x \sim y$ for $x \in B^{j_1}_{10\cdot 2^{-i}}(0), y \in B^{j_2}_{10\cdot 2^{-i}}(0) $ if and only if $I_{i,j_2j_1}\in \mathcal I_i$ and  $I_{i,j_2j_1}(x)=y$  (we set  also $x \sim x$ for every $x$).
\end{quote}
  Notice that this is an equivalence relation if and only if the following compatibility relation is true. 
For every couple of maps $I_{i,j_1j_2},I_{i,j_3j_2}\in \mathcal I_i$ for which there exists $x \in B_{10\cdot 2^{-i}}(0)$ such that $I_{i,j_2j_1}(x),I_{i,j_3j_2}(I_{i,j_2j_1}(x))\in B_{10\cdot 2^{-i}}(0)$ we have that  $I_{i,j_3j_1}\in \mathcal I_i$  and 
\[  I_{i,j_3j_2}( I_{i,j_2j_1}(x))=I_{i,j_3j_1}(x).\]
However there is no reason for this to be true in general for the maps that we have defined at the moment. Still it is almost true in a quantitative sense given by  Lemma \eqref{almcomp}. Thanks to this we can perform a small modification of the maps $I_{i,j_1j_2}$ to obtain new maps $\widetilde I_{i,j_1j_2}$ such that the above compatibility relations are satisfied. This modification is quite involved and is actually independent of the metric space $(Z,\sfd)$: instead it takes place entirely in the Euclidean space. For this reason this procedure will be developed separately in Section \ref{sec:modification} and summarized in Theorem \ref{bigmodification2}. Applying that result we can obtain the following (see Section \ref{sec:ck norm} for the definition of the norm $\|\cdot \|_{C^2,t}$).

\begin{lemma}\label{bigmodification}
There exists a constant $\tilde C$ depending only on $n$ such that the following holds.
 For every $I_{i,j_1j_2}\in \mathcal I_i$  there exists a $C^\infty$ diffeomorphism $\widetilde I_{i,j_1j_2}:\rr^n \to \rr^n$ such that 
	\begin{equation}\label{finalb}
		\|I_{i,j_1j_2}-\widetilde I_{i,j_1j_2} \|_{C^2(\rr^n),2^{-i}}\le \tilde C \eps_i,
	\end{equation}
	$\widetilde I_{i,j_2j_1}=\widetilde I_{i,j_1j_2}^{-1}$ and the following compatibility condition holds:
	for every couple of maps $\tilde I_{i,j_1j_2},\tilde I_{i,j_3j_2}$ for which the set  $\{x \in B_{8\cdot 2^{-i}}(0) \ : \ \tilde I_{i,j_2j_1}(x),\tilde I_{i,j_3j_2}(\tilde I_{i,j_2j_1}(x))\in B_{8\cdot 2^{-i}}(0)\}$ is non empty, we have $I_{i,j_3j_1}\in \mathcal I_i$ (and so also $\tilde I_{i,j_3j_1}$ is defined)  and 
	\[  \tilde I_{i,j_3j_2}(\tilde I_{i,j_2j_1}(x))=\tilde I_{i,j_3j_1}(x),\]
	for every $x$ in the above set.
\end{lemma}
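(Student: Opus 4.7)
The plan is to reduce the lemma to the general Euclidean modification machinery of Theorem \ref{bigmodification2} (developed in Section \ref{sec:modification}), which takes as input a family of isometries of $\rr^n$ that are almost-compatible on a suitable graph of overlaps and returns a family of smooth diffeomorphisms that are $C^2$-close to them and satisfy the cocycle relation \emph{exactly}. My task in this lemma is then just to check that the family $\{I_{i,j_1j_2}\}$ built in Definition \ref{maps} matches the input format of that abstract theorem.

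First, I would identify the combinatorial and analytic hypotheses. The index set is $J_i$, equipped with the ``edge'' structure $j_1\sim j_2 \iff I_{i,j_1j_2}\in \mathcal I_i$. The symmetry $I_{i,j_2j_1}=I_{i,j_1j_2}^{-1}$ is built in at the level of Lemma \ref{Definition of the transition maps}. The bounded-overlap hypothesis follows from Remark \ref{QI} together with the partition $J_i=\bigsqcup_{k=1}^{N_i}J_i^k$ (with $N_i\le N(n)$): each index is adjacent to indices lying in at most $N(n)-1$ other partition classes. The crucial analytic hypothesis, quantitative almost-compatibility, is exactly Lemma \ref{almcomp}, which gives
\[
|I_{j_3j_2}\circ I_{j_2j_1}(x)-I_{j_3j_1}(x)|\le C\eps_i 2^{-i}\quad\text{on } B_{12\cdot 2^{-i}}(0).
\]
Here the scale $2^{-i}$ and the error $\eps_i$ are the only relevant parameters, which is what allows Theorem \ref{bigmodification2} to yield the scale-invariant bound \eqref{finalb}.

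Second, I would describe (at the level of a sketch) the nature of the construction inside Section \ref{sec:modification}. The natural strategy is an inductive, partition-of-unity interpolation: for each pair $(j_1,j_2)$ one averages $I_{j_1j_2}$ against all compositions $I_{j_1k}\circ I_{kj_2}$, using a smooth cutoff of $\rr^n$ subordinated to balls around the centers $I_{j_2j_1}(0)$ of nearby indices $k$. The almost-compatibility ensures that the averaging modifies each $I_{j_1j_2}$ only by $O(\eps_i 2^{-i})$ in $C^0$, and suitable smoothness of the cutoff produces the $C^2$ bound after rescaling (cf.\ Remark \ref{rmk:scaling invariant}). The symmetry $\widetilde I_{j_2j_1}=\widetilde I_{j_1j_2}^{-1}$ is imposed by construction, with Lemma \ref{inversebound} guaranteeing that the inverse exists and satisfies the same bounds. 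Bounded overlap (the $N(n)$ bound from Proposition \ref{prop:covering}) is what keeps the total error $\le \tilde C\eps_i$ with $\tilde C=\tilde C(n)$ rather than growing unboundedly with the number of nearby charts.

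The main obstacle is the one handled inside the referenced theorem, not in the lemma itself: obtaining \emph{exact} compatibility $\widetilde I_{j_3j_2}\circ\widetilde I_{j_2j_1}=\widetilde I_{j_3j_1}$ together with the quantitative $C^2$-bound. The tension is that a single smoothing step enforces compatibility with one ``reference'' composition but can break compatibility with the others, so one needs an ordered, iterative procedure (using the layers $Q_1^i,\dots,Q_{N_i}^i$) in which at each stage Lemmas \ref{iterateclosest}, \ref{iterateclosest2} and \ref{B4} control how compositions and conjugations propagate errors in the scaling-invariant $C^2$-norm. The price paid for exactness is that the compatibility is asserted only on the slightly smaller ball $B_{8\cdot 2^{-i}}(0)$ rather than the full ball where the isometries are close—which is exactly what the statement records. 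Once Theorem \ref{bigmodification2} is in hand, the lemma is then immediate.
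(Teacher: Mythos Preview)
Your overall plan—reduce to Theorem \ref{bigmodification2} and verify its hypotheses—is exactly the paper's approach, and you correctly identify assumptions \ref{A)} (symmetry from Lemma \ref{Definition of the transition maps}), \eqref{apartition} (via Remark \ref{QI} and the partition $\{J_i^k\}$), and \ref{C)} (almost-compatibility from Lemma \ref{almcomp}). However, you have omitted the verification of hypothesis \ref{B)} of Theorem \ref{bigmodification2}, and this is not a formality: it is the longest and most substantive part of the paper's proof of this lemma. Hypothesis \ref{B)} asks that whenever $I_{j_3j_2}(I_{j_2j_1}(B_{8\cdot2^{-i}}(0)))$ meets $B_{9\cdot2^{-i}}(0)$, the pair $(j_3,j_1)$ already belongs to $\mathcal A$, i.e.\ $I_{j_3j_1}\in\mathcal I_i$. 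This is a statement about the \emph{metric space} $Z$: one must trace the composition back through \eqref{alminverse}, \eqref{centertocenter} and \eqref{alfabeta} to conclude $\sfd(x_{j_1},x_{j_3})<29\cdot2^{-i}$, and then invoke the choice \eqref{eq:def Ii} of $\mathcal I_i$. Without \ref{B)} the conclusion of the lemma—that $I_{i,j_3j_1}\in\mathcal I_i$ whenever the compatibility set is non-empty—simply does not follow from Theorem \ref{bigmodification2}, so this gap is genuine.

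A secondary remark: your sketch of what happens \emph{inside} Theorem \ref{bigmodification2} (``partition-of-unity interpolation'' by averaging $I_{j_1j_2}$ against compositions $I_{j_1k}\circ I_{kj_2}$) does not match the actual mechanism, which is an ordered iterative application of the three-maps modification Lemma \ref{TML}, modifying one map per triple at a time while shrinking the radius. Since the lemma only requires \emph{applying} Theorem \ref{bigmodification2}, this mischaracterization does not damage your proof of Lemma \ref{bigmodification} per se, but it is worth correcting so as not to mislead a reader about where the exact cocycle condition comes from.
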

\begin{proof}
	It is enough to apply Theorem \ref{bigmodification2} with scale $t=2^{-i}$, $\beta=C \eps_i$, set of indices $J= J_i$ partitioned into disjoint sets $\{J_i^k\}_{k=1}^{N_i}$, couple of indices  $\mathcal{A}=\{(j_1,j_2) \in J_i\times J_i \ : \ I_{j_1j_2} \in \mathcal I_i\}$, where $\mathcal I_i$ is the family of maps in Definition \ref{maps}, and finally with the maps $\{I_{j_1j_2}\}_{(j_1,j_2)\in \mathcal A}=\mathcal I_i$. Indeed observe that thanks to Proposition \ref{prop:covering} we can take  $M=N(n)$ as the upper bound on the cardinality  of  the partitions $J_i$, therefore the constants appearing in Theorem \ref{bigmodification2} will depend in this case only on $n$. Thus  we just need to check the hypotheses of Theorem \ref{bigmodification2}.
	
	Assumption \eqref{asimm} follows by construction and \eqref{apartition} follows by Remark \ref{QI}. Property \ref{A)} is true again by construction. Property \ref{C)} with $\beta=C \eps_i$ is  the content of Lemma \ref{almcomp}.  Note also that $\beta=C \eps_i\le \bar \beta(n,M)$ provided $\eps_i$ is small enough, where $\bar \beta(n,M)$ is the constant in  Theorem \ref{bigmodification2}. It remains to check property \ref{B)}. Suppose  $I_{j_3j_2}( I_{j_2j_1}(x))\in B_{9\cdot 2^{-i}}(0)$ for some $x \in B_{8\cdot 2^{-i}}(0)$. Recalling now \eqref{alminverse}, provided $\eps_i$ is small enough, we deduce that $y\coloneqq \beta_{j_3}\circ \alpha_{j_2}\circ \beta_{j_2}\circ \alpha_{j_1}(x) \in B_{10\cdot 2^{-i}}(0)$. Observe that we can apply \eqref{alminverse} since $\beta_{j_2}\circ \alpha_{j_1}(x) \in B_{45\cdot 2^{-i}}(x_{j_2})$, thanks to \eqref{centertocenter} and the fact that $\sfd(x_{j_1},x_{j_2})<30\cdot 2^{-i}$ which holds since the map $I_{j_1j_2}$ is defined.
	Therefore from \eqref{centertocenter} it holds that $\alpha_{j_1}(x)\in B_{10\cdot 2^{-i}}(x_{j_1})$ and $\alpha_{j_3}(y)\in B_{11\cdot 2^{-i}}(x_{j_3})$. Moreover from \eqref{alfabeta} we have that $\sfd(\alpha_{j_3}(y),\alpha_{j_1}(x))\le 2^{-i} $, if $\eps_i$ is small enough. By the triangle inequality this implies that $\sfd(x_{j_1},x_{j_3})\le (10+11+1)2^{-i}<29\cdot 2^{-i}$, hence the map $I_{j_3j_1}$ exists by our choice of $\mathcal I_i$ (recall \eqref{eq:def Ii}). This proves property \ref{B)} and concludes the proof.
\end{proof}	

\subsubsection{Construction of the differentiable manifolds}\label{sec:build manifold}
We can now construct $W_i$ as we mentioned above. Specifically we will construct it starting from its transition maps and their domains of definition. This kind of construction, even if very natural, needs to be done carefully to ensure that the resulting space is a well defined Hausdorff manifold. In doing this we will refer to the formalization through  \emph{Sets of Gluing Data} described in \cite[Thereom 3.1]{glue}.  We start defining maps $\hat I_{i,j_1j_2}$ that are restriction of the maps $\widetilde I_{i,j_1j_2}$. In particular we set
\begin{equation}\label{eq:def omega charts}
\Omega^i_{j_1j_2}\coloneqq \widetilde I_{i,j_1j_2}^{-1}(B_{8\cdot 2^{-i}}(0))\cap B_{8\cdot 2^{-i}}(0)  \end{equation}
and, whenever $\Omega^i_{j_1j_2}\neq \emptyset$, we set  $\hat I_{i,j_1j_2}\coloneqq \widetilde I_{i,j_1j_2}|_{\Omega^i_{j_1j_2}}$. Observe that the sets $\Omega^i_{j_1j_2}$ are open subsets of $\rr^n.$ Then we define
\[ W_i\coloneqq  \bigslant{\bigsqcup_{j \in J_i} B^j_{8\cdot 2^{-i}}(0) }{\sim},	 \]
where  
\begin{quote}
    $x \sim y$ (with $x \neq y$) if and only if $x \in B^{j_1}_{8\cdot 2^{-i}}(0), y \in B^{j_2}_{8\cdot 2^{-i}}(0) $, $I_{i,j_2j_1}\in \mathcal I_i$  and $\hat I_{i,j_2j_1}(x)=y$. We impose also $x \sim x$ for every $x$.
\end{quote}
 This is now a valid equivalence relation by Lemma \ref{bigmodification} (referring to the construction in \cite{glue}, the compatibility condition ensured by Lemma \ref{bigmodification} implies condition $(3.c)$ in Definition 3.1 of \cite{glue}). The set $W_i$ comes with a natural quotient map
\[ \Pi: \bigsqcup_{j \in J_i} B^j_{8\cdot 2^{-i}}(0)  \to W_i, \]
defined as $\Pi(x)\coloneqq [x]$, where $[x]$ is the equivalence class containing $x$. Define now for every $j \in J_i$ the map 
\[p_{j}\coloneqq \Pi \circ 	{\sf{in}}_j : B_{8\cdot 2^{-i}}(0) \to W_i,\]
where $B_{8\cdot 2^{-i}}(0) \overset{\sf{in}_j}{\xhookrightarrow{}}  \bigsqcup_{j \in J_i} B^j_{8\cdot 2^{-i}}(0) $ is the natural inclusion map on the $j$-th component.
Observe that, by construction, the map $\Pi$ is injective on every $B^{j}_{8\cdot 2^{-i}}(0)$ and thus also $p_{j}$ is injective. We endow $W_i$ with the topology $\tau$ defined by
\[\tau=\{U\subset W_i \ : \ p_j^{-1}(U) \text{ is an open subset of } \rr^n,\, \forall j \in J_i\}.\]
It is easy to check that $\tau$ is indeed a topology. We claim that with this topology, the sets $p_j(B_{8\cdot 2^{-i}}(0))$ are open for every $j \in J_i$ and that the maps $p_j: B_{8\cdot 2^{-i}}(0)\to p_j(B_{8\cdot 2^{-i}}(0))$ are homomorphisms for every $j \in J_i.$ To prove the claim it sufficient to show that  $p_j(V)\in \tau$, for every $j \in J_i$ and every $V\subset B_{8\cdot 2^{-i}}(0)$ open (possibly taking $V=B_{8\cdot 2^{-i}}(0)$). This in turn follows observing that by construction
\[
p_{j_2}^{-1}(p_{j_1}(V))=\Omega^i_{j_1j_2}\cap V, \quad \forall j_1,j_2 \in J_i
\]
and recalling that the sets $\Omega^i_{j_1j_2}$ are open.  

For every $j \in J_i$ we set 
\[
B_j^i\coloneqq p_j(B_{8\cdot 2^{-i}}(0))
\]
 and define the maps 
 \begin{align*}
      \psi_{i,j}\coloneqq p_j^{-1}: B_j^i \to B_{8\cdot 2^{-i}}(0).
 \end{align*}
These maps will be the charts of the manifold (note that they are heomeomorphisms, since so are the $p_j$'s  as observed above). As usual we will write $\psi_{j}$ instead of $\psi_{i,j}$ when there will be no ambiguity in doing so.
We need to show that $(W_i,\tau)$ is Hausdorff. To do this it is enough to observe that 
\[ \widetilde I_{i,j_1j_2} (\partial (\Omega^i_{j_1j_2})\cap B_{8\cdot2^{-i}}(0)) \subset \partial B_{8\cdot2^{-i}}(0) \]
that comes from the fact that $\widetilde I_{i,j_1j_2}$ is a global diffeomorphism and the definition of the sets $\Omega^i_{j_1j_2}$. This fact implies condition $(3.d)$ in Definition 3.1 of \cite{glue}, which is there shown to be enough to ensure that $W_i$ is a Hausdorff space.
It remains to endow $W_i$ with a smooth structure. If follows from the construction that whenever 
$$B_{j_1}^i\cap B_{j_2}^i=p_{j_1}( B_{8\cdot 2^{-i}}(0))\cap p_{j_2}( B_{8\cdot 2^{-i}}(0))\neq \emptyset$$
then the map $\hat I_{i,j_2j_1}$  exists and by the definition of the equivalence relation we have
\begin{equation}\label{eq:correct transitions}
    \psi_{i,j_2} \circ (\psi_{i,j_1})^{-1}= p_{j_2}^{-1} \circ p_{j_1}=\hat I_{i,j_2j_1}.
\end{equation}
This shows precisely that the maps $\hat I_{i,j_2j_1}$ (which are  diffeomorphisms) are the transitions maps for the charts $\psi_{i,j}$.
Moreover we proved above that the sets $B_j^i$ are open in $(W_i,\tau)$. This proves that  $(B^i_j ,\psi_{i,j})_{j \in J_i}$ is a  $C^\infty$-smooth structure for $W_i.$ 
\begin{lemma}\label{connected}
	The manifold $W_i$ is connected.
\end{lemma}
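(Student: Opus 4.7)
The plan is to reduce connectedness of $W_i$ to connectedness of the ``chart intersection graph'' on $J_i$, defined by declaring $j_1\sim j_2$ whenever $B^i_{j_1}\cap B^i_{j_2}\ne\emptyset$ in $W_i$. Since each $B^i_j=p_j(B_{8\cdot 2^{-i}}(0))$ is the homeomorphic image of a Euclidean ball, it is connected, and $W_i=\bigcup_{j\in J_i}B^i_j$. Thus, once we know that any two indices in $J_i$ can be joined by a finite chain $j_1=k_0,k_1,\dots,k_m=j_2$ with consecutive charts intersecting, the union along such a chain is connected, and we conclude that $W_i$ is connected.

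The main geometric input is the following claim: if $\sfd(x_{j_1},x_{j_2})<3\cdot 2^{-i}$, then $B^i_{j_1}\cap B^i_{j_2}\ne\emptyset$. To prove it, note that such a pair lies well within the regime $\sfd(x_{j_1},x_{j_2})<29\cdot 2^{-i}$, so $I_{i,j_2j_1}\in\mathcal I_i$ (and $\widetilde I_{i,j_2j_1}$ is defined). We estimate $|I_{j_2j_1}(0)|$: by \eqref{alminverse}, \eqref{centertocenter1}, and the fact that $\beta_{j_2}$ is a $C\eps_i2^{-i}$-isometry sending $x_{j_2}$ near $0$, one gets
\[
|I_{j_2j_1}(0)|\le |\beta_{j_2}(\alpha_{j_1}(0))|+C\eps_i2^{-i}\le \sfd(x_{j_1},x_{j_2})+C\eps_i2^{-i}.
\]
Combining with \eqref{finalb} this gives $|\widetilde I_{j_2j_1}(0)|\le 3\cdot 2^{-i}+C\eps_i2^{-i}<8\cdot 2^{-i}$ for $\eps_i$ small. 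Hence $0\in\Omega^i_{j_1j_2}$ (recall \eqref{eq:def omega charts}), and then $p_{j_1}(0)=p_{j_2}(\widetilde I_{j_2j_1}(0))$ is a point in $B^i_{j_1}\cap B^i_{j_2}$, proving the claim.

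To produce the chain, I use that connectedness of $Z$ implies $\delta$-chainability for every $\delta>0$: the set of points reachable from a given $p$ by $\delta$-chains is both open and closed, hence equal to $Z$. Given $j_1,j_2\in J_i$, take a $2^{-i}$-chain $x_{j_1}=p_0,p_1,\dots,p_m=x_{j_2}$ in $Z$. For each intermediate $p_\ell$ choose, using that $X_i$ is $2^{-i}$-dense, an index $k_\ell\in J_i$ with $\sfd(p_\ell,x_{k_\ell})<2^{-i}$; set $k_0=j_1$, $k_m=j_2$. Then $\sfd(x_{k_\ell},x_{k_{\ell+1}})<3\cdot 2^{-i}$, so by the claim consecutive charts $B^i_{k_\ell},B^i_{k_{\ell+1}}$ intersect. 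The union $\bigcup_\ell B^i_{k_\ell}$ is therefore connected, contains $B^i_{j_1}\cup B^i_{j_2}$ in particular, and this finishes the argument.

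The only nontrivial step is the quantitative claim relating smallness of $\sfd(x_{j_1},x_{j_2})$ to overlap of the charts; everything else is packaging. The potential subtlety is keeping track of the various numerical thresholds ($8,\ 29,\ 30$) in the definitions of $\Omega^i_{j_1j_2}$ and $\mathcal I_i$, but with the freedom $\sfd(x_{j_1},x_{j_2})<3\cdot 2^{-i}$ there is ample room.
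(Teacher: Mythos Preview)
Your proof is correct. The key geometric input---that sufficiently close centers $x_{j_1},x_{j_2}$ force $B^i_{j_1}\cap B^i_{j_2}\ne\emptyset$---is exactly the same as in the paper (the paper phrases it contrapositively: if the charts are disjoint then $\sfd(x_{j_1},x_{j_2})>5\cdot 2^{-i}$), and both arguments estimate $|\widetilde I_{j_2j_1}(0)|$ via \eqref{alminverse}, \eqref{centertocenter1}, \eqref{finalb}.

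The packaging differs: the paper argues by contradiction, showing that a decomposition $W_i=U\sqcup V$ into open sets would yield a decomposition of $Z$ into the two disjoint open covers $\bigcup_{j\in S}B^Z_{2^{-i}}(x_j)$ and $\bigcup_{j\in\tilde S}B^Z_{2^{-i}}(x_j)$, contradicting connectedness of $Z$ directly. You instead invoke $\delta$-chainability of connected metric spaces to build explicit chains in the chart intersection graph. Both are standard reductions; the paper's route is marginally shorter since it avoids the (easy but extra) $\delta$-chainability lemma, while yours is arguably more constructive.
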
	
\begin{proof}
	We argue by contradiction and suppose that $W_i=U\cup V$, where  $U,V$ are two non-empty disjoint open subsets of $W_i$.  Since $W_i\subset \cup_{j \in J_i} B_j^i$ and each $B_j^i$ is non-empty and connected (being homeomorphic to $B_{8\cdot 2^{-i}}(0)$),  we must have that 
	\begin{equation}\label{conncomp}
		U=\bigcup_{j\in S} B_j^i, \quad V=\bigcup_{j\in \tilde S} B_j^i,
	\end{equation}
	where $S,\tilde S$ are two disjoint subsets of the  set $J_i$. In particular $B_{j_1}^i\cap B_{j_2}^i=\emptyset $ for every $j_1 \in S$ and every $j_2\in \tilde S.$ We claim that $\sfd(x_{j_2},x_{j_1})>5\cdot 2^{-i}$ for every $j_1 \in S$ and every $j_2\in \tilde S.$ To see this note that the transition map $\hat I_{j_1j_2}$ must not be defined. By construction this can happen only in two cases. The first case is that the map $\tilde I_{j_1j_2}$ is not defined, the second is that $\tilde I_{j_1j_2}$ is defined but $\tilde I_{j_1j_2}^{-1}(B_{8\cdot 2^{-i}}(0))\cap B_{8\cdot 2^{-i}}(0) =\emptyset$ (recall \eqref{eq:def omega charts}). In the first case by construction (recall Definition \ref{maps}) we must have that $\sfd(x_{j_1},x_{j_2})\ge 29\cdot 2^{-i}.$ In the second case, by \eqref{finalb} and \eqref{alminverse} we have that $|\beta_{j_1} \circ \alpha_{j_2}-\tilde I_{j_2j_1}^{-1}|\le C\eps_i 2^{-i}$  in $B_{8\cdot2^{-i}}(0)$. Hence, if $\eps_i$ is small enough, $\beta_{j_1}( \alpha_{j_2}(0))\notin B_{7\cdot 2^{-i}}(0),$ that combined with \eqref{centertocenter1} and provided $\eps_i$ small enough, gives $\sfd(\alpha_{j_2}(0),x_{j_1})>6\cdot 2^{-i}$. Then again using \eqref{centertocenter1} we deduce that $\sfd(x_{j_2},x_{j_1})>5\cdot 2^{-i}.$ Therefore in both cases $\sfd(x_{j_2},x_{j_1})>5\cdot 2^{-i}$ and the claim is proved. This implies that
	\[\bigg(\bigcup_{j \in S} B^Z_{2^{-i}}(x_{j})\bigg)\cap \bigg(\bigcup_{j \in \tilde S} B^Z_{2^{-i}}(x_{j})\bigg)=\emptyset.\]
	However, since $X_i=\{x_{j}\}_{j\in J_i}$ is $2^{-i}$-dense, the above two sets cover $Z$, which contradicts the connectedness of $Z$.
\end{proof}

\subsubsection{Construction of the Riemannian-metrics}\label{sec:build metric}
It remains to endow the smooth manifolds $W_n^i$ with suitable Riemannian metrics $g_i$.

To construct the metric $g_i$ consider a partition of the unity $\rho_{i,j}$ subordinate to the cover $\{B^i_j\}_{j\in J_i}$ and define
\[ g_i\coloneqq  \sum_{j \in J_i}  \rho_{i,j}\psi_{i,j}^*g   \] 
where $g$ is the standard Euclidean metric on $\rr^n.$ Observe that the covering $\{B^i_j\}_{j\in J_i}$ is locally finite with multiplicity less than $N=N(n)$. This is a consequence of Remark \ref{QI}. We denote with $\sfd_i$ the Riemannian distance function induced by the metric $g_i$ (recall that from Lemma \ref{connected} $W_i$ is connected, hence $\sfd_i$ is finite).
The first key observation is  that, with this metric, the charts $\psi_{i,j}$ turns out to be bi-Lipschitz maps, in particular we have the following.
\begin{lemma}\label{lippsii}
	For every $j\in J_i$ consider $\psi_{j}$ as a smooth function $\psi_{j}: B_j^i\to \rr^n$, then
	\begin{equation}\label{lippsi}
		\|D\psi_{j}\|, \|D\psi_{j}^{-1}\|\le 1+C\eps_i,
	\end{equation}
	where $\|\cdot \|$ denotes the operator norm.
\end{lemma}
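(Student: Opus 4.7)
The plan is to compare $\psi_j^\ast g$ with $g_i$ pointwise at every $p \in B_j^i$ by exploiting the fact that the transition maps of the atlas are $C^1$-close to Euclidean isometries, and then read off the bound on $\|D\psi_j\|, \|D\psi_j^{-1}\|$ from this comparison.

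First, I would record the pointwise $C^1$-comparison of pullback metrics on overlaps. Fix $p \in B_j^i$ and any $k \in J_i$ with $p \in B_k^i$. By \eqref{eq:correct transitions} we have $\psi_k \circ \psi_j^{-1} = \hat I_{i,kj} = \widetilde I_{i,kj}\restr{\Omega^i_{jk}}$, and since $I_{i,kj}$ is a Euclidean isometry we have $\|D I_{i,kj}\| = \|D I_{i,kj}^{-1}\| = 1$. Combining with \eqref{finalb} from Lemma \ref{bigmodification} (and using \eqref{opnorm} to pass from entrywise to operator norm), we obtain
\[
\|D\widetilde I_{i,kj}(x) - D I_{i,kj}\| \le C\eps_i \qquad \forall x \in \rr^n,
\]
hence, for $\eps_i$ small enough, $\|D\widetilde I_{i,kj}(x)\|, \|D\widetilde I_{i,kj}(x)^{-1}\| \le 1 + C\eps_i$. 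Chain ruling this with $\psi_k = \widetilde I_{i,kj}\circ \psi_j$, for any $v \in T_pW_i$ and $w \coloneqq D\psi_j(v) \in \rr^n$ I would deduce
\[
(1 - C\eps_i)\,\psi_j^\ast g(v,v) \le \psi_k^\ast g(v,v) \le (1 + C\eps_i)\,\psi_j^\ast g(v,v).
\]

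Next, I would take the convex combination. Since $\sum_k \rho_{i,k}(p) = 1$ and $\rho_{i,k}(p) \ge 0$, averaging the previous two-sided inequality against the weights $\rho_{i,k}(p)$ over those $k$ with $p \in B_k^i$ yields
\[
(1 - C\eps_i)\,\psi_j^\ast g(v,v) \le g_i(v,v) \le (1 + C\eps_i)\,\psi_j^\ast g(v,v).
\]
Since $\psi_j^\ast g(v,v) = |D\psi_j(v)|^2$, rearranging gives $|D\psi_j(v)|^2 \le (1 - C\eps_i)^{-1} g_i(v,v) \le (1 + C'\eps_i)\, g_i(v,v)$ for $\eps_i$ small, and similarly $|D\psi_j(v)|^2 \ge (1 - C'\eps_i)\, g_i(v,v)$. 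Taking square roots and absorbing constants into $C$ yields $\|D\psi_j\| \le 1 + C\eps_i$. Applying the same bound to $u \in \rr^n$ with $v = D\psi_j^{-1}(u) \in T_pW_i$ gives $\|D\psi_j^{-1}\| \le 1 + C\eps_i$, completing \eqref{lippsi}.

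I do not expect any real obstacle here: the $C^1$ closeness of transition maps to isometries is exactly the input produced by Lemma \ref{bigmodification}, and the convex-combination structure of $g_i$ transfers this closeness from pullbacks to $g_i$ itself in a trivial way. The only point deserving a moment of care is the use of \eqref{opnorm} to turn the entrywise $C^1$-bound \eqref{finalb} into an operator-norm bound on $D\widetilde I_{i,kj} - D I_{i,kj}$, and the verification that the transition maps are indeed defined wherever we need them (which is already guaranteed by \eqref{eq:correct transitions} on nonempty overlaps).
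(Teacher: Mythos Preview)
Your proposal is correct and follows essentially the same approach as the paper: both compute $g_i(v,v)=\sum_k \rho_{i,k}(p)\,|D\psi_k(v)|^2$, use the $C^1$-closeness of the transition maps $\hat I_{i,kj}$ to isometries (from \eqref{finalb}) to bound each term by $(1\pm C\eps_i)|D\psi_j(v)|^2$, and then read off the operator-norm estimate from the resulting two-sided comparison. One minor remark: the $C^1$ part of the norm $\|\cdot\|_{C^2(\rr^n),t}$ already controls the operator norm $\|Df\|$ directly, so you do not actually need to invoke \eqref{opnorm} to pass from entrywise to operator-norm control on $D\widetilde I_{i,kj}-DI_{i,kj}$.
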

\begin{proof}
	Fix $p \in B^i_j$ and pick any $v \in T_p W_i.$ We can write $v$ in local coordinates with respect to the chart $\psi_{j}=(x_1^j,...,x_n^j)$ as $v=a^k\frac{\partial}{\partial x^j_k}.$ Set $ w\coloneqq  (D\psi_{j})_p\cdot v \in \rr^n$ and observe that $w=(a_1,...,a_n).$ We now need to compute $|v|_{T_pW_i}.$ Recall that $g_p= \sum_{k\in J_i} \rho_{i,k}(p) \psi_{k}^*g$, thus 
	\begin{align*}
		|v|^2_{T_pW_i}=\sum_{k\in J_i} \rho_{i,k}(p) |(D\psi_{k})_pv|^2=\sum_{k\in J_i} \rho_{i,k}(p) |A_{k,j}\bar a|^2.
	\end{align*}
	where $A_{k,j}$ is the Jacobian matrix of the transition function $\psi_k \circ (\psi_j)^{-1}$, that by construction coincides with $\hat I_{k,j}.$ Therefore from \eqref{finalb} we have that $ 1-C\eps_i\le \|A_{k,j}\|\le 1+C\eps_i$ (indeed  $I_{i,j_1j_2}$ are isometries). Thus
	\begin{equation*}
		(1 -C\eps_i) |w|\le|v|_{T_pW_i}\le (1 + C\eps_i) |w|,
	\end{equation*}
	which gives 
	\[\frac{1}{1+C\eps_i}  |v|\le|w|\le (1 + 2C\eps_i) |v|,\]
	that is what we wanted.
\end{proof}

\begin{lemma}\label{ballsinballs}
    It  holds that
	$$B^{W_i}_{\frac{s}{1+C\eps_i}}(\psi_{j}^{-1}(x))\subset \psi_{j}^{-1}(B_{s}(x))\subset B^{W_i}_{s(1+C\eps_i)}(\psi_{j}^{-1}(x))$$
	for every ball $B_s(x)\subset B_{8\cdot 2^{-i}}(0)$.
\end{lemma}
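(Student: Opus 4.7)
\textbf{Proof plan for Lemma \ref{ballsinballs}.} The statement is essentially a direct translation of Lemma \ref{lippsii}: since $\psi_j: B_j^i \to B_{8\cdot 2^{-i}}(0)$ is a homeomorphism whose differential and inverse differential have operator norm at most $1+C\eps_i$, I expect to obtain the two inclusions by lifting/pushing curves. The only genuine issue is that a curve in $W_i$ realising the Riemannian distance might \emph{a priori} leave the chart $B_j^i$, so one needs to exploit the hypothesis $B_s(x)\subset B_{8\cdot 2^{-i}}(0)$ to keep everything inside.

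For the second inclusion, take $p\in \psi_j^{-1}(B_s(x))$ and consider the straight segment $\gamma(\tau)\coloneqq (1-\tau)x+\tau\psi_j(p)$, $\tau\in[0,1]$, which lies entirely in $B_s(x)\subset B_{8\cdot 2^{-i}}(0)$. Then $\psi_j^{-1}\circ\gamma$ is a smooth curve in $W_i$ from $\psi_j^{-1}(x)$ to $p$ whose $g_i$-length, by Lemma \ref{lippsii}, is bounded by $(1+C\eps_i)\,|\psi_j(p)-x|<(1+C\eps_i)s$. Hence $\sfd_i(\psi_j^{-1}(x),p)<(1+C\eps_i)s$, which is the desired inclusion $\psi_j^{-1}(B_s(x))\subset B^{W_i}_{s(1+C\eps_i)}(\psi_j^{-1}(x))$.

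For the first inclusion, fix $q\in B^{W_i}_{s/(1+C\eps_i)}(\psi_j^{-1}(x))$ and pick, for an arbitrary small $\eta>0$, a piecewise smooth curve $\gamma:[0,1]\to W_i$ with $\gamma(0)=\psi_j^{-1}(x)$, $\gamma(1)=q$ and $g_i$-length $L<s/(1+C\eps_i)+\eta$. Set
\[
t^*\coloneqq \sup\{\,t\in[0,1]\,:\, \gamma([0,t])\subset B_j^i\,\}.
\]
As long as $t<t^*$, Lemma \ref{lippsii} gives that $\psi_j\circ\gamma|_{[0,t]}$ has Euclidean length at most $(1+C\eps_i)L<s+C'\eta$, so $\psi_j(\gamma(t))\in B_{s+C'\eta}(x)$. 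Since $\psi_j: B_j^i\to B_{8\cdot 2^{-i}}(0)$ is a homeomorphism, if $t^*<1$ or $\gamma(t^*)\in \partial B_j^i$ then, as $t\uparrow t^*$, $|\psi_j(\gamma(t))|$ must tend to $8\cdot 2^{-i}$; but this contradicts $B_s(x)\subset B_{8\cdot 2^{-i}}(0)$ once $\eta$ is chosen small enough. Therefore the whole curve stays in $B_j^i$, so $q\in B_j^i$ and $|\psi_j(q)-x|\le (1+C\eps_i)L<s+C'\eta$. Letting $\eta\downarrow 0$ yields $\psi_j(q)\in \overline{B_s(x)}$, and the same argument applied with $s$ replaced by any $s'<s$ at which $q$ is still in $B^{W_i}_{s'/(1+C\eps_i)}(\psi_j^{-1}(x))$ (or, equivalently, using that strict inequality is preserved) gives $\psi_j(q)\in B_s(x)$, as required.

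The only subtlety is the ``curve cannot escape the chart'' step, which I anticipate to be the main technical point; it is resolved entirely by the containment hypothesis $B_s(x)\subset B_{8\cdot 2^{-i}}(0)$ together with the bi-Lipschitz bound from Lemma \ref{lippsii}.
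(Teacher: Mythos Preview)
Your proposal is correct and follows exactly the approach of the paper, which dispatches the lemma in one line by invoking the $(1+C\eps_i)$-bi-Lipschitz bound of Lemma~\ref{lippsii} together with the fact that both distances are length distances. You have simply unpacked that sentence: the second inclusion is the straightforward direction, and for the first inclusion you correctly identify and handle the only non-trivial point (that a near-minimising curve cannot exit the chart $B_j^i$, thanks to the hypothesis $B_s(x)\subset B_{8\cdot 2^{-i}}(0)$), which the paper leaves implicit.
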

\begin{proof}
     It follows  immediately from the fact that $\psi_{j}$ are $1+C\eps_i$ bi-Lipschitz homeomorphisms (from \eqref{lippsi}) and the fact that the distances in $\rr^n$ and $W^i$ are geodesic.
\end{proof}

\begin{remark}\label{rmk:complete}
	The manifolds $W_i$ endowed with the distance $d_i$ are complete, however we will postpone the proof of this fact to the end of the next subsection (see Lemma \ref{lem:complete manifolds}).\fr
\end{remark}
\subsection{Construction of the manifold-to-metric space mappings \texorpdfstring{$f_i$}{fi}}\label{sec:manifold metric mappings}

For every $w\in W_i$ we define
\[f_i(w)\coloneqq \alpha_{j}(\psi_{j}(w)), \]
where $j \in J_i$  is the smallest such that $w\in B_j^i$.
We remark that with this definition $f_i(w)$ will depend on the choice of the index $j$, indeed the map $f_i$ will not be even continuous in general. 
However in the following statement we prove that  $f_i$ is almost unique, in the sense that the a different choice of $j$ in its definition change its value by at most $\eps_i2^{-i}. $
\begin{lemma}\label{welldeffii}
	\begin{equation}\label{welldeffi}
		\sfd(f_i(x),\alpha_{j}(\psi_{j}(x)))\le C\eps_i 2^{-i}, \quad \forall\, j \in J_i,\,\, \forall\, x \in B_j^i.
	\end{equation}
\end{lemma}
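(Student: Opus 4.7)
Let $j_1$ be the smallest index in $J_i$ with $x\in B_{j_1}^i$ (so that $f_i(x)=\alpha_{j_1}(\psi_{j_1}(x))$), and let $j_2=j$ be any other index with $x\in B_{j_2}^i$. The plan is to trace the chain of approximations that connects $\alpha_{j_2}(\psi_{j_2}(x))$ back to $\alpha_{j_1}(\psi_{j_1}(x))$, picking up a controlled error $C\eps_i 2^{-i}$ at each step.

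The first step is to rewrite $\psi_{j_2}(x)$ in terms of $\psi_{j_1}(x)$: since $x\in B_{j_1}^i\cap B_{j_2}^i$, the transition map $\hat I_{i,j_2j_1}$ is defined and by \eqref{eq:correct transitions} we have $\psi_{j_2}(x)=\hat I_{i,j_2j_1}(\psi_{j_1}(x))=\widetilde I_{i,j_2j_1}(\psi_{j_1}(x))$. In particular the map $I_{i,j_2j_1}$ exists, hence $\sfd(x_{j_1},x_{j_2})<30\cdot 2^{-i}$. Next, Lemma \ref{bigmodification} (inequality \eqref{finalb}) gives $|\widetilde I_{i,j_2j_1}-I_{i,j_2j_1}|\le C\eps_i 2^{-i}$ everywhere, and Lemma \ref{Definition of the transition maps} (inequality \eqref{alminverse}) gives $|I_{i,j_2j_1}-\beta_{j_2}\circ\alpha_{j_1}|\le C\eps_i 2^{-i}$ on $B_{45\cdot 2^{-i}}(0)$; since $\psi_{j_1}(x)\in B_{8\cdot 2^{-i}}(0)\subset B_{45\cdot 2^{-i}}(0)$ this applies, yielding
\[
|\psi_{j_2}(x)-\beta_{j_2}(\alpha_{j_1}(\psi_{j_1}(x)))|\le C\eps_i 2^{-i}.
\]

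Applying $\alpha_{j_2}$ to both sides and using that $\alpha_{j_2}$ is a $C\eps_i 2^{-i}$-isometry (which passes a further additive $C\eps_i 2^{-i}$ error from distances in $\rr^n$ to distances in $Z$), we obtain
\[
\sfd\bigl(\alpha_{j_2}(\psi_{j_2}(x)),\,\alpha_{j_2}(\beta_{j_2}(\alpha_{j_1}(\psi_{j_1}(x))))\bigr)\le C\eps_i 2^{-i}.
\]
Now the point $\alpha_{j_1}(\psi_{j_1}(x))$ lies in $B_{9\cdot 2^{-i}}(x_{j_1})\subset B_{39\cdot 2^{-i}}(x_{j_2})\subset B_{200\cdot 2^{-i}}(x_{j_2})$ by \eqref{centertocenter} and the bound $\sfd(x_{j_1},x_{j_2})<30\cdot 2^{-i}$, so that $\alpha_{j_2}\circ\beta_{j_2}$ is defined there and the near-identity bound \eqref{alfabeta} yields $\sfd(\alpha_{j_2}(\beta_{j_2}(\alpha_{j_1}(\psi_{j_1}(x)))),\alpha_{j_1}(\psi_{j_1}(x)))\le C\eps_i 2^{-i}$. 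A final triangle inequality combines these two estimates into $\sfd(\alpha_{j_2}(\psi_{j_2}(x)),\alpha_{j_1}(\psi_{j_1}(x)))\le C\eps_i 2^{-i}$, which is the claim.

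There is no real obstacle here: the argument is purely a bookkeeping of the three layers of approximation (smoothing of $I$ to $\widetilde I$, identification of $I$ with $\beta\circ\alpha$, and near-inversion of $\alpha$ by $\beta$) together with verifying that all intermediate points stay in the domains where these estimates are valid. The only mildly delicate point is making sure that $\sfd(x_{j_1},x_{j_2})$ is automatically $<30\cdot 2^{-i}$ whenever $B_{j_1}^i\cap B_{j_2}^i\neq\emptyset$, which however is built into the very existence of the transition map $\hat I_{i,j_2j_1}$ through Definition \ref{maps}.
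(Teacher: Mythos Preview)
Your proof is correct and follows essentially the same approach as the paper: both arguments rewrite $\psi_{j_2}(x)$ via the transition map $\hat I_{i,j_2j_1}$, replace it by $\beta_{j_2}\circ\alpha_{j_1}$ using \eqref{finalb} and \eqref{alminverse}, and then invoke \eqref{alfabeta} to collapse $\alpha_{j_2}\circ\beta_{j_2}$ to the identity up to a $C\eps_i 2^{-i}$ error. Your write-up is somewhat more explicit about domain checks (in particular the bound $\sfd(x_{j_1},x_{j_2})<30\cdot 2^{-i}$), but the underlying chain of estimates is identical.
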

\begin{proof}
	By definition $f_i(x)=\alpha_k(\psi_k((x)))$ for some $k \in J_i$ for which $x \in B_k^i.$ If $k=j$ there is nothing to prove, so suppose $k \neq j.$ Then
	\begin{align*}
		\sfd(f_i(x),\alpha_{j}(\psi_j(x)))&=\sfd(\alpha_{k}(\psi_{k}(x)),\alpha_{j}(\psi_j(x)))=\\
		&=\sfd(\alpha_{k}(\psi_{k}(x)),\alpha_{j}(\psi_j(\psi_{k}^{-1}(\psi_{k}(x))))=\\
		&=\sfd(\alpha_{k}(\psi_{k}(x)),\alpha_{j}(\hat I_{j,k}(\psi_{k}(x)))\\
		&\le \sfd(\alpha_{k}(\psi_{k}(x)),\alpha_{j}(\beta_j\circ \alpha_k(\psi_{k}(x)))+C\eps_i2^{-i},
	\end{align*}
	where the last inequality follows combining \eqref{alminverse} and \eqref{finalb}.
	Recalling now \eqref{alfabeta} we obtain \eqref{welldeffi}.
\end{proof}

By construction $f_i$ sends points in $B_i^j$ inside $B_{10\cdot 2^{-i}}(x_j)$. The following statement is the converse of this, meaning that points mapped by $f_i$ near $x_j$ must belong to the coordinate patch $B_i^j$.
\begin{lemma}\label{correspl}
	 Suppose that for some $w \in W_i$ it holds that $f_i(w)\in B_{s\cdot 2^{-i}}(x_{j})$ with $s<6.$ Then, provided $\eps_i$ is small enough, the map $\psi_j$ exists and we have
	\begin{equation}\label{corresp}
		w \in \psi_j^{-1}(B_{(s+C\eps_i)2^{-i}}(0)),
	\end{equation}
	in particular $w \in B_j^i.$
\end{lemma}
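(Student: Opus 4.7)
The plan is to unfold the definition of $f_i$, use the near-isometry properties of $\beta_j$ and the approximation bounds for the transition maps to transport the control on $f_i(w)$ back into a control on $\psi_j(w)$.

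First I would fix an index $k\in J_i$ such that $w\in B^i_k$ and $f_i(w)=\alpha_k(\psi_k(w))$, which exists by the definition of $f_i$. From $\psi_k(w)\in B_{8\cdot 2^{-i}}(0)$ and the containment \eqref{centertocenter}, the point $f_i(w)$ lies in $B_{9\cdot 2^{-i}}(x_k)$, while by assumption it also lies in $B_{s\cdot 2^{-i}}(x_j)$ with $s<6$. The triangle inequality then yields $\sfd(x_j,x_k)<(s+9)\cdot 2^{-i}<29\cdot 2^{-i}$, so by Definition \ref{maps} and \eqref{eq:def Ii} we have $I_{i,jk}\in\mathcal{I}_i$, and hence the modified map $\widetilde I_{i,jk}$ from Lemma \ref{bigmodification} is defined.

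The next step is to show that $y\coloneqq \widetilde I_{i,jk}(\psi_k(w))$ lies in $B_{8\cdot 2^{-i}}(0)$, which by the equivalence relation defining $W_i$ and by \eqref{eq:correct transitions} would give $w\in B^i_j$ and $\psi_j(w)=y$. To estimate $|y|$ I would insert two intermediate terms and bound:
\begin{align*}
|y|\le |\widetilde I_{i,jk}(\psi_k(w))-I_{i,jk}(\psi_k(w))|+|I_{i,jk}(\psi_k(w))-\beta_j\circ\alpha_k(\psi_k(w))|+|\beta_j(f_i(w))|.
\end{align*}
The first term is controlled by $C\eps_i 2^{-i}$ via \eqref{finalb} applied at $\psi_k(w)\in B_{8\cdot 2^{-i}}(0)$; the second by $C\eps_i 2^{-i}$ via \eqref{alminverse}, whose hypothesis $\psi_k(w)\in B_{45\cdot 2^{-i}}(0)$ is trivially satisfied.

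Finally, for the last term I would use that $\beta_j$ is a $C\eps_i 2^{-i}$-isometry with $\beta_j(x_j)\in B_{C\eps_i 2^{-i}}(0)$ (see \eqref{centertocenter1}), so that from $f_i(w)\in B_{s\cdot 2^{-i}}(x_j)$ we get $|\beta_j(f_i(w))|\le s\cdot 2^{-i}+C\eps_i 2^{-i}$. Combining these three bounds yields $|y|\le (s+C\eps_i)2^{-i}$, which is strictly less than $8\cdot 2^{-i}$ if $\eps_i$ is small enough (since $s<6$). Hence $\psi_k(w)\in\Omega^i_{kj}$, and \eqref{eq:correct transitions} gives $\psi_j(w)=y$, proving \eqref{corresp} and in particular $w\in B^i_j$. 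The main (though minor) obstacle is only the bookkeeping of which approximation estimate applies on which ball; the structure itself is essentially dictated by the inequalities already established in the section.
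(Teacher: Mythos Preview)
Your argument is correct and follows essentially the same route as the paper's proof: pick $k$ with $f_i(w)=\alpha_k(\psi_k(w))$, bound $\sfd(x_j,x_k)$ to ensure $I_{i,jk}\in\mathcal I_i$, then combine \eqref{finalb}, \eqref{alminverse} and \eqref{centertocenter1} to place $\widetilde I_{i,jk}(\psi_k(w))$ in $B_{(s+C\eps_i)2^{-i}}(0)$ and conclude via \eqref{eq:correct transitions}. The only slip is notational: the set containing $\psi_k(w)$ should be $\Omega^i_{jk}$ (the domain of $\hat I_{i,jk}$), not $\Omega^i_{kj}$.
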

\begin{proof}
	By definition $f_i(w)=\alpha_k(\psi_k(w))$ for some $k \in J_i$. By definition of $\psi_k$ we have $\psi_k(w)\in B_{8\cdot 2^{-i}}(0)$ and so by \eqref{centertocenter} it holds $f_i(w)\in B_{9\cdot 2^{-i}}(x_{k})$. In particular,$f_i(w)\in B_{s\cdot 2^{-i}}(x_{j})$ with $s<6$, we obtain $\sfd(x_k,x_j)<15\cdot 2^{-i}.$ In particular $I_{j,k}\in \mathcal I_i$ (recall \eqref{eq:def Ii}). Observe now that from \eqref{centertocenter1} we deduce that $\beta_{j}(\alpha_k(\psi_k(w)))=\beta_{j}(f_i(w)) \in B_{(s+C\eps_i)2^{-i}}(0)$. Moreover combining \eqref{finalb} and \eqref{alminverse} we have $|\tilde  I_{j,k}-\beta_j\circ \alpha_k|\le C\eps_i2^{-i}$ in $B_{45\cdot2^{-i}}(0)$. Hence  $\tilde I_{j,k}(\psi_k(w))\in B_{(s+C\eps_i)2^{-i}}(0)$, up to increasing the constant $C$. Since $s<6$, if $\eps_i$ is small enough we have that $\tilde I_{j,k}(\psi_k(w))\in B_{8\cdot 2^{-i}}(0),$ which implies $\psi_k(w) \in \Omega^i_{j,k}$ (where $\Omega^i_{j,k}$ is as in \eqref{eq:def omega charts}). Since $\Omega^i_{j,k}$ is the domain of $\hat I_{j,k}$ we can write
	$$w=\psi_j^{-1}(\psi_j \circ \psi_k^{-1})(\psi_k(w))=\psi_j^{-1}(\hat I_{j,k}(\psi_k(w))),$$
 where we used \eqref{eq:correct transitions}.
	Since we showed that $\tilde I_{j,k}(\psi_k(w))\in B_{(s+C\eps_i)2^{-i}}(0)$, this proves \eqref{corresp}.
\end{proof}
The following  result tells us that, locally, $f_i$ is a $2^{-i}C\eps_i$-isometry.
\begin{lemma}\label{fidii}
For every $j\in J_i$ it holds
	\begin{equation}\label{fid}
		|\sfd(f_i(w_1),f_i(w_2))-\sfd_i(w_1,w_2)|\le 2^{-i}C\eps_i, \quad \text{for every $w_1,w_2 \in B_j^i$.}
	\end{equation}
\end{lemma}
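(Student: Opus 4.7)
The strategy is to reduce the problem in two purely algebraic steps to comparing the Euclidean distance $|\psi_j(w_1)-\psi_j(w_2)|$ with the Riemannian distance $\sfd_i(w_1,w_2)$, and then to handle the latter comparison through a covering argument using the almost-isometries we have built.

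First, applying Lemma \ref{welldeffii} to both $w_1$ and $w_2$ and using the triangle inequality in $Z$, we get
\[\big|\sfd(f_i(w_1),f_i(w_2))-\sfd(\alpha_j(\psi_j(w_1)),\alpha_j(\psi_j(w_2)))\big|\le 2C\eps_i 2^{-i}.\]
Since $\alpha_j\colon B_{200\cdot 2^{-i}}(0)\to B_{200\cdot 2^{-i}}(x_j)$ is a $C\eps_i 2^{-i}$-GH approximation and $\psi_j(w_k)\in B_{8\cdot 2^{-i}}(0)$, the distance appearing on the right differs from $|\psi_j(w_1)-\psi_j(w_2)|$ by at most $C\eps_i 2^{-i}$. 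It thus suffices to prove $\big||\psi_j(w_1)-\psi_j(w_2)|-\sfd_i(w_1,w_2)\big|\le C\eps_i 2^{-i}$.

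The inequality $\sfd_i(w_1,w_2)\le |\psi_j(w_1)-\psi_j(w_2)|+C\eps_i 2^{-i}$ is easy: the Euclidean segment from $\psi_j(w_1)$ to $\psi_j(w_2)$ stays in $B_{8\cdot 2^{-i}}(0)$ by convexity, so its $\psi_j^{-1}$-image is an admissible curve in $W_i$ joining $w_1$ and $w_2$, and by Lemma \ref{lippsii} its length is at most $(1+C\eps_i)|\psi_j(w_1)-\psi_j(w_2)|$, which is $|\psi_j(w_1)-\psi_j(w_2)|+O(\eps_i 2^{-i})$ since $|\psi_j(w_1)-\psi_j(w_2)|<16\cdot 2^{-i}$. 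In particular this already shows $\sfd_i(w_1,w_2)\le 17\cdot 2^{-i}$.

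The reverse inequality $|\psi_j(w_1)-\psi_j(w_2)|\le \sfd_i(w_1,w_2)+C\eps_i 2^{-i}$ is the main obstacle: a minimizing curve for $\sfd_i$ may leave $B_j^i$ and re-enter, so Lemma \ref{lippsii} does not apply directly. My plan is to pick a near-minimizing curve $\gamma\colon [0,L]\to W_i$ with $L\le \sfd_i(w_1,w_2)+\eps_i 2^{-i}\le 18\cdot 2^{-i}$, and subdivide it into $K=O(1)$ pieces $\gamma|_{[t_k,t_{k+1}]}$ each of length at most $2^{-i}$. For every piece I will use the $2^{-i}$-density of $X_i$ to select an index $m_k\in J_i$ with $\sfd(f_i(\gamma(t_k)),x_{m_k})<2^{-i}$: Lemma \ref{correspl} then gives $\psi_{m_k}(\gamma(t_k))\in B_{2\cdot 2^{-i}}(0)$, and Lemma \ref{ballsinballs} ensures that a Riemannian ball of radius roughly $5\cdot 2^{-i}$ around $\gamma(t_k)$ sits inside $B_{m_k}^i$, so the whole piece $\gamma|_{[t_k,t_{k+1}]}$ is contained in $B_{m_k}^i$. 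Applying Lemma \ref{lippsii} in the chart $\psi_{m_k}$ and repeating the first two reduction steps for the index $m_k$ then yields
\[\sfd(f_i(\gamma(t_k)),f_i(\gamma(t_{k+1})))\le L(\gamma|_{[t_k,t_{k+1}]})+C\eps_i 2^{-i}.\]
Summing these estimates over the $K=O(1)$ pieces and applying the triangle inequality in $Z$ one obtains $\sfd(f_i(w_1),f_i(w_2))\le \sfd_i(w_1,w_2)+C\eps_i 2^{-i}$, which combined with the two reduction steps gives the missing estimate and closes the proof.
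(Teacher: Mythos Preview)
Your proof is correct, but it takes a different and in fact more careful route than the paper's. The paper carries out the same two reduction steps as you (Lemma~\ref{welldeffii} and the $\delta$-isometry property of $\alpha_j$), but then finishes in one line: it simply quotes that ``$\psi_j$ is $(1+C\eps_i)$-bi-Lipschitz'' (from Lemma~\ref{lippsii}) and combines this with the a priori bound $\sfd_i(w_1,w_2)\le 20\cdot 2^{-i}$ coming from Lemma~\ref{ballsinballs} to get
\[
\big|\,|\psi_j(w_1)-\psi_j(w_2)|-\sfd_i(w_1,w_2)\,\big|=\sfd_i(w_1,w_2)\left|\frac{|\psi_j(w_1)-\psi_j(w_2)|}{\sfd_i(w_1,w_2)}-1\right|\le 20\cdot 2^{-i}\,C\eps_i.
\]

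The point you flag is real: Lemma~\ref{lippsii} gives only the pointwise bound $\|D\psi_j\|\le 1+C\eps_i$ on $B_j^i$, and since a $\sfd_i$-minimizing curve between two points of $B_j^i$ can in principle leave $B_j^i$, this does not by itself yield $|\psi_j(w_1)-\psi_j(w_2)|\le(1+C\eps_i)\sfd_i(w_1,w_2)$ for \emph{all} pairs $w_1,w_2\in B_j^i$. The paper glosses over this. Your covering argument sidesteps the issue: you never need the global Lipschitz property of $\psi_j$, only the local one along a short arc, and you manufacture such arcs by cutting a near-minimizer into $O(1)$ pieces, each of which (via Lemma~\ref{correspl} and Lemma~\ref{ballsinballs}) sits inside a single chart $B_{m_k}^i$. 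Passing through $f_i$ and the triangle inequality in $Z$ then gives $\sfd(f_i(w_1),f_i(w_2))\le \sfd_i(w_1,w_2)+C\eps_i 2^{-i}$, which together with the reduction steps yields the missing inequality. The payoff is a rigorous, self-contained proof; the cost is a few more lines than the paper's version.
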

\begin{proof}
	By \eqref{welldeffi} we can suppose  that  $f_i(w_1)=\alpha_{j}(\psi_j(w_1)),f_i(w_2)=\alpha_{j}(\psi_j(w_2))$. Thus we need to bound $|\sfd(\alpha_{j}(\psi_j(w_1)),\alpha_{j}(\psi_j(w_2)))-\sfd_i(w_1,w_2)|,$ however since $\alpha_{j}$ is a $C\eps_i 2^{-i}$ isometry we have $$|\sfd(\alpha_{j}(\psi_j(w_1)),\alpha_{j}(\psi_j(w_2)))-|\psi_{j}(w_1)-\psi_{j}(w_2)||\le C\eps_i 2^{-i}.$$ Hence we reduced ourselves to estimate $||\psi_{j}(w_1)-\psi_{j}(w_2)|-\sfd_i(w_1,w_2)|.$ To do this first observe that by Lemma \ref{ballsinballs}, if $\eps_i$ is small enough, we have 
	$$ B_j^i=\psi_j^{-1}(B_{8\cdot 2^{-i}}(0))\subset B_{10\cdot 2^{-i}}(\psi_j^{-1}(0)),$$
	hence $\sfd_i(w_1,w_2)\le 20\cdot 2^{-i}.$ 
	Then recalling that $\psi_{j}$ is $1+C\eps_i$ bi-Lipschitz we have 
	\[||\psi_{j}(w_1)-\psi_{j}(w_2)|-\sfd_i(w_1,w_2)|=\sfd_i(w_1,w_2)\left| \frac{|\psi_{j}(w_1)-\psi_{j}(w_2)|}{\sfd_i(w_1,w_2)} -1 \right|\le 20\cdot 2^{-i} C\eps_i,  \]
	that is what we wanted.
	
\end{proof}

Since the balls centered in $X_i$ of radius $2^{-i}$ covers $Z$, it is natural to expect that we can cover $W_i$ with images via $\psi_j^{-1}$ of balls with radius significantly smaller ${10\cdot 2^{-i}}$. This essentially is the content of the following result. 
\begin{lemma}\label{completeness}
	For every $i \in \mathbb{N}_0$ it holds that
	\begin{equation}\label{closedcover}
		W_i\subset \bigcup_{j\in J_i} \psi_{j}^{-1}\left(\overline{B_{2\cdot 2^{-i}}(0)} \right )\subset \bigcup_{j\in J_i} {B^{W_i}_{3\cdot 2^{-i}}(\psi_{j}^{-1}(0))} ) .
	\end{equation} 
\end{lemma}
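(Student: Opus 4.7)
The plan is to prove the two inclusions separately, both of which follow almost immediately from the lemmas already established.

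For the second inclusion, the idea is to apply Lemma \ref{ballsinballs} with $x=0$ and $s=2\cdot 2^{-i}$. This immediately gives $\psi_j^{-1}(B_{2\cdot 2^{-i}}(0))\subset B^{W_i}_{2\cdot 2^{-i}(1+C\eps_i)}(\psi_j^{-1}(0))$; taking closures and using that $\overline{B_{2\cdot 2^{-i}}(0)}$ is mapped by a homeomorphism to the closure of its image, one obtains the desired containment in $B^{W_i}_{3\cdot 2^{-i}}(\psi_j^{-1}(0))$ provided $\eps_i$ is sufficiently small (so that $2(1+C\eps_i)<3$).

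For the first inclusion, the idea is to use the map $f_i$ together with the density of $X_i$ in $Z$. Fix any $w\in W_i$ and consider $f_i(w)\in Z$. Since $X_i$ is $2^{-i}$-dense in $Z$, there exists $j\in J_i$ with $\sfd(f_i(w),x_j)<2^{-i}$, i.e.\ $f_i(w)\in B_{2^{-i}}(x_j)$. Applying Lemma \ref{correspl} with $s=1<6$ then yields that (provided $\eps_i$ is small enough) the chart $\psi_j$ is defined at $w$ and
\[
w\in \psi_j^{-1}\bigl(B_{(1+C\eps_i)2^{-i}}(0)\bigr)\subset \psi_j^{-1}\bigl(\overline{B_{2\cdot 2^{-i}}(0)}\bigr),
\]
where the second inclusion holds as soon as $\eps(n)$ is chosen so that $1+C\eps_i\le 2$.

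No real obstacle is expected: both inclusions are almost direct corollaries of Lemma \ref{ballsinballs} and Lemma \ref{correspl} respectively. The only point to keep track of is that the smallness of $\eps_i$ needed here is absorbed into the running choice of $\eps(n)$ fixed in Subsection \ref{sec:notations reif}.
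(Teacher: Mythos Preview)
Your proof is correct and follows essentially the same approach as the paper: the first inclusion via the $2^{-i}$-density of $X_i$ together with Lemma \ref{correspl}, and the second inclusion via Lemma \ref{ballsinballs}. You even give slightly more detail than the paper on the closure step for the second inclusion.
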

\begin{proof}
	Let $w \in W_i$ then $f_i(w)\in B_{ 2^{-i}}(x_j)$ for some $j \in J_i$, since $X_i$ is $2^{-i}$-dense in $Z$ by construction. Therefore from Lemma \ref{correspl} we deduce that $w \in \psi_j^{-1}(B_{(1+C\eps_i)2^{-i}}(0))\subset \psi_j^{-1}\left (\overline {B_{(2\cdot 2^{-i}}(0)}\right),$ provided $\eps_i$ is small enough. This proves the first inclusion in \eqref{closedcover}. The second inclusion follows from Lemma \ref{ballsinballs}. 
\end{proof}

\begin{lemma}\label{lem:complete manifolds}
	The manifold $(W_i,\sfd_i)$ is complete.
\end{lemma}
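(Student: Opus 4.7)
The plan is to take a Cauchy sequence $(w_k) \subset W_i$ and show that it eventually stays inside a single coordinate patch $B_j^i$, so that completeness of $\rr^n$ together with the biLipschitz property of the chart $\psi_{i,j}$ produces a limit inside $W_i$. The strong covering provided by Lemma \ref{completeness} — namely that every point of $W_i$ lies in some $\psi_{i,j}^{-1}(\overline{B_{2\cdot 2^{-i}}(0)})$ — is precisely what will give us the ``buffer'' between the inner radius $2\cdot 2^{-i}$ and the outer radius $8\cdot 2^{-i}$ needed to prevent the limit from escaping to the boundary of the Euclidean ball.

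In detail, I would first fix $k_0$ large enough that $\sfd_i(w_k,w_{k_0}) < 2^{-i}$ for every $k \ge k_0$, and then use Lemma \ref{completeness} to find $j \in J_i$ with $|\psi_{i,j}(w_{k_0})| \le 2\cdot 2^{-i}$. The Euclidean ball $B_{5\cdot 2^{-i}}(\psi_{i,j}(w_{k_0}))$ is then contained in $B_{8\cdot 2^{-i}}(0)$, so Lemma \ref{ballsinballs} gives
\[
B^{W_i}_{5\cdot 2^{-i}/(1+C\eps_i)}(w_{k_0}) \subset \psi_{i,j}^{-1}\bigl(B_{5\cdot 2^{-i}}(\psi_{i,j}(w_{k_0}))\bigr) \subset B_j^i,
\]
and for $\eps_i$ small enough the left-hand ball contains every $w_k$ with $k\ge k_0$.

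Next, since $\psi_{i,j}$ is $(1+C\eps_i)$-Lipschitz on $B_j^i$ by Lemma \ref{lippsii}, the sequence $(\psi_{i,j}(w_k))_{k\ge k_0}$ is Cauchy in $\rr^n$ and hence converges to some $y \in \rr^n$. The triangle inequality gives
\[
|y| \le |\psi_{i,j}(w_{k_0})| + (1+C\eps_i)\cdot 2^{-i} \le 2\cdot 2^{-i} + (1+C\eps_i)\cdot 2^{-i} < 8\cdot 2^{-i},
\]
so $y \in B_{8\cdot 2^{-i}}(0)$. Setting $w \coloneqq \psi_{i,j}^{-1}(y) \in B_j^i \subset W_i$ and using continuity of $\psi_{i,j}^{-1}$ (again from Lemma \ref{lippsii}) one concludes $w_k \to w$ in $(W_i,\sfd_i)$.

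The only delicate point is the one highlighted above: a priori the Euclidean limit $y$ could sit on $\partial B_{8\cdot 2^{-i}}(0)$, in which case it would not correspond to any point of the chart. This is precisely ruled out by choosing the base point $w_{k_0}$ deep inside the chart via the inner cover of Lemma \ref{completeness}, leaving enough room (a factor of roughly $6\cdot 2^{-i}$) to absorb the diameter of the Cauchy tail. No further structure of the metric space $(Z,\sfd)$ or of the gluing is used in the argument.
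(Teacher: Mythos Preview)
Your proof is correct and follows essentially the same approach as the paper. The only cosmetic difference is that the paper packages the argument via local compactness --- showing that every closed $\sfd_i$-ball of radius $2^{-i}$ is contained in $\psi_{i,j}^{-1}(\overline{B_{5\cdot 2^{-i}}(0)})$, hence compact --- whereas you run the equivalent Cauchy-sequence argument directly; both rely on the same two ingredients, Lemma~\ref{completeness} for the deep covering and Lemma~\ref{ballsinballs}/\ref{lippsii} for the biLipschitz charts.
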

\begin{proof}
	Let $w \in W_{i}^n,$ then from \eqref{closedcover}, there exists $j \in J_i$ such that
	$w \in B^{W_i}_{3\cdot 2^{-i}}(\psi_{j}^{-1}(0)).$ Hence $B^{W_i}_{2^{-i}}(w) \subset B^{W_i}_{4\cdot 2^{-i}}(\psi_{j}^{-1}(0)) $.  From Lemma \ref{ballsinballs} we  deduce, provided $\eps_i$ small enough, that $$B^{W_i}_{2^{-i}}(w)\subset B^{W_i}_{4\cdot 2^{-i}}(\psi_{j}^{-1}(0)) \subset \psi_{j}(B_{5\cdot 2^{-i}}(0))\subset \psi_{j}\left (\overline{B_{5\cdot 2^{-i}}(0)}\right ).$$ Since the last set is compact, we deduce that  $\overline{B^{W_i}_{2^{-i}}(w)}$ is also compact and from the arbitrariness of $w$ we conclude.
\end{proof}

\subsection{Construction of the pseudo-distances \texorpdfstring{$\rho_i$}{rho-i} }\label{sec:pseudo}

For every $w_1,w_2 \in W_i$ we set
\begin{equation}\label{rho}
	\rho_i(w_1,w_2)\coloneqq  \begin{cases}
		\sfd_i(w_1,w_2) & \text{ if } \sfd_i(w_1,w_2)\le 2\cdot 2^{-i},\\
		\sfd(f_i(w_1),f_i(w_2)) & \text{ if } \sfd_i(w_1,w_2)> 2\cdot 2^{-i}.\\
	\end{cases}
\end{equation}

\begin{lemma}\label{rhod}
	It holds that
	$$\rho_i(w_1,w_2)=\sfd_i(w_1,w_2),$$
	whenever $\rho_i(w_1,w_2)\le 2^{-i}$ or $\sfd_i(w_1,w_2)\le 2^{-i}$.
\end{lemma}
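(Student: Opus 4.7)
The plan is to split the proof into two cases, corresponding to the two hypotheses of the statement, and to reduce the second case to a contradiction using Lemma \ref{correspl} and Lemma \ref{fidii}.

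First I would handle the easy case $\sfd_i(w_1,w_2)\le 2^{-i}$: since in particular $\sfd_i(w_1,w_2)\le 2\cdot 2^{-i}$, the definition \eqref{rho} immediately gives $\rho_i(w_1,w_2)=\sfd_i(w_1,w_2)$ and there is nothing left to prove. The real content is the case $\rho_i(w_1,w_2)\le 2^{-i}$. If $\sfd_i(w_1,w_2)\le 2\cdot 2^{-i}$ we conclude again by definition, so I would argue by contradiction assuming
\[
\sfd_i(w_1,w_2)>2\cdot 2^{-i}\quad\text{and}\quad \rho_i(w_1,w_2)=\sfd(f_i(w_1),f_i(w_2))\le 2^{-i}.
\]

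The key step is to exhibit a single chart $B_j^i$ containing both $w_1$ and $w_2$. Since $X_i$ is $2^{-i}$-dense in $Z$, we can pick $j\in J_i$ with $\sfd(f_i(w_1),x_j)<2^{-i}$; then by the triangle inequality and the assumption $\sfd(f_i(w_1),f_i(w_2))\le 2^{-i}$ we also get $\sfd(f_i(w_2),x_j)<2\cdot 2^{-i}$. Both $f_i(w_1)$ and $f_i(w_2)$ therefore lie in $B_{s\cdot 2^{-i}}(x_j)$ with $s<3<6$, so Lemma \ref{correspl} applies (provided $\eps_i$ is small enough) and gives $w_1,w_2\in B_j^i$.

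With $w_1,w_2$ in a common coordinate patch, Lemma \ref{fidii} yields
\[
\sfd_i(w_1,w_2)\le \sfd(f_i(w_1),f_i(w_2))+C\eps_i 2^{-i}\le (1+C\eps_i)2^{-i},
\]
which, for $\eps_i$ small enough (i.e.\ $C\eps_i<1$), contradicts the assumption $\sfd_i(w_1,w_2)>2\cdot 2^{-i}$. This rules out the contradictory subcase and completes the proof. I do not expect any serious obstacle here: the delicate point is merely to notice that closeness of $f_i(w_1),f_i(w_2)$ in $Z$, combined with the density of $X_i$, is enough to place both preimages in the same chart so that Lemma \ref{fidii} becomes applicable.
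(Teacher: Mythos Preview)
Your proposal is correct and follows essentially the same approach as the paper: both use the $2^{-i}$-density of $X_i$ to place $f_i(w_1),f_i(w_2)$ near a common $x_j$, invoke Lemma \ref{correspl} to put $w_1,w_2$ in the same chart $B_j^i$, and then apply Lemma \ref{fidii} to bound $\sfd_i(w_1,w_2)$ and conclude. The only cosmetic difference is that you phrase the last step as a contradiction, whereas the paper directly deduces $\sfd_i(w_1,w_2)\le 2\cdot 2^{-i}$ and reads off $\rho_i=\sfd_i$ from the definition.
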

\begin{proof}
	If $\sfd_i(w_1,w_2)\le 2^{-i}$ the statement is trivial from the definition of $\rho_i$. So suppose $\rho_i(w_1,w_2)\le 2^{-i}$. If $\rho_i(w_1,w_2)=\sfd_i(w_1,w_2)$ we are done, so suppose $\rho_i(w_1,w_2)=\sfd(f_i(w_1),f_i(w_2)) $. Then the $2^{-i}$-density of $X_i$ implies that $f_i(w_1),f_i(w_2) \in B_{5\cdot 2^{-i}}(x_j)$ for some $j \in J_i.$ Hence thanks to Lemma \ref{correspl} we have that $w_1,w_2 \in B_j^i.$ Applying now \eqref{fid} we obtain $\sfd_i(w_1,w_2)\le 2^{-i}+C\eps_i 2^{-i}\le 2\cdot 2^{-i},$ but then from \eqref{rho} we deduce $\rho_i(w_1,w_2)=\sfd_i(w_1,w_2)$.\\		
\end{proof}

\subsection{Construction of the manifold-to-manifold mappings \texorpdfstring{$h_i$}{hi}}\label{sec:manifoldtomanifold}

This section is devoted to to proof the following Lemma.
\begin{lemma}\label{mainh}
	There exists a $1+C(\eps_i+\eps_{i+1})$-bi-Lipschitz diffeomorphism $h_i : W_i \to W_{i+1}$ such that 
	\begin{equation}\label{point4eq}
		\sfd(f_{i+1}(h_i(w)),f_i(w))\le C 2^{-i}(\eps_i+\eps_{i+1}), 
	\end{equation}
	for every $w \in W_i.$
\end{lemma}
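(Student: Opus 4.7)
The plan is to define $h_i$ chart-by-chart and then glue. For every $j \in J_i$ pick an index $k(j) \in J_{i+1}$ with $\sfd(x_{i,j}, x_{i+1,k(j)}) \le 2^{-(i+1)}$, which is possible by the $2^{-(i+1)}$-density of $X_{i+1}$. The natural candidate on the chart $B_j^i$ is
\[
h_i(w) \coloneqq \psi_{i+1,k(j)}^{-1}\bigl(\widetilde L_{j,k(j)}(\psi_{i,j}(w))\bigr),
\]
where $\widetilde L_{j,k}:\rr^n \to \rr^n$ is a smooth diffeomorphism approximating $\beta_{i+1,k}\circ \alpha_{i,j}$. The bulk of the work is the construction of the $\widetilde L_{j,k}$, which mirrors the construction of the within-scale transition maps $\widetilde I_{i,\cdot\cdot}$ in Section \ref{sec:manifolds}.

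First, for each pair $(j,k) \in J_i \times J_{i+1}$ with base points sufficiently close, I would apply Lemma \ref{AA} to approximate $\beta_{i+1,k}\circ \alpha_{i,j}$ on a Euclidean ball of radius $\sim 2^{-i}$ by an isometry $L_{j,k}$ with error $O((\eps_i+\eps_{i+1})2^{-i})$, in complete analogy with Lemma \ref{Definition of the transition maps} (the composition is a $C(\eps_i+\eps_{i+1})2^{-i}$-isometry on such a ball by \eqref{alfabeta} applied at both scales, using $2^{-i} = 2\cdot 2^{-(i+1)}$). Second, I would verify almost-cocycle identities relating the $L_{j,k}$ to the within-scale isometries $I_{i,\cdot\cdot}, I_{i+1,\cdot\cdot}$, of the form $L_{j,k_2}\circ I_{i+1,k_2k_1} \approx L_{j,k_1}$ and $L_{j_2,k}\circ I_{i,j_2j_1} \approx L_{j_1,k}$, exactly in the spirit of Lemma \ref{almcomp}, using the near-identities $\alpha\circ \beta \approx \id$ and $\beta\circ \alpha \approx \id$. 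Third, I would apply the modification procedure of Section \ref{sec:modification} (a variant of Theorem \ref{bigmodification2} in which the already-fixed $\widetilde I_{i,\cdot\cdot}, \widetilde I_{i+1,\cdot\cdot}$ serve as rigid data and only the $L_{j,k}$ are perturbed) to produce smooth diffeomorphisms $\widetilde L_{j,k}$ with exact cocycle compatibility and satisfying the quantitative estimate $\|L_{j,k}-\widetilde L_{j,k}\|_{C^2(\rr^n), 2^{-i}} \le C(\eps_i+\eps_{i+1})$.

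With these $\widetilde L_{j,k}$ in hand, consistency of the local formula on overlaps $B_{j_1}^i \cap B_{j_2}^i$ (and independence of the choice of $k(j)$) reduces precisely to the cocycle identities, so $h_i$ is a well-defined smooth map. The biLipschitz bound follows because the charts $\psi_{i,j}, \psi_{i+1,k(j)}$ are $(1+C\eps_i)$- and $(1+C\eps_{i+1})$-biLipschitz by Lemma \ref{lippsii}, while $\widetilde L_{j,k(j)}$ is $C^1$-close to an isometry by the $C^2$-estimate above; chaining these three bounds and using Lemma \ref{ballsinballs} gives the $1+C(\eps_i+\eps_{i+1})$ constant on geodesic balls, which passes to a global biLipschitz bound since $W_i,W_{i+1}$ are connected length spaces. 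The tracking estimate \eqref{point4eq} is obtained by unwinding: $f_{i+1}(h_i(w)) = \alpha_{i+1,k(j)}(\widetilde L_{j,k(j)}(\psi_{i,j}(w)))$, which is within $O((\eps_i+\eps_{i+1})2^{-i})$ of $\alpha_{i+1,k(j)}(\beta_{i+1,k(j)}(\alpha_{i,j}(\psi_{i,j}(w)))) = \alpha_{i+1,k(j)}(\beta_{i+1,k(j)}(f_i(w)))$, and then within $O(\eps_{i+1}2^{-(i+1)})$ of $f_i(w)$ by \eqref{alfabeta}. Surjectivity of $h_i$ is deduced from the $2^{-(i+1)}$-density of $X_{i+1}$ combined with Lemma \ref{correspl}.

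The main obstacle is the third step: verifying that the modification machinery of Section \ref{sec:modification} can be run on the combined within-scale/cross-scale family while holding the previously constructed $\widetilde I_{i,\cdot\cdot}, \widetilde I_{i+1,\cdot\cdot}$ fixed. This requires a constrained variant of Theorem \ref{bigmodification2}, but the combinatorial uniformity provided by Proposition \ref{prop:covering} at both scales (which bounds the number of $k\in J_{i+1}$ interacting with a given $j\in J_i$ by a constant depending only on $n$) ensures that all quantitative constants remain dimensional.
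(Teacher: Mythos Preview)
Your strategy and the paper's diverge at the gluing step, and the difference matters. The paper does \emph{not} attempt to make the cross-scale maps exactly cocyclic with the already-built $\widetilde I_{i,\cdot\cdot}$ and $\widetilde I_{i+1,\cdot\cdot}$. Instead it builds new (inverses of) charts $\phi_{i,j}=\psi_{i,k(j)}^{-1}\circ K_{i,j}$ for $W_i$ indexed by $J_{i+1}$ (note the direction: $j\in J_{i+1}$ and $k(j)\in J_i$), sets $h_{i,j}\coloneqq \psi_{i+1,j}^{-1}\circ \phi_{i,j}^{-1}$, checks only the \emph{approximate} overlap consistency (Lemma \ref{charts2}), and then invokes Theorem \ref{patching} (Cheeger's gluing lemma) to splice the $h_{i,j}$ into a single immersion. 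This buys exactly what your ``constrained variant of Theorem \ref{bigmodification2}'' would buy, but without having to freeze two layers of previously modified maps; Theorem \ref{patching} is designed to absorb $C^1$-small overlap discrepancies directly.

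The constrained variant you propose is not available in the paper and is not a routine tweak: in Theorem \ref{bigmodification2} every map in a triple is eligible for modification, and the inductive proof (\textbf{Part 4}) relies on this freedom. With both $\widetilde I_{i,\cdot\cdot}$ and $\widetilde I_{i+1,\cdot\cdot}$ frozen, the four-term identity $\widetilde I_{i+1,k_2k_1}\circ \widetilde L_{j_1,k_1}=\widetilde L_{j_2,k_2}\circ \widetilde I_{i,j_2j_1}$ that you need on overlaps is not of the triple form the algorithm handles, so you would have to redesign the modification procedure. This is the genuine gap.

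A second omission: your argument does not establish global injectivity of $h_i$. Local $(1+C(\eps_i+\eps_{i+1}))$-biLipschitz on charts gives a local diffeomorphism and a global Lipschitz bound on $h_i$, but not global injectivity; the paper proves injectivity separately (end of the proof of Lemma \ref{mainh}) by a contradiction using the tracking estimate \eqref{point4eq} together with Lemma \ref{correspl} and \eqref{fid} to force two preimages of the same point into a single chart, where local injectivity applies.
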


Along this section $(\psi_{i+1,j})_{j\in J_{i+1}}$ denote the charts for the manifold $W_{i+1}$ as defined in Section \ref{sec:manifolds}.

\subsubsection{Construction of the new charts for \texorpdfstring{$W_i$}{Wi}}\label{sec:new charts}
The idea of the construction of the maps $h_i$ is to build a new atlas for $W_i$, where instead of using points in $X_i$, as in its construction, we use the points in $X_{i+1}.$ In particular we will construct (inverses of) charts $(\phi_{i,j})_{j\in J_{i+1}}$ for $W_{i}$ in direct relation with the charts $(\psi_{i+1,j})_{j\in J_{i+1}}$ of $W_{i+1}$. In this way we will be able to build maps 
$$h_{i,j}:=\psi_{i+1,j}^{-1}\circ \phi_{i,j}^{-1},$$
defined locally,  from $W_i$ to $W_{i+1}$. Then we  will patch all these maps using a technical result that will be proved later in Section \ref{sec:cheeger mapping} (see Theorem \ref{patching}).\\

For any $j \in J_{i+1}$ define $k(j)\in J_i$ as the minimal index so that $x_{i+1,j} \in B_{2^{-i}}(x_{i,k(j)})$. The number $k(j)$ is well defined thanks to the fact that $X_i$ is $2^{-i}$-dense. Consider the  maps
\[ \alpha_{i+1,j}: B_{100\cdot 2^{-i}}(0) \to B_{100\cdot 2^{-i}}(x_{i+1,j}) \]
\[ \beta_{i,k(j)}: B_{200\cdot 2^{-i}}(x_{i,k(j)}) \to B_{200\cdot 2^{-i}}(0), \]
which are respectively a $C \eps_{i+1}2^{-i}$-GH approximation and  a $C \eps_i 2^{-i}$-GH approximation. Analogously we consider the maps $\alpha_{i,k(j)},\beta_{i+1,j}$. It is crucial here that all these maps are the same considered in Section \ref{sec:manifolds}.
Since $B_{100\cdot 2^{-i}}(x_{i+1,j}) \subset B_{200\cdot 2^{-i}}(x_{i,k(j)}) $ we have that $\beta_{i,k(j)}\circ \alpha_{i+1,j}$ is well defined on the whole $B_{100\cdot 2^{-i}}(0)$ ad it is a $2^{-i}C(\eps_i+\eps_{i+1})$ isometry. 
\begin{lemma}\label{kjlemma}
	For every $j \in J_{i+1}$ there exists an isometry $K_{i,j}:\rr^n \to \rr^n$ such that 
	\begin{equation}\label{kj}
		|K_{i,j}-\beta_{i,k(j)}\circ \alpha_{i+1,j}|\le C(\eps_i+\eps_{i+1}) 2^{-i},
	\end{equation}
	in $B_{15\cdot 2^{-i}}(0)$ and
	\begin{equation}\label{kjinv} |K_{i,j}^{-1}-\beta_{i+1,j}\circ \alpha_{i,k(j)}|\le C(\eps_i+\eps_{i+1})2^{-i},
	\end{equation}
	in $K_{i,j}(B_{15\cdot 2^{-i}}(0))$.
\end{lemma}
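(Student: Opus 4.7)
\textbf{Proof plan for Lemma \ref{kjlemma}.} The structure of the argument mirrors exactly that of Lemma \ref{Definition of the transition maps}, with the across-scales map $\beta_{i,k(j)}\circ\alpha_{i+1,j}$ playing the role of $\beta_{j_1}\circ\alpha_{j_2}$. As already observed in the paragraph preceding the statement, the composition $\beta_{i,k(j)}\circ\alpha_{i+1,j}$ is well defined on $B_{100\cdot 2^{-i}}(0)$ (this uses $\sfd(x_{i+1,j},x_{i,k(j)})\le 2^{-i}$, so that the image of $B_{100\cdot 2^{-i}}(0)$ under $\alpha_{i+1,j}$ sits inside $B_{200\cdot 2^{-i}}(x_{i,k(j)})$, the domain of $\beta_{i,k(j)}$) and is a $C(\eps_i+\eps_{i+1})2^{-i}$-isometry on that ball. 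Applying Lemma \ref{AA} (with $t=80\cdot 2^{-i}$ for instance, assuming $\eps(n)$ small enough so that $C(\eps_i+\eps_{i+1})<1$) yields a global isometry $K_{i,j}:\rr^n\to\rr^n$ such that $|K_{i,j}-\beta_{i,k(j)}\circ\alpha_{i+1,j}|\le C(\eps_i+\eps_{i+1})2^{-i}$ on $B_{80\cdot 2^{-i}}(0)$. This already gives \eqref{kj} on the smaller ball $B_{15\cdot 2^{-i}}(0)$.

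\textbf{Proof of \eqref{kjinv}.} The idea is the same as in Lemma \ref{Definition of the transition maps}: fix $y\in K_{i,j}(B_{15\cdot 2^{-i}}(0))$ and write $y=K_{i,j}(x)$ for some $x\in B_{15\cdot 2^{-i}}(0)$. Since $K_{i,j}^{-1}(y)=x$, the bound \eqref{kjinv} reduces to estimating $|x-\beta_{i+1,j}\circ\alpha_{i,k(j)}(K_{i,j}(x))|$. We chain together
\begin{align*}
    |\beta_{i+1,j}(\alpha_{i,k(j)}(K_{i,j}(x)))-x|
    &\le |\beta_{i+1,j}(\alpha_{i,k(j)}(K_{i,j}(x)))-\beta_{i+1,j}(\alpha_{i,k(j)}(\beta_{i,k(j)}(\alpha_{i+1,j}(x))))|\\
    &\quad+|\beta_{i+1,j}(\alpha_{i,k(j)}(\beta_{i,k(j)}(\alpha_{i+1,j}(x))))-\beta_{i+1,j}(\alpha_{i+1,j}(x))|\\
    &\quad+|\beta_{i+1,j}(\alpha_{i+1,j}(x))-x|.
\end{align*}
The first term is bounded by $C(\eps_i+\eps_{i+1})2^{-i}$ using \eqref{kj} together with the fact that $\alpha_{i,k(j)}$ and $\beta_{i+1,j}$ are $C\eps2^{-i}$-isometries (so that each of them distorts distances by at most an additive error of order $\eps 2^{-i}$). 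The second term is bounded by $C(\eps_i+\eps_{i+1})2^{-i}$ applying the almost-inversion \eqref{alfabeta} to the inner pair $\alpha_{i,k(j)}\circ\beta_{i,k(j)}\approx\id$ and then using that $\beta_{i+1,j}$ is an almost-isometry. The third term is bounded directly by \eqref{alfabeta}.

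\textbf{Domain checks.} The main nuisance is to verify that every composition above is well defined on the relevant set. Since $K_{i,j}$ is an isometry approximating $\beta_{i,k(j)}\circ\alpha_{i+1,j}$, and the latter on $B_{15\cdot 2^{-i}}(0)$ lands in $B_{17\cdot 2^{-i}}(0)$ (using that $\alpha_{i+1,j}$ maps $B_{15\cdot 2^{-i}}(0)$ into $B_{16\cdot 2^{-i}}(x_{i+1,j})\subset B_{17\cdot 2^{-i}}(x_{i,k(j)})$, and then $\beta_{i,k(j)}$ keeps us near $0$), one sees $K_{i,j}(B_{15\cdot 2^{-i}}(0))\subset B_{20\cdot 2^{-i}}(0)$ for $\eps(n)$ small. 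Hence $\alpha_{i,k(j)}(K_{i,j}(B_{15\cdot 2^{-i}}(0)))\subset B_{21\cdot 2^{-i}}(x_{i,k(j)})\subset B_{22\cdot 2^{-i}}(x_{i+1,j})$, well inside the domain of $\beta_{i+1,j}$. All the other compositions are similarly seen to stay well inside the $B_{200\cdot 2^{-i}}$ domains of the $\alpha_\cdot,\beta_\cdot$ maps. No single step is hard; the only point requiring mild care is keeping track of the centers and the role of $\sfd(x_{i+1,j},x_{i,k(j)})\le 2^{-i}$ in each domain verification.
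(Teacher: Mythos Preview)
Your proposal is correct and follows essentially the same approach as the paper: apply Lemma \ref{AA} to the almost-isometry $\beta_{i,k(j)}\circ\alpha_{i+1,j}$ to obtain $K_{i,j}$, then derive \eqref{kjinv} by substituting $K_{i,j}(x)\approx\beta_{i,k(j)}(\alpha_{i+1,j}(x))$ and collapsing via \eqref{alfabeta}. The only cosmetic difference is that the paper compresses your three-term triangle inequality into two steps (first replacing $K_{i,j}(x)$, then applying \eqref{alfabeta} twice in one line), and uses a slightly smaller auxiliary radius; the domain verifications are handled the same way.
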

\begin{proof}
	The existence of an isometry $K_{i,j}:\rr^n \to \rr^n$ satisfying 
	\begin{equation}\label{asdasd}
		|K_{i,j}-\beta_{i,k(j)}\circ \alpha_{i+1,j}|\le C(\eps_i+\eps_{i+1})2^{-i}
	\end{equation}
	in $B_{20 \cdot 2^{-i}}(0)$
	is an immediate consequence of Lemma \ref{AA}, provided $\eps_i,\eps_{i+1}$  are small enough. To prove \eqref{kjinv} take any $x \in B_{15 \cdot 2^{-i}}(0)$, then using \eqref{asdasd} 
	\begin{align*}
		|K_{i,j}^{-1}(K_{i,j}(x))-\beta_{i+1,j}(\alpha_{i,k(j)}(K_{i,j}(x))|&= |x-\beta_{i+1,j}(\alpha_{i,k(j)}(K_{i,j}(x))|\\
		&\le|x-\beta_{i+1,j}(\alpha_{i,k(j)}(\beta_{i,k(j)}(\alpha_{i+1,j}(x)))| +C(\eps_i+\eps_{i+1}) 2^{-i}\\
		&\le C(\eps_i+\eps_{i+1}) 2^{-i},
	\end{align*}
	where in the last inequality we used \eqref{alfabeta}.  This would prove \eqref{kjinv}, however these computations needs justification. More precisely we need to check that $\beta_{i+1,j}\circ \alpha_{i,k(j)}$ is defined on $\beta_{i,k(j)}(\alpha_{i+1,j}(B_{15 \cdot 2^{-i}}(0)))$ and on  $K_{i,j}(B_{15 \cdot 2^{-i}}(0))$. Since, as remarked previously $\beta_{i,k(j)}\circ \alpha_{i+1,j}$ is defined on $B_{20 \cdot 2^{-i}}(0)$, from \eqref{alfabeta} and \eqref{centertocenter1} we have that  
	\begin{equation}\label{asdasd2}
		\alpha_{i,k(j)}(\beta_{i,k(j)}(\alpha_{i+1,j}(B_{15 \cdot 2^{-i}}(0))))\subset B_{20\cdot 2^{-i}}(x_{i+1,j}),
	\end{equation}
	which is the domain of $\beta_{i+1,j}$. Thus the first condition is verified. For the second we start proving that $\alpha_{i,k(j)}$  is defined on $K_{i,j}(B_{15\cdot 2^{-i}}(0))$. Indeed from \eqref{asdasd2} and \eqref{alfabeta} we deduce that  $\beta_{i,k(j)}(\alpha_{i+1,j}(B_{15\cdot 2^{-i}}(0))\subset B_{25\cdot 2^{-i}}(0) $. Hence from \eqref{asdasd}, provided $\eps_i,\eps_{i+1}$ are small enough, it follows that $K_{i,j}(B_{15\cdot 2^{-i}}(0))\subset B_{30\cdot 2^{-i}}(0)$, that is in the domain of $\alpha_{i,k(j)}.$ Now from \eqref{asdasd}, \eqref{asdasd2} and assuming $\eps_i,\eps_{i+1}$ small enough we deduce that
	\[ \alpha_{i,k(j)}(K_{i,j}(B_{15 \cdot 2^{-i}}(0)))\subset B_{35\cdot 2^{-i}}(x_{i+1,j}),\]
	which is the domain of $\beta_{i+1,j}$. This proves also the second condition and concludes the proof.
\end{proof} 
Define for every $j \in J_{i+1}$ the map $\phi_{i,j}: B_{4\cdot 2^{-i}}(0) \to W_i$ as
\[ \phi_{i,j}\coloneqq \psi_{i,k(j)}^{-1} \circ K_{i,j} |_{B_{4\cdot 2^{-i}}(0)}.\]
Notice that this is well defined since $K_{i,j}(B_{4\cdot 2^{-i}}(0)) \subset B_{8\cdot 2^{-i}}(0)$, by \eqref{kj}, assuming $\eps_i,\eps_{i+1}$ small enough. For every $j \in J_{i+1}$ define also the map $h_{i,j}: \phi_{i,j}(B_{4 \cdot 2^{-i}}(0))\to W_{i+1}$ as
\begin{equation}\label{eq:def hi} h_{i,j}\coloneqq \psi_{i+1,j}^{-1} \circ  \phi_{i,j}^{-1}|_{\phi_{i,j}(B_{4 \cdot 2^{-i}}(0))}.\end{equation}
\\

\begin{figure}[!htb]	\label{hfigure}
	\centering
	\includegraphics[width=0.8\textwidth, angle=0]{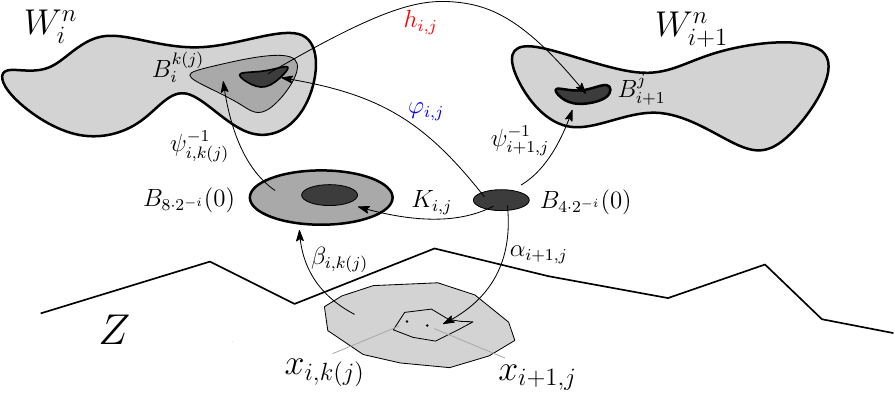}
	\caption{Diagram of definition of the maps $\phi_{i,j}$ and the maps $h_{i,j}$.}
\end{figure}

The following is the analogous to Lemma \ref{correspl} with respect to the new  (inverses of) charts, which also imply that the new charts cover the manifold.

\begin{lemma}\label{wellcovered}
	Suppose $w \in W_i$ and $x_{i+1,j}\in X_{i+1}$ are such that $\sfd(f_i(w),x_{i+1,j})<2^{-i-1}$, then
	\begin{equation}\label{nextchart}
		w \in \phi_{i,j}(B_{\frac{5}{4}\cdot 2^{-i}}(0)).
	\end{equation}
	In particular we have
	\begin{equation}\label{wellcoveredeq}
		W_{i}^n\subset \bigcup_{j\in J_{i+1}} \phi_{i,j}(B_{\frac{5}{4}\cdot 2^{-i}}(0)).
	\end{equation}
\end{lemma}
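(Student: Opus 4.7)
The plan is to mimic the strategy of Lemma \ref{correspl}: we want to produce a point in $B_{\frac{5}{4}\cdot 2^{-i}}(0)$ whose image under $\phi_{i,j}=\psi_{i,k(j)}^{-1}\circ K_{i,j}$ equals $w$. Since $K_{i,j}$ is an isometry, the task splits naturally into two parts. First, we need that $w$ lies in the domain of the chart $\psi_{i,k(j)}$, i.e.\ $w\in B^i_{k(j)}$, so that $\psi_{i,k(j)}(w)$ makes sense; second, we must control $|K_{i,j}^{-1}(\psi_{i,k(j)}(w))|$. The first is handled by Lemma \ref{correspl} applied with the index $k(j)$, and the second by the approximation \eqref{kjinv} of $K_{i,j}^{-1}$ by $\beta_{i+1,j}\circ \alpha_{i,k(j)}$ provided by Lemma \ref{kjlemma}, together with the almost-inversion property \eqref{welldeffi} of $f_i$.

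In detail, by definition $x_{i+1,j}\in B_{2^{-i}}(x_{i,k(j)})$, so the triangle inequality gives $\sfd(f_i(w),x_{i,k(j)})<\tfrac32\cdot 2^{-i}$. Applying Lemma \ref{correspl} with index $k(j)$ and $s=\tfrac32$ we deduce (for $\eps_i$ small enough) that $w\in B^i_{k(j)}$ and
\[
\psi_{i,k(j)}(w)\in B_{(\frac32+C\eps_i)2^{-i}}(0).
\]
Next, by Lemma \ref{welldeffii} we have $\sfd(f_i(w),\alpha_{i,k(j)}(\psi_{i,k(j)}(w)))\le C\eps_i 2^{-i}$, so
\[
\sfd\bigl(\alpha_{i,k(j)}(\psi_{i,k(j)}(w)),\,x_{i+1,j}\bigr)\le 2^{-i-1}+C\eps_i 2^{-i}.
\]
Since $\beta_{i+1,j}$ is a $C\eps_{i+1}2^{-i}$-isometry defined on $B_{200\cdot 2^{-i}}(x_{i+1,j})$ (which contains the point above) and since $|\beta_{i+1,j}(x_{i+1,j})|\le C\eps_{i+1}2^{-i}$ by \eqref{centertocenter1}, we get
\[
\bigl|\beta_{i+1,j}(\alpha_{i,k(j)}(\psi_{i,k(j)}(w)))\bigr|\le 2^{-i-1}+C(\eps_i+\eps_{i+1})2^{-i}.
\]
One checks (using that $K_{i,j}(0)$ is within $C(\eps_i+\eps_{i+1})2^{-i}$ of $0$, hence $K_{i,j}(B_{15\cdot 2^{-i}}(0))\supset B_{(\frac32+C\eps_i)2^{-i}}(0)$ for $\eps$ small) that $\psi_{i,k(j)}(w)$ lies in the region $K_{i,j}(B_{15\cdot 2^{-i}}(0))$ where \eqref{kjinv} holds; therefore
\[
\bigl|K_{i,j}^{-1}(\psi_{i,k(j)}(w))\bigr|\le 2^{-i-1}+C(\eps_i+\eps_{i+1})2^{-i}<\tfrac54\cdot 2^{-i},
\]
provided $\eps_i,\eps_{i+1}$ are sufficiently small. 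Setting $y\coloneqq K_{i,j}^{-1}(\psi_{i,k(j)}(w))\in B_{\frac54\cdot 2^{-i}}(0)$ gives $\phi_{i,j}(y)=\psi_{i,k(j)}^{-1}(K_{i,j}(y))=w$, which is \eqref{nextchart}.

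The inclusion \eqref{wellcoveredeq} then follows immediately: given $w\in W_i$, by the $2^{-i-1}$-density of $X_{i+1}$ in $Z$ there exists $j\in J_{i+1}$ with $\sfd(f_i(w),x_{i+1,j})<2^{-i-1}$, so the first part applies. The only delicate point, as in the proof of Lemma \ref{correspl}, is the bookkeeping of the domains on which the various isometric approximations of $\beta\circ\alpha$-type compositions are valid; once this is done the estimates themselves are straightforward triangle inequalities.
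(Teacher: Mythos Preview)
Your proof is correct and follows essentially the same route as the paper's: apply Lemma \ref{correspl} at the index $k(j)$ to land in $B^i_{k(j)}$, use \eqref{welldeffi} to locate $\alpha_{i,k(j)}(\psi_{i,k(j)}(w))$ near $x_{i+1,j}$, push through $\beta_{i+1,j}$, and then invoke \eqref{kjinv} to bound $K_{i,j}^{-1}(\psi_{i,k(j)}(w))$. One small slip: your parenthetical claim that $|K_{i,j}(0)|\le C(\eps_i+\eps_{i+1})2^{-i}$ is not right, since $K_{i,j}(0)$ is close to $\beta_{i,k(j)}(x_{i+1,j})$ and $\sfd(x_{i+1,j},x_{i,k(j)})$ can be as large as $2^{-i}$; the correct bound is $|K_{i,j}(0)|\le (1+C(\eps_i+\eps_{i+1}))2^{-i}$. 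This is harmless for your purpose, since $K_{i,j}$ is an isometry and hence $K_{i,j}(B_{15\cdot 2^{-i}}(0))$ still contains $B_{(\frac32+C\eps_i)2^{-i}}(0)$ with plenty of room. The paper verifies this domain condition slightly differently, by directly estimating $|K_{i,j}(y)-\psi_{i,k(j)}(w)|$ for $y=\beta_{i+1,j}(\alpha_{i,k(j)}(\psi_{i,k(j)}(w)))$ via \eqref{kj} and \eqref{alfabeta}, but the two checks are equivalent.
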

\begin{proof}
	Observe that  $\sfd(f_i(x),x_{i,k(j)})<3/2\cdot 2^{-i}.$ Then by Lemma \ref{correspl} we deduce that $x \in B_{k(j)}^i$. Applying now \eqref{welldeffi} we have $\sfd(f_i(w),\alpha_{i,k(j)}(\psi_{i,k(j)}(x)))\le C2^{-i}\eps_i,$ that gives $\sfd(\alpha_{i,k(j)}(\psi_{i,k(j)}(w)),x_{i+1,j})<2^{-i}/2+C\eps_i2^{-i}$. Then from \eqref{centertocenter} $\beta_{i+1,j}(\alpha_{i,k(j)}(\psi_{i,k(j)}(x)))\in B_{2^{-i}}(0),$ provided $\eps_i$ is small enough.
	We can now apply \eqref{kjinv} (we will check the needed hypothesis at the end of the proof) to infer that $$K_{i,j}^{-1}(\psi_{i,k(j)}(x))\in B_{\frac{5}{4}\cdot 2^{-i}}(0),$$
	provided $\eps_i,\eps_{i+1}$ are small enough.
	Recalling now the definition of $\phi_{i,j}$, \eqref{nextchart} follows.	Finally \eqref{wellcoveredeq} is a consequence of \eqref{nextchart} and the $2^{-i-1}$-density of $X_{i+1}.$ \\ To justify the use of \eqref{kjinv} above, we need to check that $\psi_{i,k(j)}(w) \in K_{i,j}(B_{15\cdot 2^{-i}}(0))$.  Call $y\coloneqq \beta_{i+1,j}(\alpha_{i,k(j)}(\psi_{i,k(j)}(w)))$, $y\in B_{2^{-i}}(0)$, then from \eqref{kj}  
	\begin{align*}
		|K_{i,j}(y)-\psi_{i,k(j)}(w)|&\le |\beta_{i,k(j)}(\alpha_{i+1,j}(y))-\psi_{i,k(j)}(w)|+C(\eps_i+\eps_{i+1})2^{-i}=\\
		&=|\beta_{i,k(j)}(\alpha_{i+1,j}(\beta_{i+1,j}(\alpha_{i,k(j)}(\psi_{i,k(j)}(w)))))-\psi_{i,k(j)}(x)|+C(\eps_i+\eps_{i+1})2^{-i}\\
		\text{by \eqref{alfabeta} \quad }&{\le} C(\eps_i+\eps_{i+1})2^{-i}.
	\end{align*}
	Now, since $K_{i,j}$ is an isometry and $y \in B_{2^{-i}}(0)$ , if $\eps_i,\eps_{i+1}$ are small enough, we conclude that $\psi_{i,k(j)}(w) \in K_{i,j}(B_{2\cdot 2^{-i}}(0)).$
\end{proof}

\subsubsection{Construction of the diffeomorphism \texorpdfstring{$h_i$}{hi}}\label{sec:hi build}

Before passing the the actual construction of the map $h_i$ we need a technical lemma. Roughly saying it shows the transition functions relative to the charts $\phi_{i,j}$ are close  to the transition functions $\hat I_{i+1,j_1,j_2}$ of the  manifold $W_{i+1}$.
\begin{lemma}\label{charts2}
	 Let $x \in B_{4\cdot 2^{-i}}(0)$ be in the domain of $\phi_{i,j_1}^{-1}\circ \phi_{i,j_2}$ for some $j_1,j_2 \in J_{i+1},$ then $\hat I_{i+1,j_1,j_2}$ exists and
	\begin{equation}\label{phiphi}
		|\phi_{i,j_1}^{-1}\circ \phi_{i,j_2}(x)- \hat I_{i+1,j_1,j_2}(x)|\le C(\eps_i+\eps_{i+1})2^{-i}.
	\end{equation} 
\end{lemma}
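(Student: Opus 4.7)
The plan is to unpack the definitions and recognize that
\[ \phi_{i,j_1}^{-1}\circ \phi_{i,j_2}(x) = K_{i,j_1}^{-1}\big(T\big(K_{i,j_2}(x)\big)\big), \]
where $T = \psi_{i,k(j_1)}\circ \psi_{i,k(j_2)}^{-1}$ is either the identity (if $k(j_1)=k(j_2)$) or the transition map $\hat I_{i,k(j_1),k(j_2)}$ of the atlas of $W_i$ built in Section \ref{sec:manifolds}. The strategy is then to replace each of the three factors by its approximation of order $C(\eps_i+\eps_{i+1})2^{-i}$ in terms of the GH-maps: Lemma \ref{kjlemma} gives $K_{i,j}\approx \beta_{i,k(j)}\circ \alpha_{i+1,j}$ and $K_{i,j}^{-1}\approx \beta_{i+1,j}\circ \alpha_{i,k(j)}$, while Lemmas \ref{bigmodification} and \ref{Definition of the transition maps} give $\hat I_{i,k(j_1),k(j_2)}\approx \beta_{i,k(j_1)}\circ \alpha_{i,k(j_2)}$. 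Two adjacent cancellations $\alpha_{i,k}\circ\beta_{i,k}\approx\id$ via \eqref{alfabeta} then collapse the composition to $\beta_{i+1,j_1}\circ \alpha_{i+1,j_2}(x)$, which by a final application of Lemma \ref{Definition of the transition maps} and Lemma \ref{bigmodification} is $C(\eps_i+\eps_{i+1})2^{-i}$-close to $\hat I_{i+1,j_1,j_2}(x)$.

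Before this chain of estimates can be carried out, I need to check that $\hat I_{i+1,j_1,j_2}$ actually exists, i.e.\ that $I_{i+1,j_1,j_2}\in \mathcal I_{i+1}$; by \eqref{eq:def Ii} this reduces to the bound $\sfd(x_{i+1,j_1},x_{i+1,j_2})<29\cdot 2^{-(i+1)}$. Setting $p\coloneqq\phi_{i,j_2}(x)$ and combining \eqref{welldeffi} with Lemma \ref{kjlemma} and \eqref{alfabeta}, I will obtain $\sfd(f_i(p),\alpha_{i+1,j_2}(x))\le C(\eps_i+\eps_{i+1})2^{-i}$, which together with \eqref{centertocenter} yields $\sfd(f_i(p),x_{i+1,j_2})\le |x|+C(\eps_i+\eps_{i+1})2^{-i}\le 8\cdot 2^{-(i+1)}+C(\eps_i+\eps_{i+1})2^{-(i+1)}$. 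Since $x$ belongs to the domain of $\phi_{i,j_1}^{-1}\circ\phi_{i,j_2}$, we may also write $p=\phi_{i,j_1}(y)$ for some $y\in B_{4\cdot 2^{-i}}(0)$, and the same argument yields the analogous bound for $\sfd(f_i(p),x_{i+1,j_1})$. The triangle inequality then delivers $\sfd(x_{i+1,j_1},x_{i+1,j_2})<29\cdot 2^{-(i+1)}$, provided $\eps(n)$ is small enough.

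The main obstacle will be the bookkeeping of domains: at every step one must confirm that the intermediate point lies in the ball on which the next approximation lemma applies. Concretely, $K_{i,j_2}(x)$ must lie in $B_{15\cdot 2^{-i}}(0)$ in order to invoke \eqref{kj}, the image of $T$ must lie in $K_{i,j_1}(B_{15\cdot 2^{-i}}(0))$ in order to invoke \eqref{kjinv}, and intermediate points such as $\alpha_{i+1,j_2}(x)$ must stay in the common domain of the relevant $\beta$'s so that the subsequent $\beta\circ\alpha$ estimate applies. These inclusions all follow from \eqref{centertocenter}, \eqref{centertocenter1}, the bound $\sfd(x_{i+1,j},x_{i,k(j)})\le 2^{-i}$ coming from the choice of $k(j)$, and the fact that the isometric ``center shifts'' $|K_{i,j}(0)|$ are at most $2^{-i}+C(\eps_i+\eps_{i+1})2^{-i}$, under the standing assumption that $\eps_i,\eps_{i+1}$ are small enough.
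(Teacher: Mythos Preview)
Your proposal is correct and follows essentially the same route as the paper: unwind $\phi_{i,j_1}^{-1}\circ\phi_{i,j_2}$ as $K_{i,j_1}^{-1}\circ T\circ K_{i,j_2}$, replace each factor by its GH-map approximant via Lemmas \ref{kjlemma}, \ref{bigmodification} and \ref{Definition of the transition maps}, collapse the two $\alpha\circ\beta$ pairs using \eqref{alfabeta}, and finish with \eqref{alminverse}$+$\eqref{finalb} at level $i+1$. Your existence argument for $\hat I_{i+1,j_1,j_2}$ is a minor rephrasing of the paper's (you anchor at $f_i(p)$ rather than directly comparing $\alpha_{i+1,j_1}(y)$ and $\alpha_{i+1,j_2}(x)$), and you correctly flag the case $k(j_1)=k(j_2)$, which the paper leaves implicit.
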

\begin{proof}
	Consider a point $x \in B_{4\cdot 2^{-i}}(0)$ and two indices $j_1,j_2\ \in J_{i+1}$ as in the hypotheses. We start by claiming that 
	\begin{equation}\label{j1j2}
		\sfd(x_{i+1,j_1},x_{i+1,j_2})\le 9\cdot 2^{-i}<29\cdot 2^{-(i+1)}. 
	\end{equation}
 In particular this already implies that $I_{i+1,j_1,j_2}\in \mathcal I_{i+1}$ (recall \eqref{eq:def Ii}) and so $\hat I_{i+1,j_1,j_2}$ exists.
	To show the claim note that  $\phi_{i,j_2}(x)=\phi_{i,j_1}(y)$ for some $y \in B_{4\cdot 2^{-i}}(0).$  Then from \eqref{welldeffi} we have
	\[ \sfd\big(\alpha_{i,k(j_2)}\circ \psi_{i,k(j_2)}\circ \phi_{i,j_2}(x),\alpha_{i,k(j_1)}\circ \psi_{i,k(j_1)}\circ \phi_{i,j_1}(y)\big)\le 2^{-i}C\eps_i,\]
	that by definition can be written as
	\[ \sfd(\alpha_{i,k(j_2)}\circ K_{i,j_2}(x)),\alpha_{i,k(j_1)}\circ K_{i,j_1}(y))\le 2^{-i}C\eps_i,\]
	Applying now \eqref{kj} (recalling also \eqref{alfabeta} as usual) we deduce
	\begin{equation}\label{aj1j2}
		\sfd(\alpha_{i+1,j_2}(x),\alpha_{i+1,j_1}(y))\le 2^{-i}C(\eps_i+\eps_{i+1}).
	\end{equation}
	Therefore from \eqref{centertocenter}, if $\eps_i,\eps_{i+1}$ are small enough, we deduce \eqref{j1j2}. We can now proceed with the proof of \eqref{phiphi}.
	From the definitions we have
	$\phi_{i,j_1}^{-1}\circ \phi_{i,j_2}(x)=K_{i,j_1}^{-1}\circ \hat I_{i,k(j_1),k(j_2)}\circ K_{i,j_2}(x).$ 
	From \eqref{finalb}  
	\[|\phi_{i,j_1}^{-1}\circ \phi_{i,j_2}(x)- K_{i,j_1}^{-1}\circ I_{i,k(j_1),k(j_2)}\circ  K_{i,j_2}(x)|\le C\eps_i2^{-i}.\]
	Moreover, since $x \in B_{4\cdot 2^{-i}}(0),$ from \eqref{kj} 
	\[|\phi_{i,j_1}^{-1}\circ \phi_{i,j_2}(x)- K_{i,j_1}^{-1}\circ I_{i,k(j_1),k(j_2)}\circ \beta_{i,k(j_2)}\circ \alpha_{i+1,j_2}(x)|\le C(\eps_i+\eps_{i+1})2^{-i}.\]
	Observe  now that, from \eqref{centertocenter} and the fact that $\sfd(x_{i+1,j_2},x_{i,k(j_2)})\le 2^{-i}$, we have that $\beta_{i,k(j_2)}(\alpha_{i+1,j_2}(x))$ belongs to $B_{10\cdot 2^{-i}}(0)$ (if $\eps_i,\eps_{i+1}$ are small enough), thus we can apply \eqref{alminverse} to obtain
	\[|\phi_{i,j_1}^{-1}\circ \phi_{i,j_2}(x)- K_{i,j_1}^{-1}\circ \beta_{i,k(j_1)}\circ \alpha_{i,k(j_2)}\circ \beta_{i,k(j_2)}\circ \alpha_{i+1,j_2}(x)|\le C(\eps_i+\eps_{i+1})2^{-i}.\]
	Recall now from  \eqref{alfabeta} that  $\alpha_{i,k(j_2)}\circ \beta_{i,k(j_2)}$ is almost the identity, therefore
	\[|\phi_{i,j_1}^{-1}\circ \phi_{i,j_2}(x)-  K_{i,j_1}^{-1}\circ \beta_{i,k(j_1)}\circ \alpha_{i+1,j_2}(x)|\le C(\eps_i+\eps_{i+1})2^{-i}.\]
	To write the above we should check that $\alpha_{i+1,j_2}(x)$ is in the domain of $\beta_{i,k(j_1)}$, this follows from the fact that $\alpha_{i+1,j_2}(x) \in B_{5\cdot 2^{-i}}(x_{i+1,j_2})\subset B_{15 \cdot 2^{-i}}(x_{i,k(j_1)})$,	where the inclusion is  a consequence of \eqref{j1j2} and the fact that $\sfd(x_{i+1,j_1},x_{i,k(j_1)})\le 2^{-i}$. We now apply \eqref{kjinv} to get
	\begin{equation}\label{phiphialm}
		|\phi_{i,j_1}^{-1}\circ \phi_{i,j_2}(x)-  \beta_{i+1,j_1}\circ \alpha_{i,k(j_1)}\circ \beta_{i,k(j_1)}\circ \alpha_{i+1,j_2}(x)|\le C(\eps_i+\eps_{i+1})2^{-i}.
	\end{equation}
	Observe that to use \eqref{kjinv} we need to check that $\beta_{i,k(j_1)}\circ \alpha_{i+1,j_2}(x)\in K_{i,j_1}(B_{15\cdot 2^{-i}}(0))$. To see this, we can use  \eqref{aj1j2} and then \eqref{kj} to deduce
	\begin{align*}
		|\beta_{i,k(j_1)}\circ \alpha_{i+1,j_2}(x)-K_{i,j_1}(y)|&\le |\beta_{i,k(j_1)}\circ \alpha_{i+1,j_1}(y)-K_{i,j_1}(y)|+ 2^{-i}C(\eps_i+\eps_{i+1})\\
		&\le 2^{-i}C(\eps_i+\eps_{i+1}).
	\end{align*}
	From this, since $K_{i,j_1}$ is a global isometry and $y \in B_{4\cdot 2^{-i}}(0)$, we have $\beta_{i,k(j_1)}\circ \alpha_{i+1,j_2}(x)\in K_{i,j_1}(B_{8\cdot 2^{-i}}(0))$, provided $\eps_i,\eps_{i+1}$ small enough, that is what we wanted.
	Using once again \eqref{alfabeta} in \eqref{phiphialm} we obtain that 
	\[|\phi_{i,j_1}^{-1}\circ \phi_{i,j_2}(x)-  \beta_{i+1,j_1}\circ\alpha_{i+1,j_2}(x)|\le C(\eps_i+\eps_{i+1})2^{-i},\]
	that makes sense since $\alpha_{i+1,j_2}(x)\in B_{14 \cdot 2^{-i}}(x_{i+1,j_1}),$ which follows from \eqref{j1j2}.
	Finally combining \eqref{alminverse} and $\eqref{finalb}$ we reach \eqref{phiphi}.
	
\end{proof}

\begin{lemma}\label{hmaps}
	There exists a map $h_i : W_i \to W_{i+1}$ that is surjective, satisfies
	\begin{equation}\label{diffeobound}
	\|Dh_i\|,  \|Dh_i^{-1}\|\le1+C(\eps_i+\eps_{i+1}).
	\end{equation}
	and so that for all $j \in J_{i+1}$ it holds
	\begin{equation}\label{localh}
		h_i|_{\phi_{i,j}(B_{2 \cdot 2^{-i}}(0))}=\psi_{i+1,j}^{-1} \circ H \circ \phi_{i,j}^{-1},
	\end{equation}
	for some diffeomorphism $H: B_{4\cdot 2^{-i}}(0) \to B_{4\cdot 2^{-i}}(0)$  (depending on $j$) such that $\|H-{\sf{id}} \|_{C^1,2^{-i}}\le C(\eps_i+\eps_{i+1}).$
\end{lemma}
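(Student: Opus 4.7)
I would obtain $h_i$ by patching together the locally defined maps $h_{i,j}=\psi_{i+1,j}^{-1}\circ\phi_{i,j}^{-1}$ from Section \ref{sec:new charts}. Lemma \ref{wellcovered} guarantees that the domains $\phi_{i,j}(B_{5/4\cdot 2^{-i}}(0))$ already cover $W_i$, so a well-behaved gluing procedure will yield a map defined on all of $W_i$. The natural tool is the yet-to-be-proved Theorem \ref{patching}, fed with the family $\{h_{i,j}\}_{j\in J_{i+1}}$ on the nested coverings of radii $5/4\cdot 2^{-i}$ and $2\cdot 2^{-i}$. If successful, this produces a single smooth $h_i$ whose local expression $\psi_{i+1,j}\circ h_i\circ\phi_{i,j}$ is a function $H$ of the form required in \eqref{localh}.

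\textbf{$C^1$-compatibility of the local pieces.}
The essential input needed by any patching theorem is that the $h_{i,j}$ are close to each other in $C^1$. Written in the charts $\phi_{i,j_1}$ of $W_i$ and $\psi_{i+1,j_1}$ of $W_{i+1}$, the overlap reads
\[
\psi_{i+1,j_1}\circ h_{i,j_2}\circ\phi_{i,j_1}
=\hat I_{i+1,j_1,j_2}\circ\phi_{i,j_2}^{-1}\circ\phi_{i,j_1},
\]
and Lemma \ref{charts2} bounds the $C^0$-distance of this composition from the identity by $C(\eps_i+\eps_{i+1})2^{-i}$. To upgrade this to the needed $C^1$-control I would use the formula
\[
\phi_{i,j_2}^{-1}\circ\phi_{i,j_1}=K_{i,j_2}^{-1}\circ\hat I_{i,k(j_2),k(j_1)}\circ K_{i,j_1},
\]
combine \eqref{finalb} with Lemma \ref{iterateclosest} to see that the right-hand side is $C(\eps_i)$-close in the $\|\cdot\|_{C^2,2^{-i}}$-norm to the global isometry $K_{i,j_2}^{-1}\circ I_{i,k(j_2),k(j_1)}\circ K_{i,j_1}$, and similarly observe that $\hat I_{i+1,j_1,j_2}$ is $C(\eps_{i+1})$-$C^2$-close to $I_{i+1,j_1,j_2}$. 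The overlap then differs in $C^2$ from the composition of two global isometries by $C(\eps_i+\eps_{i+1})2^{-i}$, while the $C^0$-bound of Lemma \ref{charts2} forces this composition of isometries itself to be close to the identity. An application of Lemma \ref{isombound} then converts that $C^0$-bound on an isometry into a $C^1$-bound, which assembled together yields the required $C^1$-closeness of the overlap map to the identity.

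\textbf{Consequences of the patching.}
Once Theorem \ref{patching} delivers $h_i$ satisfying \eqref{localh}, the Lipschitz bound \eqref{diffeobound} follows by differentiating this identity: the charts $\psi_{i,k(j)}$ and $\psi_{i+1,j}$ are $(1+C\eps)$-biLipschitz by Lemma \ref{lippsii}, $K_{i,j}$ is an isometry, and $H$ itself is $(1+C(\eps_i+\eps_{i+1}))$-biLipschitz by Lemma \ref{inversebound}. Surjectivity of $h_i$ would come from Lemma \ref{completeness} applied to $W_{i+1}$: each $w\in W_{i+1}$ lies in some $\psi_{i+1,j}^{-1}(\overline{B_{2\cdot 2^{-i-1}}(0)})$, and since $H$ is $C(\eps_i+\eps_{i+1})2^{-i}$-close to the identity, $H(B_{2\cdot 2^{-i}}(0))$ contains $\overline{B_{2\cdot 2^{-i-1}}(0)}$ provided $\eps_i,\eps_{i+1}$ are small enough, so $w$ is hit by $h_i$ through the patch corresponding to $j$.

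\textbf{Expected main obstacle.}
Aside from Theorem \ref{patching}, whose proof the paper defers to a dedicated section, the real technical point is the promotion of the $C^0$-estimate of Lemma \ref{charts2} to a $C^1$-estimate. This promotion is not automatic and is the place where one must carefully combine the second-order control of the isometric approximations encoded in \eqref{finalb} with the elementary but crucial fact, captured by Lemma \ref{isombound}, that isometries of $\rr^n$ close in $C^0$ are close in $C^1$. Everything else in the proof is essentially bookkeeping with the norms $\|\cdot\|_{C^k,t}$ introduced in Section \ref{sec:ck norm}.
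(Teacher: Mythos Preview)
Your proposal is correct and follows essentially the same route as the paper: apply Theorem \ref{patching} to the family $\{h_{i,j}\}$, verify the $C^1$-compatibility by writing the overlap map as a composition of isometries plus a $C^2$-small perturbation via \eqref{finalb}, and then use Lemma \ref{isombound} to upgrade the $C^0$-bound of Lemma \ref{charts2} to a $C^1$-bound; the derivative estimate and surjectivity are then deduced exactly as you indicate from Lemma \ref{lippsii} and Lemma \ref{completeness}. The paper additionally checks the partition hypothesis of Theorem \ref{patching} (via Remark \ref{QI}) and the inclusion \eqref{patch2}, and makes explicit that the overlap region contains a ball of radius $\sim 2^{-i}/N$ so that Lemma \ref{isombound} applies, but these are routine verifications within your outline.
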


\begin{proof}
	We plan to apply Theorem \ref{patching}, with $M=W_i,\overline{M}=W_{i+1},$	$\phi_j=\phi_{i,j},h_j=h_{i,j}$ (recall their definition in \eqref{eq:def hi}),$t=2\cdot 2^{-i}$ and $N=N(n)$ (given in Proposition \ref{prop:covering}). We need to check the hypotheses of the  theorem. First notice that $\phi_{i,j_1}^{-1}\circ \phi_{i,j_2}=K_{i,j_1}^{-1}\circ \hat I_{i,k(j_1),k(j_2)}\circ K_{i,j_2}$, whenever $\phi_{i,j_1}^{-1}\circ \phi_{i,j_2}$ has a non-empty domain of definition. Hence from \eqref{finalb} and the fact that $K_{i,j_1}$ are isometries we deduce that  $\|D(\phi_{i,j_1}^{-1}\circ \phi_{i,j_2})\|\le 1+C\eps_i$ and $|\partial_{ij}(\phi_{i,j_1}^{-1}\circ \phi_{i,j_2})_k|\le C\eps_i/2^{-i}$. Hence  assumption \eqref{patch1} is satisfied. Moreover the required partition of the indices $j\in J_{i+1}$ is naturally induced from the partition $J_{i+1,1},...,J_{i+1,N_{i+1}}$. Indeed if $\phi_{i,j_1}(B_{4\cdot 2^{-i}}(0))\cap \phi_{i,j_2}(B_{4\cdot 2^{-i}}(0))\neq \emptyset $, thanks to Lemma \ref{charts2}, we have that $B_{j_1}^{i+1}\cap B_{j_2}^{i+1}\neq \emptyset$, therefore the transition function $\hat I_{i+1,j_1,j_2}$ exists, and therefore by Remark \ref{QI} $j_1,j_2$ belong to different sets of the partition. Moreover the required condition that
	\begin{equation}\label{covered}
		W_{i}^n\subset \bigcup_{j\in J^{i+1}} \phi_{i,j}(B_{2\cdot 2^{-i}}(0)).
	\end{equation} 
	follows from Lemma \ref{wellcovered}.
	
	Finally we need to prove \eqref{patch2} and \eqref{patch3}. It suffices to prove the following. Define $B_2=B_{2(2-2/N) 2^{-i}}(0)$ and $B_1=B_{2(2-1/N) 2^{-i}}(0)$, then for every $j_1,j_2$  it holds that
	\begin{equation}\label{inside}
		h_{i,j_1}(\phi_{i,j_1}(B_1)\cap\phi_{i,j_2}(B_1))\subset h_{i,j_2}(\phi_{i,j_2}(B_{4 \cdot2^{-i}}(0))
	\end{equation}
	and moreover for such $j_1,j_2$ 
	\begin{equation}\label{close}
		\|\phi_{i,j_2}^{-1}\circ h_{i,j_2}^{-1}\circ h_{i,j_1}\circ \phi_{i,j_2}-{\sf{id}}\|_{C^1,2^{-i}}\le C\eps_i, \quad \text{ on   $\phi_{i,j_2}^{-1}( \phi_{i,j_1}(B_1)\cap\phi_{i,j_2}(B_1))$.	}
	\end{equation}
 If  $ \phi_{i,j_1}(B_2)\cap\phi_{i,j_2}(B_2) =\emptyset $ there is nothing to prove, hence we assume otherwise.
 We start with \eqref{inside}.	
	Suppose $$\phi_{i,j_1}(B_2)\cap\phi_{i,j_2}(B_2)\neq \emptyset$$ and pick $x\in \phi_{i,j_1}(B_1)\cap\phi_{i,j_2}(B_1)$.  Then $x=\phi_{i,j_2}(y)$ for some $y \in B_1$. Thus from \eqref{phiphi}
	\[ |\phi_{i,j_1}^{-1}\circ \phi_{i,j_2}(y)-\hat I_{i+1,j_1,j_2}(y)|\le C(\eps_i+\eps_{i+1})2^{-i}.\]
	Then from the above and since $\psi_{i+1,j_1}^{-1}$ that is  $2$-Lipschitz (if $\eps_{i+1}$ is small enough), we have 
	\[ \sfd_{i+1}(h_{j_1}(x),\psi_{i+1,j_2}^{-1}(y))=\sfd_{i+1}(\psi_{i+1,j_1}^{-1}\circ \phi_{i,j_1}^{-1}\circ \phi_{i,j_2}(y),\psi_{i+1,j_1}^{-1}(\hat I_{i+1,j_1,j_2}(y)))\le C(\eps_i+\eps_{i+1})2^{-i},\]
 where we used both the definition of $h_{j_1}$ and \eqref{eq:correct transitions} in the first identity.
	Therefore, since $y \in B_{2(2-1/N)\cdot 2^{-i}}(0)$, from Lemma \ref{ballsinballs}, assuming $\eps_i,\eps_{i+1}$ are small enough, we have 
	$$h_{j_1}(x)\in \psi_{i+1,j_2}^{-1}(B_{4	\cdot 2^{-i}}(0))=h_{i,j_2}(\phi_{i,j_2}(B_{4	\cdot 2^{-i}}(0)),$$
	that proves \eqref{inside}. It remains only to prove \eqref{close}.
	Notice that from the definitions
	$$\phi_{i,j_2}^{-1}\circ h_{i,j_2}^{-1}\circ h_{i,j_1}\circ \phi_{i,j_2}=\hat I_{i+1,j_2,j_1}\circ \phi_{i,j_1}^{-1}\circ \phi_{i,j_2}.$$ Therefore from \eqref{phiphi} 
	\[|\phi_{i,j_2}^{-1}\circ h_{i,j_2}^{-1}\circ h_{i,j_1}\circ \phi_{i,j_2}-{\sf{id}}|\le C(\eps_i+\eps_{i+1})2^{-i},
 \text{\quad on $\phi_{i,j_2}^{-1}( \phi_{i,j_1}(B_1)\cap\phi_{i,j_2}(B_1))$.}
 \]
	We need now the bound on the first derivatives. To this aim notice that we can also write 
	$$\phi_{i,j_2}^{-1}\circ h_{i,j_2}^{-1}\circ h_{i,j_1}\circ \phi_{i,j_2}=\hat I_{i+1,j_1,j_2}\circ K_{j_1}^{-1}\circ \hat I_{i,k(j_1),k(j_2)}\circ K_{j_2}$$
	and from what we just proved combined with \eqref{finalb} we deduce
	$$|I_{i+1,j_1,j_2}\circ K_{j_1}^{-1}\circ  I_{i,k(j_1),k(j_2)}\circ K_{j_2}-{\sf{id}}|\le C(\eps_i+\eps_{i+1})2^{-i} 
  \text{\quad 	on $\phi_{i,j_2}^{-1}( \phi_{i,j_1}(B_1)\cap\phi_{i,j_2}(B_1)) $.}
 $$
 Exploiting Lemma \ref{ballsinballs} (and the fact   that every map $\phi_{i,j}$ is the composition of $\psi_{i,k(j)}^{-1}$ with an isometry) we can observe that, since $ \phi_{i,j_1}(B_2)\cap\phi_{i,j_2}(B_2) \neq \emptyset $, the (bigger) set $ \phi_{i,j_1}(B_1)\cap\phi_{i,j_2}(B_1)$ contains a metric ball in $W_{i}$ of radius $\frac{2^{-i}}{100N}$, provided $\eps_{i}$ is small enough. Then, again by Lemma \ref{ballsinballs} we deduce that $\phi_{i,j_2}^{-1}( \phi_{i,j_1}(B_1)\cap\phi_{i,j_2}(B_1)) $ contains an Euclidean ball of radius $\frac{2^{-i}}{200N}$, again if $\eps_{i+1}$ is small enough. Therefore, since the above map is an isometry, from Lemma \ref{isombound} we deduce 
	$$\|D(I_{i+1,j_1,j_2}\circ K_{j_1}^{-1}\circ  I_{i,k(j_1),k(j_2)}\circ K_{j_2})-I_n\|\le C(\eps_i+\eps_{i+1}) 
 \text{\quad on $\phi_{i,j_2}^{-1}( \phi_{i,j_1}(B_1)\cap\phi_{i,j_2}(B_1)) $,}
 $$
	 that implies using \eqref{finalb} and Lemma \ref{iterateclosest2}
	$$\|D(\hat I_{i+1,j_1,j_2}\circ K_{j_1}^{-1}\circ  \hat I_{i,k(j_1),k(j_2)}\circ K_{j_2})-I_n\|\le C(\eps_i+\eps_{i+1})
  \text{\quad on $\phi_{i,j_2}^{-1}( \phi_{i,j_1}(B_1)\cap\phi_{i,j_2}(B_1)) $. }
 $$
	This completes the proof of \eqref{close}.
	Therefore if $\eps_i,\eps_{i+1}$ are small enough we can apply Theorem \ref{patching} and get a map $h_i : W_i \to W_{i+1}$ . Thanks to \eqref{g} this map has the property that for every $j \in J_{i+1}$ it holds 
	\begin{equation}
		h_i|_{\phi_{i,j}(B_{2 \cdot 2^{-i}}(0))}=\psi_{i+1,j}^{-1} \circ  \phi_{i,j}^{-1} \circ \phi_{i,j}\circ H \circ \phi_{i,j}^{-1} =\psi_{i+1,j}^{-1} \circ H \circ \phi_{i,j}^{-1},
	\end{equation}
	for some diffeomorphism $H: B_{2\cdot 2^{-i}}(0) \to B_{2\cdot 2^{-i}}(0)$  (depending on $j$) with $\|H-{\sf{id}} \|_{C^1,2^{-i}}\le C(\eps_i+\eps_{i+1}).$ Thus \eqref{localh} is proved. Moreover  from this and \eqref{lippsi} we also get $(1+C(\eps_i+\eps_{i+1}))^{-1}\le \|Dh_i\|\le1+C(\eps_i+\eps_{i+1})$. 
	Finally we observe that $h_i(W_i)=W_{i+1}$ for every $i$. Indeed $h_i|_{\phi_{i,j}(B_{2 \cdot 2^{-i}}(0))}=\psi_{i+1,j}^{-1} \circ H \circ \phi_{i,j}^{-1}$, moreover, since $\|H-{\sf{id}} \|_{C^1,t}\le C(\eps_i+\eps_{i+1}),$ if $\eps_i,\eps_{i+1}$ are small enough we have $B_{\frac{3}{2} 2^{-i}}(0)\subset H(B_{2 \cdot 2^{-i}}(0))$. Thus $h_i(\phi_{i,j}(B_{2 \cdot 2^{-i}}(0)))=\psi_{i+1,j}^{-1} \circ H(B_{2 \cdot 2^{-i}}(0)) \supset \psi_{i+1,j}^{-1}(B_{ \frac{3}{2}2^{-i}}(0)).$ Then we conclude by Lemma \ref{completeness}.
\end{proof}

\begin{lemma}\label{point4}
	It holds that 
	$$\sfd(f_{i+1}(h_i(w)),f_i(w))\le C 2^{-i}(\eps_i+\eps_{i+1}), \quad \forall \, w \in W_i.$$ 
\end{lemma}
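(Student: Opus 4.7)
The plan is to unravel the definitions of $f_i$, $h_i$, and $f_{i+1}$ on a single fixed chart and track all the error terms. Given $w \in W_i$, I first invoke Lemma \ref{wellcovered} to pick $j \in J_{i+1}$ such that $w = \phi_{i,j}(x)$ for some $x \in B_{\frac{5}{4}\cdot 2^{-i}}(0)$. By the definition of $\phi_{i,j}$ this means $\psi_{i,k(j)}(w) = K_{i,j}(x)$, and by Lemma \ref{welldeffii} we get
\[
\sfd(f_i(w), \alpha_{i,k(j)}(K_{i,j}(x))) \le C\eps_i 2^{-i}.
\]

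Next I express $h_i(w)$ via \eqref{localh} in Lemma \ref{hmaps}: since $x \in B_{\frac{5}{4}\cdot 2^{-i}}(0) \subset B_{2\cdot 2^{-i}}(0)$, one has $h_i(w) = \psi_{i+1,j}^{-1}(H(x))$ for a diffeomorphism $H$ with $\|H-\id\|_{C^1,2^{-i}}\le C(\eps_i+\eps_{i+1})$. In particular $|H(x)-x|\le C(\eps_i+\eps_{i+1})2^{-i}$, so $H(x) \in B_{2\cdot 2^{-i}}(0) \subset B_{8\cdot 2^{-(i+1)}}(0)$ (provided the parameters are small), which places $h_i(w) \in B^{i+1}_j$. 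A second application of Lemma \ref{welldeffii} then gives
\[
\sfd(f_{i+1}(h_i(w)), \alpha_{i+1,j}(H(x))) \le C\eps_{i+1} 2^{-(i+1)}.
\]

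The remaining work is to show $\sfd(\alpha_{i,k(j)}(K_{i,j}(x)), \alpha_{i+1,j}(H(x))) \le C(\eps_i+\eps_{i+1})2^{-i}$. This I break into two pieces. First, $\alpha_{i+1,j}$ is a $C\eps_{i+1}2^{-i}$-isometry, so
\[
\sfd(\alpha_{i+1,j}(H(x)), \alpha_{i+1,j}(x)) \le |H(x)-x| + C\eps_{i+1}2^{-i}\le C(\eps_i+\eps_{i+1})2^{-i}.
\]
Second, since $x \in B_{15\cdot 2^{-i}}(0)$, Lemma \ref{kjlemma} gives $|K_{i,j}(x)-\beta_{i,k(j)}(\alpha_{i+1,j}(x))|\le C(\eps_i+\eps_{i+1})2^{-i}$, so combining with the almost-isometry property of $\alpha_{i,k(j)}$ and with \eqref{alfabeta} (applied to $\alpha_{i,k(j)}\circ\beta_{i,k(j)}\simeq\id$ near $\alpha_{i+1,j}(x)$) we obtain
\[
\sfd(\alpha_{i,k(j)}(K_{i,j}(x)), \alpha_{i+1,j}(x)) \le C(\eps_i+\eps_{i+1})2^{-i}.
\]
Four triangle inequalities then assemble the four estimates into the desired bound.

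The only delicate point is checking that at every step the functions being evaluated are in the correct domain: specifically that $H(x) \in B_{8\cdot 2^{-(i+1)}}(0)$ so that chart $\psi_{i+1,j}$ is defined at $h_i(w)$, and that $\alpha_{i+1,j}(x)$ lies in the domain of $\beta_{i,k(j)}$ (this follows from $\sfd(x_{i+1,j},x_{i,k(j)})\le 2^{-i}$ together with \eqref{centertocenter}). These are all routine smallness checks of the type done in Lemmas \ref{kjlemma}--\ref{charts2}, and impose no condition beyond $\eps_i,\eps_{i+1}\le \eps(n)$ for a sufficiently small $\eps(n)$.
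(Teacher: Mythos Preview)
Your argument is correct and follows essentially the same route as the paper's proof: locate $w$ in a chart $\phi_{i,j}$, use Lemma~\ref{welldeffii} on both $f_i(w)$ and $f_{i+1}(h_i(w))$, and then reduce to comparing $\alpha_{i,k(j)}(K_{i,j}(x))$ with $\alpha_{i+1,j}(x)$ via \eqref{kj} and \eqref{alfabeta}. The only cosmetic difference is that the paper inserts the intermediate point $h_{i,j}(w)=\psi_{i+1,j}^{-1}(x)$ and uses \eqref{fid} to pass from $h_i(w)$ to $h_{i,j}(w)$, whereas you write $h_i(w)=\psi_{i+1,j}^{-1}(H(x))$ directly and absorb the $|H(x)-x|$ error through the almost-isometry of $\alpha_{i+1,j}$; this is the same estimate reorganized.
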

\begin{proof}
	
	We need to estimate $\sfd(f_{i+1}(h_i(w)),f_i(w))$. By \eqref{covered}, there exists an index $j \in J_{i+1}$ such that $w=\phi_{i,j}(x)$ for some  $ x \in B_{2\cdot 2^{-i}}(0).$ Since $\phi_{i,j}=\psi_{i,k(j)}\circ K_{i,j}^{-1}$ we deduce that $w \in B_{k(j)}^i$ and applying \eqref{welldeffi} we obtain
	\begin{equation}\label{k(j)}
		\sfd(f_i(w),\alpha_{i,k(j)}\circ \psi_{i,k(j)}(w))\le C\eps_i 2^{-i}.
	\end{equation}
	We claim now that $h_i(w)\in B_j^{i+1}$. Indeed, since $w \in \phi_{i,j}(B_{2 \cdot 2^{-i}}(0))$  from \eqref{localh} and recalling the Lispschitzianity of $\psi_{i+1,j}^{-1}$, we obtain 
	\begin{equation}\label{hij}
		\begin{split}
			\sfd_{i+1}(h_i(w),h_{i,j}(w))&=\sfd_{i+1}(h_i(w),\psi^{-1}_{i+1,j}\circ \phi_{i,j}^{-1}(w))\\
			&\le(1+C\eps_{i+1}) |H( \phi_{i,j}^{-1}(w))-\phi_{i,j}^{-1}(w)|\le  C(\eps_i+\eps_{i+1}) 2^{-i},
		\end{split}
	\end{equation}
	provided $\eps_{i+1}$ is small enough.
	Now  by  definition  $h_{i,j}(w)=\psi_{i+1,j}^{-1}(x)\in \psi_{i+1,j}^{-1}(B_{2 \cdot 2^{-i}}(0)),$ therefore from \eqref{hij} and Lemma \ref{ballsinballs}, if $\eps_i,\eps_{i+1}$ are small enough, we easily deduce that $h_i(w)\in \psi_{i+1,j}^{-1}(B_{4 \cdot 2^{-i}}(0))=B_j^{i+1}$. The claim is proved. In particular we have $h_i(w),h_{i,j}(w)\in B_j^{i+1}$ and combining \eqref{fid} with \eqref{hij} we find that 
	\begin{equation}\label{fhij}
		\sfd(f_{i+1}(h_i(w)),f_{i+1}(h_{i,j}(w)))\le C 2^{-i}(\eps_i+\eps_{i+1}).
	\end{equation}
	Moreover we shall also apply \eqref{welldeffi} and deduce
	\begin{equation}\label{j}
		\sfd(f_{i+1}(h_{i,j}(w)),\alpha_{i+1,j}(x))=\sfd(f_{i+1}(h_{i,j}(w)),\alpha_{i+1,j}\circ \psi_{i+1,j}(h_{i,j}(w)))\le C\eps_{i+1} 2^{-i-1}.
	\end{equation}
	Therefore putting \eqref{j}, \eqref{fhij} and \eqref{k(j)} together we obtain
	\[\sfd(f_{i+1}(h_i(w)),f_i(w))\le C2^{-i}(\eps_{i+1}  +\eps_i )+ \sfd(\alpha_{i,k(j)}\circ \psi_{i,k(j)}(w),\alpha_{i+1,j}(x)). \]
	However $w=\phi_{i,j}(x)=\psi_{i,k(j)}^{-1}(K_{i,j}(x))$, therefore recalling \eqref{kj}  and \eqref{alminverse}
	\begin{align*}
		\sfd(f_{i+1}(h_i(w)),f_i(w))&\le C2^{-i}(\eps_{i+1}  +\eps_i )+ \sfd(\alpha_{i,k(j)}(K_{i,j}(x)),\alpha_{i+1,j}(x))\\
		&\le C2^{-i}(\eps_{i+1}  +\eps_i )+ \sfd(\alpha_{i,k(j)}(\beta_{i,k(j)}(\alpha_{i+1,j}(x))),\alpha_{i+1,j}(x))\\
		&\le C2^{-i}(\eps_{i+1}  +\eps_i ).
	\end{align*}
\end{proof}

We now turn to the proof of the main result of this section.
\begin{proof}[Proof of Lemma \ref{mainh}]
	We already know that $h_i$ is surjective and the estimate in \eqref{diffeobound},  hence we only need to prove that $h_i$ is injective. Suppose by contradiction that exist two distinct points $w_1,w_2 \in W_i$ such that $h_i(w_1)=h_i(w_2).$ First observe that from \eqref{localh} we have that $h_i|_{\phi_{i,j}(B_{2 \cdot 2^{-i}}(0))}$ is injective for every $j \in J_{i+1}.$ Therefore we cannot have $w_1,w_2 \in \phi_{i,j}(B_{2 \cdot 2^{-i}}(0))$ for any $j \in J_{i+1}.$ We claim that this implies
	\begin{equation}\label{distantw}
		\sfd_i(w_1,w_2)>\frac{1}{4}2^{-i}.
	\end{equation}
	Indeed suppose the contrary. From  \eqref{wellcoveredeq} we have that $w_1 \in \phi_{i,j}(B_{\frac{5}{4} \cdot 2^{-i}}(0))$ for some $j \in J_{i+1}.$ From definition we have $\phi_{i,j}=\psi_{i,k(j)}^{-1} \circ K_{i,j}$, where $K_{i,j}$ is an isometry, therefore thanks to Lemma \ref{ballsinballs}  we deduce that
	\[\phi_{i,j}(B_{\frac{5}{4} \cdot 2^{-i}}(0))\subset B^{W_i}_{\frac{3}{2} \cdot 2^{-i}}(\phi_{i,j}(0)). \]
	In particular $w_2 \in B^{W_i}_{\frac{7}{4} \cdot 2^{-i}}(\phi_{i,j}(0))$, however again from Lemma \ref{ballsinballs} we have
	\[B^{W_i}_{\frac{7}{4} \cdot 2^{-i}}(\phi_{i,j}(0))\subset \phi_{i,j}(B_{2 \cdot 2^{-i}}(0)),\]
	but this is a contradiction and \eqref{distantw} is proved. Observe now that, since $h_i(w_1)=h_i(w_2)$, from Lemma \ref{point4}, provided $\eps_i,\eps_{i+1}$ small enough, we  have  
	\begin{equation}\label{closeinz}
		\sfd(f_i(w_1),f_i(w_2))\le \frac{1}{8}2^{-i}.
	\end{equation} This, together with the $2^{-i}$-density of $X_i$ implies that $f_i(w_1),f_i(w_2) \in B_{2\cdot 2^{-i}}(x_{j})$ for some $j \in J_i$. Therefore  Lemma \ref{correspl} gives that $w_1,w_2 \in B_{j_1}^i$, hence we are in position to apply \eqref{fid}, that coupled with \eqref{closeinz} provides
	\[\sfd_i(w_1,w_2)\le  \frac{1}{8}2^{-i}+C\eps_i2^{-i},\]
	which, if $\eps_i$ is sufficiently small, contradicts \eqref{distantw}. 
\end{proof}

\subsection{Final verifications and conclusion}
To prove Theorem \ref{mainthm} it remains to show that all the objects that we built on the previous sections satisfy the requirements \ref{a)}, \ref{b)}, \ref{c)}, \ref{d)} and \ref{e)} stated in Section \ref{sec:main argument}. Almost all the verifications are straightforward consequences of the results already obtained.

\noindent \ref{a)}- It is the content of Lemma \ref{rhod}.\\
\ref{c)}- If $\sfd_i(w_1,w_2)>2\cdot 2^{-i}$ we are done by \eqref{rho}. Suppose  $\sfd_i(w_1,w_2)\le2\cdot 2^{-i}$. Then observe that from \eqref{closedcover}  $w_1 \in B_{3\cdot 2^{-i}}(\psi_{i,j}^{-i}(0))$ for some $j$ and thus $w_1,w_2 \in B_{6\cdot 2^{-i}}(\psi_{i,j}^{-i}(0))\subset B_j^i $ from Lemma \ref{ballsinballs}. Then we can apply  \eqref{fid} and conclude.\\
\ref{d)}- This is \ref{point4eq}.\\
\ref{b)}- Suppose first that $\rho_{i+1}(h_i(w_1),h_i(w_2))\le 2	\cdot 2^{-(i+1)}$ and $\rho_i(w_1,w_2)\le 2	\cdot 2^{-i}$, and observe that by Lemma \ref{rhod}  $\rho_{i+1}(h_i(w_1),h_i(w_2))=\sfd_{i+1}(h_{i}(w_1),h_i(w_2))$ and $\rho_i(w_1,w_2)=d_{i}(w_1,w_2)$. Thus we conclude by the fact that $h_i$ is $1+C(\eps_i+\eps_{i+1}) $ bi-Lipschitz, which comes from Lemma \ref{mainh}. Suppose now that  $\rho_{i+1}(h_i(w_1),h_i(w_2))>2	\cdot 2^{-(i+1)}$. By \ref{c)} and \ref{d)} using triangle inequality we get 
\[ |\rho_{i+1}(h_i(w_1),h_i(w_2))-\rho_i(w_1,w_2)|\le C2^{-i}(\eps_i+\eps_{i+1}).\]
Then dividing by $\rho_{i+1}(h_i(w_1),h_i(w_2))$ follows that
\[ \left |1-\frac{\rho_i(w_1,w_2)}{\rho_{i+1}(h_i(w_1),h_i(w_2))}\right|\le C(\eps_i+\eps_{i+1})\]
that is what we wanted. The case $\rho_i(w_1,w_2)>2^{-i}$ is analogous.\\
\ref{e)}- Take any $x_{i,j}\in X_i$ and take any $w \in B_j^i$, then from \eqref{centertocenter} $\alpha_{i,j}(\psi_{i,j}(w)) \in B_{9\cdot 2^{-i}}(x_{i,j}),$ if $\eps_i$ is sufficiently small. Moreover from \eqref{welldeffi} we have $\sfd(f_i(w),\alpha_{i,j}(\psi_{i,j}(w)))	\le C2^{-i}\eps_i$, therefore, if $\eps_i$ is small enough we conclude from the  $2^{-i}$-density of $ X_i$.

\subsection{Adjustments for the local version}\label{sec:local proof}

The proof of the local version of Reifenberg theorem for metric spaces (see Theorem \ref{localthm})  is almost the same as the proof of global one (Theorem \ref{mainthm}). However some technical difficulties arise which make the argument slightly more complicated. The main point is that we cannot cover in general the ball $B_{1}(z_0)$ with a countable number of balls of a fixed radius contained in it (this can be seen already if $Z=\rr^n$ with $n>1$). Therefore we fix a big constant $M$   and cover only the smaller ball $B_{1-M\eps_0}(z_0)$. Then, as in the global case, for every scale $2^{-i}$ we cover $B_{1-M\eps_0}(z_0)$ with balls of radius $2^{-i}$ and use these covering to build manifolds $W_i$ (only that we need to start from scale $2^{-i_0}\ll M\eps_0$, instead of $i=0$).
As expected, the main issue arises  close to the boundary of the ball $B_{1}(z_0)$. For the sake of the simplicity consider $Z=\rr^n$ and suppose we have covered the $B_{1-M\eps_0}(0)$ with small balls of radius $2^{-i}$. Then the union of these balls is already a good candidate for approximating manifold $W_i$. However, close to the boundary, $W_i$ looks very rough and irregular and the induced metric does not contain any  information about the underlying Euclidean distance.  For this reason most of the construction will happen in some subset of $W_i$ that is a bit far from the boundary. This difficulty mainly reflects on construction of the maps $h_i : W_i \to W_{i+1}$. Indeed the two manifolds $W_i,W_{i+1}$, near the boundary may look very different and in particular we have no reason to hope that $h_i$ is a surjective diffeomorphism. For this reason, $h_i$ can be only defined in a smaller subset $U_i$ of $W_i$.

We will only write down the objects that need to be built in order to perform the main argument as in Section \ref{sec:main argument} for the global version.

\begin{center}
	\underline{INITIAL ASSUMPTIONS} \\
\end{center}

As in the statement of Theorem \ref{localthm} we assume that 
\begin{equation}\label{mainass}
\eps_i(n)\le \eps(n) \text{ for every } i \ge 0,
\end{equation}
where $\eps_i(n)$ are the numbers defined in \eqref{ghjonesloc} relative to  the ball $B_1(z_0)\subset Z$ and $\eps(n)$ is a sufficiently small constant depending only on $n.$ For brevity we will write $\eps_i=\eps_i(n).$ We also fix positive constants $C=C(n)$, $M=M(n)$ depending only on $n$. Moreover $\eps(n)$ is assumed to be small enough so that $\eps(n)M(n)\ll 1.$

\begin{center}
	\underline{MAIN ARGUMENT} \\
\end{center}

 For brevity in the sequel we write $\bar \eps_0\coloneqq M\eps_0.$

\textbf{CLAIM:}	It is sufficient to construct  a sequence of Riemannian manifolds $\{W_i,d_i\}_{i\ge 0}$ (not necessarily connected or complete - note however that condition $B_{1-2\bar \eps_0}(0)\subset U_0$ below  ensures that image of the homeomorphism  covers  $B_{1-M\eps_0}(z_0)$), open sets $U_i \subset W_i$, symmetric maps $\rho_i : W_i \times W_i \to \mathbb{R}_+$,  maps $h_i : U_i \to U_{i+1}$ with $U_{i+1}=h_i(U_i)$ and $f_i: W_i \to Z$,  satisfying the following statements

\begin{enumerate}[label={\alph**)}]
	\item \label{a2)} $(W_0,d_0)=(B_1^{\rr^n}(0),d_{Eucl})$, $\rho_0=d_{Eucl}$ and $B_{1-2\bar \eps_0}(0)\subset U_0,$ 
	\item \label{b2)}$\sfd(f_0(0),z_0)\le \frac{\bar \eps_0}{100}$,
	\item \label{c2)}for every $i \ge 0$
	$$\frac{1}{1+C(\eps_i+\eps_{i+1})}\le\frac{\rho_{i+1}(h_i(w_1),h_i(w_2))}{\rho_i(w_1,w_2)}\le 1+C(\eps_i+\eps_{i+1}),$$
	for every $w_1,w_2 \in U_i$	and
	$$c_1(n)\le\frac{\rho_{1}(h_0(w_1),h_0(w_2))}{\rho_0(w_1,w_2)}\le c_2(n),$$
	for every $w_1,w_2 \in U_0,$ for some positive constants $c_1(n),c_2(n)$ depending only on $n$,
	\item \label{d2)} for $i \ge 1$ 
	$$|d(f_i(w_1),f_i(w_2))-\rho_i(w_1,w_2)|\le C\eps_i2^{-i}\bar \eps_0 ,$$ for every $w_1,w_2 \in U_i$
	and
	$$|d(f_0(x),f_0(y))-|x-y|||\le \frac{\bar \eps_0}{100},$$ for every $x,y \in B_1(0)$,
	\item \label{e2)} for $i \ge 1,$ 
	$$\sfd(f_{i+1}(h_i(w)),f_i(w))\le C(\eps_i+\eps_{i+1})2^{-i} \bar \eps_0 ,$$
	for every $w \in U_i$  and
	$$\sfd(f_{1}(h_0(w)),f_0(w))\le \frac{\bar \eps_0}{100},$$ for every $w \in U_0,$ 
	\item \label{f2)} $f_i(h_i(U_i))$ is $ 2 \cdot 2^{-i}\bar \eps_0 $-dense in $B_{1-2\bar \eps_0}(z_0)$ for every $i \ge 1$.

\end{enumerate}

		{\textbf {Proof of the \textbf{CLAIM}}}:
Define $F_i : U_0 \to Z$ as $F_0\coloneqq f_0$ and $F_i\coloneqq f_i \circ h_{i-1}\circ...\circ h_0$ for every $i \ge 1$,  that is  well defined since $U_{i+1}=h_i(U_i).$  Then from \ref{e2)}
\begin{equation}\label{Fbound2}
d(F_{i+1}(w),F_i(w))=d(f_{i+1}(h_i(h_{i-1}...)),f_i(h_{i-1}...))\le 2\eps(n)C2^{-i}\bar \eps_0,
\end{equation}
for $i \ge 1$ and
\begin{equation}\label{Fbound22}
d(F_{1}(w),F_0(w))=d(f_{1}(h_0(w),f_0(w))\le \frac{\bar \eps_0}{100},
\end{equation}
Hence the sequence $F_i(w)$ is Cauchy in $Z$ and we call $F(w)$ its limit. Notice that from \eqref{Fbound2} we obtain for $i \ge 1$
\begin{equation}\label{fif2}
d(F(w),F_i(w))\le  4\eps(n)C2^{-i}\bar \eps_0.
\end{equation} 
Combining the above with \eqref{Fbound22} it follows that
\begin{equation}\label{fif3}
d(F(w),F_0(w))\le  \bar \eps_0 (4C\eps(n)+100^{-1}).
\end{equation} 
Differently from Theorem \ref{mainthm}, this $F$ is not yet the function required in Theorem \ref{localthm}, but needs to be modified, indeed is not defined in the whole $B_1(0)$. We start proving that $F: U_0 \to B_1(z_0)$ is biH\"older (or biLipschitz) arguing as in the proof of Theorem \ref{mainthm}.  Consider any $w_1,w_2 \in U_0$  and set
\[s_0=\rho_0(w_1,w_2)=|w_1-w_2|\le 2.\]
Define also for every $i$ the points $w_1^i,w_2^i \in W_i$ and the numbers $s_i$ as  $w_1^0\coloneqq w_1, w_1^0\coloneqq w_2$
\[w_1^{i+1}=h_{i}(w_1^i),\,\, w_2^{i+1}=h_{i}(w_2^i),\]
and
\[ s_i\coloneqq \rho_i(w_1^i,w_2^i).\]
Observe that from \ref{c2)} 
\begin{equation}
\frac{1}{1+C(\eps_i+\eps_{i+1})}s_i\le s_{i+1}\le s_{i}(1+C(\eps_i+\eps_{i+1})), \quad \text{for every $i \ge 1$.  }
\end{equation}

\noindent{\sc $F$ is BiH\"older:}
Combining \ref{d2)} and \ref{e2)} we get
\begin{equation}\label{eq:si si+1 2}
   s_i- 6C\eps(n)2^{-i}\le  s_{i+1}\le s_i+ 6C\eps(n)2^{-i}, \quad \text{for all $i\ge 1.$}
\end{equation}
In particular $s_\infty\coloneqq \sfd(F(w_1),F(w_2))= \lim_{m\to +\infty} s_m$.
Let $j\in \nn$ be arbitrary. Applying repeatedly \eqref{eq:silip} for all $s_0,\dots,s_j$ and then \eqref{eq:si si+1 2} for the remaining $s_i$'s with $i>j$, we obtain
\begin{equation}\label{eq:holder trick2}
   2^{-\alpha j} s_0 - 6C \eps(n)2^{-j}\le  s_\infty\le 2^{\alpha j} s_0 + 6C \eps(n)2^{-j},
\end{equation}
where we set $\alpha\coloneqq \log_2(1+2C \eps(n))>0.$ Note that $\alpha\in(0,1)$ provided $\eps(n)<C/2.$ 
Then there exists a (maximal) $j\in\nn$ so that 
\begin{equation} \label{eq:first j2}
    2^{\alpha j}s_0/2\le 2^{-j}, \quad 2^{\alpha(j+1)}s_0/2\ge 2^{-(j+1)}.
\end{equation}
From the second in \eqref{eq:first j2} we get
\begin{equation}\label{eq:s0 potenziato2}
    2s_0^\frac{1}{1+\alpha}\ge 2^{-j}.
\end{equation}
Plugging \eqref{eq:first j2}  in the right hand side of \eqref{eq:holder trick2} and then plugging \eqref{eq:s0 potenziato2} we obtain
\begin{equation}\label{eq:holder12}
     s_\infty\le 2^{\alpha j} s_0 + 6C \eps(n)2^{-j}\le 2(1+6C \eps(n))2^{-j} \le 4(1+6C \eps(n))s_0^\frac{1}{1+\alpha}
\end{equation}
Similarly, since $\alpha<1,$ we can find $j\in \nn$ such that
\begin{equation}\label{eq:laltro2}
    2^{-\alpha j}s_0/2\ge 2^{-j} \quad \text{ and } \quad 2^{-\alpha (j-1)}s_0/2\le 2^{-(j-1)}.
\end{equation}
From the second in \eqref{eq:laltro2}  we get $(s_0/2)^\frac{1}{1-\alpha}\le 2^{-j+1}$.
Plugging this and the first in \eqref{eq:laltro2} into \eqref{eq:holder trick2} we get
\begin{equation}\label{eq:holder2 2}
     s_\infty\ge  2^{-j} - 6C \eps(n)2^{-j}\ge (1-6C \eps(n)))\frac{(s_0/2)^\frac{1}{1-\alpha}}2,
\end{equation}
provided $\eps(n)<C/6.$
Combining \eqref{eq:holder12} and \eqref{eq:holder2 2}  we get the desired byH\"older condition for $F.$

\noindent{\sc $F$ is BiLipschitz (assuming \eqref{sumscales2}):}
Iterating the above and using \ref{c2)} in the case $i=0$ we get
\begin{equation}	\label{lip2}
 c_1(n)\prod_{j=1}^i\frac{1}{1+C(\eps_j+\eps_{j+1})}\le \frac{s_i}{s_0}\le  c_2(n)\prod_{j=1}^i(1+C(\eps_j+\eps_{j+1})).
\end{equation} 
Thanks to \eqref{sumscales2} we have that $\prod_{j=1}^{+\infty}\frac{1}{1+C(\eps_j+\eps_{j+1})}>0$ and $\prod_{j=1}^{+\infty}(1+C(\eps_j+\eps_{j+1}))<+\infty.$ 
Moreover applying \ref{d2)} we can estimate
\begin{equation}\label{rofi2}
|d(F_i(w_1),F_i(w_2))-s_i|=|d(f_i(h_{i-1}(w_1^{i-1})),f_i(h_{i-1}(w_2^{i-1})))-\rho_i(h_{i-1}(w_1^{i-1}),h_{i-1}(w_2^{i-1}))|\le C \eps(n)2^{-i}\bar \eps_0
\end{equation}
for every $i\ge 1$. This implies that $s_i \to \sfd(F(w_1,F(w_2)))$ as $i \to +\infty.$ Therefore passing to the limit in \eqref{lip2} we obtain that 
\[M_1\le \frac{d(F(w_1),F(w_2))}{|w_1-w_2|}\le M_2,\]
for some positive constants $M_1,M_2$.
Therefore $F$ is bi-Lipschitz.
\medskip

We prove now that $F(U_0)$ is dense in $B_{1-3\bar \eps_0}(0)$. Consider $z \in B_{1-3\bar \eps_0}(0)$ and pick any $\delta>0.$ 	
Take now $i$ such that $2\cdot 2^{-i}\bar \eps_0\le \delta/8$, then by \ref{e2)} and the fact that $U_{k+1}=h_k(U_k)$, there exists $w \in W_0^n$ such that $\sfd(F_i(w),z)< \delta/2$ and moreover from \eqref{fif2} $\sfd(F(w),F_i(w))<  \delta/2$, provided $\eps(n)$ is small enough. Hence $\sfd(F(w),z)<  \delta$, thus $F(U_0)$ is dense in $B_{1-3\bar \eps_0}(0).$ We now show that
\begin{equation}\label{triangle}
B_{1-5\bar \eps_0}(z_0)\subset F(B_{1-2\bar \eps_0}(0))
\end{equation}
We first claim that if $F(w)=z \in B_{1-4\bar \eps_0}(z_0)$, then $w \in B_{1-3\bar \eps_0}(0).$ Indeed \eqref{fif3} gives
\[ \sfd(z,f_0(w))\le  \bar \eps_0 (4C\eps(n)+100^{-1}),\]
therefore $f_0(w) \in B_{1-\frac{7}{2}\bar \eps_0}(z_0)$, provided $\eps(n)$ small enough. Thus, by \ref{b2)} and \ref{d2)} for  $i=0$, we have that $w \in  B_{1-3\bar \eps_0}(0)$. The claim is proved. Pick now $z \in B_{1-5\bar \eps_0}(z_0)$, recalling  that $F(U_0)$ is dense in $B_{1-3\bar \eps_0}(0)$ there exists a sequence $w_k \in U_0$ such that $F(w_k)\to z.$ Since $F(w_k) \in B_{1-4\bar \eps_0}(z_0)$ for $k$ big enough, we deduce from the above claim that $w_k \in B_{1-3\bar \eps_0}(0)$ for $k$ big enough. Moreover, since $F^{-1}$ is Lipschitz, the sequence $w_k$ is Cauchy, hence it converges to a point $w \in B_{1-2\bar \eps_0}(0)$ that is contained in $U_0$ from \ref{a2)}. Therefore by the continuity of $F$ follows that $F(w)=z.$ This proves \eqref{triangle}. 
 We define now the map  $\tilde F : B_1(0)\to B_1(z_0)$ as
\[\tilde F(x)\coloneqq  F|_{B_{1-2\bar \eps_0}(0))}((1-2\bar \eps_0)x)\]
that have all the properties required by Theorem \ref{localthm}.\\

\section{Proof of  ``Gromov-Hausdorff close  and Reifenberg flat metric spaces are homeomorphic''}

Here we prove Corollary \ref{corollary}, stated in Section \ref{sec:cor}.
\begin{proof}[Proof of Corollary \ref{corollary}]
Suppose that $(Z_1,\sfd_1)$ and $(Z_2,\sfd_2)$ are metric spaces satisfying the assumption $i)$ of Theorem \ref{mainthm} at scale $r>0$ and with parameters  $\eps_i,\tilde \eps_i\le\eps(n)$ respectively, where $\eps(n)$ is the constant given by $i)$ in Theorem \ref{mainthm}.  Without loss of generality we can assume that $\eps_0=\tilde \eps_0=\eps(n).$ We can also assume (as in the proof of Theorem \ref{mainthm}) that $r=200$. Suppose also that for some $\delta\le\eps(n)$ small enough it holds
\[
\sfd_{GH}((Z_1,\sfd_1),(Z_2,\sfd_2))\le \delta/3.
\]
In particular by Theorem \ref{ghchar} and Proposition \ref{ghappr} we can find two $\delta$-GH-approximations $\Phi_*: Z_2\to Z_1$ and $\Phi^*:Z_1 \to Z_2$ such that
\begin{equation}\label{eq:phi*}
     \sfd_2(\Phi^*\circ \Phi_*(z),z))<\delta, \quad \text{for all $z\in Z$,} \quad   \sfd_1(\Phi_*\circ \Phi^*(y),y))<\delta, \quad \text{for all $y\in Z_2$.}
\end{equation}
We need to show that we can take the same Riemannian manifold $W$ in the conclusion $i)$ of Theorem \ref{mainthm} for both $Z_!$ and $Z_2$. 
Recall that  $W$ (in the case $i)$) can be taken to be $W_0,$ where $\{W_i\}_{i\in \nn}$ are the Riemannian manifolds built in Section \ref{sec:manifolds} (this can be seen immediately in the main argument described in Section \ref{sec:main argument}).

The key point is that $W_0$ is built starting solely from the following objects:
\begin{enumerate}[label=\roman*)]
    \item a countable set of indices $J= J_0$,
    \item a partition of $J_0$ into disjoint subsets $\{J_0^k\}_{k=1}^{N_0}$,
    \item a collection of isometries $\mathcal I_0=\{ I_{j_1j_2}\}_{(j_1,j_2)\in \mathcal A}$, for some set $\mathcal{A}\subset J_0\times J_0$. 
\end{enumerate}
Indeed these are the only elements used in Lemma \ref{bigmodification2} to build the family of maps $\{\hat I_{j_1j_2}\}_{(j_1,j_2)\in \mathcal A}$, which are in turn the only ingredients in the actual construction of the manifold $W_0$ done in Section \ref{sec:build manifold}. We stress that also the indexing $\mathcal A$, not only the family of maps by itself,  is important when building $W_0$. 

Therefore if we can show that the objects in items $i),\,ii),\,iii)$ above can be taken to be the same for both $Z_1$ and $Z_2$ we are done. To show this we proceed as follows: we first make a suitable choice of the required objects for $Z_1$ and then show that these can be taken also for $Z_2.$

As in the proof of Theorem \ref{mainthm}, we will denote by $C$ a constant depending only on $n$, which might change from line to line. 

\textbf{Construction for $Z_1$}:
The starting point of the construction for $Z_1$ (see Section \ref{sec:coverings}) for $i=0$ is a choice of a set $X_0$ that is $1$-dense in $Z_1$ and a labelling the elements of $X_0$ as $X_0=\{x_{0,1},x_{0,2},....\}=\{x_{0,j}\}_{j\in J_0}$ where $J_0\coloneqq \{1,2,...,\#|X_0|\}$. Then this $J_0$ is precisely the set of indices in $a)$.
Then  $X_0$ is partitioned into disjoint subsets $Q^0_1,Q^0_2,...,Q^0_{N_0},$ satisfying 
\begin{equation}\label{eq:base}
    \sfd_1(x,y)\ge100, \quad \text{for all $x,y \in Q^0_k$ and all $k=1,\dots,N_0$}.
\end{equation}
From this the set of indices $J_0$ is also partitioned as $J_0=\bigcup_{k=1}^{N_0}J_0^k$ where $J_0^k\coloneqq  \{ j\in J_0 \ | x_{0,j}\in Q_k^0\}$. This partition is precisely the one in item $b)$ above.  The existence of $X_0$ and a partition $\{Q_k^0\}$ as above is proved in Proposition \ref{prop:covering}. However by that same result we see that we can, and will,  take $X_0$ to be $1/2$-dense in $Z_1$ and have
\begin{equation}\label{eq:improved}
    \sfd_1(x,y)\ge101, \quad \text{for all $x,y \in Q^0_k$ and all $k=1,\dots,N_0$}.
\end{equation}
Next we must choose the maps $\mathcal I_0=\{ I_{j_1j_2}\}_{(j_1,j_2)\in \mathcal A}$ for $Z_1$. We recall here how this is done in the construction. First we need fix some $C\eps(n)$-GH-approximations $\alpha_j,\beta_j$ from $B_{200}^{\rr^n}(0)$ to $B_{200}^{Z_1}(x_{0,j})$ and vice-versa, which are one the inverse of the other up to an error of $C\eps(n)$  (see Section \ref{sec:irr charts}). For the current proof   any such choice will do, the only precaution is that we take $\beta_j$ to be the restriction to $B_{200}^{Z_1}(x_{0,j})$ of a $C\eps(n)$-GH-approximation map  $\beta_j: B^{Z_1}_{200+\eps(n)}(x_{0,j})\to  B_{200+\eps(n)}^{\rr^n}(0)$ (this is clearly possible by the assumption $\sfd_{GH}(B^{Z_1}_{200}(x_{0,j}),B_{200}^{\rr^n}(0))\le 200\eps(n)$ and by Theorem \ref{ghchar}). Then for all indices $j_1,j_2$ such that $\sfd_1(x_{0,j_1},x_{0,j_2})<30$,  an isometry $I_{j_1j_2}$ is chosen with the only requirement that
\begin{equation}\label{eq:isom prop}
    |I_{j_1,j_2}-\beta_{j_1}\circ\alpha_{j_2}|\le C\eps(n), \quad \text{in $B_{45}(0)$,}
\end{equation}
and that $I_{j_1,j_2}^{-1}=I_{j_2,j_1}$ (see in particular Lemma \ref{Definition of the transition maps} and Definition \ref{maps}).
Finally, again as in Definition \ref{maps}, the set $\mathcal I_0$ can  be chosen as any subset of the above maps so that
\begin{equation}\label{eq:lol}
   \{I_{j_1j_2}\ : j_1,j_2 \in J_0 \text{ and }\sfd_1(x_{0,j_1},x_{0,j_2})<29\}\subset   \mathcal I_0.
 \end{equation}
 Here for $Z_1$ we will choose $ \mathcal I_0\coloneqq  \{I_{j_1j_2}\ : j_1,j_2 \in J_0 \text{ and }\sfd_1(x_{j_1},x_{j_2})<30-1/2\}$, which clearly satisfies \eqref{eq:lol}. 

\textbf{Construction for $Z_2$}: We now need to show that the choice of indices $J_0$, the partition $\{J_0^k\}_{k=1}^{N_0}$ and  the class of isometries $\mathcal I_0$ works also for $Z_2$. As we did for $Z_1$ we need first to chose a set $Y_0\subset Z_2$ and a partition of it:
\[
Y_0\coloneqq \Phi^*(X_0), \quad \tilde Q^0_k\coloneqq \Phi^*(Q^0_k), \quad \text{for all $k=1,\dots,N_0$}.
\]
We also label the elements in $Y_0$, as in $Z_1$, using $J_0$, i.e.\ $y_{0,j}\coloneqq \Phi^*(x_{0,j})$ for all $j \in J_0.$
It is then immediate to check, provided $\delta <1/4$, that $\tilde X_0$ that is $1$-dense in $Z_2$ and that, thanks to \eqref{eq:improved}, condition \eqref{eq:base} holds for $\tilde Q^0_k$ in $Z_2$. Clearly this construction induces the same  partition $\{J_0^k\}_{k=1}^{N_0}$  of $J_0.$ This shows that the objects in $i)$ and $ii)$ can be taken to be the same for $Z_2.$

Next we need to choose the isometries for $Z_2$. To do so we  choose the maps
\begin{align*}
    \tilde \alpha_{j} : B^{\rr^n}_{200}(0) \to B^{Z_2}_{200}(y_{0,j}), \quad 
\tilde \beta_{j}: B^{Z_2}_{200}(y_{0,j}) \to B^{\rr^n}_{200}(0)
\end{align*}
as follows:
\[
\tilde \alpha_j(x)\coloneqq \Phi^*\circ \alpha_j((1-\delta)x), \quad \tilde \beta_j\coloneqq\beta_j\circ  \Phi_*.
\]
It is easily checked that they have the correct domains of definition and ranges, provided say $\delta \le \eps(n)/2$. Similarly we can check that
\[
d_{\rr^n}(\tilde \beta_{j}\circ \tilde \alpha_{j},\id), \, d_2(\tilde \alpha_{j}\circ \tilde \beta_{j},\id)\le C\eps(n), \quad \text{uniformly},
\]
which follows from the analogous property of $\alpha_j,\beta_j$ and \eqref{eq:phi*}. In particular  $\tilde \alpha_j$ and $\tilde \beta_j$  are so admissible for the construction for $Z_2.$ The key observation is now that, whenever  $\sfd_1(x_{0,j_1},x_{0,j_2})<30-1/2$, it also holds $\sfd_2(y_{0,j_1},y_{0,j_2})<30$ (if say $\delta<1/2$)  and moreover 
\begin{align*}
    |I_{j_1,j_2}-\tilde \beta_{j_1}\circ\tilde \alpha_{j_2}|\le 
    &|I_{j_1,j_2}-\beta_{j_1}\circ\alpha_{j_2}|+|\beta_{j_1}\circ\alpha_{j_2}-\tilde \beta_{j_1}\circ\tilde \alpha_{j_2}|\le C\eps(n),  \quad \text{in $B_{45}(0)$,}
\end{align*}
where in the last estimate we used both \eqref{eq:isom prop} and \eqref{eq:phi*}, provided $\delta$ is small enough. This shows that for such couple of indices   the same map $I_{j_1,j_2}$ used  in $Z_1$ satisfies again \eqref{eq:isom prop}, but in $Z_2$ (up to enlarging the constant $C$). Hence $I_{j_1,j_2}$ is an admissible choice for the couple $j_1,j_2$ (indeed $I_{j_1,j_2}^{-1}=I_{j_2,j_1}$ is automatically true). For the other couples  we might choose other maps $I'_{j_1,j_2}$ (this is irrelevant since they will be discarded). We then choose precisely $\mathcal I_0$   for the maps in $Z_2$. As recalled above the only requirement for $\mathcal I_0$ is to be a subset of all the isometries that we  constructed for $Z_2$   and that \eqref{eq:lol} must hold. The first condition is met because by construction $ \mathcal I_0\coloneqq  \{I_{j_1j_2}\ : j_1,j_2 \in J_0 \text{ and }\sfd_1(x_{j_1},x_{j_2})<30-1/2\}$ and for such indices $j_2,j_2$ we chose precisely $I_{j_1j_2}$ both in $Z_1$ and in $Z_2.$
On the other hand, since  $\sfd_2(y_{0,j_1},y_{0,j_2})<29$ implies that $\sfd_1(x_{j_1},x_{j_2})<30-1/2$, the requirement \eqref{eq:lol} is also clearly satisfied. Hence $\mathcal I_0$ is admissible also for $Z_2$.

For the second part of Corollary \ref{corollary}, suppose that $Z_1$ and $Z_2$ are compact Riemannian manifolds, we know that for $i$ big enough we can take $\beta_{i,j},\alpha_{i,j}$ and $\tilde \beta_{i,j}, \tilde \alpha_{i,j}$ as charts and their inverses (see Remark \ref{rmk:charts}) respectively for $Z_1$ and $Z_2$. In particular for $i$ big enough we can take $W_i=Z_1$ and $\tilde W_i=Z_2$. However we know that $W_i$ and $W_0$ are diffeomorphic via the map $h_i\circ \dots\circ h_0$ (and the same for $\tilde W_i$ and $W_0$). Therefore $Z_1$ and $Z_2$ are both diffeomorphic to $W_0$. This concludes the proof of Corollary \ref{corollary} is concluded.
\end{proof}

\section{Main tools for the proof}

\subsection{Mappings modification theorem}\label{sec:modification}
This section is devoted to the proof of the most important technical result for the proof of the metric Reifenberg's theorem (in particular in the proof of Lemma \ref{bigmodification}) when building the manifolds approximating the metric space. Roughly saying these manifolds will be built starting from a  family of transition maps, but without the knowledge of charts, which instead need to be constructed by hand. However these transitions maps (which are actually isometries of $\rr^n$) will  not  in general be compatible with each other  and thus need to be suitably modified in order to produce an actual manifold. This modification procedure is precisely the content of this section.

\subsubsection{Cocyclical maps}\label{sec:cocyclical}
A central role in the statement and proof of the mapping modification theorem will be played by the notion of \textit{cocyclical maps}.

\begin{definition}\label{def:cocyclical}
	Let $f,g,h  : \rr^n \to \rr^n$ be bijective maps and fix a radius $r>0.$ Define  the maps $\{I_{ab}\}_{a,b \in \{1,2,3\}}$ as follows: $I_{12}\coloneqq f,I_{23}\coloneqq g,I_{13}\coloneqq h$  and then set $I_{ba}=I_{ab}^{-1}$ for every distinct $a,b \in \{1,2,3\}$. We say that the maps $f,g,h$ are \textit{$r$-cocyclical} if  for any distinct $a,b,c\in \{1,2,3\}$ we have that
	\begin{equation}\label{cocyclical}
		\begin{split}
			&\text{for any point } x \in B_r(0) \text{ such that } I_{ba}(x)\in B_r(0), I_{cb}(I_{ba}(x)) \in B_r(0), \\
			&\text{it holds } I_{ca}(x)=I_{cb}(I_{ba}(x)). 
		\end{split}	
	\end{equation}
\end{definition}
We point out  that the above definition is  independent of the order of the three functions, i.e.\ $f,g,h$ are $r$-cocyclical if and only if $g,f,h$ are $r$-cocyclical and so on.  Moreover it is immediate from the definition that $f,g,h$ are $r$-cocyclical if and only if $f^{-1},g,h$ are $r$-cocyclical. Finally observe that if $f,g,h$ are $r$-cocyclical, then they are also $s$-cocyclical for any $s<r.$
\begin{remark}\label{equivdef}
	It is worth to observe that asking \eqref{cocyclical}  is equivalent to ask that the following binary relation, defined on the the disjoint union  $B_1\sqcup B_2 \sqcup B_3$, of three copies of the Euclidean ball $B^{\rr^n}_r(0)$,  is transitive and symmetric:
	\[x \sim y \text{ with } x \in B_a, \,y \in B_b\Longleftrightarrow I_{ba}(x)=y ,\]
	where $\{I_{ab}\}_{a,b \in \{1,2,3\}}$ are defined as above and $I_{aa}$ is the identity map.\fr
\end{remark}
\begin{remark}\label{permutations}
	We observe that if \eqref{cocyclical} is satisfied for a particular choice $a,b,c$, then it is automatically satisfied also for the choice $c,b,a$.  Therefore it is for example  enough to check it with $(a,b,c)=(1,2,3),\,(3,1,2),\,(1,3,2).$ \fr
\end{remark}

The following simple result will be useful to quickly check the cocyclical condition. Roughly said it tells us that if three maps are almost cocyclical at some radius and \eqref{cocyclical} is verified  for one choice of $a,b,c$ and for that same radius, then they are fully cocyclical at a slightly smaller radius.
\begin{prop}\label{prop:trick}
	Fix $\eps>0$. Let $f,g,h$ and  $\{I_{ab}\}_{a,b \in \{1,2,3\}}$ be as in Definition \ref{def:cocyclical}. Suppose that for every $a,b,c \in \{1,2,3\}$ distinct it holds
 \begin{equation}\label{eq:almost cocycl}
     |I_{cb}\circ I_{ba}-I_{ca}|< \eps , \quad \text{ in } B_r(0)
 \end{equation}
and that
	\begin{equation}\label{cocyclical2}
		\begin{split}
			&\text{for any point } x \in B_r(0) \text{ such that } I_{21}(x)\in B_r(0), I_{32}(I_{21}(x)) \in B_r(0), \\
			&\text{it holds } I_{31}(x)=I_{32}(I_{21}(x)).
		\end{split}	
	\end{equation}
Then the maps $f,g,h$ are $(r-\eps)$-cocyclical.
\end{prop}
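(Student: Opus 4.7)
The plan is to verify condition \eqref{cocyclical} at radius $r-\eps$ for every choice of distinct $a,b,c$, using the given identity \eqref{cocyclical2} at radius $r$ for the single triple $(1,2,3)$ together with the almost-cocyclical bound \eqref{eq:almost cocycl}. By Remark \ref{permutations} it suffices to treat the triples $(1,2,3)$, $(3,1,2)$ and $(1,3,2)$, and the first is immediate since $B_{r-\eps}(0)\subset B_r(0)$.

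For the triple $(1,3,2)$ I would take $x \in B_{r-\eps}(0)$ with $I_{31}(x)$ and $I_{23}(I_{31}(x))$ both in $B_{r-\eps}(0)$, and then verify that the three points $x$, $I_{21}(x)$, $I_{32}(I_{21}(x))$ all lie in $B_r(0)$, so that the hypothesis \eqref{cocyclical2} applies. Membership of $x$ is trivial; for the other two one uses \eqref{eq:almost cocycl} twice:
\[
|I_{21}(x)-I_{23}(I_{31}(x))|<\eps \quad\text{and}\quad |I_{32}(I_{21}(x))-I_{31}(x)|<\eps,
\]
together with the assumption that $I_{23}(I_{31}(x))$ and $I_{31}(x)$ are already in $B_{r-\eps}(0)$. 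Applying \eqref{cocyclical2} gives $I_{31}(x)=I_{32}(I_{21}(x))$, and composing with $I_{23}$ yields $I_{23}(I_{31}(x))=I_{21}(x)$, which is exactly the $(1,3,2)$ identity.

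For the triple $(3,1,2)$ the same idea works, only now I would apply \eqref{cocyclical2} not at $x$ but at the auxiliary point $y\coloneqq I_{13}(x)$. With the assumptions $x,\,I_{13}(x),\,I_{21}(I_{13}(x))\in B_{r-\eps}(0)$ one checks, using the bound $|I_{32}(I_{21}(y))-I_{31}(y)|<\eps$ and the identity $I_{31}(y)=x$, that the three relevant points $y,\,I_{21}(y),\,I_{32}(I_{21}(y))$ all lie in $B_r(0)$. Then \eqref{cocyclical2} gives $x=I_{31}(y)=I_{32}(I_{21}(I_{13}(x)))$, and composing with $I_{23}$ produces the required equality $I_{23}(x)=I_{21}(I_{13}(x))$.

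Nothing here should be an obstacle: the whole argument is a bookkeeping exercise of rewriting each triple as an instance of $(1,2,3)$ applied at a suitable point, and the role of the shrinkage from $r$ to $r-\eps$ is precisely to absorb the single $\eps$ of slack coming from \eqref{eq:almost cocycl} when propagating membership in the ball. The only mild subtlety is being careful that the inequalities are strict (so one cannot afford to ``spend'' $\eps$ more than once per chain), which is why working at radius $r-\eps$ rather than some smaller radius is exactly the right choice.
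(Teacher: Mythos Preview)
Your proof is correct and follows essentially the same approach as the paper: reduce via Remark \ref{permutations} to the three triples $(1,2,3)$, $(3,1,2)$, $(1,3,2)$, and for the two nontrivial ones use \eqref{eq:almost cocycl} to push the relevant points from $B_{r-\eps}(0)$ into $B_r(0)$ so that \eqref{cocyclical2} applies (at $x$ for $(1,3,2)$, at $y=I_{13}(x)$ for $(3,1,2)$). The only cosmetic differences are the order in which you treat the two cases and the fact that you correctly cite \eqref{eq:almost cocycl} for the strict inequalities, whereas the paper's own proof twice writes \eqref{cocyclical2} there by a slip of the pen.
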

\begin{proof}
	Thanks to Remark \ref{permutations} we need to verify \eqref{cocyclical} only for $(a,b,c)$ in the cases $(1,2,3)$, $(3,1,2)$ and $(1,3,2).$ The case $(a,b,c)=(1,2,3)$ is true by hypothesis.
	
We check $(3,1,2).$	Suppose now that $x,\, I_{13}(x), \, I_{21}(I_{13}(x)) \in B_{r-\eps}(0)$. Inequality \eqref{eq:almost cocycl}  with $a=1$, $b=2$, $c=3$ and computed at $I_{13}(x)\in B_r(0)$ reads as $
	| I_{32}(I_{21}(I_{13}(x)))-x|<\eps.$
In particular $I_{32}(I_{21}(y)) \in B_r(0)$ with $y\coloneqq I_{13}(x) \in B_r(0)$. Moreover we are assuming that $I_{21}(y)=I_{21}(I_{13}(x))\in B_r(0) $, therefore from \eqref{cocyclical2} we have $x=I_{31}(y)=I_{32}(I_{21}(y))=I_{32}(I_{21}(I_{13}(x)))$, from which applying the map $I_{23}$ we obtain $I_{23}(x)=I_{21}(I_{13}(x)).$ 

	We now check $(1,3,2).$ Suppose  that $x,\, I_{31}(x), \, I_{23}(I_{31}(x)) \in B_{r-\eps}(0)$. From \eqref{cocyclical2} follows that 
	$
	|  I_{23}(I_{31}(x))-I_{21}(x)|<\eps.
	$
	In particular $I_{21}(x) \in B_r(0)$. Moreover again from \eqref{cocyclical2} $|  I_{32}(I_{21}(x))-I_{31}(x)|<\eps,$ hence also $I_{32}(I_{21}(x)) \in B_r(0)$. Therefore from \eqref{cocyclical2} we have $I_{32}(I_{21}(x))=I_{31}(x)$, from which applying $I_{23}$ we conclude.
\end{proof}

\subsubsection{Statement of the main result}

 We need first to introduce some notation: 
\begin{itemize}
	\item $J$ is a countable set of indices,
	\item  $\{J_{i}\}_{i=1}^N$, $N \ge 3,$ is a partition of $J$ and for every $j \in J$ we denote by $n(j)$  the unique integer such that $j\in J_{n(j)},$
	\item  $\mathcal{A}\subset J\times J$ is a set with the following two properties: 
	\begin{equation}\label{asimm}
		(j_1,j_2)\in \mathcal{A} \Longrightarrow (j_2,j_1)\in \mathcal{A},
	\end{equation}
	\begin{equation}\label{apartition}
		(j_1,j_2),(j_1,j_3)\in \mathcal{A} \Longrightarrow n(j_1)\neq n(j_2)\neq n(j_3)\neq n(j_1). 
	\end{equation}
\end{itemize}

We can now state the main result of this section. See also Section \ref{sec:cocyclical} for the definition of cocyclical maps.
\begin{theorem}[Mappings modification theorem]\label{bigmodification2}
	For every $n,M \in \mathbb{N}$, there exist $C=C(n,M)>0$ and $\bar \beta=\bar \beta(n,M)>0$ such that the following holds. Fix $t>0$  and let $J$, $\{J_{i}\}_{1\le i\le N}$ and $\mathcal{A}$ be as above and such that $N\le M$. Suppose $\{I_{j_1j_2}\}_{(j_1,j_2)\in \mathcal{A}}$ is a family of global isometries of $\rr^n$ with the following properties:
	
	\begin{enumerate}[label=\text{\Alph*)}]
		\item \label{A)} $I_{j_2j_1}=I_{j_1j_2}^{-1},$
		\item \label{B)} $I_{j_3j_2}(I_{j_2j_1}(B_{8t}(0)))\cap B_{9t}(0)\neq \emptyset $ $\Longrightarrow$ $(j_3,j_1)\in \mathcal{A},$
		\item \label{C)} if $(j_1,j_2),(j_2,j_3),(j_3,j_1)\in \mathcal{A}$, then
		\begin{equation}\label{eq:assumption C}
			|I_{j_3j_2}\circ I_{j_2j_1}-I_{j_3j_1}|\le  \beta t, \quad \text{ in $B_{10t}(0),$}
		\end{equation}
		for some $\beta<\bar \beta$. 
	\end{enumerate}
	Then there exists another family $\{\widetilde I_{j_1j_2}\}_{(j_1,j_2)\in \mathcal{A}}$ of $C^\infty$-global diffeomorphisms of $\rr^n$ such that	$\widetilde I_{j_2j_1}=\widetilde I_{j_1j_2}^{-1}$ and satisfying the following compatibility condition.  For every $(j_1,j_2),(j_3,j_2)\in \mathcal{A}$ for which the set  $\{x \in B_{8t}(0) \ : \  \tilde I_{j_2j_1}(x),\tilde I_{j_3j_2}(\tilde I_{j_2j_1}(x))\in B_{8t}(0)\}$ is not empty, we have $(j_3,j_1)\in \mathcal{A}$ and  the maps $\tilde I_{j_2j_1},\tilde I_{j_3j_2},\tilde I_{j_3j_1}$ are $8t$-cocyclical.
	Moreover for every $(j_1,j_2)\in \mathcal{A}$ it holds that
	\begin{equation}\label{finalb2}
		\|I_{j_1j_2}-\widetilde I_{j_1j_2} \|_{C^2(\rr^n),t}\le C \beta.
	\end{equation}
\end{theorem}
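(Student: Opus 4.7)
My plan is to construct the modified maps $\tilde I_{j_1 j_2}$ via an iterative scheme that processes the partition classes one at a time, exploiting the finite bound $N \leq M$ to keep the accumulated perturbation under control. The starting point is Proposition \ref{prop:trick}: to prove that a triple is $8t$-cocyclical, it suffices to establish the almost-cocycle bound on a slightly larger ball together with a single exact cocycle identity there. The almost-cocycle estimate will follow automatically from the $C^2$-closeness \eqref{finalb2} combined with the original hypothesis \ref{C)}. The core task is therefore to enforce the exact identity $\tilde I_{j_3 j_1} = \tilde I_{j_3 j_2} \circ \tilde I_{j_2 j_1}$ on a ball slightly larger than $B_{8t}(0)$ for every admissible triple.

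I would then build the family $\{\tilde I_{j_1 j_2}\}$ in $N$ rounds. Start with $\tilde I^{(0)}_{j_1 j_2} \coloneqq I_{j_1 j_2}$. At round $k = 1, \dots, N$, pairs $(j_1, j_2)$ with $n(j_1) = k$ or $n(j_2) = k$ are held fixed; for each remaining pair $(j_1, j_2) \in \mathcal A$ admitting a (unique, by \ref{apartition}) intermediate $j_3 \in J_k$ with $(j_1, j_3), (j_3, j_2) \in \mathcal A$, modify via
\[
\tilde I^{(k)}_{j_1 j_2}(x) \coloneqq \tilde I^{(k-1)}_{j_1 j_2}(x) + \chi(x)\bigl[\tilde I^{(k-1)}_{j_1 j_3}\bigl(\tilde I^{(k-1)}_{j_3 j_2}(x)\bigr) - \tilde I^{(k-1)}_{j_1 j_2}(x)\bigr],
\]
with a smooth cutoff $\chi$ equal to $1$ on a ball containing $B_{8t}(0)$ and supported in a slightly larger ball (chosen so that $\|\chi\|_{C^2,t} \leq C(n)$). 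The symmetry $\tilde I_{j_2 j_1} = \tilde I_{j_1 j_2}^{-1}$ is preserved by performing the construction on one representative of each unordered pair and taking the inverse on the other; Lemma \ref{inversebound} guarantees that these inverses exist globally and satisfy the required $C^2$ bound since each perturbation is small in $\|\cdot\|_{C^2,t}$.

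The quantitative bound \eqref{finalb2} can then be verified by tracking perturbations round by round using Lemmas \ref{iterateclosest}, \ref{iterateclosest2}, \ref{inversebound}, and \ref{B4}: at each round the increment is $O(\beta)$ in $\|\cdot\|_{C^2,t}$ thanks to the (inductive) almost-cocycle bound inherited from the previous stage, and after $N \leq M$ rounds the total perturbation is $C(n, M)\beta$. Each $\tilde I^{(k)}_{j_1 j_2}$ remains a global diffeomorphism by Lemma \ref{diffeo}. The main obstacle will be to verify that modifications performed at a later round $k' > k$ do not spoil the cocycle relations already established at round $k$. This requires a careful coordination of the cutoffs across rounds, and the partition constraint \ref{apartition} — which forces every cocycle triple to involve three distinct classes — is essential: it should decouple the modifications happening at different rounds and allow for an inductive preservation of cocycle. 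Verifying this bookkeeping, together with ensuring that the scheme respects the inversion symmetry throughout, will be the most delicate part of the argument, and the precise ordering of rounds and the choice of $\chi$ will need to be fine-tuned accordingly.
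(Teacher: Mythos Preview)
Your plan is in the right spirit—iterative modification with cutoffs, controlled by the partition structure and the finite bound $N\le M$—but there is a genuine gap, and it is precisely the one you flag as ``the most delicate part'' without resolving. The paper's proof handles this, and the difference in \emph{granularity} between your scheme and the paper's is what makes theirs go through.

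The paper does not process classes one at a time; it processes \emph{ordered triples of classes}. There are $N_3=\binom{N}{3}$ steps, enumerated so that the triple $(a_3,a_2,a_1)$ with $a_1<a_2<a_3$ is handled at a prescribed stage. At each step only a \emph{single} map in each relevant triple—the one playing the role of $I_{32}$—is altered, via a ``three-maps modification lemma'' (a refinement of Lemma~\ref{magic}). That lemma guarantees two locality properties: the new map agrees with the old one outside $I_{21}(B_r(0))$, and more importantly it agrees with the old one at every point where the cocycle identity \emph{already} held. This second ``coherence'' clause is exactly what lets one prove inductively (statement $\mathbf{S}(k)$ in the paper) that the modification at step $k+1$ does not destroy any cocycle established at steps $\le k$: when the new map $I^{k+1}_{j_3j_2}$ is applied at a point relevant to an earlier triple, one shows—using the already-established cocycles for two \emph{lower} triples sharing edges with the current one—that the old map was already correct there, hence unchanged.

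Your $N$-round scheme modifies \emph{every} pair $(j_1,j_2)$ with $n(j_1),n(j_2)\ne k$ at round $k$, via the convex-cutoff formula. Two concrete obstacles stand in the way of the preservation argument. First, your cutoff $\chi$ lives in the domain of $\tilde I_{j_1j_2}$, so after you set $\tilde I_{j_2j_1}:=(\tilde I_{j_1j_2})^{-1}$ the cutoff sits in the \emph{codomain} of the inverse; when you later compose through $j_2$ this asymmetry means you cannot simply say ``on the $\chi=1$ region everything telescopes''—you must track which side of each composition the cutoffs fall on, and they do not align. Second, and more seriously, your formula lacks the coherence clause: in the transition annulus $0<\chi<1$ the new map differs from the old one even at points where the previous cocycle was exact, so an earlier identity can genuinely be perturbed. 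Without a mechanism that leaves the map untouched where it was already right, there is no clean inductive statement to carry forward; the ``fine-tuning of $\chi$'' you allude to would, if pushed through, amount to reinventing the triple-by-triple ordering and the coherence property. In short, the missing idea is not just bookkeeping: it is the coherence property of the single-map modification, together with the finer $\binom{N}{3}$-step ordering that lets one invoke it.
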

We briefly explain the role of the subclasses $J_i$ in which of the family of indices $J$ is partitioned.  The key point is that the triples of maps $I_{j_2j_1}, I_{j_3j_2}, I_{j_3j_1}$ for which we need to obtain the compatibility conditions are such that the indices $j_1,j_2,j_3$ belong to pairwise different  subclasses $J_i$  (by \eqref{apartition}). In particular every map takes part only in at most $N$ of the triples of maps that we need to consider. Indeed in every triple of the form $I_{j_2j_1}, I_{j_3j_2}, I_{j_3j_1}$, the maps pairwise share an index, but
by \eqref{apartition} and the pigeonhole principle we have
\begin{equation}\label{eq:pigeon}
    \# \{j \in J \ : \  (j_1,j)\in \mathcal A\}\le N, \quad \forall j_1\in J.
\end{equation}
This fact will allow to modify every map only a finite (and controlled) number of times, even if we have no control on the total number of maps (which might be even infinite).

\begin{remark}
	Similarly to Remark \ref{equivdef}, we observe that the compatibility condition required in Theorem \ref{bigmodification2} is equivalent to ask  that the following binary relation is  transitive and symmetric. Let $\bigsqcup_{j\in J} B_j$, the disjoint union of  copies of the Euclidean ball $B_{8t}(0)$, indexed by $J$, set
	\[x \sim y \text{ with } x \in B_{j_1}, \,y \in B_{j_2}\Longleftrightarrow \tilde I_{j_2j_1}(x)=y .\]\fr
\end{remark}

\subsubsection{Three-maps modification lemma}\label{sec:basic}
The proof of Theorem \ref{bigmodification2} will follow an algorithm based on the iteration of the following result. Roughly speaking it says that, given three maps which are
both close to isometries \eqref{TML2} and almost cocyclical at a given scale \eqref{TML1}-\eqref{TML2}, we can slightly modify one of them \eqref{bound} to make them cocyclical at a slightly smaller scale \eqref{cocycl}. The crucial part of this result is that this said map is modified only where strictly needed and left unchanged everywhere else (see \eqref{locality} and \eqref{coherence}). This will allow us in the modification algorithm to modify the same map more than once, without disrupting the work done in the previous steps.
\begin{lemma}[Three-maps modification]\label{TML}
	Fix $N,n,k\in \mathbb{N}$, $k\ge 2,$ and also two real numbers $t>0$ and $r\ge 2$. Then there exist constants $C_1=C_1(n,N)$ and $\delta_1(n,N)$ such that the following holds. Suppose we have  a  $C^k$-global  diffeomorphisms of $\rr^n$ $I_{ab}$ for $a,b=1,2,3$ and $a \neq b$ and for which $I_{ab}=I_{ba}^{-1}.$ Suppose we have also some corresponding global isometries $I'_{ab}$ (again $I'_{ab}=(I'_{ba})^{-1}$) for which
	\begin{equation}\label{TML1}
		| I'_{ab}-I'_{ac}\circ  I'_{cb}|\le \eps t, \quad \text{in $B_{rt}(0)$}
	\end{equation}
	for every distinct $a,b,c$ and for some number $\eps <\delta_1(n,N)$.  Suppose finally that
	\begin{equation} \label{TML2}
		\| I_{ab}-I_{ab}'\|_{C^2(\rr^n),t}\le \eps 
	\end{equation}
	for every $a,b$.

	Then there exists a $C^k$-global diffeomorphism $ \hat I_{32}$ of $\rr^n$ such that
	\begin{equation}\label{locality}
		\hat I_{32}=I_{32} \text{ outside } I_{21}(B_{rt}(0)),
	\end{equation} 
	\begin{equation}\label{coherence}
		\hat I_{32}(x)=I_{32}(x) \text{ for any } x \in B_{rt}(0) \text{ such that } I_{32}(I_{21}(x))=I_{31}(x),
	\end{equation}
	\begin{equation}\label{cocycl}
		I_{21},I_{31},\hat I_{32} \text{ are } \big(1-\frac{1}{N}\big )rt\text{-cocyclical},
	\end{equation}
	and finally
	\begin{equation}\label{bound}
		\|I_{32}-\hat I_{32}\|_{C^2(\rr^n),t},\|I_{23}-\hat I_{23}\|_{C^2(\rr^n),t}\le C_1\eps,
	\end{equation} 
	where $\hat I_{23}=\hat I_{32}^{-1}.$
\end{lemma}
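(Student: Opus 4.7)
\textbf{Proof plan for Lemma \ref{TML}.} The idea is to modify $I_{32}$ by a smooth interpolation toward the composition $I_{31}\circ I_{12}$, performed only on the thin shell where the two disagree, so that the size of the modification is controlled by the almost-cocyclical hypothesis. Concretely, fix a smooth radial cutoff $\chi:\rr^n\to[0,1]$ with $\chi\equiv 1$ on $B_{(1-1/(2N))rt}(0)$ and $\chi\equiv 0$ outside $B_{rt}(0)$, with standard bounds $|\nabla\chi|\le CN/(rt)$ and $|D^2\chi|\le CN^2/(rt)^2$. Set $\psi(x)\coloneqq\chi(I_{12}(x))$; since $I_{12}$ is an $\eps$-perturbation of an isometry in the scaled $C^2$ norm, the $C^2$ bounds of $\chi$ transfer to $\psi$ up to constants depending only on $n$. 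Define
$$\hat I_{32}(x)\coloneqq I_{32}(x)+\psi(x)\bigl[I_{31}(I_{12}(x))-I_{32}(x)\bigr].$$

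The first step is to estimate the size of the modification. Writing $g(x)\coloneqq I_{31}(I_{12}(x))-I_{32}(x)$, hypothesis \eqref{TML2}, \eqref{TML1} for $(a,b,c)=(3,1,2)$, and Lemma \ref{iterateclosest} applied to $I_{31}\circ I_{12}$ versus $I'_{31}\circ I'_{12}$ together yield $\|g\|_{C^2,t}\le C\eps$ on $I_{21}(B_{rt}(0))$, which contains the support of $\psi$. A product-rule computation then gives $\|\psi g\|_{C^2(\rr^n),t}\le C_1(n,N)\eps$, which is \eqref{bound} for $\hat I_{32}$; the companion bound for $\hat I_{23}=\hat I_{32}^{-1}$ follows from Lemma \ref{inversebound} applied to $(I'_{32})^{-1}\circ\hat I_{32}$, which by Lemma \ref{iterateclosest} is an $O(\eps)$ perturbation of the identity in scaled $C^2$ norm. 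The same application of Lemma \ref{inversebound} (alternatively Lemma \ref{diffeo}) shows that $\hat I_{32}$ is a $C^k$ global diffeomorphism of $\rr^n$.

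Properties \eqref{locality} and \eqref{coherence} are then immediate from the definition. If $x\notin I_{21}(B_{rt}(0))$, then $I_{12}(x)\notin B_{rt}(0)$ and $\psi(x)=0$, so $\hat I_{32}(x)=I_{32}(x)$. If $y=I_{21}(x)$ with $I_{32}(y)=I_{31}(x)$, then, using $I_{12}\circ I_{21}=\id$, the bracket $I_{31}(I_{12}(y))-I_{32}(y)=I_{31}(x)-I_{32}(y)$ vanishes, and $\hat I_{32}(y)=I_{32}(y)$.

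The main obstacle---the final step---is verifying the cocyclical property \eqref{cocycl}. I invoke Proposition \ref{prop:trick} at scale $R\coloneqq(1-1/(2N))rt$, applied to the new family (with $I_{21},I_{12},I_{31},I_{13}$ unchanged and $I_{32},I_{23}$ replaced by $\hat I_{32},\hat I_{23}$). The almost-cocyclical estimate at scale $R$ with error $C\eps t$ follows from the analogous statement for $I_{21},I_{31},I_{32}$ (a consequence of \eqref{TML1}, \eqref{TML2} and Lemma \ref{iterateclosest}) perturbed by the $C^0$ bound on $\hat I_{32}-I_{32}$ and $\hat I_{23}-I_{23}$ coming from \eqref{bound}. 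The single exact identity required by Proposition \ref{prop:trick}, namely $I_{31}(x)=\hat I_{32}(I_{21}(x))$ for $x,I_{21}(x)\in B_R(0)$, holds by construction: for such $x$ one has $\chi(I_{12}(I_{21}(x)))=\chi(x)=1$, crucially using $I_{12}\circ I_{21}=\id$, hence $\psi(I_{21}(x))=1$ and $\hat I_{32}(I_{21}(x))=I_{31}(I_{12}(I_{21}(x)))=I_{31}(x)$. Proposition \ref{prop:trick} then yields $(R-C\eps t)$-cocyclicity, and choosing $\delta_1(n,N)$ so small that $R-C\eps t\ge(1-1/N)rt$ gives \eqref{cocycl}.
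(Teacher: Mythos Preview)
Your proof is correct and follows the same overall strategy as the paper: an additive interpolation between $I_{32}$ and $I_{31}\circ I_{12}$ governed by a cutoff, followed by Proposition~\ref{prop:trick} to upgrade the exact identity on a slightly smaller ball to full cocyclicity. The difference is only in the packaging of the cutoff. The paper sets $H\coloneqq I_{31}\circ I_{12}\circ I_{23}$ on $U_2=I_{32}(I_{21}(B_{rt}(0)))\cap B_{rt}(0)$, applies Lemma~\ref{magic} to obtain $\hat H$ interpolating $H$ with $\id$ between $U_1$ and $U_2$, and then defines $\hat I_{32}=\hat H\circ I_{32}$; after unfolding, this is exactly $\hat I_{32}(x)=I_{32}(x)+\phi(I_{32}(x))\,[I_{31}(I_{12}(x))-I_{32}(x)]$ for a cutoff $\phi$ adapted to $U_1\subset U_2$. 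Your choice $\psi=\chi\circ I_{12}$ with $\chi$ radial is simpler: it avoids the case distinction $A=\emptyset$ versus $A\neq\emptyset$ and makes the verification of the exact identity in Proposition~\ref{prop:trick} a one-liner via $I_{12}\circ I_{21}=\id$. The paper's route has the advantage of reusing the general extension Lemma~\ref{magic}, but otherwise the two arguments are equivalent.

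One small point worth making explicit in your write-up: the $C^0$ bound $|g|\le C\eps t$ on $\operatorname{supp}\psi\subset I_{21}(B_{rt}(0))$ is obtained by substituting $y=I_{12}(x)\in B_{rt}(0)$ and invoking \eqref{TML1} with $(a,b,c)=(3,1,2)$ (as you indicate), while the $C^1$ and $C^2$ bounds on $g$ hold globally since $Dg'$ is a constant matrix of norm $\le C\eps$ (by Lemma~\ref{isombound}) and $D^2g'=0$. Also note that the literal wording of \eqref{coherence} in the statement is slightly imprecise; what is actually proved (both by you and by the paper) and what is used in the iteration is the version $\hat I_{32}(I_{21}(x))=I_{32}(I_{21}(x))$ whenever $I_{32}(I_{21}(x))=I_{31}(x)$, which is exactly what you establish.
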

For the proof we will need the following elementary technical result.
\begin{lemma}\label{magic}
	For every $n \in \mathbb{N}$ and $\eta >0$, there exists constants $C_2=C_2(n,\eta)>0, \delta_2=\delta_2(n,\eta)\in(0,1)$ with the following property. Let $m \in \{1,2\}$. Let $U_1, U_2$ open bounded subsets of  $\rr^n$ such that $\bar U_1 \subset U_2$ and $\sfd(\bar U_1,U_2^c)\ge \eta t$ with $t>0.$ Suppose $H\in C^k(U_2;\rr^n)$,   with $k \ge 2$, satisfies  
	$$\|H-{\sf{id}}\|_{C^m(U_2),t}\le  \eps$$
	for some $\eps < \delta_2.$
	
	Then there exists a smooth  global diffeomorphism $\hat H : \rr^n \to \rr^n$ such that $H|_{U_1}=\hat H|_{U_1}$, $\hat H|_{U_2^c}=\id$, $\hat H(x)=H(x)$ whenever $H(x)=x$ and
	\begin{equation}\label{H}
		\|\hat H-{\sf{id}}\|_{C^m(\rr^n),t}\le  C_2(n,\eta)\eps.
	\end{equation}
	Moreover $\hat H^{-1}$ is $C^k$ and 
	\begin{equation}\label{invH}
		\|\hat H^{-1}-{\sf{id}}\|_{C^m(\rr^n),t}\le  C_2(n,\eta)\eps.
	\end{equation}
\end{lemma}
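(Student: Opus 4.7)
The plan is to interpolate between $H$ and the identity via a smooth cutoff. Fix a smooth function $\chi:\rr^n\to[0,1]$ with $\chi\equiv 1$ on a neighborhood of $\bar U_1$, $\chi\equiv 0$ outside $U_2$, and satisfying $|\partial_i \chi|\le C(n)/(\eta t)$, $|\partial_{ij}\chi|\le C(n)/(\eta t)^2$. Such a $\chi$ exists because $\sfd(\bar U_1, U_2^c)\ge \eta t$ (produce it by convolving the indicator of a $(\eta t/3)$-enlargement of $\bar U_1$ with a standard bump of support of size $\eta t/4$). Then define
\[
\hat H(x)\coloneqq x+\chi(x)\bigl(H(x)-x\bigr), \qquad x\in\rr^n,
\]
with the convention that $\chi(x)(H(x)-x)=0$ outside $U_2$. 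By construction $\hat H=H$ on $U_1$, $\hat H=\id$ on $U_2^c$, and $\hat H(x)=H(x)$ at every fixed point of $H$.

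Next I would verify the $C^m$ bound \eqref{H}. Writing $\hat H-\id=\chi(H-\id)$ and using the product rule, for $m=1$ one has
\[
\|D(\hat H-\id)\|\le |D\chi|\,|H-\id|+\chi\,\|D(H-\id)\|\le \frac{C(n)}{\eta t}\cdot \eps t+\eps\le C(n,\eta)\eps,
\]
together with $|\hat H-\id|\le |H-\id|\le \eps t$, giving $\|\hat H-\id\|_{C^1(\rr^n),t}\le C_2(n,\eta)\eps$. For $m=2$ an analogous computation, distributing one or two derivatives on $\chi$ versus $(H-\id)$ and invoking \eqref{opnorm}, yields
\[
t|\partial_{ij}(\hat H-\id)_k|\le t\bigl(|\partial_{ij}\chi|\eps t+2|\partial_i\chi|\eps+\chi\cdot \eps/t\bigr)\le C(n,\eta)\eps,
\]
which combined with the $C^1$ bound delivers \eqref{H}.

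For the diffeomorphism claim and the inverse estimate \eqref{invH}, I would simply invoke the previously established Lemma \ref{inversebound} applied to $\hat H$: since $\|\hat H-\id\|_{C^m(\rr^n),t}\le C_2\eps$ and $\hat H-\id$ has compact support (hence is bounded), for $\delta_2$ chosen small enough so that $C_2\delta_2<1/4$, Lemma \ref{inversebound} gives that $\hat H$ is a $C^k$-diffeomorphism of $\rr^n$ with $\|\hat H^{-1}-\id\|_{C^m(\rr^n),t}\le C(n)C_2\eps$, absorbing the constant into $C_2(n,\eta)$.

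The main technical point is bookkeeping the product-rule terms against the chosen scaling-invariant norm and ensuring $\eta$ dependence enters only through the cutoff's derivative bounds. No fundamental obstacle is expected: every other ingredient (existence of the cutoff, global invertibility under a small $C^1$ perturbation of the identity, bounds on the inverse) has already been packaged in Lemma \ref{diffeo} and Lemma \ref{inversebound}.
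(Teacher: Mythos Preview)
Your proposal is correct and follows essentially the same route as the paper: define $\hat H=\id+\chi(H-\id)$ for a cutoff $\chi$ adapted to the gap between $\bar U_1$ and $U_2^c$, estimate $\hat H-\id$ via the product rule, and then appeal to Lemmas \ref{diffeo} and \ref{inversebound} for invertibility and the bound on $\hat H^{-1}$. The only cosmetic difference is that the paper first rescales to $t=1$ to avoid tracking the $t$'s in the cutoff bounds, whereas you carry them through directly.
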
		
\begin{proof}
		Is is enough to  consider $t=1,$ since the case of a general $t$ follows by scaling observing that the norms $\|\cdot\|_{C^m,t}$ are scaling invariant (recall Remark \ref{rmk:scaling invariant}).
	Let  $\phi \in C_c^{\infty}(\rr^n)$ be such that $\phi=1 $ on $U_1$ and $\phi=0$ on $U_2^c$ and such that $|\partial_{i,j}\phi|,|\partial_i\phi |\le c=c(n,\eta)$ and $|\phi|\le 1$. Define 
	\[\hat H\coloneqq  (H-{\sf{id}})\phi+\id. \]
	Then using \eqref{opnorm}
	$$|\hat H-\id|\le \eps, $$
	$$|\partial_i(\hat H-{\sf{id}})_k|\le |\partial_i (H-{\sf{id}})_k||\phi|+|(H-{\sf{id}})_k| |\partial_i \phi  |\le (1+c) \eps, $$
	\begin{align*}
		|\partial_{i,j}(\hat H-{\sf{id}})_k|&\le  |\partial_{i,j} H_k||\phi|+|\partial_{i}(H-{\sf{id}})_k||\partial_j\phi |+|\partial_{j}(H-{\sf{id}})_k||\partial_j\phi |+|(H-{\sf{id}})_k||\partial_{i,j} \phi|\\
		&\le \tilde c(\eta,n) \eps 
	\end{align*}
	on $\rr^n$, where $\tilde c$ is a constant depending only on $\eta$ and $n$. This proves \eqref{H}. It is also clear that $H|_{U_1}=\hat H|_{U_1}$, $\hat H|_{U_2^c}=id$ and $\hat H(x)=H(x)$ whenever $H(x)=x$. Moreover from the first and second bound above, if $\eps$ is small enough with respect to $\eta$ and $n$, then $\hat H$ is a diffeomorphism and $\hat H^{-1}$ is $C^k$ by Lemma \ref{diffeo}.   Then \eqref{invH} is a direct consequence of Lemma \ref{inversebound}.
\end{proof}

We are now ready to prove the three-maps modification lemma.
\begin{proof}[Proof of Lemma \ref{TML}]
	It is clear from \eqref{TML1} that
	\begin{equation*}
		| I_{32}'( I_{21}'(I_{13}'))-{\sf{id}}|\le\eps t 
	\end{equation*}	
	in $B_{rt}(0)$. Moreover from the above and \eqref{TML2}
	\begin{equation}\label{Hbound1}
		| I_{32}(I_{21}(I_{13}))-{\sf{id}}|\le4\eps t 
	\end{equation}
	in $B_{rt}(0)$. 
	Define now the set $A\coloneqq I_{32}(I_{21}(B_{(1-\frac{1}{2N})rt}(0)))\cap B_{(1-\frac{1}{2N})rt}(0).$ We distinguish two cases:
	
	\noindent {\sc Case 1:} $A=\emptyset$. We simply take $\hat I_{32}=I_{32}.$ Indeed in this case $I_{21},I_{32},I_{31}$ are vacuously  $(1-\frac{1}{N})rt$-cocyclical, i.e.\ the set of points where we need to check \eqref{cocyclical} is empty. To see this denote $B_{(1-\frac{1}{N})rt}(0)$ by $B$. Suppose that there exists $x \in B$ such that $I_{21}(x),I_{32}(I_{21}(x))\in B, $ then we would have $A\neq \emptyset.$ Suppose  instead that there exists $x \in B$ such that $I_{31}(x),I_{23}(I_{31}(x))\in B, $ then applying to $I_{31}(x)$ the map in \eqref{Hbound1}, if  $\eps<1/(100N)$, we deduce  that $I_{32}(I_{21}(x))\in A$ and so $A\neq \emptyset.$ Finally suppose there exists $x \in B$ such that $I_{13}(x),I_{21}(I_{13}(x))\in B $ and set $y\coloneqq I_{13}(x)\in B.$ Then from \eqref{Hbound1}  we deduce $I_{32}(I_{21}(y)) \in B_{(1-\frac{1}{2N})rt}(0)$, therefore again $A\neq \emptyset.$ From Remark \ref{permutations}, this is enough to prove that $I_{21},I_{32},I_{31}$ are $(1-\frac{1}{N})rt$-cocyclical.
	
	\noindent {\sc Case 2:} $A\neq \emptyset$.
	Since $S\coloneqq I_{31}'( I_{12}'(I_{23}'))$ is an isometry, from Lemma \ref{isombound} (recall $r \ge 2$) we get $\|DS-{\sf{id}} \|\le 4\eps$ and thus $\|S-{\sf{id}}\|_{C^2(B_{rt}(0)),t}\le 4\eps$. Therefore from \eqref{TML2}, assuming $\eps <1$ and applying Lemma \ref{iterateclosest2} we obtain
	\begin{equation} \label{Hbound2}
		\|I_{31}( I_{12}(I_{23}))-{\sf{id}}\|_{C^2(B_{rt}(0)),t}\le C\eps,
	\end{equation}
	where $C$ is a constant depending only on $n$.
	Define now the following open sets
	\[ U_2\coloneqq  I_{32}(I_{21}(B_{rt}(0)))\cap B_{rt}(0) \]
	\[ U_1\coloneqq A= I_{32}(I_{21}(B_{(1-\frac{1}{2N})rt}(0)))\cap B_{(1-\frac{1}{2N})rt}(0) \]
	that are non empty, since we assumed $A\neq \emptyset.$
	Clearly $U_1 \subset U_2\subset B_{rt}(0)$ and notice that, by \eqref{TML2} and the fact that $I_{ab}'$ are isometries, if say $\eps \le \frac{1}{100N}$, we have $\sfd(\bar U_1, U_2^c)\ge t/(10N).$ Define now $H\coloneqq  I_{31}\circ  I_{12} \circ I_{23}: U_2 \to \rr^n$.  Thanks to  \eqref{Hbound2} we can now apply Lemma \ref{magic} (provided $\eps \le \delta_2(n,1/(10N))C^{-1}$, where $\delta_2$ is the one given by Lemma \ref{magic}) with $H$, with $U_1$,$U_2$,$t$ and $\eps$ to deduce the existence of  a $C^k$-global diffeomorphism $\hat H$ such that
	\begin{equation}\label{restr}
		\hat H|_{U_1}= I_{31}\circ  I_{12} \circ  I_{23},
	\end{equation}
	\begin{equation}\label{coherence2}
		\hat H(x)=H(x) \text{ whenever }  H(x)=x ,
	\end{equation}
	\begin{equation}\label{u2c}
		\hat H|_{U_2^c} = \id,
	\end{equation}
	\begin{equation}\label{hatHbound}
		\|\hat H^{-1}-{\sf{id}}\|_{C^2(\rr^n),t}, \|\hat H-{\sf{id}}\|_{C^2(\rr^n),t}\le \tilde C\eps,  
	\end{equation}
	where $\tilde C$ is constant depending only on $n$ and $N$.
	We define $ \hat I_{32}=\hat H \circ   I_{32}$ and call $\hat I_{23}$ its inverse. Clearly \eqref{coherence} is satisfied thanks to \eqref{coherence2}. Moreover \eqref{locality} holds from \eqref{u2c}, since $I_{32}(I_{21}(B_{rt}(0)^c))\subset U_2^c$.  We now verify \eqref{bound}. The first bound follows directly from \eqref{hatHbound} and \eqref{TML2}, applying Lemma \ref{iterateclosest}. The bound for the inverse, $\hat I_{23}$, follows from Lemma \ref{inversebound}, provided $\eps$ is small enough. Finally we need to prove the cocyclical condition in \eqref{cocycl}. 
 Suppose first that there exists $x \in B_{(1-\frac{1}{3N})rt}(0)$ such that $I_{21}(x),\hat I_{32}(I_{21}(x))\in B_{(1-\frac{1}{3N})rt}(0).$ Then, if $\eps$ is small enough with respect to $n$ and $N$, from \eqref{bound} we deduce that $I_{32}(I_{21}(x)) \in B_{(1-\frac{1}{2N})rt}(0)$, that implies $I_{32}(I_{21}(x)) \in U_1$, therefore from \eqref{restr}
	\[ \hat I_{32}(I_{21}(x))=\hat H(I_{32}(I_{21}(x)))=I_{31}\circ  I_{12} \circ  I_{23} \circ I_{32}(I_{21}(x))=I_{31}(x). \]
	\eqref{cocycl} then follows from Proposition \ref{prop:trick}, indeed notice that its hypotheses are satisfied thanks to \eqref{TML1} and \eqref{bound}, provided $\eps$ is small enough.
\end{proof}

\subsubsection{Proof of the mapping modification theorem}

\begin{proof}[Proof of Theorem \ref{bigmodification2}]
  Let $\{I_{j_1j_2}\}_{(j_1,j_2)\in \mathcal{A}}$ be a family of global isometries of $\rr^n$ satisfying assumptions \ref{A)}, \ref{B)}, \ref{C)} of the statement. Recall also that, by assumption, $\mathcal A\subset J\times J$ satisfying \eqref{asimm}, \eqref{apartition}, where  $J$ is a countable set of indices partitioned into sets  $\{J_{i}\}_{i=1}^N$, $3\le N\le M$. Recall also that for every $j \in J$ we denote by $n(j)$  the unique integer such that $j\in J_{n(j)}.$
The goal is to construct  new maps $\{\widetilde I_{j_1j_2}\}_{(j_1,j_2)\in \mathcal{A}}$ satisfying suitable compatibility properties as in the conclusion of the theorem.

	This will be achieved by modifying slightly the original maps and by iterating Lemma \ref{TML}. 
	The proof is divided in the following parts: first we describe the iterative algorithm that we use to modify the maps  (\textbf{Part 0} and \textbf{Part 1}); after this we see the effect of this modification  (\textbf{Part 2});  then we prove that the algorithm is applicable, by showing that the hypotheses of Lemma \ref{TML} are satisfied at every step  (\textbf{Part 3}); finally we prove that the maps that we obtain satisfy the compatibility conditions required by Theorem \ref{bigmodification}  (\textbf{Part 4}).

	\medskip
	
\noindent	{\textbf{Part 0, preparation:}}\\
	We choose $\beta \le \frac{\delta_1}{100M^3(C_1+1)^{M^3}}$ where  $\delta_1=\delta_1(n,M^3),C_1=C_1(n,M^3)$ are given in Lemma \ref{TML}. Define also the numbers $\beta_k\coloneqq (C_1+1)^k\beta$ for $k=0,1,\dots,N^3$.
	
	In the modification of the maps we will need to proceed in a precise  ordered fashion. To formalize such procedure we need to introduce the following notation.
	 
	Set $N_3\coloneqq \binom{N}{3} (=\frac{N(N-2)(N-1)}{6})$. Define the set $$\mathscr{T}\coloneqq \{(a,b,c) \ | \ 1\le c<b<a\le N\} $$ and consider the enumeration of the elements of $\mathscr{T}=\{T_1,...,T_{N_3}\}$ defined as follows. Set $T_1\coloneqq (3,2,1)$. If $T_k=(a,b,c)$ then set:
	\begin{itemize}
		\item $T_{k+1}=(a,b,c+1)$ if $c<b-1$,
		\item $T_{k+1}=(a,b+1,1)$ if $c=b-1$ and $b<a-1$,
		\item $T_{k+1}=(a+1,2,1)$ if $T_k=(a,a-1,a-2).$
	\end{itemize}  
	In other words we choose the enumeration so that the sequence $\#T_k$ is increasing, where $\#T_k$ is the 3-digit numbers formed by the  entries of $T_k$ (from right to left): e.g.\  $T_1=(3,2,1), T_2=(4,2,1),T_3=(4,3,1), T_4=(4,3,2), T_5=(5,2,1)...$ and so on.
	
	\medskip
\noindent	{\textbf{Part 1, modification procedure:}}
	
	We divide the modification in a finite number of steps $k=1,...,N_3$. At every step we produce for every map $I_{j_1j_2}, (j_1,j_2)\in \mathcal{A},$ a modified map that will be called $I^k_{j_1j_2}.$  \\
	We start by setting $ I^0_{j_1j_2}\coloneqq I_{j_1j_2}$ for every $ (j_1,j_2)\in \mathcal{A}.$
	\begin{itemize}
		\item For every  $k=1,...,{N_3}$ we  do the following:
	\end{itemize}

\begin{tcolorbox}[colframe=white,colback=mygray]
	{\bf Step $k$}: Consider $T_k=(a_3,a_2,a_1)$ and for every triple of maps  $I^{k-1}_{j_{2}j_{1}},I^{k-1}_{j_{3}j_{1}},I^{k-1}_{j_{3}j_{2}}$ with $n(j_1)=a_1 , n(j_2)=a_2 ,n(j_3)=a_3$, 
	apply Lemma \ref{TML} (see the next part for the verification of the hypotheses) with 
	\[  I_{21}=I^{k-1}_{j_{2}j_{1}},\quad  I_{31}= I^{k-1}_{j_{3}j_{1}},\quad I_{32}= I^{k-1}_{j_{3}j_{2}},  \]
	\[  I_{21}'=I^{0}_{j_{2}j_{1}},\quad I_{31}'= I^{0}_{j_{3}j_{1}},\quad I_{32}'= I^{0}_{j_{3}j_{2}},  \]
	\[t=2^{-i}\]
	\[N=M^2\]
	\[r=10-\frac{k-1}{N^3}\]
	\[\eps=\beta_k=(C_1+1)^k\beta\]
	to produce a modified map $I^{k}_{j_{3}j_{2}}$ (that is the map  $\hat I_{32}$ given by the Lemma) and then set $I^{k}_{j_{2}j_{3}}\coloneqq (I^{k}_{j_{3}j_{2}})^{-1}$. Moreover   set $I^{k}_{j_1j_2}\coloneqq I^{k-1}_{j_1j_2},I^{k}_{j_1j_3}\coloneqq I^{k-1}_{j_1j_3}$ and the same for their inverses. 

	Finally for every map $I_{j,\bar j}^{k-1}$ that does not belong to any triple considered above and neither does its inverse,  we simply set $I_{j, \bar j}^{k}\coloneqq I_{j,\bar j}^{k-1},$ $I_{\bar j,  j}^{k}\coloneqq I_{\bar j, j}^{k-1}.$  
\end{tcolorbox}

 At the end of the iteration define $ \widetilde I_{j_1j_2}\coloneqq I^{{N_3}}_{j_1j_2}$.

\noindent {\emph{Important remark:}} Note that assumption \eqref{apartition} (see also \eqref{eq:pigeon})  ensures that every map $I^{k-1}_{j,\bar j}$ belongs to at most one of  the triples considered at \textbf{Step $k$}, hence the above procedure makes sense as we are trying to modify the same map more than once in the same step.

\medskip
\noindent	{\textbf{Part 2, effect of the modification:}}
	
	We gather here the properties of the new maps produced by the modification. From Lemma \ref{TML} (given that we can apply it) we have that 
	\begin{equation}\label{kbound}
		\| I^k_{j_{3}j_{2}}- I^{k-1}_{j_{3}j_{2}}\|_{C^2(\rr^n), t}\le \beta_k C_1,\quad  \| I^k_{j_{2}j_{3}}- I^{k-1}_{j_{2}j_{3}}\|_{C^2(\rr^n), t}\le \beta_k C_1,
	\end{equation}
	\begin{equation}\label{local}
		I^{k}_{j_{3}j_{2}}=I^{k-1}_{j_{3}j_{2}}\quad  \text{  in  } \quad I^{k-1}_{j_{2},j_{1}}(B_{(10-\frac{k-1}{N^3})t}(0))^c.
	\end{equation} 
	\begin{equation}\label{coherence3}
		I^k_{j_3j_2}(x)=I^{k-1}_{j_3j_2}(x) \text{ for any } x \in B_{(10-\frac{k-1}{N^3})t}(0) \text{ such that } I^{k-1}_{j_3j_2}(I^{k-1}_{j_2j_1}(x))=I^{k-1}_{j_3j_1}(x),
	\end{equation}
	and
	\begin{equation}\label{cocycl3}
		I^{k}_{j_{2}j_{1}},I^{k}_{j_{3}j_{1}},I^{k}_{j_{3}j_{2}} \text{ are } \left(10-\frac{k}{N^3}\right)t\text{-cocyclical}.
	\end{equation}
	Observe that \eqref{cocycl3} holds because $N\le M$ and being $r$-cocyclical implies being $s$-cocyclical for any $s<r$.
	
	\medskip 
	
\noindent	{\textbf{Part 3, verification of the hypotheses needed to apply Lemma \ref{TML}:}}\\
	The fact that $I_{ab}=I_{ba}^{-1}$ and $I'_{ab}=I_{ba}'^{-1}$ is granted by \eqref{asimm} and the fact that at the end every modification step, we set $I^{k}_{j_{2}j_{3}}\coloneqq (I^{k}_{j_{3}j_{2}})^{-1}$.  Our initial assumption on $ \beta$ implies that $\beta_k\le \delta_1(n,N)$ for every $k$, therefore  we only need to prove that \eqref{TML1} and \eqref{TML2} are satisfied for some $\eps\le \beta_k$. \eqref{TML1} is always satisfied by assumption \ref{C)} with $\eps=\beta\le \beta_k$. Therefore we only need to prove that
	\begin{equation}\label{k0}
		\|I^{k-1}_{j_1j_2}-I^0_{j_1j_2}\|_{C^2(\rr^n), t}\le\beta_k.
	\end{equation}
	We can prove this by induction. It is trivial if $k=1$. So suppose it is true for $k$, in particular we can perform the above modification at least up to Step $k$. Then  from \eqref{kbound} (that we are assuming to hold at Step $k$) we have
	\[\|I^{k}_{j_1j_2}-I^0_{j_1j_2}\|_{C^2(\rr^n), t}\le\|I^{k-1}_{j_1j_2}-I^0_{j_1j_2}\|_{C^2(\rr^n), t}+\|I^{k-1}_{j_1j_2}-I^k_{j_1j_2}\|_{C^2(\rr^n), t}\le C\beta_k+\beta_k =\beta_{k+1},\]
	that proves \eqref{k0}. Notice also that from \eqref{k0}, \eqref{finalb2} already follows, indeed
	\[\|\tilde I_{j_1j_2}-I_{j_1j_2}\|_{C^2(\rr^n), t}=\|I^{{N_3}}_{j_1j_2}-I^0_{j_1j_2}\|_{C^2(\rr^n), t}\le \beta_{N_3}= \beta (C_1+1)^{N_3}	\le\beta (C_1+1)^{M^3} ,\]
	hence it is sufficient to take $C\ge (C_1+1)^{M^3}$, which depends only on $n,M$, since $C_1$ depends only on $n$.	
	
	\medskip
\noindent	{\textbf{Part 4, proof of compatibility conditions:}}
	
	To prove Theorem \ref{bigmodification2} it remains only to prove the compatibility conditions. We report them here for the convenience of the reader:
	
	\noindent\emph{Compatiblity conditions:} for every $(j_1,j_2),(j_3,j_2)\in \mathcal{A}$ for which the set  $$\{x \in B_{8t}(0) \ : \  \tilde I_{j_2j_1}(x),\tilde I_{j_3j_2}(\tilde I_{j_2j_1}(x))\in B_{8t}(0)\}$$ is not empty, we have $(j_3,j_1)\in \mathcal{A}$ and that the maps $\tilde I_{j_2j_1},\tilde I_{j_3j_2},\tilde I_{j_3j_1}$ are $8t$-cocyclical.

	\medskip
	
	We first describe the  idea of the argument.
	
	\noindent\emph{Idea:}  After  Step $k$ of the procedure we clearly have, thanks to \eqref{cocycl3},  that the maps relative to the triple $T_k$ are cocyclical at scale $(10-k/N^3)t$. Therefore what we need to do is check that the modification at Step $k$ does not destroy the compatibility conditions created at the previous steps. For this it turns out to be crucial the fact that we are decreasing the scale at every step and   that we are modifying the maps only where is strictly needed (see in particular \eqref{local} and \eqref{coherence3}).
	
	\medskip
	We pass now to the rigorous part. We claim that to prove the above compatibility conditions is enough to show that the following statement, denoted by {\bf{S($k$)}}, is true for every $k=1,...,{N_3}.$\\
	
	{\bf{S($k$):}} For every $m \le k$ consider $T_m=(a_3,a_2,a_1)$. Let $I^k_{j_1j_2},I^k_{j_1j_3},I^k_{j_2j_3}$ be such that $n(j_1)=a_1 , n(j_2)=a_2 ,n(j_3)=a_3$. These are $(10-\frac{k}{N^3})t$-cocyclical.\\

	To see that this would be sufficient to conclude, suppose there exists two maps $\tilde I_{j_2j_1},\tilde I_{j_3j_2}$ and a point $x \in B_{8t}(0)$ such that $\tilde I_{j_2j_1}(x)$, $\tilde I_{j_3j_2}(\tilde I_{j_2j_1}(x))\in B_{8 t}(0)$. Then from \eqref{finalb2} and by how we chose  $\beta$ at the beginning, we have $I_{j_3j_2}( I_{j_2j_1}(x))\in B_{9t}(0)$. Then, thanks to assumption \ref{B)} we have that $(j_3,j_1)\in \mathcal{A}$. Moreover from \eqref{apartition} we must have that $n(j_1)\neq n(j_2)\neq n(j_3)\neq n(j_1)$, therefore $(n(j_1), n(j_2), n(j_3))=T_m$, for some $m\le {N_3}$. Therefore  {\bf{S($N_3$)}} implies that $\tilde I_{j_2j_1},\tilde I_{j_3j_2}, \tilde I_{j_3j_1}$ are $8t$-cocyclical (indeed $N_3\le N^3/6$).
	
	Observe that we actually used only statement {\bf{S($N_3$)}}, however to prove it we will need to argue by induction and prove every 	{\bf{S($k$)}}.
	
	\medskip
	
	\noindent {\bf Proof of {\bf{S($k$):}}}\\
	We prove it by induction on $k.$
	First we observe that after the step $k$ is completed in the modification procedure, any triple of maps $I^k_{j_1j_2},I^k_{j_1j_3},I^k_{j_2j_3}$ such that  $T_k=(n(j_1), n(j_2), n(j_3))$, is $(10-\frac{k}{N^3})t$-cocyclical by  \eqref{cocycl3}.
	Hence {\bf{S($1$)}} is clearly true. \\
	Suppose now that {\bf{S($k$)}} is true for $k$. Consider $T_{k+1}=(b_3,b_2,b_1)$. Since  $I^{k+1}_{j\bar j}\neq I^k_{j\bar j}$ only if $j \in J_{b_3}$, $\bar j \in J_{b_2}$ (or the opposite), we only need to check {\bf{S($k+1$)}} for $T_m=(b_3,b_2,a_1)$ with $a_1\le b_1.$ Indeed the other cases are true by induction hypothesis.  The case $a_1=b_1$ is immediately verified from the initial observation. Let now $T_m=(b_3,b_2,a_1)$ with $a_1<b_1,$ then $m\le k.$ We need to show that $I^{k+1}_{j_1j_2},I^{k+1}_{j_1j_3},I^{k+1}_{j_2j_3}$ are $(10-\frac{k+1}{N^3})t$-cocyclical. To this aim set  $B_k\coloneqq B_{(10-\frac{k}{N^3})t}(0),$ $B_{k+1/2}\coloneqq B_{(10-\frac{k+1/2}{N^3})t}(0)$ and $B_{k+1}\coloneqq B_{(10-\frac{k+1}{N^3})t}(0)$ so that $B_{k+1}\subset B_{k+1/2}\subset B_k.$ We claim that is sufficient to show that:
	\begin{equation}\label{cocyclicalfinal}\tag{$\spadesuit$}
		\begin{split}
			&\text{for any point } x \in B_{k+1/2} \text{ such that } I^{k+1}_{j_3j_2}(I^{k+1}_{j_2j_1}(x)),I^{k+1}_{j_2j_1}(x) \in B_{k+1/2}, \\
			&\text{it holds } I^{k+1}_{j_3j_2}(I^{k+1}_{j_2j_1}(x))=I^{k+1}_{j_3j_1}(x).
		\end{split}	
	\end{equation}
	Indeed the full cocyclical condition on the smaller ball $B_{k+1}$ would then follow from Proposition \ref{prop:trick}, whose hypotheses are satisfied thanks to assumption \ref{C)}, \eqref{finalb2} and our initial choice of $\beta$.

	 \noindent \emph {Proof of \eqref{cocyclicalfinal}:}  Let $x \in B_{k+1/2}$ be such that $I^{k+1}_{j_3j_2}(I^{k+1}_{j_2j_1}(x)),I^{k+1}_{j_2j_1}(x) \in B_{k+1/2}$. 
	 Notice first that from \eqref{kbound} and how we chose  $\beta$, we have that $I^{k}_{j_3j_2}(I^{k}_{j_2j_1}(x)),I^{k}_{j_2j_1}(x) \in B_{k}$, therefore by \textbf{S($k$)} and induction hypothesis $I^{k}_{j_3j_2}(I^{k}_{j_2j_1}(x))=I^{k}_{j_3j_1}(x). $ Hence we need to show
	\begin{equation}\label{k1k}
		I^{k}_{j_3j_2}(y)=I^{k+1}_{j_3j_2}(y)
	\end{equation}
	where $y=I^{k}_{j_2j_1}(x).$  If the map $	I^{k}_{j_3j_2}$ was not modified at the step $k+1$, i.e.\ $I^{k}_{j_3j_2}=I^{k+1}_{j_3j_2}$, there is nothing to prove. Hence
	we can assume that $I^{k+1}_{j_3j_2}$ has been modified  at step $k+1$ of the modification procedure by applying Lemma \ref{TML} to the maps $I^{k}_{j_3j_2},I^{k}_{j_2,j_0},I^{k}_{j_3,j_0}$ for some $j_0 \in J_{b_1}$. We divide two cases.
	
	\noindent {\sc Case 1:} There is not any $z \in B_k$ such that $I^k_{j_2,j_0}(z)=y$. In this case  \eqref{k1k} follows immediately from \eqref{local}.
	
	\noindent {\sc Case 2:} There exists $z \in B_{k}$ such that $I^k_{j_2,j_0}(z)=y.$ The idea is that in this case the map $I^{k}_{j_3j_2}$ was already correct and needed not to be modified.   Observe first that $x=I^k_{j_1j_2}(I^k_{j_2,j_0}(z))$, $I^k_{j_2,j_0}(z) \in B_k$. Moreover by \eqref{kbound} and by how we chose  $\beta$ at the beginning, we have $I_{j_1j_2}(I_{j_2,j_0}(z)) \in B_{10t}(0)$, hence thanks to assumption \ref{B)} we have that $(j_1,j_0)\in \mathcal{A}$. Therefore by induction hypothesis, since $T_l=(b_2,b_1,a_1)$ with $l\le k$, we have $I^k_{j_1,j_0}(z)=x.$ From this we infer that $I^k_{j_3j_1}(I^k_{j_1,j_0}(z))=I^k_{j_3j_1}(x) \in B_k.$ Therefore again by induction hypothesis since $T_h=(b_3,b_1,a_1)$ with $h\le k$, we have 
	$$I^k_{j_3,j_0}(z)=I^k_{j_3j_1}(I^k_{j_1,j_0}(z))=I^k_{j_3j_1}(x)=I^k_{j_3j_2}(y)=I^k_{j_3j_2}(I^k_{j_2,j_0}(z)).$$
	Hence from \eqref{coherence3} we deduce \eqref{k1k}.  This concludes the proof of Theorem \ref{bigmodification2}.
		\end{proof}
	\begin{figure}[ht!]\label{pallette2}
		\centering
		\includegraphics[width=0.7\textwidth, angle=0]{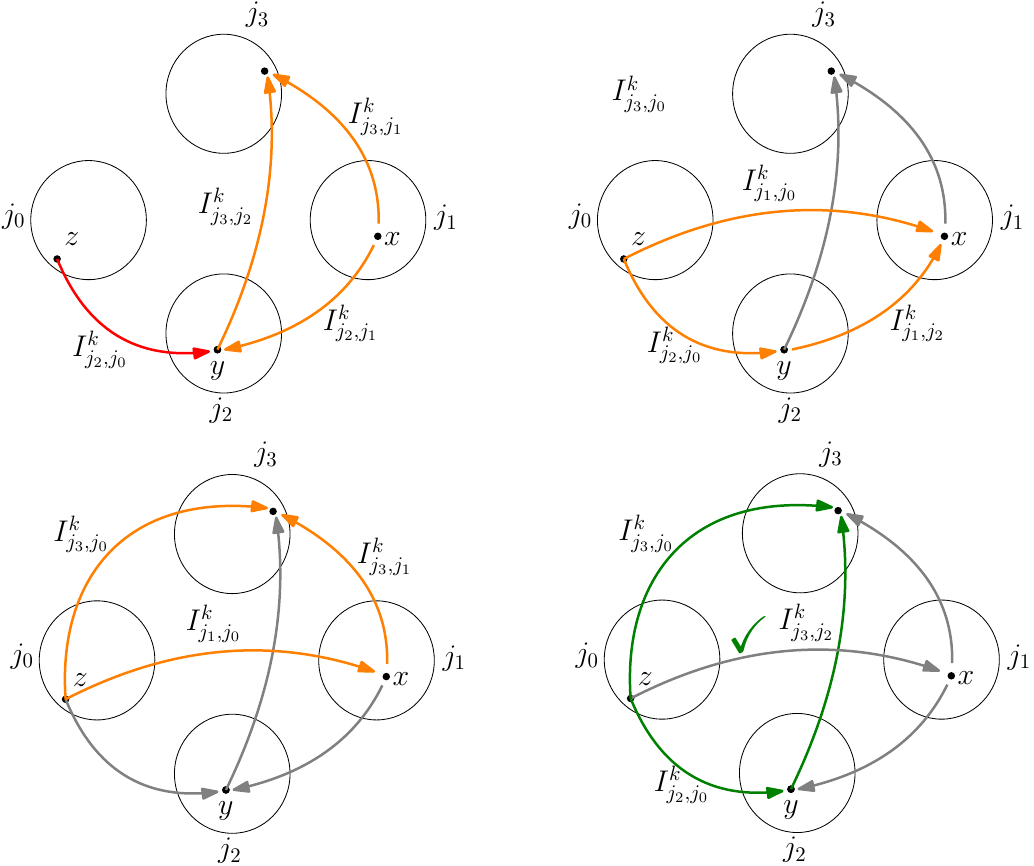}
		\caption{Scheme for the proof of \eqref{cocyclicalfinal}.}
		\label{bk}
	\end{figure}
\FloatBarrier

\subsection{Gluing locally defined manifold-to-manifold immersions}\label{sec:cheeger mapping}
In this section we prove another technical tool, Theorem \ref{patching} stated below, independent of the rest of the note.
This is a variation of a result due to Cheeger \cite[Lemma 3.6 and Lemma 4.1]{Cheeger} and provides a criterion to show that two Riemannian manifolds are diffeomorphic. 
More in details it says that given a manifold $M$, a family of coordinate-charts and  for each chart a smooth embedding from that chart to another manifold,  given that they don't differ too much on the intersection of the charts (condition \eqref{patch3}), we can  modify them by a small amount and glue them together to obtain a smooth immersion map defined in the whole $M$.  This result will be used in the proof of the metric Reifenberg's theorem to build the manifold-to-manifold maps  (see Section \ref{sec:manifoldtomanifold}). 

\begin{theorem}\label{patching}
	For every $N,n \in \mathbb{N}$ and $L\ge 1$ there exist $C_3=C_3(n,N,L)>0$ and $\eps_3(n,N,L)$ with the following property. Let $q \in \mathbb{N}$ with $q \ge 2$ and $t>0.$ Let $M,{\overline M}$ be smooth $n$-dimensional $C^q$ manifolds and let $\phi_j: B_{2t}(0)\subset \rr^n \to M$, $j=1,...,m$ ($m$ possibly $+\infty$), be $C^q$ embeddings and set for every $j=1,\ldots,m$ and $i=0,\dots,N$  $B_i^j\coloneqq \phi_j(B_{(2-i/{N})t}(0))$. Suppose that $M \subset \cup_j \phi_j(B_{t}(0))$ and that  for  every $j_1, j_2=1,...,m$
	\begin{equation}\label{patch1}
		\|D(\phi_{j_1}^{-1}\circ\phi_{j_2})\|,|\partial_{ij}(\phi_{j_1}^{-1}\circ\phi_{j_2})_k|t\le 
		L,
	\end{equation}
	on  the domain of definition of $\phi_{j_1}^{-1}\circ\phi_{j_2}$ (if non-empty). Moreover suppose that we can partition the set of indices $\{1,...,m\}$ into sets $I_1,I_2...,I_{N}$ such that for every $j,\bar j\in I_k$ we have $B^j \cap B^{\bar j}_0 = \emptyset$. Finally suppose that there exists a family of $C^q$ embeddings $h_j: B_0^j \to {\overline M}$ for $j=1,\dots,m$ such that for every couple of indexes $j_1 \in I_h,j_2 \in I_l$  with $1\le  h<l$ for which $B_{l-1}^{j_1} \cap B_{l-1}^{j_2}\neq \emptyset$  we have
	\begin{equation}\label{patch2}
		h_{j_1}(B_{l-2}^{j_1} \cap B_{l-2}^{j_2})\subset h_{j_2}(B_0^{j_2}),
	\end{equation}
	\begin{equation}\label{patch3}
		\| H_{j_1j_2}-{\sf{id}}\|_{C^1\left (\phi_{j_2}^{-1}(B_{l-2}^{j_1} \cap B_{l-2}^{j_2})\right ),t}\le \eps.
	\end{equation}
	for some $\eps \le \eps_3$, where $H_{j_1j_2}=\phi_{j_2}^{-1}\circ h_{j_2}^{-1}\circ h_{j_1}\circ \phi_{j_2}$ (that is well defined by \eqref{patch2}). 
	
	Then there exists a $C^q$-immersion  $h : M \to {\overline M}$ such that $h$ is obtained by modifying slightly the maps $h_j$ in the following sense
	\begin{equation}\label{g}
		h=h_j \circ \phi_j \circ H_j \circ \phi_j^{-1}
	\end{equation}
	in $\phi_j(B_{t}(0))$ for every $j \in I_k$ where $H_j: B_{2t}(0) \to B_{2t}(0)$ is a diffeomorphism of class $C^q$ such that $\|H_j-{\sf{id}}\|_{C^1(B_{2t}(0)),t}\le C_3\eps.$
\end{theorem}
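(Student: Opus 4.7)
The plan is to induct on $k = 0, 1, \ldots, N$, building at each stage an open set $U_k \subset M$, a $C^q$-immersion $h^{(k)} : U_k \to \bar M$, and for every $j \in I_1 \cup \cdots \cup I_k$ (say $j \in I_l$) a global $C^q$-diffeomorphism $H_j: \rr^n \to \rr^n$ with $\|H_j - \id\|_{C^1(\rr^n),t} \le c_l \eps$, where the constants $c_l$ depend only on $n, N, L$. The inductive invariants will be $U_k \supset \bigcup_{l \le k} \bigcup_{j \in I_l} \phi_j(B^j_k)$ and $h^{(k)}|_{\phi_j(B^j_l)} = h_j \circ \phi_j \circ H_j \circ \phi_j^{-1}$ for every $j \in I_l$ with $l \le k$. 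At $k = N$ the open set $U_N$ contains $\bigcup_j \phi_j(B^j_N) = \bigcup_j \phi_j(B_t(0)) \supset M$, so $h := h^{(N)}$ and $C_3 := \max_{l \le N} c_l$ will deliver the conclusion. The base case $k = 1$ is immediate: for $j \in I_1$ the hypothesis $B^j_0 \cap B^{\bar j}_0 = \emptyset$ makes the $\phi_j(B^j_0)$ pairwise disjoint, so setting $H_j := \id$ and $h^{(1)}|_{\phi_j(B^j_1)} := h_j$ is unambiguous.

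For the step $k \to k+1$, the key observation is that within the single class $I_{k+1}$ the sets $\phi_{j'}(B^{j'}_0)$ are pairwise disjoint, so I may treat each $j' \in I_{k+1}$ independently. Fix such $j'$ and introduce the nested preimages
\begin{equation*}
\Omega^{\mathrm{out}}_{j'} := \phi_{j'}^{-1}\bigl(U_k \cap \phi_{j'}(B^{j'}_k)\bigr), \qquad \Omega^{\mathrm{in}}_{j'} := \phi_{j'}^{-1}\bigl(U_k \cap \phi_{j'}(B^{j'}_{k+1})\bigr),
\end{equation*}
which satisfy $\bar\Omega^{\mathrm{in}}_{j'} \subset \Omega^{\mathrm{out}}_{j'} \subset B_{2t}(0)$ with a Euclidean separation of order $t/(NL)$ provided by the step $1/N$ in the nested radii together with the bound \eqref{patch1} on the transition maps. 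By \eqref{patch2}, the composition
\begin{equation*}
F_{j'} := \phi_{j'}^{-1} \circ h_{j'}^{-1} \circ h^{(k)} \circ \phi_{j'}
\end{equation*}
is a well-defined $C^q$-embedding of $\Omega^{\mathrm{out}}_{j'}$ into $\rr^n$.

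On the part of $\Omega^{\mathrm{out}}_{j'}$ overlapping $\phi_j(B^j_l)$ for some $j \in I_l$ with $l \le k$, the inductive formula $h^{(k)} = h_j \circ \phi_j \circ H_j \circ \phi_j^{-1}$ combined with the identity $h_{j'}^{-1} \circ h_j = \phi_{j'} \circ H_{j j'} \circ \phi_{j'}^{-1}$ yields
\begin{equation*}
F_{j'} = H_{j j'} \circ (\phi_{j'}^{-1} \circ \phi_j) \circ H_j \circ (\phi_j^{-1} \circ \phi_{j'}).
\end{equation*}
The inner bracket is $H_j$ conjugated by the chart transition $\phi_j^{-1} \circ \phi_{j'}$; since this transition has $C^2$-norm bounded by $L$ (by \eqref{patch1}), Lemma \ref{B4} bounds the conjugate's $C^1$-norm by $C(n, L)\, c_l \eps$. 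A subsequent application of Lemma \ref{iterateclosest} to compose this with $H_{j j'}$ (controlled by $\eps$ from \eqref{patch3}, after checking that $\Omega^{\mathrm{out}}_{j'}$ lies in the domain $\phi_{j'}^{-1}(B^j_{k-1} \cap B^{j'}_{k-1})$ where \eqref{patch3} applies) yields $\|F_{j'} - \id\|_{C^1(\Omega^{\mathrm{out}}_{j'}),t} \le \tilde c_{k+1} \eps$ with $\tilde c_{k+1}$ depending only on $n, N, L$. I then invoke Lemma \ref{magic} with $U_2 = \Omega^{\mathrm{out}}_{j'}$, $U_1 = \Omega^{\mathrm{in}}_{j'}$, and $H = F_{j'}$ to produce a global $C^q$-diffeomorphism $H_{j'}$ of $\rr^n$ equal to $F_{j'}$ on $\Omega^{\mathrm{in}}_{j'}$, equal to $\id$ outside $\Omega^{\mathrm{out}}_{j'}$, with $\|H_{j'} - \id\|_{C^1(\rr^n),t} \le c_{k+1} \eps$. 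Defining $h^{(k+1)} = h^{(k)}$ on $U_k$ and $h^{(k+1)} = h_{j'} \circ \phi_{j'} \circ H_{j'} \circ \phi_{j'}^{-1}$ on $\phi_{j'}(B^{j'}_{k+1})$, the two prescriptions agree on the overlap $\phi_{j'}(\Omega^{\mathrm{in}}_{j'}) = U_k \cap \phi_{j'}(B^{j'}_{k+1})$ precisely because $H_{j'} = F_{j'}$ there.

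The main technical obstacle lies in the bookkeeping of radii and the preservation of the inductive invariants: I must verify that whenever multiple earlier patches $j_1, j_2, \ldots$ in distinct classes simultaneously cover a point of $\Omega^{\mathrm{out}}_{j'}$, the various expressions for $F_{j'}$ coming from each $j_a$ give the same value (they do, because $F_{j'}$ is \emph{defined} canonically by $\phi_{j'}^{-1} \circ h_{j'}^{-1} \circ h^{(k)} \circ \phi_{j'}$, independent of which earlier patch is used to evaluate $h^{(k)}$). Once the step is complete, no earlier $H_j$ is modified, so the formula for $j \in I_l$ persists on $\phi_j(B^j_l)$ throughout the induction. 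The recursion $c_{k+1} \le A(n, N, L)(1 + c_k)$ and the finite depth $N$ yield $C_3 = c_N$ depending only on $n, N, L$; choosing $\eps_3$ small enough so that $C_3 \eps_3 < 1/2$ then guarantees, via Lemma \ref{diffeo}, that every $H_j$ is a genuine diffeomorphism of $\rr^n$, completing the proof.
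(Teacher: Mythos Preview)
Your inductive scheme is the same as the paper's: process the colour classes $I_1,\ldots,I_N$ in order, at each step pull back the already-built map through the chart of a new patch $j'\in I_{k+1}$ to obtain a function $F_{j'}$ close to the identity, bound it via Lemmas~\ref{B4} and~\ref{iterateclosest}, extend it globally with Lemma~\ref{magic}, and glue. The recursive constants $c_k$ and the final choice of $C_3,\eps_3$ also mirror the paper.

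There is, however, a genuine bookkeeping gap in the sentence ``By~\eqref{patch2}, the composition $F_{j'}=\phi_{j'}^{-1}\circ h_{j'}^{-1}\circ h^{(k)}\circ\phi_{j'}$ is a well-defined $C^q$-embedding of $\Omega^{\mathrm{out}}_{j'}$''. Hypothesis~\eqref{patch2} bounds the image of the \emph{original} maps $h_j$, not of the modified $h^{(k)}$; moreover, with your choice $U_k=\bigcup_{l\le k}\bigcup_{j\in I_l}B^j_l$, the overlap $U_k\cap B^{j'}_k$ can contain points of $B^j_l\setminus B^j_{k-1}$ for $l<k$, and on such points~\eqref{patch2} (which only controls $h_j$ on $B^j_{k-1}\cap B^{j'}_{k-1}$) gives no information. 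The paper avoids this by shrinking the domain of $\hat h_k$ at every step to $\bigcup_{i\le k}\bigcup_{j\in I_i}B^j_k$, and by carrying as an explicit inductive invariant the inclusion $\hat h_k(B^{j_1}_{l-1}\cap B^{j_2}_{l-1})\subset h_{j_2}(B^{j_2}_0)$ for later classes $l>k$ (their~(ind2)), which is re-verified at each step using~\eqref{patch2} together with the smallness of the perturbation and the $1/N$ slack in radii. You should either adopt this shrinking, or add the analogue of~(ind2) to your list of invariants and prove it is preserved; without one of these, $F_{j'}$ need not be defined on the set you claim.
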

\begin{remark}
	The above theorem does not appear in \cite{Cheeger} as it is stated here. The main difference is that here we have an infinite number of maps and that we have also an explicit $C^1$ control on the error of the modification, which will be crucial in our application in Section \ref{sec:manifoldtomanifold}. This forces us  also  to assume a $C^2$ control on the charts (see \eqref{patch1}), which is not present in \cite{Cheeger}. 
	For these reasons, even if the proof is analogous to the one in \cite{Cheeger}, we will include it here for completeness.\fr
\end{remark}

We can now move to the proof  which builds upon the technical Lemma \ref{magic} proved in the previous section.
\begin{proof}[Proof of Theorem \ref{patching}]
	Before starting, we need to define constants $\eta_i$ for $i=1,...,N$ in the following way. Let $C_2=C_2(n,\frac{1}{LN})$, $\delta_2=\delta_2(n,\frac{1}{LN})<1$ be the constants given in Lemma \ref{magic}. Moreover let $c=C(n,L)>1$ be the constant given in Lemma \ref{B4}. Define $\eta_1\coloneqq \eps$ and inductively $\eta_{k+1}\coloneqq \eta_k D$ for some constant $D=D(L,C_2,N,n)>1$, big enough, to be determined later. Moreover we take $\eps_3=\eps_3(n,N,L)$ small enough to satisfy 
	\begin{equation}\label{etak}
		\eps_3 \le \frac{\delta_2}{cLNC_2D^N}.
	\end{equation}
	Before moving to the main body of the proof, we state a preliminary technical claim, which elementary proof will be given at the end.
	
	\noindent\textbf{Claim:} Suppose that $B^j_r\cap B^{\bar j}_r\neq \emptyset$ for some $r \ge 2$ and some indices $j, \bar j$. Define the set $\Omega_{r}\coloneqq \phi_j^{-1}(B^j_r\cap B^{\bar j}_r)$. Then the set $\{x \in B_{2t}(0) \ | \ \sfd(x,\Omega_r)<\frac{t}{LN}\}$ is contained in the domain of $\phi_{\bar j}^{-1}\circ \phi_{ j}$.\\

	We can now pass to the core of the argument. We will construct by induction $C^q$ maps $$\hat h_k : \bigcup_{\substack{{j \in I_i}\\{i=1,..,k}}} B^j_{k}\to {\overline M}$$ with the following properties. For every $h < k$
	\begin{equation}\label{ind1}
		\hat h_k=\hat h_h, \quad \text{ in } \bigcup_{\substack{{j \in I_i}\\{i=1,..,h}}} B^j_{k}.
	\end{equation}
	 Moreover for every $j_1 \in I_k, j_2 \in I_l$ with $l>k$ such that $B_{l-1}^{j_1} \cap B_{l-1}^{j_2}\neq \emptyset$ then
	\begin{equation}\label{ind2}
		\hat h_k\left(B_{l-1}^{j_1} \cap B_{l-1}^{j_2} \right)\subset h_{j_2}(B_0^{j_2}).
	\end{equation}
	Notice that \eqref{ind2} implies that the map $\hat H_{j_1j_2}\coloneqq \phi_{j_2}^{-1}\circ h_{j_2}^{-1}\circ \hat h_{k} \circ \phi_{j_2}$ is a well defined map $ \hat H_{j_1j_2}:\phi_{j_2}^{-1}\left(B_{l-1}^{j_1} \cap B_{l-1}^{j_2} \right) \to B_{2t}(0) $, then we also require
	\begin{equation}\label{ind3}
		\| \hat H_{j_1j_2}-{\sf{id}}\|_{C^1,t}\le \eta_k
	\end{equation}
	on $\phi_{j_2}^{-1}\left(B_{l-1}^{j_1} \cap B_{l-1}^{j_2} \right)$.
	
	To start the induction for $k=1$ we set $\hat h_1\coloneqq h_{j_1}$ in $B_1^{j_1}$ for every $j_1 \in I_1$. Recall that $B_1^{j_1}$, with $j_1 \in I_1$, are all disjoint and hence $\hat h_1$ is well defined. Then we need only to check \eqref{ind2}, \eqref{ind3}, but these are clearly satisfied thanks to \eqref{patch2} and \eqref{patch3}, since $\eta_1=\eps$. 
	
	Suppose now we have constructed maps $\hat h_1,...,\hat h_k$ and consider any $j_{k+1} \in I_{k+1}$. By induction hypothesis for every $j_h \in I_h$ with $h \le k$ by  the map  $\hat H_{j_h,j_{k+1}}\coloneqq \phi_{j_{k+1}}^{-1}\circ h_{j_{k+1}}^{-1}\circ \hat h_{h} \circ \phi_{j_{k+1}}$ is well defined and  satisfies $\| \hat H_{j_h,j_{k+1}}-{\sf{id}}\|_{C^1,t}\le \eta_h\le \eta_k$ on on  $\phi_{j_{k+1}}^{-1}\left(B_{k}^{j_h} \cap B_{k}^{j_{k+1}} \right)$. Moreover from \eqref{ind1} $\hat h_h=\hat h_k$ on $B^{j_h}_k$ for every $j_h$ as above. Thus we can patch the maps $\hat H_{j_h,j_{k+1}}$ together to get a map 
	$\hat H_{j_{k+1}}\coloneqq \phi_{j_{k+1}}^{-1}\circ h_{j_{k+1}}^{-1}\circ \hat h_{k} \circ \phi_{j_{k+1}}$ defined on the whole set $U_2\coloneqq \phi_{j_{k+1}}^{-1}\left((\bigcup_{i=1}^k \bigcup_{j\in I_i} B^j_k)\cap B_k^{j_{k+1}}\right)$ and satisfying $\|\hat H_{j_{k+1}}-{\sf{id}} \|_{C^1(U_2),t}\le \eta_k\le \delta_2$ (by \eqref{etak}). Set now $U_1\coloneqq \phi_{j_{k+1}}^{-1}\left((\bigcup_{i=1}^k \bigcup_{j\in I_i} B^j_{k+1})\cap B_{k+1}^{j_{k+1}}\right)$. Clearly $U_1 \subset U_2$ and we also claim that 
	\begin{equation}\label{eq:U1U2}
		\sfd_{\rr^n}(\bar U_1,U_2^c)\ge t\frac{1}{LN}
	\end{equation} 
To prove this is sufficient to show that $$\sfd_{\rr^n}\left(\phi_{j_{k+1}}^{-1}(B_{k+1}^j\cap B_k^{j_{k+1}}),(\phi_{j_{k+1}}^{-1}((B_{k}^j)^c\cap B_k^{j_{k+1}})\right)\ge t\frac{1}{LN}, 
	$$ 
	for every $j \in J_i$, and $i=1,...,k$. This can be  seen using \eqref{patch1}. Indeed suppose the above is false, then there exist $x \in B_{(2-(k+1)/N)t}(0)$ and $y\in \phi_{j_{k+1}}^{-1}((B_{k}^j)^c\cap B_k^{j_{k+1}})$ such that $\sfd_{\rr^n}(\phi_{j_{k+1}}^{-1}(\phi_j(x)),y)<t\frac{1}{LN}.$ Then the {\bf Claim} above implies that the whole segment joining $\phi_{j_{k+1}}^{-1}(\phi_j(x))$ and $y$ is in the domain of $\phi_j^{-1}\circ \phi_{j_{k+1}}$. Hence by \eqref{patch1} we must have that $\sfd_{\rr^n}(x,\phi_j^{-1}(\phi_{j_{k+1}}(y)))<t/N$, however by construction $\phi_j^{-1}(\phi_{j_{k+1}}(y)) \in B_{(2-k/N)t}(0)^c$ which is a contradiction. This prove \eqref{eq:U1U2}. Therefore we can apply Lemma \ref{magic} with $U_1,U_2, H=\hat H_{j_{k+1}},\eps= \eta_{k},\eta=\frac{1}{LN}, t=t $. Thus we obtain, after an obvious restriction, a $C^q$-diffeomorphism  $\tilde H_{j_{k+1}} : B_{2t}(0)\to B_{2t}(0)$ such that $\tilde H_{j_{k+1}}|_{U_1}=\hat H_{j_{k+1}}|_{U_1}$ and 
	\begin{equation}\label{ind4}
		\|\tilde H_{j_{k+1}}-{\sf{id}} \|_{C^1(B_{2t}(0)),t}\le C_2\eta_{k}<\frac{1}{NLc},
	\end{equation}
	where the last inequality follows by \eqref{etak}.
	We now define the function
	\begin{equation}\label{modification}
		\tilde h_{j_{k+1}}\coloneqq h_{j_{k+1}} \circ \phi_{j_{k+1}}\circ \tilde H_{j_{k+1}} \circ \phi_{j_{k+1}}^{-1},
	\end{equation}
	on $B_0^{j_{k+1}}$.
	Clearly  $\tilde h_{j_{k+1}}=\hat h_k$ on $\phi_{j_{k+1}}(U_1)=(\bigcup_{i=1}^k \bigcup_{j\in I_i} B^j_{k+1})\cap B_{k+1}^{j_{k+1}}.$ Repeat now the above construction and define maps $\tilde h_{j_{k+1}}$ for every $j_{k+1}\in I_{k+1}$. From the previous observation and the fact that $B_0^{j_{k+1}}\cap B_0^{\overline{j_{k+1}}} =\emptyset $ for every $j_{k+1},\overline{j_{k+1}} \in I_{k+1}$ the map
	\begin{equation}\label{ind5}
		\hat h_{k+1}:=
		\begin{cases}
			\hat h_k  &  \text{ on } \bigcup_{i=1}^k \bigcup_{j\in I_i} B^j_{k+1},\\
			\tilde h_{j_{k+1}}  &  \text{ on } B_{k+1}^{j_{k+1}} \, \text{ for } j_{k+1} \in I_{k+1},
		\end{cases}
	\end{equation}
	is a well defined $C^q$ map $\hat h_{k+1}: \bigcup_{i=1}^{k+1} \bigcup_{j\in I_i} B^j_{k+1} \to \overline{M} $. Moreover it is clear from the definition and from the induction hypothesis that \eqref{ind1} is verified for $\hat h_{k+1}.$ We need now to verify \eqref{ind2} and \eqref{ind3}. Observe that it is enough to check these two conditions for $j_1 \in I_{k+1}, j_2 \in I_{l}$ with $l>k+1$, since in the other cases they are true by \eqref{ind5} and induction hypothesis.

	For \eqref{ind2}  take $j_1 \in I_{k+1}, j_2 \in I_{l}$ with $l>k+1$, such that $B_{l-1}^{j_1} \cap B_{l-1}^{j_2}\neq \emptyset$ and pick $x$ in such set. In particular $x=\phi_{j_1}(y)=\phi_{j_2}(z)$ for some $y,z \in B_{(2-\frac{l-1}{N})t}(0)$. Since $B_{l-1}^{j_1} \subset B_{k+1}^{j_1}$, $\hat h_{k+1}(x)=\tilde h_{j_1}(x)$. Thus from \eqref{ind4} we have that
	\begin{equation}\label{b91}
		\tilde h_{j_1}(x)=h_{j_{1}} \circ \phi_{j_{1}}\circ \tilde H_{j_{1}}(y) \in h_{j_{1}}(\phi_{j_{1}}(B_{(2-\frac{l-2}{N})t}(0)))= h_{j_{1}} (B^{j_1}_{l-2}).
	\end{equation}
	On the other hand we have that
	\[\tilde h_{j_1}(x)= h_{j_{1}} \circ \phi_{j_{1}}\circ \tilde H_{j_{1}} \circ \phi_{j_{1}}^{-1}(\phi_{j_2}(z)),\]
	moreover from the \textbf{Claim} and \eqref{ind4}, we deduce that $\tilde H_{j_{1}}(\phi_{j_{1}}^{-1}(x))$ is in the domain of $\phi_{j_{1}} \circ \phi_{j_{2}}^{-1}$. Therefore we can write 
	\[\tilde h_{j_1}(x)= h_{j_{1}}  \circ \phi_{j_{2}} \circ (\phi_{j_{2}}^{-1} \circ \phi_{j_{1}})\circ \tilde H_{j_{1}} \circ (\phi_{j_{1}}^{-1} \circ \phi_{j_2})(z).
	\]
	From the above, using \eqref{ind4} and Lemma \ref{B4} we deduce that
	\begin{equation}\label{b92}
		\tilde h_{j_1}(x)\in h_{j_{1}}(\phi_{j_{2}}(B_{(2-\frac{l-2}{N})t}(0)))= h_{j_{1}} (B^{j_2}_{l-2}).
	\end{equation}
	Then \eqref{ind2} follows combining \eqref{b91} and \eqref{b92}, using \eqref{patch2}. It remains to show \eqref{ind3} for $j_1 \in I_{k+1}, j_2 \in I_{l}$ for $l> k+1.$ Observe that by \eqref{ind2} that we just proved we can define the map  $\hat H_{j_1j_2}$ on $\phi_{j_2}^{-1}(B_{l-1}^{j_1} \cap B_{l-1}^{j_2} )$ as above. Moreover since $\phi_{j_2}^{-1}(B_{l-1}^{j_1} \cap B_{l-1}^{j_2} )\subset \phi_{j_2}^{-1}(B_{k+1}^{j_1})$ we deduce from \eqref{ind5}
	\begin{align*}
		&\hat H_{j_1j_2}=\phi_{j_2}^{-1}\circ h_{j_2}^{-1}\circ \hat h_{k} \circ \phi_{j_2}=\phi_{j_2}^{-1}\circ h_{j_2}^{-1}\circ \tilde h_{j_1} \circ \phi_{j_2}=\\
		&=\phi_{j_2}^{-1}\circ h_{j_2}^{-1}\circ ( h_{j_1}\circ \phi_{j_1} \circ \tilde H_{j_1} \circ \phi_{j_1}^{-1})  \circ \phi_{j_2}=\\
		&=(\phi_{j_2}^{-1}\circ h_{j_2}^{-1}\circ  h_{j_1} \circ \phi_{j_2}) \circ \phi_{j_2}^{-1} \circ \phi_{j_1}\circ \tilde H_{j_1} \circ \phi_{j_1}^{-1} \circ \phi_{j_2}=\\
		&=H_{j_1j_2} \circ (\phi_{j_2}^{-1} \circ \phi_{j_1})\circ  \tilde H_{j_1} \circ (\phi_{j_1}^{-1} \circ \phi_{j_2}),
	\end{align*}
	on $\phi_{j_2}^{-1}(B_{l-1}^{j_1} \cap B_{l-1}^{j_2} ),$ where $H_{j_1j_2}$ is the map in the hypothesis of the theorem. (Observe that in order to plug in the term $\phi_{j_2} \circ \phi_{j_2}^{-1}$ in the third line, we used the \textbf{Claim} as above). Set now $f\coloneqq \phi_{j_2}^{-1} \circ \phi_{j_1}$  and $g\coloneqq f \circ \tilde H_{j_1} \circ f^{-1}$. With this notation  $\hat H_{j_1j_2}=H_{j_1j_2} \circ g$. From \eqref{ind4}, \eqref{patch1} and Lemma \ref{B4} we have that
	\begin{equation}\label{gbound}
		\|g-{\sf{id}}\|_{C^1,t}\le cC_2\eta_k<1,
	\end{equation}
on $\phi_{j_2}^{-1}(B_{l-1}^{j_1} \cap B_{l-1}^{j_2} ).$
	Therefore combining \eqref{gbound}, \eqref{patch3} and Lemma \ref{iterateclosest} we obtain 
	\[ \|\hat H_{j_1j_2}-{\sf{id}}\|_{C^1,t}\le \tilde C(\eta_k+\eps) \le \eta_k( \tilde C+1),\]
	on $\phi_{j_2}^{-1}(B_{l-1}^{j_1} \cap B_{l-1}^{j_2} ),$ where $\tilde C$ is constant depending only on $C_2$ and $L$. Hence \eqref{ind3} follows provided we choose $D\ge  \tilde C+1.$
	
	Since the induction procedure is now proved, we can define the required map as $h:=\hat h_N$. The assumption $M \subset \cup_j \phi_j(B_{t}(0))$ and \eqref{ind5} grant that $h$ is defined on the whole $M$ and it of class $C^q$. Moreover \eqref{ind4},\eqref{modification} and \eqref{ind5}  imply \eqref{g} together with the bound
	\[\|\tilde H_{j}-{\sf{id}} \|_{C^1(B_{2t}(0)),t}\le C_2\eta_{N}\le \eps D^NC_2,	\]
	hence it is sufficient to take $C_3\ge D^NC_2$. Recall also that $\hat h_1\coloneqq h_{j_1}$ in $B_1^{j_1}$ for every $j_1 \in I_1$. Thus, up to proving the {\bf Claim}, the proof  is concluded.
	
	\textbf{Proof of the Claim}: Define the sets $\Omega_{r-1}:=\phi_j^{-1}(B^j_{r-1}\cap B^{\bar j}_{r-1})$,  $\Omega_{0}:=\phi_j^{-1}(B^j_{0}\cap B^{\bar j}_{0})$, which are both contained in the domain of $\phi_j^{-1}\circ \phi_{\bar j}$. Thus it is enough to prove that 
	$$\{x \in B_{2t}(0) \ | \ \sfd(x,\Omega_r)<\frac{t}{LN}\}\subset \Omega_{r-1}.$$
	Pick $x \in B_{2t}(0) \setminus \Omega_{r-1}$ and and set $\rho:=\sfd(x,\Omega_r)>0$. Fix $\eps>0$ arbitrary. There exists a point $y \in \Omega_r$ such that $|y-x|\le \rho+\eps.$  Moreover, since $y \in \Omega_r$, $x \in \Omega_{r-1}^c$ and $\Omega_r \subset \Omega_{r-1}$, then there must be at least one point $p$, lying in the segment joining $x$ and $y$, with $p\in \partial \Omega_{r-1}$. However, since the maps $\phi_j,\phi_{\bar j}$ are homeomorphisms, we must have that $\Omega_{r-1}\subset \subset \Omega_0$ and 
	\[\partial \Omega_{r-1}\subset \partial B_{(2-\frac{r-1}{N})t}(0)\cup \phi_j^{-1}\circ \phi_{\bar j}(\partial B_{(2-\frac{r-1}{N})t}(0)). \]
	Moreover $\Omega_r\subset B_{(2-\frac{r}{N})t}(0)\cap \phi_j^{-1}\circ \phi_{\bar j}(B_{(2-\frac{r}{N})t}(0))$. Hence if $p \in \partial B_{(2-\frac{r-1}{N})t}(0)$ it holds $|p-x|\ge t/N\ge 1/(LN)$. If instead $p\in \phi_j^{-1}\circ \phi_{\bar j}(\partial B_{(2-\frac{r-1}{N})t}(0)),$ since the map $\phi_{\bar j}^{-1}\circ \phi_{j}$ is $L$-Lipschitz by \eqref{patch1}, we must have that
	\[|x-y|\ge|x-p|\ge \frac{t}{LN}.\]
	Hence $\rho+\eps \ge \frac{t}{LN}$ and from the arbitrariness of $\eps$ we conclude.
\end{proof}

\end{document}